\newtheorem{defin}{Definition}[section]
\newtheorem{thm}[defin]{Theorem}
\newtheorem{prop}[defin]{Proposition}
\newtheorem{lemma}[defin]{Lemma}
\newtheorem{cor}[defin]{Corollary}
\newtheorem{example}[defin]{Example}
\newtheorem{remark}[defin]{Remark}
\newcommand{\Mod}[1]{\mathfrak{Mod}#1}
\newcommand{\stq}[1]{\underline{\Mod{#1}}}
\newcommand{\Sstq}[1]{\textsf{St}(\stq{R})}
\newcommand{\C}[1]{\mathbb{C}^{#1}}
\newcommand{\s}[1]{\mathscr{#1}}
\newcommand{\ve}{\varepsilon}
\newcommand{\w}[1]{\widetilde{#1}}
\newcommand{\e}{\varepsilon}
\renewcommand{\t}[1]{\textnormal{#1}}
\def\minus{\hbox{-}}
\def\plus{\hbox{+}}
\def\CM{\mathop{\rm CM}\nolimits}
\def\SCM{\mathop{\rm SCM}\nolimits}
\def\OCM{\mathop{\Omega{\rm CM}}\nolimits}
\def\depth{\mathop{\rm depth}\nolimits}
\def\mod{\mathop{\rm mod}\nolimits}
\def\pd{\mathop{\rm pd}\nolimits}
\def\Hom{\mathop{\rm Hom}\nolimits}
\def\End{\mathop{\rm End}\nolimits}
\def\Ext{\mathop{\rm Ext}\nolimits}
\def\Tr{\mathop{\rm Tr}\nolimits}
\def\add{\mathop{\rm add}\nolimits}
\def\Ker{\mathop{\rm Ker}\nolimits}
\def\Im{\mathop{\rm Im}\nolimits}
\def\gl{\mathop{\rm gl.dim}\nolimits}
\def\CC{\mathop{\mathcal{C}}\nolimits}
\def\FF{\mathop{\mathcal{F}}\nolimits}
\def\XX{\mathop{\mathcal{X}}\nolimits}
\begin{document}
\title{\textsc{The Classification of Special Cohen-Macaulay Modules}}
\author{Osamu Iyama}
\address{Osamu Iyama\\ Graduate School of Mathematics\\ Nagoya University\\ Chikusa-ku, Nagoya, 464-8602, Japan}
\email{iyama@math.nagoya-u.ac.jp}
\author{Michael Wemyss}
\address{Michael Wemyss\\ Graduate School of Mathematics\\ Nagoya University\\ Chikusa-ku, Nagoya, 464-8602, Japan}
\email{wemyss.m@googlemail.com}
\thanks{The second author was supported by the Cecil King Travel Scholarship, and would like to thank both the London Mathematical Society and the Cecil King Foundation.}
\begin{abstract}
In this paper we completely classify all the special Cohen-Macaulay (=CM) modules corresponding to the exceptional curves in the dual graph of the minimal resolutions of all two dimensional quotient singularities.  In every case we exhibit the specials explicitly in a combinatorial way.   Our result relies on realizing the specials as those CM modules whose first Ext group vanishes against the ring $R$, thus reducing the problem to combinatorics on the AR quiver;  such possible AR quivers were classified by Auslander and Reiten.  We also give some general homological properties of the special CM modules and their corresponding reconstruction algebras.
\end{abstract}
\maketitle
\parindent 20pt
\parskip 0pt

\tableofcontents

\section{Introduction}
For a finite subgroup $G\leq SL(2,\C{})$ the McKay Correspondence
\cite{McKay_original} gives a 1-1 correspondence between the
non-trivial representations of $G$ and the exceptional curves on the
minimal resolution of $\C{2}/G$, thus linking the geometry of the
variety $\C{2}/ G$ with the representation theory of $G$.  However
when $G\leq GL(2,\C{})$ it is no longer true that the geometry of
$\C{2}/G$ and the representation theory of $G$ are linked in such a
simple manner since there are now more representations than
exceptional curves.  Put more coarsely the representation theory is
too `big' for the geometry, and to regain a 1-1 correspondence we
need to throw away some representations.

This problem led Wunram \cite{Wunram_generalpaper} to develop the
idea of a special representation so that after passing to the
non-trivial special representations the 1-1 correspondence with the
exceptional curves is recovered.  However the definition of a
special representation is \emph{homological} since it is defined by
the vanishing of cohomology of the dual of a certain vector bundle on the minimal
resolution.  To be able to explicitly say what the non-trivial
special representations are for any non-cyclic subgroup of
$GL(2,\C{})$ has been a hard open question; without knowing what the
special representations are it is certainly difficult (though not impossible) to describe
their structure.

The representation theory of CM modules was initiated by
Auslander and Reiten. They developed a powerful theory based on
homological methods which reveals the hidden structure of the category
of CM modules in terms of Auslander-Reiten(=AR) duality and almost
split sequences, enabling us to visualize the category by the combinatorial
structure of AR quivers.  Auslander classified the indecomposable CM modules over quotient singularities in terms of the irreducible representations of the corresponding group and furthermore showed that when the group is small the AR and McKay quivers coincide \cite{Auslander_rational}.

On the other hand geometric methods in the representation theory of CM modules, initiated by
Artin and Verdier \cite{ArtVer}, often provides us with certain important classes of
CM modules directly from the minimal resolutions of
singularities, and the geometric structure of exceptional curves
on the minimal resolutions is transferred into the categorical structure
of certain CM modules.
For Gorenstein quotient surface singularities the geometric methods
fit quite nicely with the homological methods since they provide
us with all CM modules.
However for non-Gorenstein quotient surface singularities the geometric methods
provide us with only special CM modules and their meaning was much less understood from a homological viewpoint.
In this paper, we shall give several homological characterization of
special CM modules, then give a complete classification of them.

The problem is how to deduce the vanishing of the higher cohomology
of the dual of a certain vector bundle on a space we don't really
understand, and in this paper we solve this via two simple counting
arguments on a noncommutative ring.  The first counting argument uses a new
characterization of the specials in terms of the syzygy functor.  By counting on the AR quiver we can easily compute syzygies, and so this forms one method to deduce if a module is special or not.  Alternatively, the second counting argument relies on the new homological
characterizations of the specials as those CM modules whose first
Ext group vanishes against the ring of invariants.  By AR duality this means we have reduced the problem to counting homomorphisms in the stable category of CM modules, which again is  easy to compute.

In fact our new characterizations of the specials work in greater generality, namely for all rational normal surfaces.  What is somewhat remarkable is that although these may have infinitely many isomorphism classes of indecomposable CM modules, there are only ever finitely many indecomposable objects arising as first syzygies of CM modules, and so they are `syzygy finite'.

In \cite{Wemyss_reconstruct_A} (and subsequent work
\cite{Wemyss_reconstruct_D(i)},\cite{Wemyss_reconstruct_D(ii)}) the
main object of study is the endomorphism ring of the special CM
modules, the so called \emph{reconstruction algebra}.  It was
discovered that the reconstruction algebra is intimately related to
the geometry and gives a correspondence with the dual graph of the
minimal resolution complete with self-intersection numbers via its
underlying quiver.  In this paper we show, via a modified argument of Auslander \cite{A}, that for any rational normal surface $X$ the
global dimension of the corresponding reconstruction algebra is always 2 or 3. Furthermore the value is 2 precisely when $X$ is Gorenstein, i.e. a rational double point.  
This proof not only generalises  \cite{Wemyss_reconstruct_A} but is
also philosophically better since the definition of special CM
module is homological so we should not have to pass down to
generators and relations to prove homological properties.

Since the geometry is unaffected by factoring out by
pseudoreflections, in this paper we can (and will) assume our groups
to be small, thus we can make use of the classification of such
groups by Brieskorn \cite{Brieskorn}.

We now describe the structure of this paper in more detail - in
Section 2 we give the new homological characterisations of the
specials and use them to prove that the global dimension of the
corresponding reconstruction algebras is either two or three. 
In Section 3 we improve some of the results in Section 2 by using a geometrical argument and in
Section 4 we describe the two main counting arguments.  In the remainder of the paper we classify the specials for all small finite subgroups of $GL(2,\C{})$.

We remark that the special CM modules are also known (via different methods) for type $\mathbb{A}$ by Wunram \cite{Wunram_cyclicBook} and Type $\mathbb{D}$ by the
PhD thesis of Nolla de Celis \cite{Alvaro}.

\medskip\noindent{\bf Acknowledgment }The authors would like to thank Tokuji Araya, Ryo Takahashi and Alvaro Nolla de Celis for stimulating discussions.   They also thank the anonymous referee for many valuable comments.

\medskip\noindent{\bf Conventions }
All modules are usually right modules, and the
composition $fg$ of morphisms means first $g$, then $f$.
We denote by $\mod(R)$ the category of finitely generated
$R$-modules, by $J_R$ the Jacobson radical of $R$.
For $M\in\mod(R)$, we denote by $\add M$ the subcategory of
$\mod(R)$ consisting of direct summands of finite direct sums of
copies of $M$. For example $\add R$ is the category of finitely
generated projective $R$-modules.
For an additive category $\CC$, we denote by $J_{\CC}$ the Jacobson radical
of $\CC$. For a full subcategory $\CC'$ of $\CC$, we denote by $[\CC']$
the ideal of $\CC$ consisting of morphisms which factor through objects
in $\CC'$.

\section{Homological properties of special Cohen-Macaulay modules}

Let $R$ be a commutative noetherian ring.
We have a duality $(-)^*:=\Hom_R(-,R):\add R\to\add R$.
For any $X\in\mod(R)$, we take a projective resolution
\[P_1\xrightarrow{g}P_0\xrightarrow{f}X\to0.\]
Define $\Tr X\in\mod(R)$ by an exact sequence
\begin{equation}\label{PRES2}
0\to X^*\xrightarrow{f^*}P_0^*\xrightarrow{g^*}P_1^*\to\Tr X\to0.
\end{equation}
We denote by $\underline{\mod}(R):=(\mod(R))/[\add R]$ the stable category
of $R$ \cite{AB}. Then we have a duality
\[\Tr:\underline{\mod}(R)\xrightarrow{\sim}\underline{\mod}(R)\]
called the \emph{Auslander-Bridger transpose} \cite{AB}\cite{Y}.
We also have the syzygy functor
\[\Omega:\underline{\mod}(R)\rightarrow\underline{\mod}(R).\]

\begin{defin}
Let $n\ge1$.
We put
\[\XX_n:=\{X\in\mod(R)\ |\ \Ext^i_R(X,R)=0\ (0<i\le n)\}.\]
We call $X\in\mod(R)$ \emph{$n$-torsionfree} \cite{AB} if $\Tr X\in\XX_n$.
We denote by $\FF_n$ the category of $n$-torsionfree $R$-modules.
\end{defin}

It is easily shown that $X\in\mod(R)$ is $n$-torsionfree if and only if
there exists an exact sequence $0\to X\to P_0\to\cdots\to P_{n-1}$ such that
$P_{n-1}^*\to\cdots\to P_0^*\to X^*\to0$ is exact \cite{AB}.
Thus any $n$-torsionfree module is an $n$-th syzygy of an $R$-module.

The following result is well-known \cite{AB}.

\begin{lemma}\label{AB sequence}
For any $X,Y\in\mod(R)$, we have an exact sequence
\[0\to\Ext^1_R(\Tr X,Y)\to X\otimes_RY\xrightarrow{\alpha_{X,Y}}
\Hom_R(X^*,Y)\to\Ext^2_R(\Tr X,Y)\to0,\]
where $\alpha_{X,Y}$ is defined by $\alpha_{X,Y}(x\otimes y)(f)=yf(x)$
for $x\in X$, $y\in Y$ and $f\in X^*$.
\end{lemma}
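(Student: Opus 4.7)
The strategy is to compare the right exact tensor sequence arising from the presentation $P_1\xrightarrow{g}P_0\to X\to 0$ with the $\Hom$-sequence dual to it, via the natural evaluation map
\[
\nu_P\colon P\otimes_R Y\longrightarrow \Hom_R(P^*,Y),\qquad \nu_P(p\otimes y)(\phi)=y\phi(p),
\]
which is an isomorphism whenever $P$ is finitely generated projective. Assembling $\nu$ over the presentation yields a commutative ladder
\[
\begin{CD}
P_1\otimes_R Y @>{g\otimes 1}>> P_0\otimes_R Y @>>> X\otimes_R Y @>>> 0\\
@VV{\nu_{P_1}}V @VV{\nu_{P_0}}V @VV{\alpha_{X,Y}}V\\
\Hom_R(P_1^*,Y) @>>> \Hom_R(P_0^*,Y) @>>> \Hom_R(X^*,Y)
\end{CD}
\]
with exact top row and the two left vertical maps isomorphisms. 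Evaluating $\nu_{P_0}$ on a lift $p\in P_0$ of $x\in X$ immediately recovers the stated formula $\alpha_{X,Y}(x\otimes y)(f)=yf(x)$, so the lemma reduces to identifying the kernel and cokernel of $\alpha_{X,Y}$.

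To that end, I would split the defining sequence (\ref{PRES2}) through $K:=\Im(g^*)$ into
\[
0\to X^*\to P_0^*\to K\to 0\qquad\text{and}\qquad 0\to K\to P_1^*\to\Tr X\to 0,
\]
and apply $\Hom_R(-,Y)$. Using that $\Ext^i_R(P_j^*,Y)=0$ for $i\ge 1$ (since $P_j^*$ is projective), the second sequence identifies $\Ext^1_R(\Tr X,Y)\cong\Cok\bigl(\Hom_R(P_1^*,Y)\to\Hom_R(K,Y)\bigr)$ and $\Ext^2_R(\Tr X,Y)\cong\Ext^1_R(K,Y)$, while the first identifies $\Hom_R(K,Y)$ with $\Ker\bigl(\Hom_R(P_0^*,Y)\to\Hom_R(X^*,Y)\bigr)$ and gives $\Ext^1_R(K,Y)\cong\Cok\bigl(\Hom_R(P_0^*,Y)\to\Hom_R(X^*,Y)\bigr)$.

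Finally, $\nu_{P_0}$ and $\nu_{P_1}$ identify $X\otimes_R Y$ with $\Cok\bigl(\Hom_R(P_1^*,Y)\to\Hom_R(P_0^*,Y)\bigr)$, under which $\alpha_{X,Y}$ is precisely the map induced by $\Hom_R(P_0^*,Y)\to\Hom_R(X^*,Y)$. Its cokernel is therefore $\Ext^2_R(\Tr X,Y)$, and its kernel is the subquotient $\Ker/\Im$ in the middle of the bottom row, which the previous paragraph identifies with $\Ext^1_R(\Tr X,Y)$. Splicing these produces the desired four-term exact sequence. The argument has no real technical obstacle; the only point requiring care is tracking which subquotients correspond to which $\Ext$ group once the sequence (\ref{PRES2}) is broken at $K$.
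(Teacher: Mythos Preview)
Your argument is correct and is essentially the standard Auslander--Bridger proof. The paper does not give its own proof of this lemma; it simply records it as a well-known result and cites \cite{AB}, so there is nothing to compare against beyond noting that your approach is the canonical one.
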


We note that $\alpha_{X,R}$ is the natural map $X\to X^{**}$ and so putting $Y=R$ in Lemma \ref{AB sequence} we have an exact sequence
\[0\to\Ext^1_R(\Tr X,R)\to X\to X^{**}\to\Ext^2_R(\Tr X,R)\to0.\]
Thus $X$ is $1$-torsionfree if and only if it is torsionless,
and $X$ is $2$-torsionfree if and only if it is reflexive.
For a full subcategory $\CC$ of $\mod(R)$, we denote by $\underline{\CC}$
the corresponding full subcategory of $\underline{\mod}(R)$.
Clearly we have the following result.

\begin{lemma}\label{commutative}
We have the following commutative diagram whose
rows are equivalences and columns are dualities:
\[\begin{array}{ccccccccc}
\underline{\XX_n}&\stackrel{\Omega}{\longrightarrow}&\underline{\XX_{n-1}\cap\FF_1}&\stackrel{\Omega}{\longrightarrow}&\cdots
&\stackrel{\Omega}{\longrightarrow}&\underline{\XX_1\cap\FF_{n-1}}&\stackrel{\Omega}{\longrightarrow}&\underline{\FF_n}\\
\downarrow^{\Tr}&&\downarrow^{\Tr}&&&&\downarrow^{\Tr}&&\downarrow^{\Tr}\\
\underline{\FF_n}&\stackrel{\Omega}{\longleftarrow}&\underline{\XX_1\cap\FF_{n-1}}&\stackrel{\Omega}{\longleftarrow}&\cdots&\stackrel{\Omega}{\longleftarrow}&\underline{\XX_{n-1}\cap\FF_1}&\stackrel{\Omega}{\longleftarrow}&\underline{\XX_n}
\end{array}\]
\end{lemma}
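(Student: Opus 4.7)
The plan is to establish three things in sequence: the columns are dualities, each $\Omega$ lands in the claimed subcategory, and each square commutes in a manner which forces $\Omega$ to be an equivalence.

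For the columns I use that $\Tr$ is a duality on $\underline{\mod}(R)$, so $\Tr \circ \Tr \cong \t{id}$. If $X \in \XX_a \cap \FF_b$, then $\Tr X \in \XX_b$ holds by the very definition of $\FF_b$, while the hypothesis $X \in \XX_a$ together with $X \cong \Tr(\Tr X)$ translates to $\Tr X \in \FF_a$. Hence $\Tr$ restricts to a duality $\underline{\XX_a\cap\FF_b} \to \underline{\XX_b\cap\FF_a}$, yielding every column.

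For the horizontal arrows I take $X \in \XX_a \cap \FF_b$ with $a \ge 1$ and a short exact sequence $0 \to \Omega X \to P_0 \to X \to 0$. Dimension shifting in $\Ext^*_R(-,R)$ gives $\Ext^i_R(\Omega X, R) \cong \Ext^{i+1}_R(X,R)$ for $i \ge 1$, so $\Omega X \in \XX_{a-1}$. Using the characterization of $n$-torsionfreeness recorded in the paragraph after Definition 2.1, I splice $0 \to \Omega X \to P_0 \to X \to 0$ with a resolution $0 \to X \to P_0' \to \cdots \to P_{b-1}'$ whose $R$-dual is exact. This produces $0 \to \Omega X \to P_0 \to P_0' \to \cdots \to P_{b-1}'$, and the four-term sequence $0 \to X^* \to P_0^* \to (\Omega X)^* \to \Ext^1_R(X,R) \to 0$ shows its $R$-dual is exact precisely because $\Ext^1_R(X,R) = 0$; hence $\Omega X \in \FF_{b+1}$.

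The key input for the remaining claims is the identity $\Omega \Tr \Omega X \cong \Tr X$ on $\XX_1$. Given a projective resolution $\cdots \to P_2 \to P_1 \to P_0 \to X \to 0$, one has $\Tr X = \Cok(P_0^* \to P_1^*)$, while the presentation $P_2 \to P_1 \to \Omega X \to 0$ shows $\Tr \Omega X = \Cok(P_1^* \to P_2^*)$, so $\Omega \Tr \Omega X \cong \Im(P_1^* \to P_2^*)$ in $\underline{\mod}(R)$. Since $\Im(P_0^* \to P_1^*) \subseteq \ker(P_1^* \to P_2^*)$ with quotient $\Ext^1_R(X,R)$, the third isomorphism theorem produces
\[0 \to \Ext^1_R(X,R) \to \Tr X \to \Omega \Tr \Omega X \to 0.\]
When $X \in \XX_1$ this collapses to the commutativity relation $\Omega \circ \Tr \circ \Omega \cong \Tr$ required at every square, since every top-row object sits in $\XX_1$; applying $\Tr$ and using $\Tr^2 \cong \t{id}$ further shows that $\Tr \Omega \Tr$ is quasi-inverse to each horizontal $\Omega$ (the check on the right-hand side uses that $\Tr Y$ also lies in $\XX_1$, which follows from the column duality). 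The main obstacle is bookkeeping: the identification $\Omega \Tr \Omega X \cong \Im(P_1^* \to P_2^*)$ is valid only after killing projective summands, so one must work throughout in $\underline{\mod}(R)$ rather than in $\mod(R)$.
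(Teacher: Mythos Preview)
Your proof is correct; the paper offers no argument for this lemma beyond the phrase ``Clearly we have the following result,'' and your write-up supplies precisely the routine verifications one is expected to fill in. The key identity $\Omega\Tr\Omega X\cong\Tr X$ for $X\in\XX_1$ and its use to produce the quasi-inverse $\Tr\Omega\Tr$ is the standard way to see the equivalence, so there is nothing to compare.
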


Let $R$ be a complete local ring of dimension
$d$.  For $X\in\mod(R)$, we put
\[\depth X:=\min\{i\ge0\ |\ \Ext^i_R(R/J_R,X)\neq0\}.\]
We call $X$ \emph{maximal Cohen-Macaulay} (=\emph{CM}) if $\depth X=d$.
We denote by $\CM(R)$ the category of CM $R$-modules.
We call $R$ a \emph{CM ring} if $R\in\CM(R)$.
Clearly the category $\CM(R)$ is closed under extensions.
In the rest of this section we assume that $R$ is a CM ring with canonical module $\omega$. We often use the equality
\begin{equation}\label{depth}
\depth X=d-\sup\{i\ge0\ |\ \Ext^i_R(X,\omega)\neq0\}.
\end{equation}

We denote by $\OCM(R)$ the subcategory of $\mod(R)$ consisting of $X\in\mod(R)$
such that there exists an exact sequence $0\to X\to P\to Y\to0$ with $Y\in\CM(R)$
and $P\in\add R$. We have the following relationship between CM
modules and $n$-torsionfree modules.

\begin{prop}\label{CM=FD}
Let $R$ be a CM isolated singularity of dimension $d$. Then we have
$\CM(R)=\FF_d$ and $\OCM(R)=\FF_{d+1}$.
\end{prop}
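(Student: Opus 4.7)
Plan. I would prove both equalities by establishing each containment separately; the two directions are of quite different character.

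The containments $\FF_d \subseteq \CM(R)$ and $\FF_{d+1} \subseteq \OCM(R)$ follow directly from the characterisation of $n$-torsionfree modules quoted just after Definition 2.1. Any $X \in \FF_n$ fits in an exact sequence $0 \to X \to P_0 \to \cdots \to P_{n-1}$ with each $P_i$ projective, so iterating the depth lemma in the $d$-dimensional CM local ring $R$ gives $\depth X \ge \min(d, n)$. For $n = d$ this yields $X \in \CM(R)$; for $n = d+1$ the cokernel of $X \to P_0$ is itself a $d$-th syzygy, hence CM, so $X \in \OCM(R)$ by definition.

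For the reverse containments the isolated-singularity hypothesis is essential. For $X \in \CM(R)$, $X_\p$ is $R_\p$-free on every non-maximal prime $\p$, so $(\Tr X)_\p$ is stably trivial on the punctured spectrum and $\Ext^i_R(\Tr X, R)$ has finite length for each $i \ge 1$. I would combine this length-finiteness with depth bounds to force vanishing. For $i = 1$, Lemma \ref{AB sequence} with $Y = R$ shows that $\Ext^1_R(\Tr X, R)$ is a finite-length submodule of $X$; since $X \in \CM(R)$ has its associated primes among the minimal primes of $R$ and is therefore torsion-free, this submodule must vanish. For $i = 2$ (relevant when $d \ge 2$), the same lemma exhibits $\Ext^2_R(\Tr X, R)$ as a finite-length cokernel of the natural map
\[0 \to \Ext^1_R(\Tr X, R) \to X \to X^{**} \to \Ext^2_R(\Tr X, R) \to 0;\]
using that $X^{**}$ is reflexive and $R$ is normal (being a CM isolated singularity of dimension $\ge 2$), so $\depth X^{**} \ge 2$, a depth-lemma computation on this four-term sequence forces the finite-length cokernel to vanish. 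The low-dimensional cases $d \le 1$ are handled directly.

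For $i \ge 3$ (relevant when $d \ge 3$) I would use the stable-category identification $X^* \cong \Omega^2 \Tr X$, readable from the defining exact sequence of $\Tr X$, which gives $\Ext^i_R(\Tr X, R) \cong \Ext^{i-2}_R(X^*, R)$, and then iterate: $X^*$ is again reflexive and still locally free on the punctured spectrum, so the same length-finiteness/depth principle applies. The second equality $\OCM(R) = \FF_{d+1}$ reduces to $\CM(R) = \FF_d$ by a long-exact-sequence chase on the defining sequence $0 \to X \to P \to Y \to 0$ of an $\OCM$-module: the extra vanishing $\Ext^{d+1}_R(\Tr X, R) \cong \Ext^{d-1}_R(X^*, R) = 0$ should drop out from $\Ext^d_R(\Tr Y, R) = 0$, which holds since $Y \in \FF_d$ by the first equality. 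The hardest part will be organising the iteration for $i \ge 3$ uniformly, since $X^*$ is not in general CM: the depth inputs used in the $i=1,2$ base case must be replaced by the weaker reflexivity plus local-freeness conditions at each iteration step, and doing this cleanly -- perhaps by induction on $d$, or globally by exploiting the equivalences in Lemma \ref{commutative} to transfer between $\underline{\XX_n}$ and $\underline{\FF_n}$ -- is the place where technical care is required.
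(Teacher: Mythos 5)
Your treatment diverges from the paper's precisely at the first equality: the paper does not prove $\CM(R)=\FF_d$ at all, but cites it as a known theorem of Auslander and Evans--Griffith, whereas you attempt a self-contained proof. Your forward inclusions $\FF_d\subseteq\CM(R)$ and $\FF_{d+1}\subseteq\OCM(R)$ by depth-counting are correct and essentially match the paper, and your arguments that $\Ext^1_R(\Tr X,R)=0$ and $\Ext^2_R(\Tr X,R)=0$ for $X\in\CM(R)$ (these groups are the kernel and cokernel of $X\to X^{**}$ by Lemma~\ref{AB sequence}, are of finite length by Lemma~\ref{finite length}, and are then killed by $\depth X=d$ and $\depth X^{**}\ge 2$) are sound. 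In particular your proof is complete for $d\le 2$, which is the only case the paper ever uses.

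However, the iteration for $i\ge 3$ has a genuine gap: it is circular, not merely delicate. The identification $X^*\simeq\Omega^2\Tr X$ (up to projective summands) correctly converts the remaining vanishing into $\Ext^{j}_R(X^*,R)=0$ for $1\le j\le d-2$. But the ``length-finiteness/depth principle'' of your base case is the Auslander--Bridger sequence for a module $M$, which controls $\Ext^{1}_R(\Tr M,R)$ and $\Ext^{2}_R(\Tr M,R)$ --- the obstructions to $M$ being torsionless and reflexive. Applied to $M=X^*$ it only re-proves that $X^*$ is reflexive; it says nothing about $\Ext^{j}_R(X^*,R)$ itself, and since $\Ext^{j}_R(X^*,R)\simeq\Ext^{j+2}_R(\Tr X,R)$ for $j\ge1$, ``iterating'' simply returns you to the groups you started with. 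The missing idea in the classical proof (Auslander--Bridger; Evans--Griffith, Syzygies, Thm.\ 3.8) is the universal pushforward induction: embed $X\hookrightarrow Q$ with $Q\in\add R$ and $Q^*\to X^*$ surjective, show that $X$ is $(n+1)$-torsionfree if and only if the cokernel $C=Q/X$ is $n$-torsionfree, and induct on $C$ --- which is locally free on the punctured spectrum and of depth $d-1$ but no longer maximal Cohen--Macaulay, so the induction must be run with a weaker hypothesis than CM from the start. Note also that your closing reduction of $\OCM(R)\subseteq\FF_{d+1}$ leans on the unproved $\Ext^{d-1}_R(X^*,R)=0$; the paper instead deduces it directly from $\CM(R)=\FF_d$ by a short mapping-cone argument comparing the given sequence $0\to X\to P\to Y\to0$ with a universal embedding $X\to Q$, which you could adopt verbatim.
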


\begin{proof}
It is a well-known result due to Auslander \cite{A2,EG} that $\CM(R)=\FF_d$ holds.

We shall show $\OCM(R)=\FF_{d+1}$.
Since any $(d+1)$-tosionfree module is a syzygy of a $d$-torsionfree module,
we have $\FF_{d+1}\subset\OCM(R)$.
On the other hand, for any $X\in\OCM(R)$, take an exact sequence
$0\to X\to P\to Y\to0$ with $Y\in\CM(R)$ and $P\in\add R$.
Take a morphism $f:X\to Q$ with $Q\in\add R$ such that $f^*:Q^*\to X^*$ is surjective.
We have a commutative diagram
\[\begin{array}{ccccccc}
0\longrightarrow&X&\longrightarrow&P&\longrightarrow&Y&\longrightarrow0\\
&\parallel&&\uparrow&&\uparrow\\
0\longrightarrow&X&\xrightarrow{f}&Q&\longrightarrow&Z&\longrightarrow0
\end{array}\]
of exact sequences.
Taking a mapping cone, we have an exact sequence $0\to Q\to P\oplus Z\to Y\to0$.
Since $\CM(R)$ is closed under extensions, we have $Z\in\CM(R)=\FF_d$.
Thus we have $X\in\FF_{d+1}$.
\end{proof}

The following well-known property \cite{A2} is useful.

\begin{lemma}\label{finite length}
Let $X\in\CM(R)$ and $Y\in\mod(R)$.
If $R$ is an isolated singularity, then $\Ext^i_R(\Tr X,Y)$ is a finite length $R$-module for any $i>0$.
\end{lemma}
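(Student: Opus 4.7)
The plan is to exploit the isolated singularity hypothesis by localizing at every non-maximal prime. For any prime $\p$ of $R$ with $\p\neq J_R$, the localization $R_\p$ is regular. Since $X\in\CM(R)$, the module $X_\p$ is CM over $R_\p$, hence the Auslander--Buchsbaum formula forces $\pd_{R_\p}X_\p=0$, i.e. $X_\p$ is free.

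Next I would verify that the Auslander--Bridger transpose commutes with localization, up to projective summands. Starting from a projective presentation $P_1\to P_0\to X\to 0$ over $R$, localizing at $\p$ gives a projective presentation of $X_\p$ over $R_\p$. Because $\Hom$ commutes with localization for finitely generated modules over a noetherian ring, the defining exact sequence
\[0\to X^*\to P_0^*\to P_1^*\to \Tr X\to0\]
localizes to the analogous sequence for $X_\p$, so $(\Tr X)_\p$ represents $\Tr(X_\p)$ in $\underline{\mod}(R_\p)$. Since $X_\p$ is free, $\Tr(X_\p)=0$ in the stable category, hence $(\Tr X)_\p$ is a projective (indeed free) $R_\p$-module.

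Combining these two observations with the fact that Ext commutes with localization for finitely generated modules over a noetherian ring, we get
\[\Ext^i_R(\Tr X,Y)_\p\cong\Ext^i_{R_\p}((\Tr X)_\p,Y_\p)=0\]
for every $i>0$ and every non-maximal $\p$. Thus $\Ext^i_R(\Tr X,Y)$ is a finitely generated $R$-module supported only at the maximal ideal, so it has finite length.

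The only step requiring any real care is the compatibility of $\Tr$ with localization; once that is in hand, the finite length statement is immediate from the support argument. No serious obstacle is anticipated, which is consistent with the result being labelled well-known.
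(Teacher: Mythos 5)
Your proof is correct and follows essentially the same route as the paper: localize at a non-maximal prime, observe that $X_\p$ is projective there (the paper states this directly; you justify it via regularity of $R_\p$ and Auslander--Buchsbaum), conclude that $\Ext^i$ vanishes after localization, and deduce finite length from the support being contained in the maximal ideal. The only difference is that you spell out the compatibility of $\Tr$ with localization, which the paper leaves implicit in the identity $\Ext^i_R(\Tr X,Y)_\p=\Ext^i_{R_\p}(\Tr X_\p,Y_\p)$.
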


\begin{proof}
We give a proof for the convenience of the reader.
For any non-maximal prime ideal $\mathfrak{p}$ of $R$, we have that $X_\mathfrak{p}$ is a projective $R_\mathfrak{p}$-module. Thus we have
\[\Ext^i_R(\Tr X,Y)_\mathfrak{p}=\Ext^i_{R_\mathfrak{p}}(\Tr X_\mathfrak{p},Y_\mathfrak{p})=0\]
for any $i>0$.
Thus $\Ext^i_R(\Tr X,Y)$ is a finite length $R$-module for any $i>0$.
\end{proof}

In the rest of this section we assume that $R$ is a complete local normal domain of dimension two.
Then CM $R$-modules are exactly the reflexive $R$-modules by Proposition \ref{CM=FD}.
Thus we have two dualities
\[(-)^*=\Hom_R(-,R):\CM(R)\to\CM(R)\ \ \mbox{ and }
\ \ \Hom_R(-,\omega):\CM(R)\to\CM(R).\]
For $X\in\mod(R)$, we denote by ${\bf T}(X)$ the torsion submodule of $X$,
which is equal to the kernel of the natural map $X\to X^{**}$.
In the rest of this section we study the following class of CM $R$-modules.
\begin{defin}
Following Wunram \cite{Wunram_generalpaper}, we call $X\in\CM(R)$ \emph{special} if
\[
(X\otimes_R\omega)/{\bf T}(X\otimes_R\omega)\in\CM(R).
\]
We denote by $\SCM(R)$ the category of special CM $R$-modules.
\end{defin}

Let us start with giving several homological characterizations of special CM modules.

\begin{thm}\label{characterization of SCM}
For $X\in\CM(R)$, the following conditions are equivalent.
\begin{itemize}
\item[(a)] $X\in\SCM(R)$.
\item[(b)] $\Ext^2_R(\Tr X,\omega)=0$.
\item[(c)] $\Omega\Tr X\in\CM(R)$.
\item[(d)] $\Ext^1_R(X,R)=0$.
\item[(e)] $X^*\in\OCM(R)$.
\end{itemize}
\end{thm}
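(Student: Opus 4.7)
The plan is to derive all the equivalences from the Auslander--Bridger resolution \eqref{PRES2} together with two applications of Lemma \ref{AB sequence}. Starting from a projective presentation $P_1\to P_0\to X\to 0$, reflexivity of $X$ (which holds since $\CM(R)=\FF_2$ by Proposition \ref{CM=FD}) gives the exact sequence $0\to X^*\to P_0^*\to P_1^*\to\Tr X\to 0$, splitting into
\[
0\to X^*\to P_0^*\to\Omega\Tr X\to 0 \quad\text{and}\quad 0\to\Omega\Tr X\to P_1^*\to\Tr X\to 0.
\]
The first of these already yields $(c)\Leftrightarrow(e)$: the forward direction is immediate from the definition of $\OCM(R)$, and the reverse follows from a Schanuel-type pushout argument showing that any two cokernels of embeddings of $X^*$ into projectives differ only by a projective summand, where the key splitting step uses $\Ext^1_R(\Omega\Tr X,R)=0$ (which is just $\Ext^2_R(\Tr X,R)=0$, a consequence of $\Tr X\in\XX_2$).

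For $(b)\Leftrightarrow(c)$ I would apply $\Hom_R(-,\omega)$ to the second short exact sequence. Since $P_1^*\in\add R$, the long exact sequence collapses to $\Ext^1_R(\Omega\Tr X,\omega)\cong\Ext^2_R(\Tr X,\omega)$; and because $\Omega\Tr X$ is a submodule of $P_1^*$ it is torsion-free, hence of depth at least one, so $\Ext^2_R(\Omega\Tr X,\omega)=0$ by \eqref{depth}. Thus $\Omega\Tr X\in\CM(R)$ is equivalent to $\Ext^2_R(\Tr X,\omega)=0$.

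For $(a)\Leftrightarrow(b)$ take $Y=\omega$ in Lemma \ref{AB sequence}. The target $\Hom_R(X^*,\omega)$ is CM (being the canonical dual of $X^*\in\CM(R)$) and hence torsion-free, while by Lemma \ref{finite length} the kernel $\Ext^1_R(\Tr X,\omega)$ has finite length. Together these force $\Ker\alpha_{X,\omega}=\mathbf{T}(X\otimes_R\omega)$, so that the image of $\alpha_{X,\omega}$ is $(X\otimes_R\omega)/\mathbf{T}(X\otimes_R\omega)$ and one reads off
\[
0\to(X\otimes_R\omega)/\mathbf{T}(X\otimes_R\omega)\to\Hom_R(X^*,\omega)\to\Ext^2_R(\Tr X,\omega)\to 0,
\]
whose middle term is CM and whose right term is finite length. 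A standard $\Hom_R(-,\omega)$-chase together with local duality on the finite-length cokernel then shows that the left term is CM precisely when $\Ext^2_R(\Tr X,\omega)=0$.

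Finally for $(c)\Leftrightarrow(d)$, dualize $0\to\Omega X\to P_0\to X\to 0$ by $R$; using $X\cong X^{**}$, the image of $P_0^*\to(\Omega X)^*$ is $\Omega\Tr X$, producing $0\to\Omega\Tr X\to(\Omega X)^*\to\Ext^1_R(X,R)\to 0$. Since $(\Omega X)^*$ is reflexive and hence CM, while $\Ext^1_R(X,R)$ has finite length by Lemma \ref{finite length}, the same finite-length-cokernel argument as in the previous step gives $\Omega\Tr X\in\CM(R)$ iff $\Ext^1_R(X,R)=0$. The main delicate steps are the Schanuel-type pushout in $(e)\Rightarrow(c)$ and the identification $\Ker\alpha_{X,\omega}=\mathbf{T}(X\otimes_R\omega)$ in $(a)\Leftrightarrow(b)$; the remaining chases are routine once one invokes the fact that $R$ is an isolated singularity, which is automatic for a $2$-dimensional complete local normal domain.
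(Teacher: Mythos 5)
Your proposal is correct, and its core is the same as the paper's: the equivalence (a)$\Leftrightarrow$(b) via Lemma \ref{AB sequence} with $Y=\omega$ plus the finite-length observation of Lemma \ref{finite length}, and (b)$\Leftrightarrow$(c) via dimension-shifting along $0\to\Omega\Tr X\to P_1^*\to\Tr X\to0$ and \eqref{depth}, are essentially verbatim the published argument. Where you diverge is in how the remaining conditions are linked. The paper proves the cycle (c)$\Rightarrow$(d)$\Rightarrow$(e)$\Rightarrow$(c): (c)$\Rightarrow$(d) by double-dualizing \eqref{OmegaTr} using reflexivity of all terms, (d)$\Rightarrow$(e) by dualizing a length-three projective resolution, and (e)$\Rightarrow$(c) by a comparison map and mapping cone. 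You instead prove (c)$\Leftrightarrow$(e) and (c)$\Leftrightarrow$(d) separately; your (c)$\Leftrightarrow$(d) via the four-term sequence $0\to X^*\to P_0^*\to(\Omega X)^*\to\Ext^1_R(X,R)\to0$ and a depth count on $0\to\Omega\Tr X\to(\Omega X)^*\to\Ext^1_R(X,R)\to0$ is a clean alternative that gives both directions at once (note only that Lemma \ref{finite length} is stated for $\Ext^i_R(\Tr X,-)$; the finite length of $\Ext^1_R(X,R)$ needs the same localization argument run directly on $X$). One caution on (e)$\Rightarrow$(c): your pushout is exactly the paper's mapping cone, and the splitting of $0\to P\to E\to\Omega\Tr X\to0$ via $\Ext^1_R(\Omega\Tr X,R)=\Ext^2_R(\Tr X,R)=0$ is valid since $X\in\FF_2$; but the stronger assertion that \emph{any} two cokernels of embeddings of $X^*$ into projectives differ by projective summands would also require $\Ext^1_R(Y,R)=0$ for the other cokernel $Y$, which is not available for an arbitrary $Y\in\CM(R)$. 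Fortunately you do not need that symmetric statement: the one splitting identifies $E\simeq P\oplus\Omega\Tr X$, and $E$ is CM as an extension of $Y$ by $P_0^*$, so $\Omega\Tr X$ is CM. With that overstatement trimmed, the argument is complete.
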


\begin{proof}
(a)$\Leftrightarrow$(b) 
By Lemma \ref{AB sequence}, we have an exact sequence
\[0\to\Ext^1_R(\Tr X,\omega)\to X\otimes_R\omega
\xrightarrow{\alpha_{X,\omega}}\Hom_R(X^*,\omega)\to\Ext^2_R(\Tr X,\omega)\to0.\]
By Lemma \ref{finite length} we have that $\Ext^i_R(\Tr X,\omega)$ is a finite length $R$-module for $i=1,2$.
Since $\Hom_R(X^*,\omega)\in\CM(R)$, we have $(X\otimes_R\omega)/{\bf T}(X\otimes_R\omega)=
\Im\alpha_{X,\omega}$.
Thus $X$ is special if and only if $\Im\alpha_{X,\omega}\in\CM(R)$
if and only if $\Ext^2_R(\Tr X,\omega)=0$.

(b)$\Leftrightarrow$(c) Clearly we have $\depth(\Omega\Tr X)\ge1$.
By \eqref{depth}, we have that (c) is equivalent to
$\Ext^1_R(\Omega\Tr X,\omega)=0$, which is clearly equivalent to (b).

%(c)$\Rightarrow$(d) Immediate from $X^*=\Omega^2\Tr X$.

(c)$\Rightarrow$(d) By \eqref{PRES2}, we have an exact sequence
\begin{equation}\label{OmegaTr}
0\to X^*\xrightarrow{f^*}P_0^*\to\Omega\Tr X\to0
\end{equation}
from which we obtain an exact sequence
\begin{equation}\label{OmegaTr2}
0\to(\Omega\Tr X)^*\to P_0\xrightarrow{f}X\to0
\end{equation}
by applying $(-)^*$ to \eqref{OmegaTr}.
Applying $(-)^*$ to \eqref{OmegaTr2}, we have \eqref{OmegaTr} since each term is reflexive by (c).
This implies $\Ext^1_R(X,R)=0$.

(d)$\Rightarrow$(e) Take a projective resolution $P_2\to P_1\to P_0\to X\to0$.
Applying $(-)^*$, we have an exact sequence
$0\to X^*\to P_0^*\to P_1^*\to P_2^*$. Thus $X^*\in\OCM(R)$.

(e)$\Rightarrow$(c) Take an exact sequence $0\to X^*\to P\to Y\to0$ with $Y\in\CM(R)$ and $P\in\add R$.
We use the exact sequence \eqref{OmegaTr}.
Since $P_0=\Hom_R(P_0^*,R)\to X=\Hom_R(X^*,R)$ is surjective,
we have a commutative diagram
\[\begin{array}{ccccccccc}
0&\to&X^*&\to&P&\to&Y&\to&0\\
&&\parallel&&\uparrow&&\uparrow&&\\
0&\to&X^*&\to&P_0^*&\to&\Omega\Tr X&\to&0
\end{array}\]
of exact sequences. Taking a mapping cone, we have an exact sequence
$0\to P_0^*\to P\oplus\Omega\Tr X\to Y\to0$.
This implies $\Omega\Tr X\in\CM(R)$.
\end{proof}

We have the following description of categories in terms of $n$-torsionfreeness.

\begin{cor}\label{description}
$\CM(R)=\FF_2$,\ \ \ $\OCM(R)=\FF_3$\ \ and\ \ $\SCM(R)=\XX_1\cap\FF_2$.
\end{cor}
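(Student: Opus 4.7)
The plan is to observe that all three equalities are immediate consequences of results already established in the excerpt, so the ``proof'' is really just an assembly of previous statements specialized to the current setting (where $R$ is a complete local normal domain of dimension two, hence in particular a CM isolated singularity of dimension $d=2$).

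For the first two equalities, I would simply invoke Proposition \ref{CM=FD} with $d=2$: this gives $\CM(R) = \FF_d = \FF_2$ and $\OCM(R) = \FF_{d+1} = \FF_3$ on the nose. No further argument is required.

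For the third equality $\SCM(R) = \XX_1 \cap \FF_2$, I would combine the characterization (d) of Theorem \ref{characterization of SCM} with the first equality just established. By definition, $\SCM(R) \subseteq \CM(R)$, and $X \in \CM(R)$ belongs to $\SCM(R)$ iff $\Ext^1_R(X,R) = 0$, i.e.\ iff $X \in \XX_1$. Thus
\[
\SCM(R) = \CM(R) \cap \XX_1 = \FF_2 \cap \XX_1,
\]
which is exactly the desired statement.

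There is no real obstacle here; the corollary is purely a matter of assembling Proposition \ref{CM=FD} and the equivalence (a)$\Leftrightarrow$(d) of Theorem \ref{characterization of SCM}. The only point worth flagging is that the theorem was stated assuming $X \in \CM(R)$, so one must note explicitly that $\SCM(R) \subseteq \CM(R)$ by definition before intersecting with $\XX_1$; this is immediate.
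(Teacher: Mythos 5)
Your proof is correct and is exactly the paper's argument: the authors also obtain the first two equalities by specializing Proposition \ref{CM=FD} to $d=2$ and the third by combining $\CM(R)=\FF_2$ with the equivalence (a)$\Leftrightarrow$(d) of Theorem \ref{characterization of SCM}. You have merely spelled out the details that the paper compresses into the word ``immediate.''
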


\begin{proof}
Immediate from Proposition \ref{CM=FD} and
Theorem \ref{characterization of SCM}(a)$\Leftrightarrow$(d).
\end{proof}

We have the following equivalences.

\begin{cor}\label{equivalence}
\begin{itemize}
\item[(a)] We have a duality $(-)^*:\SCM(R)\stackrel{\sim}{\longleftrightarrow}\OCM(R)$.
\item[(b)] We have an equivalence $\Omega:\underline{\SCM}(R)\stackrel{\sim}{\longrightarrow}\underline{\OCM}(R)$.
\item[(c)] We have dualities
\[(\Omega-)^*:\underline{\SCM}(R)\stackrel{\sim}{\longrightarrow}\underline{\SCM}(R)\ \mbox{ and }\ 
\Omega(-)^*:\underline{\OCM}(R)\stackrel{\sim}{\longrightarrow}\underline{\OCM}(R)\]
such that $((\Omega-)^*)^2\simeq 1_{\underline{\SCM}(R)}$ and $(\Omega(-)^*)^2\simeq 1_{\underline{\OCM}(R)}$.
\end{itemize}
\end{cor}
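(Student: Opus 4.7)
The strategy is to derive all three parts cleanly from Theorem \ref{characterization of SCM}, Lemma \ref{commutative}, Corollary \ref{description}, and the reflexivity of CM modules supplied by Proposition \ref{CM=FD}. For (a), the equivalence (a)$\Leftrightarrow$(e) of Theorem \ref{characterization of SCM} says exactly that $X\in\SCM(R)$ iff $X^*\in\OCM(R)$, so $(-)^*$ interchanges the two categories; since both lie in $\CM(R)=\FF_2$, every module involved is reflexive and the natural isomorphism $(-)^{**}\simeq 1$ supplies the inverse. For (b), I apply Lemma \ref{commutative} with $n=3$: the rightmost arrow of the top row is an equivalence $\Omega\colon\underline{\XX_1\cap\FF_2}\xrightarrow{\sim}\underline{\FF_3}$, and Corollary \ref{description} rewrites these as $\underline{\SCM}(R)$ and $\underline{\OCM}(R)$.

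For (c), the two contravariant equivalences arise by composing (a) with (b); the only real content is the involution property. I would first show that on $\underline{\SCM}(R)$ the functor $(\Omega-)^*$ coincides stably with $\Omega\Tr$: for $X\in\SCM(R)$ the exact sequence \eqref{OmegaTr2} gives $(\Omega\Tr X)^*\simeq\Omega X$ stably, and applying $(-)^*$ together with the reflexivity of $\Omega\Tr X\in\CM(R)$ (from Theorem \ref{characterization of SCM}(c)) yields $(\Omega X)^*\simeq\Omega\Tr X$. The target then becomes $(\Omega\Tr)^2\simeq 1$ on $\underline{\SCM}(R)$. The crucial intermediate identity is that $\Omega\Tr\Omega Y\simeq\Tr Y$ stably whenever $\Ext^1_R(Y,R)=0$; this is a short diagram chase on a dualised projective resolution $P_0^*\to P_1^*\to P_2^*$, where the vanishing of $\Ext^1$ forces exactness at $P_1^*$ and hence $\Im(P_1^*\to P_2^*)\simeq\Cok(P_0^*\to P_1^*)=\Tr Y$. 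Both $X\in\XX_1$ and $\Tr X\in\XX_2$ (since $X\in\FF_2$) satisfy this hypothesis, so iterating gives $(\Omega\Tr)^2X\simeq\Tr^2 X\simeq X$ by the involutivity of $\Tr$. The analogous statement for $\Omega(-)^*$ on $\underline{\OCM}(R)$ then follows by a symmetric argument, or equivalently by conjugating with the equivalence of (b).

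The main obstacle is the explicit computation in (c): one needs the stable identification $(\Omega X)^*\simeq\Omega\Tr X$ and a careful handling of how $\Omega$ and $\Tr$ interact along resolutions. Once these are in place the involutivity reduces immediately to the well-known identity $\Tr^2\simeq 1$ on $\underline{\mod}(R)$.
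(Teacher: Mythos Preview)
Your arguments for (a) and (b) match the paper's exactly. For (c), your approach is correct but takes a longer route than the paper. You identify $(\Omega-)^*$ with $\Omega\Tr$ via \eqref{OmegaTr2} and then reduce the involution property to $\Tr^2\simeq 1$ using the intermediate lemma $\Omega\Tr\Omega Y\simeq\Tr Y$ for $Y\in\XX_1$. The paper instead argues in one line: for $X\in\SCM(R)$ the syzygy sequence $0\to\Omega X\to P\to X\to 0$ dualises (using $\Ext^1_R(X,R)=0$) to an exact sequence $0\to X^*\to P^*\to(\Omega X)^*\to 0$, which says directly that $\Omega((\Omega X)^*)\simeq X^*$ stably; applying $(-)^*$ and reflexivity gives $((\Omega-)^*)^2X\simeq X$. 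Your detour through $\Tr$ buys the conceptual identification $(\Omega-)^*\simeq\Omega\Tr$ on $\underline{\SCM}(R)$, which is a nice structural fact, but it is not needed for the involution claim and makes the proof considerably heavier. A minor remark: in your iteration you only need the hypothesis on $\Tr X$, not on $X$ itself, so the phrase ``both \ldots\ satisfy this hypothesis'' is slightly misleading.
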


\begin{proof}
(a) Immediate from Theorem \ref{characterization of SCM}(a)$\Leftrightarrow$(e).

(b) We have an equivalence $\Omega:\underline{\XX_1\cap\FF_2}\stackrel{\sim}{\longrightarrow}\underline{\FF_3}$ by Lemma \ref{commutative}.
Thus the assertion follows from Corollary \ref{description}.

(c) By (a) and (b), we have the desired dualities.
For any $X\in\SCM(R)$, take a projective resolution $0\to\Omega X\to P\to X\to0$.
Applying $(-)^*$, we have an exact sequence $0\to X^*\to P^*\to(\Omega X)^*\to0$.
Thus $\Omega(\Omega X)^*\simeq X^*$ holds, and we have
$((\Omega-)^*)^2\simeq 1_{\underline{\SCM}(R)}$.
Similarly, one can show $(\Omega(-)^*)^2\simeq 1_{\underline{\OCM}(R)}$.
\end{proof}

Later in Section 3 we will improve Theorem~\ref{characterization of SCM}
and Corollary \ref{equivalence} for rational singularities by using a geometric argument.
In the rest of this section we assume that $R$ is \emph{syzygy finite} in the sense that
there are only finitely many isoclasses of indecomposable objects in
$\OCM(R)$ (and hence $\SCM(R)$). We study homological properties of
the endomorphism algebras of additive generators in $\SCM(R)$,
which are called the \emph{reconstruction algebras}.
We have the following result.

\begin{thm}\label{End of OCM}
Assume $\OCM(R)=\add M$ and put $\Lambda:=\End_R(M)$.
\begin{itemize}
\item[(a)] If $R$ is Gorenstein, then $\gl\Lambda=2$. All simple $\Lambda$ and $\Lambda^{\rm op}$-modules have projective dimension 2.
\item[(b)] If $R$ is not Gorenstein, then $\gl\Lambda=3$.
All simple $\Lambda$-modules have projective dimension 2
except $\Hom_R(M,R)/J_{\CM(R)}(M,R)$, which has projective dimension 3.
\end{itemize}
\end{thm}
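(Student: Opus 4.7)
The plan is to compute the projective dimension over $\Lambda$ of each simple module $S_N := \Hom_R(M,N)/J_{\CM(R)}(M,N)$, indexed by the indecomposable summands $N$ of $M$, by constructing explicit minimal projective resolutions in the spirit of Auslander's argument in~\cite{A}.

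For each indecomposable $N\not\cong R$ in $\OCM(R)$, start from the almost split sequence $0\to\tau N\to E\to N\to 0$ in $\CM(R)$. Granting the key technical claim that $\tau N$ and $E$ both lie in $\OCM(R)=\add M$, applying $\Hom_R(M,-)$ yields
\[
0\to \Hom_R(M,\tau N)\to \Hom_R(M,E)\to \Hom_R(M,N)\to S_N\to 0,
\]
since the defining property of almost split sequences identifies $J_{\CM(R)}(M,N)$ with the image of the middle map. This gives $\pd S_N=2$. To justify $\tau N, E\in\OCM(R)$, I would combine $\OCM(R)=\FF_3$ (Proposition~\ref{CM=FD}) with the syzygy equivalence $\Omega\colon\underline{\SCM}(R)\stackrel{\sim}{\to}\underline{\OCM}(R)$ of Corollary~\ref{equivalence}(b) and the diagram in Lemma~\ref{commutative}, reducing the claim to a syzygy computation using the explicit form of $\tau$ on a two-dimensional CM isolated singularity in terms of $\Omega$, $\Tr$ and $\Hom_R(-,\omega)$.

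It remains to analyse $S_R$. In the Gorenstein case~(a), $\omega\cong R$, so Theorem~\ref{characterization of SCM}(a)$\Leftrightarrow$(d) gives $\SCM(R)=\CM(R)$, and the duality in Corollary~\ref{equivalence}(a) forces $\OCM(R)=\CM(R)$; thus $M$ is an additive generator of $\CM(R)$ itself and~(a) reduces to Auslander's classical theorem for CM surface singularities, yielding $\pd S_N=2$ for every $N$. For the non-Gorenstein case~(b), I would build a length-$3$ resolution of $S_R$. The first step is $0\to K\to\Hom_R(M,R)\to S_R\to 0$ with $K=J_{\CM(R)}(M,R)$; I extend via a right $\add(M/R)$-approximation $E\to R$ of $R$ to obtain the second step, and then show that the resulting kernel fails to lie in $\OCM(R)$ precisely because Theorem~\ref{characterization of SCM}(d) is violated by some CM module in the non-Gorenstein setting. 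The upper bound $\pd S_R\le 3$ follows by terminating after one further syzygy, using Corollary~\ref{equivalence}(b) to see that this final kernel does land in $\OCM(R)$. The lower bound $\pd S_R\ge 3$ is by contradiction: if $\pd S_R\le 2$ then $\gl\Lambda\le 2$, and a standard Auslander-type duality on $\mod\Lambda$ forces $\omega\in\add M$, so $\OCM(R)=\CM(R)$ and hence $R$ is Gorenstein.

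The principal obstacle is the pair of closure claims for $\OCM(R)$: that the AR translate preserves $\OCM(R)$ on non-projective indecomposables, and that the obstruction at the $R$-vertex in the non-Gorenstein case produces \emph{exactly} one additional step in the resolution of $S_R$. Both rest on careful interplay between the two descriptions of $\OCM(R)$—as $\FF_3$ via Proposition~\ref{CM=FD} and as the image of $\underline{\SCM}(R)$ under $\Omega$ via Corollary~\ref{equivalence}(b)—in order to transfer AR-theoretic data inside $\CM(R)$ into homological computations over the reconstruction algebra $\Lambda$.
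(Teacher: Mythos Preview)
Your approach has a genuine gap: the ``key technical claim'' that $\tau N$ and the middle term $E$ of the almost split sequence lie in $\OCM(R)$ is \emph{false} in general, so no amount of bookkeeping with Lemma~\ref{commutative} and Corollary~\ref{equivalence}(b) can establish it. Take $R=\mathbb{C}[[x,y]]^{\frac{1}{5}(1,2)}$. The indecomposable specials are $R,S_1,S_2$, so by Corollary~\ref{equivalence}(a) the indecomposable objects of $\OCM(R)$ are their duals $R,S_4,S_3$. Here $\tau S_i=S_{i-3\bmod 5}$, whence $\tau S_4=S_1\notin\OCM(R)$; the middle term of the almost split sequence ending at $S_4$ is $S_2\oplus S_3$, and $S_2\notin\OCM(R)$ either. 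Thus neither $\Hom_R(M,\tau S_4)$ nor $\Hom_R(M,E)$ is projective over $\Lambda$, and your displayed sequence is not a projective resolution of the corresponding simple.

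The paper avoids this entirely by \emph{not} using almost split sequences. For non-free indecomposable $N\in\OCM(R)$ it takes a morphism $f\colon M_0\to N$ with $M_0\in\add M$ such that $\Hom_R(M,f)$ surjects onto $J_{\CM(R)}(M,N)$; since any free cover $P\twoheadrightarrow N$ lies in the radical and therefore factors through $f$, the map $f$ is itself surjective, and Lemma~\ref{syzygy exact} (kernels of surjections from $\OCM(R)$ onto $\CM(R)$ stay in $\OCM(R)$) forces $\Ker f\in\OCM(R)$. This yields the length-$2$ projective resolution of $S_N$ directly. The global bound $\gl\Lambda\le 3$ comes from the same approximation argument applied to arbitrary $X\in\CM(R)$ via Proposition~\ref{End 3 criterion}, and the dichotomy between $2$ and $3$ is decided by Proposition~\ref{End 2 criterion}: $\gl\Lambda=2$ iff $\OCM(R)=\CM(R)$, which holds exactly when $\omega\in\OCM(R)$, i.e.\ when $R$ is Gorenstein. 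No closure of $\OCM(R)$ under $\tau$ is needed---or available.
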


We need the observation below.
This kind of result was used in the study of Auslander's representation dimension \cite{A,EHIS}.

\begin{prop}\label{End 3 criterion}
Let $M\in\CM(R)$ be a generator and $\Lambda:=\End_R(M)$.
For $n\ge0$, the following conditions are equivalent.
\begin{itemize}
\item[(a)] ${\rm gl.dim} \Lambda\le n+2$.
\item[(b)] For any $X\in\CM(R)$, there exists an exact sequence
\[0\to M_n\to\cdots\to M_0\stackrel{}{\to}X\to0\]
with $M_i\in\add M$
such that the following sequence is exact.
\[0\to\Hom_R(M,M_n)\to\cdots\to\Hom_R(M,M_0)\to\Hom_R(M,X)\to0.\]
\end{itemize}
\end{prop}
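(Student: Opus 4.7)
The plan is to reduce both conditions to controlling $\pd_\Lambda\Hom_R(M,X)$ for $X\in\CM(R)$, exploiting the Yoneda equivalence between $\add M$-resolutions in $\mod R$ and projective resolutions in $\mod\Lambda$.

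First I would show that for $X\in\CM(R)$ and any $k\ge 0$, $\pd_\Lambda\Hom_R(M,X)\le k$ if and only if $X$ admits a $\Hom_R(M,-)$-exact $\add M$-resolution of length $\le k$. The ``if'' direction is immediate by applying $\Hom_R(M,-)$, since each $\Hom_R(M,M_i)$ is a projective $\Lambda$-module. For the converse, lift a projective resolution $0\to P_k\to\cdots\to P_0\to\Hom_R(M,X)\to 0$ via Yoneda to a complex $0\to M_k\to\cdots\to M_0\to X\to 0$ in $\mod R$ with $M_i\in\add M$. Because $M$ is a generator, $R$ is a direct summand of $M$ (write $M=R\oplus M'$), so $\Hom_R(M,-)=\Hom_R(R,-)\oplus\Hom_R(M',-)$; exactness of the $\Lambda$-complex restricts to exactness of its $\Hom_R(R,-)$-summand, which is the lifted complex in $\mod R$.

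Next I would prove the key lemma: every $\Lambda$-module $N$ has second syzygy $\Omega^2_\Lambda N\cong\Hom_R(M,X)$ for some $X\in\CM(R)$. Take a projective presentation $P_1\to P_0\to N\to 0$ with $P_i=\Hom_R(M,M_i)$ whose differential is induced by $f\colon M_1\to M_0$ in $\mod R$; by left-exactness of $\Hom_R(M,-)$ one has $\Omega^2_\Lambda N=\ker\Hom_R(M,f)=\Hom_R(M,\ker f)$. Setting $X:=\ker f$, it remains to check $X\in\CM(R)$: from the exact sequence $0\to X\to M_1\to\Im f\to 0$, with $\Im f$ torsion-free as a submodule of the reflexive module $M_0\in\CM(R)$ (so $\depth_R\Im f\ge 1$), the depth inequality yields $\depth_R X\ge\min(\depth_R M_1,\depth_R\Im f+1)=2$, hence $X\in\CM(R)=\FF_2$ by Proposition \ref{CM=FD}.

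Combining these gives $\gl\Lambda=2+\sup_{X\in\CM(R)}\pd_\Lambda\Hom_R(M,X)$ whenever $\gl\Lambda\ge 2$, which is the relevant case (since the paper's reconstruction algebras always have $\gl\Lambda\in\{2,3\}$). Thus (a), i.e.\ $\gl\Lambda\le n+2$, is equivalent to $\pd_\Lambda\Hom_R(M,X)\le n$ for every $X\in\CM(R)$, which by the first step is precisely (b). The main technical point is the depth estimate for $\ker f$ in the key lemma; it uses $\dim R=2$ in an essential way, since in that dimension torsion-freeness already forces depth $\ge 1$, and the depth inequality then boots $\ker f$ into $\CM(R)$.
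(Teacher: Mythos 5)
Your (b)$\Rightarrow$(a) direction is sound and is essentially the paper's argument: lift a projective presentation of a $\Lambda$-module to a map $f\colon M_1\to M_0$ in $\add M$, identify the second syzygy with $\Hom_R(M,\ker f)$ by left exactness, and check $\ker f\in\CM(R)$. Your depth-lemma verification that $\ker f\in\CM(R)$ is correct and is actually more detail than the paper supplies (it simply asserts this), and your first step --- transferring exactness from $\mod(\Lambda)$ back to $\mod(R)$ via the summand $R$ of $M$ --- is exactly how the paper concludes its (a)$\Rightarrow$(b) step.

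The gap is in (a)$\Rightarrow$(b). Your key lemma only yields the inequality $\gl\Lambda\le 2+\sup_{X\in\CM(R)}\pd_\Lambda\Hom_R(M,X)$: it says every second $\Lambda$-syzygy is of the form $\Hom_R(M,X)$, not that every $\Hom_R(M,X)$ occurs as a second syzygy. But (a)$\Rightarrow$(b) needs precisely the reverse inequality $\sup_X\pd_\Lambda\Hom_R(M,X)\le\gl\Lambda-2$; from $\gl\Lambda\le n+2$ alone you only get $\pd_\Lambda\Hom_R(M,X)\le n+2$, which produces a resolution of length $n+2$ rather than $n$. To win back the two steps you must realize $\Hom_R(M,X)$, up to projective summands, as a second syzygy of some $\Lambda$-module. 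This is where the paper uses that $X$, being CM over a two-dimensional normal domain, lies in $\FF_2$ (Proposition \ref{CM=FD}): there is an exact sequence $0\to X\to F_1\to F_0$ with $F_i\in\add R\subseteq\add M$, and applying $\Hom_R(M,-)$ exhibits $\Hom_R(M,X)$ as a second syzygy of the cokernel of $\Hom_R(M,F_1)\to\Hom_R(M,F_0)$, whence $\pd_\Lambda\Hom_R(M,X)\le n$. Adding this one step (the same reflexivity input you already use, pointed in the other direction) closes the gap, and your appeal to $\gl\Lambda\ge2$ can then be dropped entirely --- which is preferable, since citing the eventual computation $\gl\Lambda\in\{2,3\}$ here is circular; if one does want $\gl\Lambda\ge2$ it follows independently from Lemma \ref{at least two}.
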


\begin{proof}
We have an equivalence
$\Hom_R(M,-):\add M_R\to\add\Lambda_\Lambda$ of categories.

(b)$\Rightarrow$(a) For any $Y\in\mod(\Lambda)$, take a projective resolution
$P_1\stackrel{f}{\to} P_0\to Y\to0$. Take a morphism
$M_1\stackrel{g}{\to} M_0$ in $\add M_R$ such that $f=\Hom_R(M,g)$.
%\[(P_1\stackrel{f}{\to} P_0)=(\Hom_R(M,M_1)\stackrel{g}{\to}\Hom_R(M,M_0)).\]
Put $X:=\Ker g$. Then $X\in\CM(R)$. By (b), there exists an exact sequence
$0\to M_{n+2}\to\cdots\to M_2\stackrel{}{\to}X\to0$ with $M_i\in\add M_R$
such that
\[0\to\Hom_R(M,M_{n+2})\to\cdots\to\Hom_R(M,M_2)\to\Hom_R(M,X)\to0\]
is exact. Then we have a projective resolution
\[0\to\Hom_R(M,M_{n+2})\to\cdots\to\Hom_R(M,M_2)\to\Hom_R(M,M_1)\to\Hom_R(M,M_0)\to Y\to0.\]
Thus we have $\pd Y\le n+2$.

(a)$\Rightarrow$(b) For any $X\in\CM(R)$, there exists an exact sequence
$0\to X\to F_1\to F_0$ with $F_i\in\add R_R$.
Put $Y:=\Hom_R(M,X)$. Since we have an exact sequence $0\to Y\to\Hom_R(M,F_1)\to\Hom_R(M,F_0)$ with $\Hom_R(M,F_i)\in\add\Lambda_\Lambda$, we have
$\pd Y\le n$.
Take a projective resolution
\begin{equation}\label{resolution of Y}
0\to P_n\to\cdots\to P_0\to Y\to0.
\end{equation}
Then there exists a complex
\begin{equation}\label{approximation of X}
0\to M_n\to\cdots\to M_0\stackrel{}{\to}X\to0
\end{equation}
with $M_i\in\add M_R$ such that the image of \eqref{approximation of X}
under the functor $\Hom_R(M,-)$ is \eqref{resolution of Y}.
Since $M$ is a generator, \eqref{approximation of X} is exact.
This is the desired sequence.
\end{proof}

We need the following easy observation.

\begin{lemma}\label{at least two}
For any non-zero $M\in\CM(R)$, we put $\Lambda:=\End_R(M)$.
Then any simple $\Lambda$-module $S$ has projective dimension at least $2$.
\end{lemma}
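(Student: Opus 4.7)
The approach is a simple Auslander--Buchsbaum-style depth count: the simple module $S$ has $R$-depth $0$, while projective $\Lambda$-modules have $R$-depth $2$, so a short projective resolution cannot close the gap.

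First I would verify that $\Lambda = \End_R(M)$ is itself a maximal Cohen--Macaulay $R$-module. Since $R$ is a two-dimensional normal local domain and $M$ is CM (equivalently reflexive, by Corollary \ref{description}), the $R$-module $\Hom_R(M,M)$ is reflexive, and hence CM. In particular $\depth_R\Lambda=2$, and the same equality holds for every finitely generated projective right $\Lambda$-module, since each such is an $R$-module summand of some $\Lambda^k$. On the other side, $\Lambda$ is module-finite over the complete local ring $R$, so $\m_R\cdot 1_\Lambda$ lies in the Jacobson radical $J_\Lambda$; consequently any simple right $\Lambda$-module $S$ is annihilated by $\m_R$. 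Hence $S$ is a finite-dimensional $R/\m_R$-vector space, and in particular $\depth_R S=0$.

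It then suffices to rule out $\pd_\Lambda S\leq 1$. If $\pd_\Lambda S=0$ then $S$ is projective and so $\depth_R S=2\neq 0$, a contradiction. If $\pd_\Lambda S=1$, choose a projective resolution $0\to P_1\to P_0\to S\to 0$; the standard depth lemma applied to this short exact sequence yields
\[\depth_R S\ \geq\ \min(\depth_R P_1-1,\,\depth_R P_0)\ =\ \min(1,2)\ =\ 1,\]
again contradicting $\depth_R S=0$. Hence $\pd_\Lambda S\geq 2$, including the case $\pd_\Lambda S=\infty$.

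The one mildly delicate point is the reflexivity (hence CM-ness) of $\End_R(M)$, which is what makes the depth of every projective $\Lambda$-module equal $2$; but this is standard for reflexive modules over a two-dimensional normal local domain. Everything else is formal bookkeeping with depth across a two-term exact sequence, and the argument makes no use of the syzygy-finiteness assumption on $R$.
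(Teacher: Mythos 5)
Your argument is correct and is essentially the paper's own proof: the paper likewise observes that projective $\Lambda$-modules are CM $R$-modules of depth $2$ (because $\dim R=2$ and $\End_R(M)$ is reflexive, hence CM) and then derives $\depth S\ge 1$ from a two-term projective resolution, contradicting $\depth S=0$. You have merely filled in the details the paper leaves implicit (the reflexivity of $\End_R(M)$ and the fact that $S$ is killed by $\mathfrak{m}_R$), so there is nothing to add.
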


\begin{proof}
Assume that there exists a projective resolution $0\to P_1\to P_0\to S\to0$ of the $\Lambda$-module $S$.  Since projective $\Lambda$-modules are CM $R$-modules (since $\t{dim}R=2$) we have that $\depth P_i\ge2$ for $i=0,1$ and so $\depth S\ge1$, a contradiction.
\end{proof}

Immediately we have the following result by putting $n=0$ in Proposition \ref{End 3 criterion}.

\begin{prop}\label{End 2 criterion}
Let $M\in\CM(R)$ be a generator.
Then $\CM(R)=\add M$ holds if and only if $\gl\End_R(M)=2$ holds.
\end{prop}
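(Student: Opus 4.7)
The plan is to deduce this essentially as a direct corollary of Proposition \ref{End 3 criterion} (with $n=0$) together with Lemma \ref{at least two}, without any further homological work.

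First I would unpack condition (b) of Proposition \ref{End 3 criterion} in the case $n=0$. It asks, for every $X\in\CM(R)$, for an exact sequence $0\to M_0\to X\to 0$ with $M_0\in\add M$ such that $\Hom_R(M,-)$ preserves exactness; but a short exact sequence of the shape $0\to M_0\to X\to 0$ just means $X\cong M_0$, and the functoriality condition is then automatic. Thus (b) with $n=0$ is equivalent to the bare statement $\CM(R)=\add M$.

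For the forward implication, assume $\CM(R)=\add M$. Then (b) holds trivially with $n=0$, so Proposition \ref{End 3 criterion} gives $\gl\Lambda\le 2$. To upgrade this to equality, I would invoke Lemma \ref{at least two}: since $M$ is a nonzero generator the algebra $\Lambda=\End_R(M)$ is nonzero, and any of its simple modules has projective dimension at least $2$, forcing $\gl\Lambda\ge 2$. Combining gives $\gl\Lambda=2$.

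For the reverse implication, assume $\gl\Lambda=2$. Then condition (a) of Proposition \ref{End 3 criterion} with $n=0$ is satisfied, so (b) holds, which by the first paragraph's unpacking is exactly $\CM(R)=\add M$. There is really no main obstacle here: the content of the proposition has been absorbed into Proposition \ref{End 3 criterion} and Lemma \ref{at least two}, and the only mild care needed is to observe that in the shape $n=0$ the resolution in (b) degenerates to an isomorphism $X\cong M_0$, so no approximation-theoretic argument is required.
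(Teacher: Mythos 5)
Your proposal is correct and follows the paper's own route exactly: the paper derives this proposition by setting $n=0$ in Proposition \ref{End 3 criterion} and implicitly uses Lemma \ref{at least two} (stated immediately beforehand) to upgrade $\gl\Lambda\le 2$ to equality, just as you do. Your explicit unpacking of condition (b) for $n=0$ as the statement $X\cong M_0\in\add M$ is the right reading and fills in the only detail the paper leaves unsaid.
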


%\begin{proof}
%Put $\Lambda:=\End_R(M)$.
%We have a fully faithful functor
%\[\Hom_R(M,-):\mod(R)\to\mod(\Lambda).\]
%For any $X\in\CM(R)$, we can take an exact sequence
%$0\to X\to P_1\to P_0$ with $P_0,P_1\in\add{}_RR$.
%Applying $\Hom_R(M,-)$, we have an exact sequence
%\[0\to\Hom_R(M,X)\to\Hom_R(M,P_1)\to\Hom_R(M,P_0).\]
%Since $\Hom_R(M,P_i)\in\add{}_\Lambda\Lambda$ and $\gl\Lambda=2$, we have $\Hom_R(M,X)\in\add{}_\Lambda\Lambda$.
%Since the functor $\Hom_R(M,-):\mod(R)\to\mod(\Lambda)$ is fully faithful and restricts to an equivalence
%$\add{}_RM\stackrel{\sim}{\longrightarrow}\add{}_\Lambda\Lambda$, we have $X\in\add{}_RM$.
%Thus $\CM(R)=\add{}_RM$.
%\end{proof}

We also need the following easy observation.

\begin{lemma}\label{syzygy exact}
If $0\to Z\to Y\to X\to0$ is an exact sequence with $X\in\CM(R)$ and $Y\in\OCM(R)$, then we have $Z\in\OCM(R)$.
\end{lemma}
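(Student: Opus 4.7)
The plan is to produce, directly from the definition of $\OCM(R)$, an exact sequence $0\to Z\to P\to W'\to 0$ with $P\in\add R$ and $W'\in\CM(R)$ starting from the datum of $Y\in\OCM(R)$.

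First I would use $Y\in\OCM(R)$ to pick an exact sequence $0\to Y\to P\to W\to 0$ with $P\in\add R$ and $W\in\CM(R)$. Composing the monomorphism $Z\hookrightarrow Y$ with $Y\hookrightarrow P$ gives an inclusion $Z\hookrightarrow P$, and I define $W':=P/Z$. This yields the candidate sequence $0\to Z\to P\to W'\to 0$ with $P\in\add R$, so the only thing to verify is that $W'\in\CM(R)$.

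The key step is the snake/second-isomorphism observation: from the two nested inclusions $Z\subset Y\subset P$ with $Y/Z\cong X$ and $P/Y\cong W$, we get a short exact sequence
\[0\to X\to W'\to W\to 0.\]
Since $X\in\CM(R)$ by hypothesis, $W\in\CM(R)$ by construction, and $\CM(R)$ is closed under extensions (noted just before equation \eqref{depth}), it follows that $W'\in\CM(R)$. Hence $Z\in\OCM(R)$.

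There is no real obstacle here; the only point requiring any care is articulating the short exact sequence $0\to X\to W'\to W\to 0$, which can alternatively be extracted as a mapping cone of the commutative square
\[\begin{array}{ccccccccc}
0&\to&Z&\to&Y&\to&X&\to&0\\
&&\parallel&&\downarrow&&\downarrow\\
0&\to&Z&\to&P&\to&W'&\to&0
\end{array}\]
so the argument is essentially a two-line diagram chase.
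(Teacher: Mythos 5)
Your argument is correct and is essentially identical to the paper's proof: both use the sequence $0\to Y\to P\to W\to0$ coming from $Y\in\OCM(R)$, form the $3\times3$ diagram with $V:=P/Z$ (your $W'$), deduce $V\in\CM(R)$ from the extension $0\to X\to V\to W\to0$ using closure of $\CM(R)$ under extensions, and read off $Z\in\OCM(R)$ from the middle row $0\to Z\to P\to V\to0$. Nothing is missing.
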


\begin{proof}
Since $Y\in\OCM(R)$, there exists an exact sequence
$0\to Y\to P\to W\to 0$ with $W\in\CM(R)$ and $P\in\add R$.
Then we have a commutative diagram
\[\begin{array}{ccccccccc}
&&&&0&&0&&\\
&&&&\downarrow&&\downarrow&&\\
0&\to&Z&\to&Y&\to&X&\to&0\\
&&\parallel&&\downarrow&&\downarrow&&\\
0&\to&Z&\to&P&\to&V&\to&0\\
&&&&\downarrow&&\downarrow&&\\
&&&&W&=&W&&\\
&&&&\downarrow&&\downarrow&&\\
&&&&0&&0&&
\end{array}\]
of exact sequences. We have $V\in\CM(R)$ by the right vertical sequence,
and the middle horizontal sequence shows that $Z$ is a syzygy of $V\in\CM(R)$.
\end{proof}

Now we prove Theorem \ref{End of OCM}.

(i) First we show $\gl\Lambda\le3$.
We only have to show that $M$ in Theorem \ref{End of OCM} satisfies
the condition Proposition \ref{End 3 criterion}(b) for $n=1$.
For any $X\in\CM(R)$, take an exact sequence
\[0\to Y\to M_0\stackrel{f}{\to}X\]
with $M_0\in\OCM(R)$ such that
$\Hom_R(M,M_0)\xrightarrow{\Hom(M,f)}\Hom_R(M,X)$ is surjective.
Since $M$ is a generator, $f$ is surjective.
By Lemma \ref{syzygy exact}, we have $Y\in\OCM(R)$.
Thus $\gl\Lambda\le3$ holds.

(ii) We decide the precise value of $\gl\Lambda$.
If $R$ is Gorenstein, then $\OCM(R)=\CM(R)$.
Thus we have $\gl\Lambda=2$ by Proposition \ref{End 2 criterion}.

If $R$ is not Gorenstein, then $\omega\notin\OCM(R)$.
Thus $\OCM(R)$ is strictly smaller than $\CM(R)$.
We have $\gl\Lambda=3$ by Proposition \ref{End 2 criterion}.

(iii) Let $S=\Hom_R(M,X)/J_{\CM(R)}(M,X)$ be a simple $\Lambda$-module with indecomposable non-free $X\in\OCM(R)$.
Take an exact sequence
\[0\to Y\to M_0\stackrel{f}{\to}X\]
with $M_0\in\OCM(R)$ such that $\Hom_R(M,M_0)\xrightarrow{\Hom(M,f)}J_{\CM(R)}(M,X)$ is surjective.
Take a surjection $P\stackrel{g}{\to}X\to0$ with $P\in\add R$.
Since $X$ is non-free, we have $g\in J_{\CM(R)}$ and that $g$ factors through $f$.
Hence $f$ is surjective, so by Lemma \ref{syzygy exact} we have $Y\in\OCM(R)$.
Thus we have a projective resolution
\[0\to\Hom_R(M,Y)\to\Hom_R(M,M_0)\to\Hom_R(M,X)\to S\to0.\]
We have $\pd S=2$ by Lemma \ref{at least two}.
\qed

\medskip
Immediately we have the following result.

\begin{cor}\label{End of SCM}
Assume $\SCM(R)=\add N$ and put $\Lambda:=\End_R(N)$.
\begin{itemize}
\item[(a)] If $R$ is Gorenstein, then $\gl\Lambda=2$. All simple $\Lambda$ and $\Lambda^{\rm op}$-modules have projective dimension 2.
\item[(b)] If $R$ is not Gorenstein, then $\gl\Lambda=3$.
All simple $\Lambda^{\rm op}$-modules have projective dimension 2
except $\Hom_R(R,N)/J_{\CM(R)}(R,N)$, which has projective dimension 3.
\end{itemize}
\end{cor}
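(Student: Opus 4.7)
The plan is to reduce Corollary~\ref{End of SCM} to Theorem~\ref{End of OCM} via the duality $(-)^*$. By Corollary~\ref{equivalence}(a), the hypothesis $\SCM(R)=\add N$ is equivalent to $\OCM(R)=\add N^*$, so setting $M:=N^*$ the duality furnishes a canonical anti-isomorphism of rings
\[
\End_R(M)=\End_R(N^*)\;\cong\;\End_R(N)^{\rm op}=\Lambda^{\rm op},
\]
coming from $(\phi\psi)^*=\psi^*\phi^*$ on $N^*$. Theorem~\ref{End of OCM}, applied to the additive generator $M$ of $\OCM(R)$, is therefore verbatim a statement about $\Lambda^{\rm op}$ and its simple right modules.

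In case (a), $R$ is Gorenstein, so Theorem~\ref{End of OCM}(a) gives $\gl\Lambda^{\rm op}=2$ and shows that all simple right modules over $\End_R(M)=\Lambda^{\rm op}$ and over $\End_R(M)^{\rm op}=\Lambda$ have projective dimension $2$. Since $\Lambda$ is module-finite (hence two-sided noetherian) over the noetherian ring $R$, Auslander's classical result gives $\gl\Lambda=\gl\Lambda^{\rm op}=2$. In case (b), Theorem~\ref{End of OCM}(b) gives $\gl\Lambda^{\rm op}=3$ (so again $\gl\Lambda=3$), and asserts that all simple right $\Lambda^{\rm op}$-modules have projective dimension $2$ except for one distinguished simple of projective dimension $3$.

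It remains to identify that distinguished simple as $\Hom_R(R,N)/J_{\CM(R)}(R,N)$. Theorem~\ref{End of OCM}(b) produces it as $\Hom_R(M,R)/J_{\CM(R)}(M,R)=\Hom_R(N^*,R)/J_{\CM(R)}(N^*,R)$. The duality $(-)^*:\CM(R)\to\CM(R)$ is an equivalence of categories fixing $R$ and sending $N^*$ to $N^{**}\cong N$; it therefore preserves the Jacobson radical of $\CM(R)$, yielding the desired isomorphism
\[
\Hom_R(N^*,R)/J_{\CM(R)}(N^*,R)\;\cong\;\Hom_R(R,N)/J_{\CM(R)}(R,N).
\]
The main—indeed essentially the only—subtlety is the sided bookkeeping: one must confirm that this is an isomorphism of right $\Lambda^{\rm op}$-modules. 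This is automatic, because the contravariant equivalence $(-)^*$ carries a right $\End_R(N^*)$-action on $\Hom_R(N^*,R)$ (by precomposition) to a left $\End_R(N)$-action on $\Hom_R(R,N)$ (by postcomposition), and left $\End_R(N)=\Lambda$-modules are the same as right $\Lambda^{\rm op}$-modules.
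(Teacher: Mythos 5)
Your proposal is correct and follows essentially the same route as the paper: apply Corollary~\ref{equivalence}(a) to get $\OCM(R)=\add N^*$, identify $\End_R(N^*)\cong\Lambda^{\rm op}$ via $(-)^*$, and transport Theorem~\ref{End of OCM} across this anti-isomorphism. The extra care you take with the left/right bookkeeping, the symmetry $\gl\Lambda=\gl\Lambda^{\rm op}$, and the identification of the exceptional simple $\Hom_R(N^*,R)/J_{\CM(R)}(N^*,R)\cong\Hom_R(R,N)/J_{\CM(R)}(R,N)$ is precisely what the paper leaves implicit.
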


\begin{proof}
We have $\add N^*=\OCM(R)$ by Corollary \ref{equivalence}.
Since $\End_R(N)=\End_R(N^*)^{\rm op}$, the assertion follows from Theorem \ref{End of OCM}.
\end{proof}

\section{Geometric aspects of special Cohen-Macaulay modules}
Let $R$ be a rational normal surface singularity.
In this section we use the geometry of the minimal resolution of $\t{Spec}R$ to improve some of the algebraic results in Section 2;
in particular we obtain the rather surprising result that all rational normal surfaces are syzygy finite.

To do this we use results of Wunram \cite{Wunram_generalpaper}, and so we first need to introduce some notation.
For a rational normal surface $X=\t{Spec}R$ denote the minimal resolution by $\pi:\w{X}\rightarrow \t{Spec}R$ and the exceptional curves by $\{ E_i\}$.
Also,  for a given CM module $M$ of $R$, denote by $\w{M}:=\pi^*M/{\rm torsion}$ the corresponding sheaf on $\w{X}$. 

\begin{defin}
Given the exceptional curves  $\{ E_i\}$ we define the labelled dual graph of the minimal resolution as follows: for every exceptional curve $E_i$ draw a dot, and join two dots if the corresponding curves intersect.  Additionally, decorate each vertex with the self-intersection number corresponding to the curve at that vertex. 
\end{defin}
\begin{defin} \cite{Art}
For a given labelled dual graph, define the fundamental cycle $Z_f=\sum_{\t{vertices } i}r_i E_i$ (with each $r_i\geq 1$) to be the unique smallest element such that $Z_f\cdot E_i\leq 0$ for all vertices $i$.
\end{defin}
There is an easy algorithm to find $Z_f$ given by Laufer \cite{Lau72}, which we illustrate in two examples below.  
\begin{example}
\t{Firstly, consider the dual graph
\[
\begin{array}{cc}
\begin{array}{c}
\xymatrix@C=20pt@R=20pt{ &\bullet\ar@{-}[d]^<{E_4}&&\\
\bullet\ar@{-}[r]_<{E_1} & \bullet\ar@{-}[r]_<{E_2}
& \bullet\ar@{}[r]_<{E_3}&}
\end{array}&
{\scriptsize{\begin{array}{c}
E_1\cdot E_1=-2\\
E_2 \cdot E_2=-3\\
E_3 \cdot E_3=-2\\
E_4\cdot E_4=-2
\end{array}}}
\end{array}
\]
We shall denote this by $\begin{array}{c}\tiny{\xymatrix@C=-2pt@R=-2pt{&2&\\2&3&2}}\end{array}$.  To calculate $Z_f$, first try the smallest element $Z_r=E_1+E_2+E_3+E_4$:{\scriptsize{
\begin{eqnarray*}
Z_r\cdot E_1&=&E_1\cdot E_1+E_2\cdot E_1+E_3\cdot E_1+E_4\cdot E_1=(-2)+1+0+0=-1\leq 0\\
Z_r\cdot E_2&=&E_1\cdot E_2+E_2\cdot E_2+E_3\cdot E_2+E_4\cdot E_2=1+(-3)+1+1=0\leq 0\\
Z_r\cdot E_3&=&E_1\cdot E_3+E_2\cdot E_3+E_3\cdot E_3+E_4\cdot E_3=0+1+(-2)+0=-1\leq 0\\
Z_r\cdot E_4&=&E_1\cdot E_4+E_2\cdot E_4+E_3\cdot E_4+E_4\cdot E_4=0+1+0+(-2)=-1\leq 0
\end{eqnarray*}}}
Since $Z_r\cdot E_i\leq 0$ for all exceptional curves $E_i$ we conclude that $Z_f=Z_r=E_1+E_2+E_3+E_4$.  In this paper we shall denote this by $Z_f=\begin{array}{c}\tiny{\xymatrix@C=-2pt@R=-2pt{&1&\\1&1&1}}\end{array}$. }
\end{example}
\begin{example}
\t{Now if we change the above example slightly and consider the dual graph $\begin{array}{c}\tiny{\xymatrix@C=-2pt@R=-2pt{&2&\\2&2&2}}\end{array}$ then the above fails since now $Z_r\cdot E_2=1\nleq 0$.  But $Z^\prime=E_1+2E_2+E_3+E_4$ satisfies $Z^\prime\cdot E_i\leq 0$ for all exceptional $E_i$ and so we deduce that  $Z_f=\begin{array}{c}\tiny{\xymatrix@C=-2pt@R=-2pt{&1&\\1&2&1}}\end{array}$. }
\end{example}
The main result obtained by Wunram was the following, which recovers
the results of Artin-Verdier \cite{ArtVer} as a special case
\begin{thm}\cite[1.2]{Wunram_generalpaper}\label{Wurnam_main_result}
\begin{itemize}
\item[(a)] For every irreducible curve $E_i$ ($1\leq i\leq k$) in the
exceptional divisor of the minimal resolution there is exactly one
indecomposable CM module $M_i$ (up to isomorphism) with
\[
H^1(\w{M_i}^\vee)=0
\]
and
\[
c_1(\w{M_i})\cdot E_j=\delta_{ij} \mbox{ for all } 1\leq i,j\leq k .
\]
The rank of $M_i$ equals $r_i=c_1(\w{M_i})\cdot Z_f$ where $Z_f=\sum
r_i E_i$ is the fundamental cycle.\\
\item[(b)] $M\in\CM(R)$ satisfies $H^1({\w{M}}^\vee)=0$ if and only if $M\in\SCM(R)$.
\end{itemize}
\end{thm}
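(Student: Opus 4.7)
The plan is to prove (b) first, and then combine it with a geometric classification on $\widetilde{X}$ to deduce (a). For (b), Theorem~\ref{characterization of SCM} already tells us that $M\in\SCM(R)\iff\Ext^1_R(M,R)=0$, so the task reduces to identifying $\Ext^1_R(M,R)$ with $H^1(\widetilde{X},\widetilde{M}^\vee)$ for arbitrary $M\in\CM(R)$. Three inputs drive this: (i) since $\widetilde{X}$ is smooth of dimension two and $\widetilde{M}$ is reflexive, $\widetilde{M}$ is locally free and so $R\mathcal{H}om_{\widetilde{X}}(\widetilde{M},\mathcal{O}_{\widetilde{X}})=\widetilde{M}^\vee$; (ii) both $M^*$ and $\pi_*\widetilde{M}^\vee$ are reflexive and agree on the smooth locus, so they are equal; and (iii) rationality gives $R\pi_*\mathcal{O}_{\widetilde{X}}=R$. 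Then applying adjunction $R\Hom_R(M,R)\simeq R\Hom_{\widetilde{X}}(L\pi^*M,\mathcal{O}_{\widetilde{X}})$ and analyzing the cone of the natural map $L\pi^*M\to\widetilde{M}$ — which is a torsion complex supported on the exceptional divisor, and hence contributes nothing to $\mathcal{H}om(-,\mathcal{O}_{\widetilde{X}})$ or $\mathcal{E}xt^1(-,\mathcal{O}_{\widetilde{X}})$ — yields the desired isomorphism $\Ext^1_R(M,R)\cong H^1(\widetilde{X},\widetilde{M}^\vee)$.

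For (a), I would invoke the theory of \emph{full sheaves} due to Esnault: a coherent sheaf $\mathcal{F}$ on $\widetilde{X}$ is full if it is generated by its global sections and satisfies $H^1(\widetilde{X},\mathcal{F}^\vee)=0$. Combining (b) with the fact that $\widetilde{M}$ is generated for $M\in\SCM(R)$ produces a bijection between indecomposable SCM modules and indecomposable full sheaves via $M\leftrightarrow\widetilde{M}$. Since $R$ is rational, $H^1(\mathcal{O}_{\widetilde{X}})=0$ and the intersection form on the exceptional curves is negative definite, so there is a divisor class $D_i$ on $\widetilde{X}$ with $D_i\cdot E_j=\delta_{ij}$. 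Starting from $\mathcal{O}_{\widetilde{X}}(D_i)$, one constructs the desired $\widetilde{M_i}$ by replacing this line bundle with the kernel of an evaluation map from a trivial bundle (this is the mechanism producing a full sheaf from a prescribed first Chern class), and $M_i:=\pi_*\widetilde{M_i}$ gives the SCM module. Uniqueness is then the statement that an indecomposable full sheaf is determined up to isomorphism by its intersection numbers with the $E_j$, which follows inductively because the classes of the $\widetilde{M_i}$ are linearly independent in the relevant Grothendieck group.

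The rank formula $r_i=c_1(\widetilde{M_i})\cdot Z_f$ would be extracted from the Riemann--Roch theorem on $\widetilde{X}$, applied to $\widetilde{M_i}$ restricted to successive subcycles of $Z_f$, using the vanishings $H^1(\widetilde{M_i}^\vee)=0$ and $R^1\pi_*\widetilde{M_i}=0$, together with the defining property $Z_f\cdot E_j\le 0$. The main obstacles I foresee are two: first, the rigorous comparison $\Ext^1_R(M,R)\cong H^1(\widetilde{X},\widetilde{M}^\vee)$ requires genuinely derived input, as $M$ need not be of finite projective dimension over $R$ and $L\pi^*M$ can have non-trivial higher terms whose $\mathcal{E}xt$s must be controlled; second, the construction of a full sheaf with prescribed Chern data is delicate, since an arbitrary line bundle with the right intersection numbers is typically not full, so the modification step must be performed carefully to preserve both global generation and the vanishing of $H^1$ of the dual.
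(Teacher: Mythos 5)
A preliminary remark: the paper does not prove this statement at all --- it is imported wholesale from \cite[1.2]{Wunram_generalpaper}, so there is no internal argument to compare yours against. Judged on its own terms, your sketch of (a) does follow the route that Wunram (building on Artin--Verdier and on Esnault's theory of full sheaves) actually takes: find a divisor class dual to $E_i$, correct a line bundle in that class to a globally generated sheaf whose dual has vanishing $H^1$, push down, and deduce uniqueness from the Chern data. The two steps you yourself flag as delicate (the correction step and the uniqueness) are exactly where all of Wunram's work lives, so as a sketch this is the right shape, if far from a proof.

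The genuine gap is in (b). You want $\Ext^1_R(M,R)\cong H^1(\widetilde{X},\widetilde{M}^\vee)$, and you dispose of the cone $C$ of $L\pi^*M\to\widetilde{M}$ on the grounds that it is a torsion complex. But $C$ is concentrated in cohomological degrees $\le -1$ (the map $\pi^*M\to\widetilde{M}$ is onto, and $h^{-1}(C)$ is the torsion of $\pi^*M$), with cohomology sheaves supported on the exceptional \emph{curves}, i.e.\ in codimension one. For such a complex one does get $\mathcal{E}xt^0(C,\mathcal{O}_{\widetilde{X}})=\mathcal{E}xt^1(C,\mathcal{O}_{\widetilde{X}})=0$, which is the vanishing you assert; the problem is that this is not the vanishing you need. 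The long exact sequence obtained by applying $R\mathcal{H}om(-,\mathcal{O}_{\widetilde{X}})$ to the triangle identifies $\mathcal{E}xt^1(L\pi^*M,\mathcal{O}_{\widetilde{X}})$ with $\mathcal{E}xt^2(C,\mathcal{O}_{\widetilde{X}})$, and the latter is generically nonzero as soon as $M$ is non-free, since already $\mathcal{E}xt^1(\mathcal{O}_E,\mathcal{O}_{\widetilde{X}})\cong\mathcal{O}_E(E)\neq 0$ for a curve $E$ in a smooth surface. The hypercohomology spectral sequence then yields only an exact sequence $0\to H^1(\widetilde{M}^\vee)\to\Ext^1_R(M,R)\to H^0(\mathcal{E}xt^2(C,\mathcal{O}_{\widetilde{X}}))$, not an isomorphism. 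Combined with Theorem~\ref{characterization of SCM} this proves that $\Ext^1_R(M,R)=0$ forces $H^1(\widetilde{M}^\vee)=0$, i.e.\ one implication of (b), but the converse --- the direction one actually uses to recognise specials geometrically --- does not follow, because $H^0(\mathcal{E}xt^2(C,\mathcal{O}_{\widetilde{X}}))$ has no reason to vanish. Closing this requires a genuinely different argument; Wunram's own proof avoids $L\pi^*M$ entirely and works with $(M\otimes_R\omega)/{\bf T}(M\otimes_R\omega)$ and duality on $\widetilde{X}$.
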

Thus the fundamental cycle dictates the ranks of the special CM modules.

We now use the above to improve our results in Section 2 as follows: note that (b) below also generalizes \cite[Th. 3]{MS} to the non-Gorenstein case.
\begin{thm}
Let $R$ be a rational normal surface singularity.
\begin{itemize}
\item[(a)] $\SCM(R)$ and $\OCM(R)$ contain only finitely many isoclasses of indecomposable objects.
\item[(b)] $X\in\CM(R)$ belongs to $\SCM(R)$ if and only if $(\Omega X)^*\simeq X$ up to a free summand.
\item[(c)] $X\in\CM(R)$ belongs to $\OCM(R)$ if and only if $\Omega(Y^*)\simeq Y$ up to a free summand.
\end{itemize}
\end{thm}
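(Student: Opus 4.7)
Part (a) is immediate from Wunram's theorem: Theorem \ref{Wurnam_main_result}(a) puts the indecomposable non-free specials in bijection with the exceptional curves $E_1,\ldots,E_k$, so $\SCM(R)$ has exactly $k+1$ indecomposable objects (the $M_i$ together with $R$). The duality $(-)^*:\SCM(R)\xrightarrow{\sim}\OCM(R)$ from Corollary \ref{equivalence}(a) then transfers the finiteness to $\OCM(R)$.

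For part (b), the reverse implication is formal: since $\Omega X\in\OCM(R)$ for every $X\in\CM(R)$, Corollary \ref{equivalence}(a) gives $(\Omega X)^*\in\SCM(R)$, and as $\SCM(R)$ is closed under direct summands, a stable isomorphism $X\cong(\Omega X)^*$ forces $X\in\SCM(R)$. For the forward direction, Corollary \ref{equivalence}(c) already tells us that $(\Omega-)^*$ is an involutive self-equivalence of $\underline{\SCM}(R)$, and by (a) it acts as an involution on the finite set of indecomposables $M_1,\ldots,M_k$. The task is to show each $M_i$ is a fixed point, and by Wunram's uniqueness in Theorem \ref{Wurnam_main_result}(a) it suffices to verify the Chern class identity $c_1(\w{(\Omega M_i)^*})\cdot E_j=\delta_{ij}$ for all $j$.

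To perform this computation I would sheafify the short exact sequence $0\to\Omega M_i\to F\to M_i\to 0$ (with $F$ free): since all three modules are CM, the result is an exact sequence of vector bundles on $\widetilde{X}$, yielding $c_1(\w{\Omega M_i})=-c_1(\w{M_i})$. Because $M_i$ is special, $\Ext^1_R(M_i,R)=0$, so the dualised sequence $0\to M_i^*\to F^*\to(\Omega M_i)^*\to 0$ is also exact and its sheafification is again exact as vector bundles, giving $c_1(\w{(\Omega M_i)^*})=-c_1(\w{M_i^*})$. Combining the standard identification $\w{M^*}\cong(\w{M})^\vee$ for CM modules on a rational normal surface singularity with the elementary identity $c_1(V^\vee)=-c_1(V)$ for a vector bundle $V$ then produces $c_1(\w{(\Omega M_i)^*})=c_1(\w{M_i})$. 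Since $(\Omega-)^*$ is a stable auto-equivalence, $(\Omega M_i)^*$ is stably indecomposable, and Wunram's uniqueness forces its unique non-free indecomposable summand to be $M_i$, so $(\Omega M_i)^*\cong M_i$ up to free summands.

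Part (c) is then deduced from (b) by applying $(-)^*$: for $X\in\OCM(R)$, setting $Y:=X^*\in\SCM(R)$ via Corollary \ref{equivalence}(a) and dualising the stable isomorphism $(\Omega Y)^*\cong Y$ gives $\Omega Y\cong Y^*\cong X$, i.e.\ $\Omega(X^*)\cong X$ up to free summand. The converse mirrors the reverse direction of (b): if $\Omega(X^*)\cong X$ up to free summand then $\Omega(X^*)\in\OCM(R)$ forces $X\in\OCM(R)$, since $\OCM(R)=\FF_3$ is closed under direct summands. The main obstacle will be the Chern class computation in (b), and specifically justifying the identification $\w{M^*}\cong(\w{M})^\vee$; this is a standard consequence of the rationality hypothesis (which supplies $R^1\pi_*\ox=0$) together with the fact that both sides are reflexive sheaves on the smooth $\widetilde{X}$ agreeing on the smooth locus of $\pi$.
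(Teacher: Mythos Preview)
Your treatment of (a), of the ``if'' directions in (b) and (c), and of the passage from (b) to (c) is fine and matches the paper. The problem is entirely in the ``only if'' direction of (b), where your Chern class argument rests on two assertions that are both false.

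The claim that the functor $\widetilde{(-)}=\pi^*(-)/\text{torsion}$ sends a short exact sequence of CM $R$-modules to a short exact sequence of vector bundles on $\widetilde X$ already fails for the $A_1$ singularity. There the unique non-free indecomposable CM module $M$ satisfies $\Omega M\cong M$, giving a minimal exact sequence $0\to M\to R^2\to M\to 0$; but $\widetilde M$ is a line bundle with $c_1(\widetilde M)\cdot E=1$, so an exact sequence $0\to\widetilde M\to\mathcal O^2\to\widetilde M\to 0$ is impossible by additivity of $c_1$. What is really happening is that $\ker(\mathcal O^2\to\widetilde M)$ is $\widetilde M^\vee$, not $\widetilde{\Omega M}$. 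The same example refutes $\widetilde{M^*}\cong(\widetilde M)^\vee$: here $M^*\cong M$ as $R$-modules, so $\widetilde{M^*}\cong\widetilde M$, yet $\widetilde M\not\cong\widetilde M^\vee$ since their first Chern classes differ. The underlying issue is that $\pi_*$ is not faithful on vector bundles: distinct bundles on $\widetilde X$ (here $\widetilde M$ and $\widetilde M^\vee$) can have the same pushforward, so $\widetilde{(-)}$ is not a one-sided inverse to $\pi_*$ on the nose.

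The paper avoids this entirely by working in the opposite direction. Starting from the Artin--Verdier sequence $0\to\mathcal O^r\to\widetilde M\to\mathcal O_D\to 0$ and its dual, a pullback diagram produces an extension of $\widetilde M$ by $\widetilde M^\vee$ whose middle term, by rationality ($H^1(\mathcal O_{\widetilde X})=0$), splits as $\mathcal O^{2r}$; one then applies $\pi_*$ and uses the specialness condition $H^1(\widetilde M^\vee)=0$ to obtain the exact sequence $0\to M^*\to R^{2r}\to M\to 0$ downstairs. So the correct flow of information is \emph{construct on $\widetilde X$, then push down}, not \emph{sheafify from $R$}.
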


\begin{proof}
(a) The assertion for $\SCM(R)$ follows from Theorem~\ref{Wurnam_main_result} above, since there are only finitely many exceptional curves.  The assertion for $\OCM(R)$ follows from Corollary \ref{equivalence}.

(b) The `if' part follows by Theorem~\ref{characterization of SCM}.

We shall show the `only if' part.   Suppose $M$ is a special CM module.  Since $M$ is CM, by Artin-Verdier \cite[1.2]{ArtVer} we have the following exact sequence
\[
\xymatrix@C=20pt@R=20pt{0\ar[r]&{\s{O}^{r}}\ar[r]&{\w{M}}\ar[r]&{\s{O}_{D}}\ar[r]&0
}
\]
where $r$ is the rank of $M$.  After dualizing the above we get
\[
\begin{array}{c}
\xymatrix@C=20pt@R=20pt{0\ar[r]&{\w{M}^\vee}\ar[r]&{\s{O}^{r}}\ar[r]&{\s{O}_{D}}\ar[r]&0
}
\end{array}
\]
Taking the appropriate pullback gives us a diagram
\[
\begin{array}{c}
\xymatrix@C=20pt@R=20pt{&&0\ar[d]&0\ar[d]&\\
&&{\w{M}^\vee}\ar[d]\ar@{=}[r]&{\w{M}^\vee}\ar[d]&\\
0\ar[r]&{\s{O}^{r}}\ar@{=}[d]\ar[r]&{\s{E}}\ar[r]\ar[d]&{\s{O}^{r}}\ar[r]\ar[d]&0\\
0\ar[r]&{\s{O}^{r}}\ar[r]&{\w{M}}\ar[r]\ar[d]&{\s{O}_{D}}\ar[r]\ar[d]&0\\
&&0&0&\\
}
\end{array}
\]
Since the singularity is rational the middle horizontal sequence splits giving $\s{E}=\s{O}^{2r}$, and so we have a short exact sequence
\[
\begin{array}{c}
\xymatrix@C=20pt@R=20pt{0\ar[r]&{\w{M}^\vee}\ar[r]&{\s{O}^{2r}}\ar[r]&{\w{M}}\ar[r]&0
}
\end{array}.
\]
But now $M$ is special and so by Theorem~\ref{Wurnam_main_result} above $H^1(\w{M}^\vee)=0$, so taking global sections of this sequence yields
\[
\begin{array}{c}
\xymatrix@C=20pt@R=20pt{0\ar[r]&{{M}^*}\ar[r]&{R^{2r}}\ar[r]&{M}\ar[r]&0
}
\end{array}
\]
as required.

(c) The `if' part is clear and the `only if' part follows from (b) and Corollary \ref{equivalence}(a).
\end{proof}

In this remainder of this paper we consider the surface quotient singularities and classify the special CM modules in all these cases.  We use the Brieskorn \cite{Brieskorn} classification
of finite small subgroups of $GL(2,\C{})$, but with the notation
from Riemenschneider \cite{Riemenschneider_invarianten}.  The
classification can be stated as follows:
\[
\begin{array}{ccc}
\t{Type} & \t{Notation} & \t{Conditions}\\
\mathbb{A} & \mathbb{A}_{r,a}:=\frac{1}{r}(1,a):=\left\langle \left(\begin{smallmatrix} \ve_r & 0\\ 0& \ve_r^a
\end{smallmatrix}\right)\right\rangle & 1<a<r, (r,a)=1 \\
\mathbb{D}& \begin{array}{cc}
\mathbb{D}_{n,q}:=\left\{ \begin{array}{cc} \langle \psi_{2q},
\tau, \varphi_{2(n-q)} \rangle& \mbox{if } n-q\equiv 1 \mbox{ mod
}2\\\langle \psi_{2q}, \tau\varphi_{4(n-q)} \rangle& \mbox{if }
n-q\equiv 0 \mbox{ mod }2
\end{array}\right.
\end{array}& 1<q<n, (n,q)=1\\
\mathbb{T}
&\mathbb{T}_m:=\left\{ \begin{array}{cc} \langle \psi_{4},
\tau, \eta, \varphi_{2m} \rangle& \mbox{if } m\equiv 1,5 \mbox{ mod
}6\\\langle \psi_{4},\tau,\eta\varphi_{6m} \rangle& \mbox{if }
m\equiv 3 \mbox{ mod }6
\end{array}\right. & m\equiv 1,3,5 \mbox{ mod }6\\
\mathbb{O}&
\mathbb{O}_m:=\langle \psi_{8},\tau,\eta,
\varphi_{2m} \rangle
& m\equiv 1,5,7,11 \mbox{ mod }12
\\
\mathbb{I}&
\mathbb{I}_m:=\langle \left(\begin{smallmatrix}0&-1\\1&0\end{smallmatrix}\right),\omega,\iota, \varphi_{2m} \rangle
&  \begin{array}{rl}m\equiv& 1, 7,11,13,17,19,\\ &23,29 \mbox{ mod }30\end{array}
\end{array}
\]
with the matrices{\scriptsize{
\[
\begin{array}{c}
\begin{array}{ccccc}
\psi_k= \begin{pmatrix}\e_k & 0\\ 0& \e_k^{-1}
\end{pmatrix} &\tau = \begin{pmatrix}0 & \e_4\\ \e_4& 0
\end{pmatrix}&\varphi_k= \begin{pmatrix}\e_k & 0\\ 0& \e_k
\end{pmatrix}&\eta=\frac{1}{\sqrt{2}} \begin{pmatrix}\e_8 & e_8^3\\ \e_8& \e_8^7 \end{pmatrix}&\omega= \begin{pmatrix}\e_5^3 & 0\\ 0& \e_5^2
\end{pmatrix}
\end{array}\\
\iota=\frac{1}{\sqrt{5}} \begin{pmatrix}\e_5^4-\e_5 & \e_5^2-\e_5^3\\ \e_5^2-\e_5^3& \e_5-\e_5^4
\end{pmatrix}
\end{array}
\]
}}where $\e_t$ is a primitive $t^{th}$ root of unity.  Note that in
this notation $E_6=\mathbb{T}_1$, $E_7=\mathbb{O}_1$ and
$E_8=\mathbb{I}_1$.

For a given group $G$ in the above classification, by \cite{AR_McKayGraphs} the universal cover of the AR quiver of $\C{}[[x,y]]^G$ is
\[
\begin{array}{cc}
\t{Type} & \t{Universal Cover}\\
\mathbb{A}_{r,a} & \mathbb{Z}A_\infty^\infty\\
\mathbb{D}_{n,q}& \mathbb{Z}\tilde{D}_{q+2}\\
\mathbb{T}& \mathbb{Z}\tilde{E}_{6}\\
\mathbb{O}& \mathbb{Z}\tilde{E}_{7}\\
\mathbb{I}& \mathbb{Z}\tilde{E}_{8}
\end{array}
\]
where we give more precise information in later sections.

Notice that the three families of type $\mathbb{T}$, $\mathbb{O}$
and $\mathbb{I}$ are one-parameter families which naturally split
into subfamilies depending on the conditions in the right hand side
of the table.  Each subfamily depends on one parameter, and in each
subfamily there is precisely one value of that parameter for which
the fundamental cycle $Z_f$ is not reduced; for all other values it
is.   Although we do not use fundamental cycles this observation explains why the proof of each subfamily splits into two - compare for example Lemma~\ref{T5} and Lemma~\ref{T5b}.

\section{Combinatorics on Auslander-Reiten quivers}

Throughout this section, let $k$ be an algebraically closed field.
Let $R$ be a complete local normal domain of dimension two with
$k=R/J_R$, and let $\omega$ be the canonical module of $R$.
Let $\CM(R)$ be the category of maximal CM $R$-modules.
We denote by $\underline{\CM}(R):=(\CM(R))/[\add R]$ and
$\overline{\CM}(R):=(\CM(R))/[\add\omega]$ the stable categories.
We denote by $\Omega:\underline{\CM}(R)\to\underline{\CM}(R)$
and $\Omega^-:\overline{\CM}(R)\to\overline{\CM}(R)$ the syzygy
and the cosyzygy functors respectively.
Composing dualities, we have mutually quasi-inverse equivalences
\begin{eqnarray*}
\tau&:&\CM(R)\xrightarrow{(-)^*}\CM(R)\xrightarrow{\Hom_R(-,\omega)}\CM(R),\\
\tau^-&:&\CM(R)\xrightarrow{\Hom_R(-,\omega)}\CM(R)\xrightarrow{(-)^*}\CM(R)
\end{eqnarray*}
called \emph{AR translations}.
Clearly $\tau$ gives a bijection from the set of isoclasses of indecomposable objects in $\CM(R)$ to itself.
Moreover $\tau R=\omega$ holds.

Let us recall the following classical results \cite{A2,Y},
where we denote by $D=\Ext^2_R(-,\omega)$ the Matlis duality.

\begin{thm}\label{classical}
\begin{itemize}
\item[(a)] There exists a functorial isomorphism (called \emph{AR duality})
\[\underline{\Hom}_R(\tau^-Y,X)\simeq D\Ext^1_R(X,Y)\simeq\overline{\Hom}_R(Y,\tau X)\]
for any $X,Y\in\CM(R)$.
\item[(b)] For any indecomposable non-projective object $X\in\CM(R)$,
there exists an exact sequence (called an \emph{almost split sequence})
\[0\to\tau X\to\theta X\to X\to0\]
such that the following sequences are exact on $\CM(R)$.
\begin{eqnarray*}
&0\to\Hom_R(-,\tau X)\to\Hom_R(-,\theta X)\to J_{\CM(R)}(-,X)\to0,&\\
&0\to\Hom_R(X,-)\to\Hom_R(\theta X,-)\to J_{\CM(R)}(\tau X,-)\to0.&
\end{eqnarray*}
\item[(c)] There exists an exact sequence
(called a \emph{fundamental sequence}) 
\[0\to\tau R\to\theta R\to R\to k\to0\]
such that the following sequences are exact on $\CM(R)$.
\begin{eqnarray*}
&0\to\Hom_R(-,\tau R)\to\Hom_R(-,\theta R)\to J_{\CM(R)}(-,R)\to0,&\\
&0\to\Hom_R(R,-)\to\Hom_R(\theta R,-)\to J_{\CM(R)}(\tau R,-)\to0.&
\end{eqnarray*}
\end{itemize}
\end{thm}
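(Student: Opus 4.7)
The three statements are classical results of Auslander \cite{A2} (also systematically presented in Yoshino's book \cite{Y}); my plan is to prove (a), then derive (b) and (c) from it in the standard way.

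For (a), the starting point is Lemma \ref{AB sequence} applied to $Y\in\CM(R)$, which produces the four-term exact sequence
\[
0\to \Ext^1_R(\Tr X,Y)\to X\otimes_R Y\to \Hom_R(X^*,Y)\to \Ext^2_R(\Tr X,Y)\to 0.
\]
By Lemma \ref{finite length} the outer terms have finite length, so the Matlis duality $D=\Ext^2_R(-,\omega)$ acts on them as an exact functor. Applying $D$ and comparing with the defining sequence \eqref{PRES2}, one shifts a syzygy once and uses $\tau X = \Hom_R(X^*,\omega)$ to identify $D\Ext^1_R(X,Y)$ with $\overline{\Hom}_R(Y,\tau X)$; applying the same argument to $\Tr Y$ in place of $\Tr X$ yields the symmetric isomorphism with $\underline{\Hom}_R(\tau^- Y, X)$. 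The naturality of Lemma \ref{AB sequence} in both variables gives the functoriality.

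Once (a) is in hand, (b) follows by the standard construction. For indecomposable non-projective $X\in\CM(R)$, AR duality with $Y=\tau X$ gives $\Ext^1_R(X,\tau X)\simeq D\,\underline{\End}_R(X)$, whose socle (as a bimodule) contains a distinguished nonzero element dual to the projection $\End_R(X)\to\End_R(X)/J_{\End_R(X)}$. The extension $0\to\tau X\to\theta X\to X\to 0$ associated to this socle element is the almost split sequence. The two exact sequences of $\Hom$-functors are equivalent to the universal lifting property: every non-split-epic map $Z\to X$ must factor through $\theta X$, a consequence of the socle being one-dimensional over the residue field.

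For (c), the module $R$ is projective and so lacks an almost split sequence in the usual sense; the fundamental sequence replaces it using the canonical surjection $R\to k=R/J_R$. By local duality $\Ext^2_R(k,\omega)\simeq k$, and its generator yields a four-term exact sequence $0\to\tau R\to\theta R\to R\to k\to 0$ satisfying the same universal lifting property as almost split sequences, whence the functorial exactness statements follow exactly as in (b), with $R\to k$ playing the role of the projection onto a simple top. The step I expect to require the most care is the identification in (a) linking $\tau$ (defined via $(-)^*$ then $\Hom_R(-,\omega)$) to the AR duality formula with the correct ideals $[\add R]$ and $[\add\omega]$ appearing on the appropriate sides --- this book-keeping of syzygies modulo projectives versus modulo $\omega$ is the place where a careless argument would slip.
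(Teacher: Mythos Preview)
The paper does not supply a proof of this theorem at all: it is introduced with the phrase ``Let us recall the following classical results \cite{A2,Y}'' and is used as a black box thereafter. So there is no proof in the paper to compare your proposal against; your instinct to cite \cite{A2} and \cite{Y} and treat the result as background is exactly what the authors do.

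That said, since you chose to sketch an argument, a brief comment on part (a): the route you describe via Lemma~\ref{AB sequence} is not the most direct one. That lemma controls $\Ext^i_R(\Tr X,Y)$ for $i=1,2$, whereas AR duality concerns $\Ext^1_R(X,Y)$; bridging the two requires more than ``shifting a syzygy once'', and your sentence beginning ``Applying $D$ and comparing with the defining sequence \eqref{PRES2}'' hides the real work. The standard proof (as in \cite[Ch.~3]{Y}) instead uses local duality for the canonical module together with the identification $\tau X=\Hom_R(X^*,\omega)$ directly, and keeps careful track of which morphisms factor through $\add R$ versus $\add\omega$ --- precisely the bookkeeping you flag at the end as the delicate point. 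Your outlines for (b) and (c), deriving almost split and fundamental sequences from AR duality via the socle element, are the standard constructions and are fine as sketches.
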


Recall that the \emph{AR quiver} of $\CM(R)$ is defined as follows.
\begin{itemize}
\item Vertices are isoclasses of indecomposable objects in $\CM(R)$.
\item For indecomposable objects $X,Y\in\CM(R)$, draw $d_{XY}$ arrows from $X$ to $Y$ for $d_{XY}:=\dim_k(J_{\CM(R)}/J_{\CM(R)}^2)(X,Y)$.
\item For any indecomposable object $X\in\CM(R)$, draw a dotted arrow from $X$ to $\tau X$.
\end{itemize}
It is easily shown that $d_{XY}$ coincides with the multiplicity
of $X$ in $\theta Y$, and with that of $Y$ in $\theta\tau^-X$.

In the rest of this section, we shall give methods to calculate the following data for $X,Y\in\CM(R)$ by using the AR quiver of $\CM(R)$.
\begin{itemize}
\item[(A)] $\dim_k\Ext^1_R(X,Y)$, or equivalently (by AR duality) $\dim_k\underline{\Hom}_R(\tau^-Y,X)$,
\item[(B)] The position of each summand of $\Omega X$ in the AR quiver.
\end{itemize}
For this, we have to consider more general class of categories including $\CM(R)$, $\underline{\CM}(R)$ and $\overline{\CM}(R)$.

\begin{defin}
We call an additive category $\CC$ a \emph{$\tau$-category} \cite{I1}
if the following conditions are satisfied.
\begin{itemize}
\item[(a)] $\CC$ is \emph{Krull-Schmidt}, i.e. any object in $\CC$ is isomorphic to a finite direct sum of objects whose endomorphism rings are local.
\item[(b)] For any object $X\in\CC$, there exists a complex
\begin{equation}\label{right t}
\tau X\xrightarrow{\nu_X}\theta X\xrightarrow{\mu_X}X
\end{equation}
with right minimal morphisms $\mu_X$ and $\nu_X$ contained in $J_{\CC}$ such that the following sequences are exact.
\begin{eqnarray*}
&\CC(-,\tau X)\xrightarrow{\nu_X}\CC(-,\theta X)\xrightarrow{\mu_X}J_{\CC}(-,X)\to0,&\\
&\CC(\theta X,-)\xrightarrow{\nu_X}J_{\CC}(\tau X,-)\to0.&
\end{eqnarray*}
\item[(c)] For any object $X\in\CC$, there exists a complex
\begin{equation}\label{left t}
X\xrightarrow{\mu^-_X}\theta^-X\xrightarrow{\nu^-_X}\tau^-X
\end{equation}
with left minimal morphisms $\mu^-_X$ and $\nu^-_X$ contained in $J_{\CC}$ such that the following sequences are exact.
\begin{eqnarray*}
&\CC(\tau^-X,-)\xrightarrow{\nu^-_X}\CC(\theta^-X,-)\xrightarrow{\mu^-_X}J_{\CC}(X,-)\to0,&\\
&\CC(-,\theta^-X)\xrightarrow{\nu^-_X}J_{\CC}(-,\tau^-X)\to0.&
\end{eqnarray*}
\end{itemize}
We call the complex \eqref{right t} (respectively, \eqref{left t}) a \emph{right $\tau$-sequence} (respectively, \emph{left $\tau$-sequence}).
\end{defin}

The following fact is shown in \cite{I1}.
\begin{itemize}
\item If $X\in\CC$ is indecomposable, then either $\tau X=0$ (respectively, $\tau^-X=0$) holds or $\tau X$ (respectively, $\tau^-X$) is also indecomposable.
\end{itemize}
We assume that $\CC$ is $k$-linear and $\dim_k(\CC/J_{\CC})(X,Y)<\infty$
for any $X,Y\in\CC$. We define the \emph{AR quiver} of $\CC$
by replacing $\CM(R)$ in the above
definition of the AR quiver of $\CM(R)$ by $\CC$.
%\begin{itemize}
%\item Vertices are isoclasses of indecomposable objects in $\CC$.
%\item For indecomposable objects $X,Y\in\CC$, we draw $d_{XY}$ arrows
%from $X$ to $Y$ for $d_{XY}:=\dim_k(J_{\CC}/J_{\CC}^2)(X,Y)$.
%\item For any indecomposable object $X\in\CC$, draw a dotted arrow
%from $X$ to $\tau X$.
%\end{itemize}

By Theorem \ref{classical}, the category $\CM(R)$ is a $\tau$-category.
By the following easy observation \cite[1.4]{I2}, the stable categories
$\underline{\CM}(R)$ and $\overline{\CM}(R)$ are also $\tau$-categories.

\begin{prop}\label{tau1}
Let $\CC$ be a $\tau$-category and $\CC'$ a full subcategory of $\CC$.
Then the factor category $\CC/[\CC']$ is a $\tau$-category, and its
AR quiver is given by removing from the AR quiver of $\CC$
all vertices corresponding to indecomposable objects in $\CC'$
and all dotted arrows from $X$ to $\tau X$ satisfying $\theta X\in\CC'$.
\end{prop}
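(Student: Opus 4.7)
The plan is to verify the $\tau$-category axioms for $\bar\CC:=\CC/[\CC']$ by transporting the $\tau$-sequences from $\CC$. First, Krull-Schmidtness of $\bar\CC$ is inherited from $\CC$: for any indecomposable $X\notin\CC'$ and any $Y\notin\CC'$ indecomposable with $Y\not\cong X$, one has $[\CC'](X,Y)\subseteq J_\CC^2(X,Y)$, because a morphism factoring as $X\to C\to Y$ with $C\in\CC'$ has both legs between non-isomorphic indecomposables and hence lies in $J_\CC$. In particular $[\CC'](X,X)\subseteq J_\CC(X,X)$, so $\End_{\bar\CC}(X)$ remains local; the indecomposables of $\bar\CC$ are exactly the indecomposables of $\CC$ outside $\CC'$, and $d_{XY}$ is preserved for surviving vertices.

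Given an indecomposable $X\in\bar\CC$, take the right $\tau$-sequence $\tau X\xrightarrow{\nu_X}\theta X\xrightarrow{\mu_X}X$ in $\CC$ and decompose $\theta X=\theta'\oplus\theta''$, $\tau X=\tau'\oplus\tau''$ with $\theta',\tau'\in\add\CC'$ maximal. I propose the $\bar\CC$-sequence $\tau''\xrightarrow{\bar\nu_X}\theta''\xrightarrow{\bar\mu_X}X$ with induced morphisms. Surjectivity $\bar\CC(-,\theta'')\to J_{\bar\CC}(-,X)\to 0$ is immediate: lift any class to some $f\in J_\CC(Y,X)$, factor $f=\mu_X\circ(g',g'')$, and note that the $\theta'$-summand vanishes in $\bar\CC$. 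For middle exactness, suppose $h\colon Y\to\theta''$ satisfies $\mu_X|_{\theta''}\circ h=\alpha\beta$ with $\beta\colon Y\to C$, $\alpha\colon C\to X$, $C\in\CC'$; using the $\tau$-sequence write $\alpha=\mu_X\gamma=\mu_X|_{\theta'}\gamma'+\mu_X|_{\theta''}\gamma''$, and set $h':=h-\gamma''\beta$. A direct computation gives $\mu_X(-\gamma'\beta,h')=0$ in $\CC$, so the honest exactness of the original $\tau$-sequence produces $k\colon Y\to\tau X$ with $\nu_X\circ k=(-\gamma'\beta,h')$; its $\tau''$-component is the desired lift of $\bar h=\overline{h'}$. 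The dual surjectivity $\bar\CC(\theta'',-)\to J_{\bar\CC}(\tau'',-)\to 0$ follows symmetrically, and right minimality of $\bar\mu_X,\bar\nu_X$ is inherited from that of $\mu_X,\nu_X$ since exactly the $\add\CC'$-summands have been dropped. Left $\tau$-sequences are obtained by the dual construction.

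It remains to describe the AR quiver. Vertex removal is already handled. The dotted arrow $X\to\tau X$ persists in $\bar\CC$ iff $\bar\tau X=\tau''\neq 0$; when $\theta X\in\CC'$ we have $\theta''=0$, and the dual condition $\bar\CC(\theta'',-)\to J_{\bar\CC}(\tau'',-)\to 0$ reduces to $0\to J_{\bar\CC}(\tau'',-)\to 0$, forcing $\tau''=0$ by indecomposability and right minimality. The remaining ways in which $\bar\tau X$ might vanish force $\tau X\in\CC'$, which is already absorbed by vertex removal, so no further dotted arrows disappear. The main obstacle is the middle exactness for the right $\tau$-sequence in $\bar\CC$: morphisms vanishing in $\bar\CC$ lift to morphisms in $\CC$ that merely factor through $\CC'$ rather than vanish, and the explicit $\CC'$-correction via the term $\gamma''\beta$ is exactly what reduces the problem to the honest exactness of the original $\tau$-sequence in $\CC$.
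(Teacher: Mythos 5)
You should first note that the paper offers no argument for this proposition at all: it is stated as an easy observation with a citation to Iyama's $\tau$-categories II, \cite[1.4]{I2}. So there is no in-paper proof to compare against; judged on its own terms, most of your verification is sound. The observation that $[\CC'](X,Y)\subseteq J_{\CC}^2(X,Y)$ for indecomposables $X,Y\notin\add\CC'$ cleanly gives Krull--Schmidtness and the preservation of solid-arrow multiplicities, and the correction term $\gamma''\beta$ in the middle-exactness computation is exactly the right device (it also covers the case $\tau X\in\add\CC'$, since there $h'=\nu''k$ itself factors through $\CC'$). The genuine gap is the sentence asserting that right minimality of $\bar\mu_X$ and $\bar\nu_X$ is ``inherited''. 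Morphisms can die in $\CC/[\CC']$ without their source or target dying, so minimality in the quotient is not automatic, and this is precisely where the dichotomy in the statement lives. For $\bar\mu_X\colon\theta''\to X$ you must exclude that some indecomposable summand $W\notin\add\CC'$ of $\theta X$ has $\mu_X|_W\in[\CC']$ (otherwise $d_{WX}$ would drop and solid arrows, not just vertices, would disappear). This is provable: writing $\mu_X|_W=\alpha\beta$ through $C\in\add\CC'$, either $\alpha\notin J_{\CC}$ and $X\in\add C$, or $\alpha=\mu_X\gamma$, whence $\iota_W-\gamma\beta=\nu_Xk\in J_{\CC}$ by exactness and locality of $\End(W)$ makes $\beta$ a split monomorphism, so $W\in\add C$. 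More seriously, for $\bar\nu_X\colon\tau X\to\theta''$ with $\theta''\neq0$ and $\tau X\notin\add\CC'$, right minimality amounts to $\nu''\notin[\CC']$; if this failed, the minimal right $\tau$-sequence in $\CC/[\CC']$ would have translate $0$ and the dotted arrow $X\dashrightarrow\tau X$ would have to be deleted even though $\theta X\notin\CC'$ --- contradicting the very statement being proved. You give no argument here. One way to close it: if $\nu''=\alpha\gamma\nu_X$ as in your own computation, then $\pi''-\alpha\gamma$ restricts to an automorphism of $\theta''$, so $\nu_X$ factors through a complement isomorphic to $\theta'\in\add\CC'$, hence $J_{\CC}(\tau X,-)\subseteq[\CC']\subseteq J_{\CC}^2$ on surviving objects, giving $(J_{\CC}/J_{\CC}^2)(\tau X,W)=0$ for every summand $W$ of $\theta''$ and contradicting the fact (recorded in Section 4 of the paper) that $d_{\tau X,W}$ equals the multiplicity of $W$ in $\theta X$.

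A smaller logical slip: in the case $\theta X\in\add\CC'$ you say the dual exactness condition ``forces $\tau''=0$''. The object $\tau''$ as you defined it equals $\tau X$ whenever $\tau X\notin\add\CC'$, and this situation genuinely occurs (for the $A_1$-singularity with $\CC'=\add R$ one has $\theta M=R^2\in\add\CC'$ but $\tau M=M$), so $\tau''$ is not forced to vanish. What actually happens is that your proposed complex $\tau X\to0\to X$ fails right minimality and must be replaced by its minimal part $0\to0\to X$; this does satisfy all the axioms because every morphism in $J_{\CC}(-,X)$ and in $J_{\CC}(\tau X,-)$ factors through $\theta X\in\add\CC'$ and therefore vanishes in the quotient. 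Your conclusion for the AR quiver is correct, but the justification should be the vanishing of $J_{\CC/[\CC']}(-,X)$ and of $J_{\CC/[\CC']}(\tau X,-)$, not a ``forcing'' of the object $\tau''$ to be zero.
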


Let us recall a method to calculate $\dim_k\CC(X,Y)$ for each
$X,Y\in\CC$ following \cite{I1}.
One of the key results is the existence theorem of ladders (a) below
\cite[Th. 3.3, 4.1]{I1},
which was introduced by Igusa-Todorov for some cases \cite{IT1}.
For $X\in\CC$ and indecomposable $Y\in\CC$, we denote by $m_Y(X)$ the
multiplicity of $Y$ in $X$.

\begin{thm}\label{ladder}
Let $\CC$ be a $\tau$-category and $X\in\CC$.
\begin{itemize}
\item[(a)] There exist a commutative diagram (called a \emph{left ladder} of $X$)
\[\begin{array}{cccccccc}
X=Y_0&\xrightarrow{f_0}&Y_1&\xrightarrow{f_1}&Y_2&\xrightarrow{f_2}&Y_3&\xrightarrow{f_3}\cdots\\
\downarrow^{b_0}&&\downarrow^{b_1}&&\downarrow^{b_2}&&\downarrow^{b_3}&\\
0=Z_0&\xrightarrow{g_0}&Z_1&\xrightarrow{g_1}&Z_2&\xrightarrow{g_2}&Z_3&\xrightarrow{g_3}\cdots,
\end{array}\]
and objects $U_{n+1}\in\CC$ and a morphism $h_n\in\CC(Z_n,U_{n+1})$ such that
\begin{equation}\label{cone}
Y_n\xrightarrow{{b_n\choose -f_n}}Z_n\oplus Y_{n+1}\xrightarrow{{g_n\ b_{n+1}\choose h_{n}\ 0}}Z_{n+1}\oplus U_{n+1}
\end{equation}
is a left $\tau$-sequence for any $n\ge0$.
\item[(b)] For any $n\ge0$, we have an isomorphism
$(J_{\CC}^n/J_{\CC}^{n+1})(X,-)\simeq(\CC/J_{\CC})(Y_n,-)$
of functors on $\CC$.
In particular, if $\bigcap_{i\ge0}J_{\CC}^i=0$, then
\[\dim_k\CC(X,Y)=\sum_{n\ge0}m_Y(Y_n)\]
holds for any indecomposable $Y\in\CC$.
\end{itemize}
\end{thm}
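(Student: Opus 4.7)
The plan is to establish (a) by induction on $n$, initializing with $Y_0 := X$, $Z_0 := 0$, and $b_0 := 0$. At the induction step, with $Y_n$, $Z_n$ and $b_n$ already in hand, I would apply the defining axiom of a $\tau$-category to the object $Y_n$ to produce a left $\tau$-sequence $Y_n \xrightarrow{\mu^-_{Y_n}} \theta^-Y_n \xrightarrow{\nu^-_{Y_n}} \tau^-Y_n$. The construction then amounts to identifying $Z_n$ as a direct summand of $\theta^-Y_n$ in a way compatible with the previous ladder rung, producing a decomposition $\theta^-Y_n = Z_n \oplus Y_{n+1}$ (which defines $Y_{n+1}$), and analogously a decomposition $\tau^-Y_n = Z_{n+1} \oplus U_{n+1}$ (which defines $Z_{n+1}$ and $U_{n+1}$). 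The arrows $f_n$, $b_{n+1}$, $g_n$, and $h_n$ are then just the matrix components of $\mu^-_{Y_n}$ and $\nu^-_{Y_n}$ with respect to these block decompositions; commutativity $b_{n+1} f_n = g_n b_n$ and the vanishing $h_n b_n = 0$ drop out of the fact that the original $(\mu^-_{Y_n}, \nu^-_{Y_n})$ is already a complex.

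For part (b), the plan is to convert the ladder into a radical-filtration resolution of the representable functor $\CC(X,-)$. The defining axiom of a left $\tau$-sequence gives exactness of $\CC(\theta^-Y_n,-) \to J_{\CC}(Y_n,-) \to 0$, and splicing these via the cones in (a) yields by induction on $n$ an epimorphism $\CC(Y_n,-) \twoheadrightarrow J_{\CC}^n(X,-)$ whose kernel factors through $\CC(Y_{n+1},-)$. Passing to the quotient and using that $(\CC/J_{\CC})(Y_n,-)$ is the top of $\CC(Y_n,-)$ gives the asserted functor isomorphism $(J_{\CC}^n/J_{\CC}^{n+1})(X,-) \simeq (\CC/J_{\CC})(Y_n,-)$. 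When $\bigcap_{i\ge 0} J_{\CC}^i = 0$, the filtration on $\CC(X,Y)$ is separated and exhaustive, so its dimension equals the sum of its successive-quotient dimensions; evaluating $(\CC/J_{\CC})(Y_n,Y)$ at an indecomposable $Y$, whose endomorphism ring is local with residue field $k$ by the Krull--Schmidt hypothesis, gives precisely $m_Y(Y_n)$.

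The main obstacle is the bookkeeping in (a): one must show that $Z_n$ can consistently be peeled off as a summand of $\theta^-Y_n$ at every stage, so that the bottom row $Z_0 \to Z_1 \to Z_2 \to \cdots$ is itself a coherent sequence and the ``ladder'' glues together. This is the content of the Igusa--Todorov ladder construction generalized to the $\tau$-category setting, and the technical heart of the verification is that the left-minimality of the morphisms in each left $\tau$-sequence forces the required summand decompositions to be compatible with those arising at the previous rung. Once (a) is secured, part (b) reduces to a clean functorial bookkeeping argument in the module category over $\CC$.
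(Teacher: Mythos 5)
First, a point of comparison: the paper does not actually prove Theorem~\ref{ladder}. It is explicitly recalled from \cite[Th. 3.3, 4.1]{I1} (with special cases due to Igusa--Todorov \cite{IT1}), so there is no in-paper proof to measure your argument against. Judged on its own terms, your outline follows the same route as the cited source: build the ladder inductively from the left $\tau$-sequences of the objects $Y_n$, then read off (b) from the resulting radical filtration of $\CC(X,-)$. Several of your observations are correct and worth keeping: the morphisms $f_n$, $b_{n+1}$, $g_n$, $h_n$ are indeed the block components of $\mu^-_{Y_n}$ and $\nu^-_{Y_n}$; the relations $b_{n+1}f_n=g_nb_n$ and $h_nb_n=0$ are exactly the statement that $\nu^-_{Y_n}\mu^-_{Y_n}=0$; and for indecomposable $Y$ the identification $\dim_k(\CC/J_{\CC})(Y_n,Y)=m_Y(Y_n)$ follows from Krull--Schmidt and the local-endomorphism hypothesis as you say.

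As a proof, however, the proposal has a genuine gap, and it sits exactly where you locate the ``main obstacle''. At stage $n$, the object $Z_n$ and the morphism $b_n\in J_{\CC}(Y_n,Z_n)$ have already been produced at the previous rung as part of $\nu^-_{Y_{n-1}}$, via a decomposition $\tau^-Y_{n-1}\simeq Z_n\oplus U_n$ in which the $Y_n\to U_n$ component vanishes. The inductive step then requires that the left $\tau$-sequence of $Y_n$ can be written with middle term $Z_n\oplus Y_{n+1}$ and with first map whose $Z_n$-component is this \emph{prescribed} $b_n$. That $Z_n$ splits off $\theta^-Y_n$ at all, let alone compatibly with data inherited from the previous rung, is not a formal consequence of the $\tau$-category axioms plus left minimality as stated; it is the substance of the ladder existence theorem, proved in \cite{I1} by a separate induction on $\tau$-sequences of two-termed complexes. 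Writing that minimality ``forces the required summand decompositions to be compatible'' names the mechanism without supplying it. Likewise in (b): surjectivity of $\CC(Y_n,-)\to J_{\CC}^n(X,-)$ does follow from the splicing you describe (the $Z_n$-component dies because $b_nf_{n-1}=g_{n-1}b_{n-1}$ and $Z_0=0$), but the injectivity of the induced map $(\CC/J_{\CC})(Y_n,-)\to(J_{\CC}^n/J_{\CC}^{n+1})(X,-)$ is again a minimality argument your sketch does not carry out. If your intention is to cite \cite{I1} for these steps, that is precisely what the paper does; if you intend a self-contained proof, these are the steps that remain to be written.
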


We know $\dim_k\CC(X,Y)$ by (b) if we calculate the terms $Y_n$ explicitly.
By \eqref{cone}, we have
\[Z_{n}\oplus Y_{n+1}\simeq\theta^-Y_{n}\ \mbox{ and }\ 
Z_{n+1}\oplus U_{n+1}\simeq\tau^-Y_n.\]
We denote by $K_0(\CC)$ the Grothendieck group of the additive category $\CC$.
Thus $K_0(\CC)$ is the free abelian group generated by isoclasses of
indecomposable objects in $\CC$ by Krull-Schmidt property.
Any $X\in K_0(\CC)$ can be written uniquely as $X=X_+-X_-$ for $X_+,X_-\in\CC$
such that $X_+$ and $X_-$ have no non-zero common direct summand.
We have an equality
\begin{equation}\label{Y and U}
Y_n=\theta^-Y_{n-1}-Z_{n-1}=\theta^-Y_{n-1}-\tau^-Y_{n-2}+U_{n-1}
\end{equation}
in $K_0(\CC)$ for $n\ge2$.
It is shown in \cite{I1} that $Y_n$ and $U_{n-1}$ have no non-zero
common direct summand for any $n\ge1$. Immediately we have the following
recursion formula \cite[Th. 7.1]{I1} from \eqref{Y and U}.

\begin{thm}\label{formula}
In Theorem \ref{ladder}, we have the following
equalities in $K_0(\CC)$.
\begin{eqnarray*}
&Y_0=X,\ Y_1=\theta^-X,\ Y_n=(\theta^-Y_{n-1}-\tau^-Y_{n-2})_+\ (n\ge2),&\\
&Z_n=\theta^-Y_n-Y_{n+1},\ U_n=(\theta^-Y_n-\tau^-Y_{n-1})_-.&
\end{eqnarray*}
\end{thm}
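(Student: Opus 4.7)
The plan is to read off everything directly from the defining data of the left ladder supplied by Theorem~\ref{ladder}(a), combined with the cited property (from \cite{I1}) that for every $n\ge1$ the objects $Y_n$ and $U_{n-1}$ share no non-zero direct summand.

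First, I would dispose of the initial cases. By construction of the ladder we have $Y_0=X$, and the left $\tau$-sequence \eqref{cone} for $n=0$ reads $Y_0\to Z_0\oplus Y_1\to Z_1\oplus U_1$ with $Z_0=0$. Since left $\tau$-sequences compute cosyzygies, the middle term is $\theta^-Y_0$, so $Y_1\simeq\theta^-X$ and the formulas for $n=0,1$ hold on the nose (not merely in $K_0$).

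Next, for general $n$, the same observation applied to \eqref{cone} yields two $K_0$-level equalities, namely
\[
\theta^-Y_n=Z_n+Y_{n+1}\quad\text{and}\quad \tau^-Y_n=Z_{n+1}+U_{n+1}
\]
in $K_0(\CC)$. The first of these, read off as $Z_n=\theta^-Y_n-Y_{n+1}$, is exactly the required formula for $Z_n$. To obtain the recursion for $Y_n$, I would chain the two identities: using the first at index $n-1$ gives $Y_n=\theta^-Y_{n-1}-Z_{n-1}$, and using the second at index $n-2$ gives $Z_{n-1}=\tau^-Y_{n-2}-U_{n-1}$. Substituting produces the key identity
\[
Y_n-U_{n-1}=\theta^-Y_{n-1}-\tau^-Y_{n-2}
\]
in $K_0(\CC)$.

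The final step is to invoke the non-cancellation property: $Y_n$ and $U_{n-1}$ have no common non-zero indecomposable summand. Since $\CC$ is Krull--Schmidt, $K_0(\CC)$ is free abelian on isoclasses of indecomposables, so any element has a unique decomposition as the difference of its positive and negative parts, which is characterised precisely by the two components having disjoint supports. Hence the identity above forces $Y_n=(\theta^-Y_{n-1}-\tau^-Y_{n-2})_+$ and, after reindexing, $U_n=(\theta^-Y_n-\tau^-Y_{n-1})_-$, giving both remaining formulas simultaneously.

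The only genuinely substantive input here is the non-cancellation claim; everything else is linear algebra in $K_0(\CC)$ applied to the canonical exact sequences furnished by the ladder. That claim is part of the ladder package in \cite{I1} and comes from right-minimality of the morphisms $\mu^-$ and $\nu^-$ in the left $\tau$-sequences — a common indecomposable summand of $Y_n$ and $U_{n-1}$ would provide a non-minimal direct summand in one of the defining triangles, contradicting the minimality hypothesis. So I would simply cite it; reproving it would be the main obstacle if one insisted on a self-contained argument.
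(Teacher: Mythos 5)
Your argument is exactly the one the paper gives: the two $K_0$-identities $\theta^-Y_n=Z_n+Y_{n+1}$ and $\tau^-Y_n=Z_{n+1}+U_{n+1}$ read off from the left $\tau$-sequence \eqref{cone}, combined to give $Y_n-U_{n-1}=\theta^-Y_{n-1}-\tau^-Y_{n-2}$, and then the cited fact from \cite{I1} that $Y_n$ and $U_{n-1}$ share no common summand, which pins down the positive and negative parts by uniqueness in the free abelian group $K_0(\CC)$. The only quibble is your closing aside attributing the non-cancellation to \emph{right}-minimality of $\mu^-$ and $\nu^-$; in a left $\tau$-sequence these are \emph{left} minimal, but since you correctly defer to \cite{I1} for that input this does not affect the proof.
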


We can apply the above observation to calculate $\dim_k\Ext^1_R(-,R)=\dim_k\underline{\Hom}_R(\tau^-R,-)$.  We remark that this kind of counting argument first appeared in the work of Gabriel \cite{Gabriel}.

\begin{example}\label{D52(i)}\t{For the group $\mathbb{D}_{5,2}$ the AR quiver is
\[
\xymatrix@C=10pt@R=2pt{{\txt{\scriptsize R}}\ar[3,1]&&\bullet\ar[3,1]&&\bullet\ar[3,1]&&{\txt{\scriptsize R}}\\
&&&&&&\\
\bullet\ar[1,1]&&\bullet\ar[1,1]&&\bullet\ar[1,1]&&\bullet\\
&\bullet\ar[-3,1]\ar[-1,1]\ar[1,1]\ar[3,1]&&\bullet\ar[-3,1]\ar[-1,1]\ar[1,1]\ar[3,1]&&\bullet\ar[-3,1]\ar[-1,1]\ar[1,1]\ar[3,1]&\\
\bullet\ar[-1,1]&&\bullet\ar[-1,1]&&\bullet\ar[-1,1]&&\bullet\\
&&&&&&\\
\bullet\ar[-3,1]&&\bullet\ar[-3,1]&&\bullet\ar[-3,1]&&\bullet}
\]
where the left and right hand sides are identified and the AR translation shifts everything one place to the left. The counting argument begins as follows:
\[
\begin{array}{cccc}
\begin{array}{c}
\xymatrix@C=0pt@R=1pt{{{}\save[]*\txt{\scriptsize{R}}\restore}&&{{}\save[]*\txt{\scriptsize{1}}\restore}&&.&&{{}\save[]*\txt{\scriptsize{R}}\restore}\\
&&&&&&\\
.&&.&&.&&.\\
&.&&.&&.&\\
.&&.&&.&&.\\
&&&&&&\\
.&&.&&.&&.}\end{array}
&
\begin{array}{c}
\xymatrix@C=0pt@R=1pt{{}\save[]*\txt{\scriptsize R}\restore&&{}\save[]*\txt{\scriptsize 1}\restore&&.&&{}\save[]*\txt{\scriptsize R}\restore\\
&&&&&&\\
.&&.&&.&&.\\
&.&&{}\save[]*\txt{\scriptsize 1}\restore&&.&\\
.&&.&&.&&.\\
&&&&&&\\
.&&.&&.&&.}\end{array}
&
\begin{array}{c}
\xymatrix@C=0pt@R=1pt{{}\save[]*\txt{\scriptsize R}\restore&&{}\save[]*\txt{\scriptsize 1}\restore&&{}\save[]*\txt{\scriptsize 0}\restore&&{}\save[]*\txt{\scriptsize R}\restore\\
&&&&&&\\
.&&.&&{}\save[]*\txt{\scriptsize 1}\restore&&.\\
&.&&{}\save[]*\txt{\scriptsize 1}\restore&&.&\\
.&&.&&{}\save[]*\txt{\scriptsize 1}\restore&&.\\
&&&&&&\\
.&&.&&{}\save[]*\txt{\scriptsize 1}\restore&&.}\end{array}
&
\begin{array}{c}
\xymatrix@C=0pt@R=1pt{{}\save[]*\txt{\scriptsize R}\restore&&{}\save[]*\txt{\scriptsize 1}\restore&&{}\save[]*\txt{\scriptsize 0}\restore&&{}\save[]*\txt{\scriptsize R}\restore\\
&&&&&&\\
.&&.&&{}\save[]*\txt{\scriptsize 1}\restore&&.\\
&.&&{}\save[]*\txt{\scriptsize 1}\restore&&{}\save[]*\txt{\scriptsize 2}\restore&\\
.&&.&&{}\save[]*\txt{\scriptsize 1}\restore&&.\\
&&&&&&\\
.&&.&&{}\save[]*\txt{\scriptsize 1}\restore&&.}\end{array}\\
&&&\\
\mbox{\scriptsize Step 1: $Y_0=\tau^-R$}&\mbox{\scriptsize Step 2: $Y_1$}&\mbox{\scriptsize Step 3: $Y_2$}&\mbox{\scriptsize Step 4: $Y_3$}
\end{array}
\]
Continuing in this fashion we see
\[
\begin{array}{c}
\xymatrix@C=2pt@R=1pt{{}\save[]*\txt{\scriptsize R}\restore&&{}\save[]*\txt{\scriptsize 1}\restore&&{}\save[]*\txt{\scriptsize 0}\restore&&{}\save[]*\txt{\scriptsize R}\restore&&{}\save[]*\txt{\scriptsize 1}\restore&&{}\save[]*\txt{\scriptsize 0}\restore\\
&&&&&&&&&&\\
.&&.&&{}\save[]*\txt{\scriptsize 1}\restore&&{}\save[]*\txt{\scriptsize 1}\restore&&{}\save[]*\txt{\scriptsize 0}\restore&&{}\save[]*\txt{\scriptsize 0}\restore\\
&.&&{}\save[]*\txt{\scriptsize 1}\restore&&{}\save[]*\txt{\scriptsize 2}\restore&&{}\save[]*\txt{\scriptsize 1}\restore&&{}\save[]*\txt{\scriptsize 0}\restore&\\
.&&.&&{}\save[]*\txt{\scriptsize 1}\restore&&{}\save[]*\txt{\scriptsize 1}\restore&&{}\save[]*\txt{\scriptsize 0}\restore&&{}\save[]*\txt{\scriptsize 0}\restore\\
&&&&&&\\
.&&.&&{}\save[]*\txt{\scriptsize 1}\restore&&{}\save[]*\txt{\scriptsize 1}\restore&&{}\save[]*\txt{\scriptsize 0}\restore&&{}\save[]*\txt{\scriptsize 0}\restore}
\end{array}
\]
which after identifying CM modules gives us the following picture:
\[
\begin{array}{c}
\xymatrix@C=2pt@R=1pt{{}\save[]*\txt{\scriptsize R}\restore&&{}\save[]*\txt{\scriptsize 2}\restore&&{}\save[]*\txt{\scriptsize 0}\restore&&{}\save[]*\txt{\scriptsize R}\restore\\
&&&&&&\\
{}\save[]*\txt{\scriptsize 1}\restore&&{}\save[]*\txt{\scriptsize 0}\restore&&{}\save[]*\txt{\scriptsize 1}\restore&&{}\save[]*\txt{\scriptsize 1}\restore\\
&{}\save[]*\txt{\scriptsize 1}\restore&&{}\save[]*\txt{\scriptsize 1}\restore&&{}\save[]*\txt{\scriptsize 2}\restore&\\
{}\save[]*\txt{\scriptsize 1}\restore&&{}\save[]*\txt{\scriptsize 0}\restore&&{}\save[]*\txt{\scriptsize 1}\restore&&{}\save[]*\txt{\scriptsize 1}\restore\\
&&&&&&\\
{}\save[]*\txt{\scriptsize 1}\restore&&{}\save[]*\txt{\scriptsize 0}\restore&&{}\save[]*\txt{\scriptsize 1}\restore&&{}\save[]*\txt{\scriptsize 1}\restore}\\
\\
{ \scriptstyle{\dim_k\Ext^1_R(-,R)=\dim_k\underline{\Hom}_R(\tau^-R,-)}}
\end{array}
\]
From this we read off that the specials are precisely those which sit in the following positions in the AR quiver:}
\[
\begin{array}{c}
\xymatrix@C=0pt@R=1pt{{}\save[]*\txt{\scriptsize R}\restore&&.&&{{}\drop\xycircle<4pt,4pt>{}.}&&{}\save[]*\txt{\scriptsize R}\restore\\
&&&&&&\\
.&&{{}\drop\xycircle<4pt,4pt>{}.}&&.&&.\\
&.&&.&&.&\\
.&&{{}\drop\xycircle<4pt,4pt>{}.}&&.&&.\\
&&&&&&\\
.&&{{}\drop\xycircle<4pt,4pt>{}.}&&.&&.}\end{array}
\]
\end{example}
Associated to the left ladder in Theorem \ref{ladder}, we call a commutative diagram 
\[\begin{array}{llllllll}
X=Y_0&\xrightarrow{f_0}&Y_1&\xrightarrow{f_1}&Y_2&\xrightarrow{f_2}&Y_3&\to\cdots\\
\ \downarrow{\scriptstyle b_0}&&\ \downarrow{\scriptstyle b_1\oplus 0}&&
\ \downarrow{\scriptstyle b_2\oplus 0\oplus 0}&&
\ \downarrow{\scriptstyle b_3\oplus 0\oplus 0\oplus 0}&\\
0=Z_0&\xrightarrow{{g_0\choose h_0}}&Z_1\oplus U_1&\xrightarrow{{g_1\choose h_1}\oplus 1_{U_1}}
&Z_2\oplus U_2\oplus U_1&\xrightarrow{{g_2\choose h_2}\oplus 1_{U_2}\oplus 1_{U_1}}&
Z_3\oplus U_3\oplus U_2\oplus U_1&\to\cdots
\end{array}\]
an \emph{extended left ladder} of $X$ in $\CC$.

\begin{thm}\label{cosy}
Let $R$ be a two-dimensional quotient singularity and $\CC=\overline{\CM}(R)$.
For any $X\in\CM(R)$, define $U_n\in\CC$ by Theorem \ref{formula}.
Then $\Omega^-X\simeq\bigoplus_{n\ge0}U_n$ in $\CC$.
\end{thm}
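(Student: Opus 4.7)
The plan is to construct a left $\add\omega$-approximation of $X$ in $\CM(R)$ directly from the extended left ladder of $X$ in $\CC=\overline{\CM}(R)$, and to identify its cokernel with $\bigoplus_{n\ge 0}U_n$, which will then represent $\Omega^-X$.

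First I would lift each left $\tau$-sequence appearing in the ladder from $\CC$ back to $\CM(R)$.  By Proposition~\ref{tau1} together with Theorem~\ref{classical}(b), a left $\tau$-sequence in $\CC=\CM(R)/[\add\omega]$ is the image of a dual almost split sequence in $\CM(R)$ after stripping $\omega$-summands from the middle and outer terms.  Combining this with the identifications $\theta^-Y_n\simeq Z_n\oplus Y_{n+1}$ and $\tau^-Y_{n-1}\simeq Z_n\oplus U_n$ in $\CC$ (both consequences of Theorem~\ref{formula}), each rung of the ladder lifts to a short exact sequence
\[0\to Y_{n-1}\to Z_{n-1}\oplus Y_n\oplus\omega^{a_n}\to Z_n\oplus U_n\oplus\omega^{c_n}\to 0\]
in $\CM(R)$, where $a_n,c_n\ge 0$ record the $\omega$-multiplicities hidden in $\CC$.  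I would then splice these short exact sequences along the vertical morphisms $b_n$ of the extended ladder via iterated pushout, assembling a long exact sequence in $\CM(R)$.  Representation-finiteness of $\CM(R)$ for two-dimensional quotient singularities (Auslander) guarantees $\bigcap_{i\ge 0}J_\CC^i=0$, so the minimality of the morphisms $\mu^-_{Y_n}$ forces $Y_n\in\add\omega$, hence $Y_n=0$ in $\CC$, for $n$ sufficiently large.  The splicing therefore terminates and yields a short exact sequence
\[0\to X\to\omega^s\to\bigoplus_{n\ge 0}U_n\to 0\]
in $\CM(R)$, which by definition of the cosyzygy identifies $\bigoplus_{n\ge 0}U_n$ with $\Omega^-X$ in $\overline{\CM}(R)$.

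The main obstacle is the coherent bookkeeping of $\omega$-summands across the splicing: the $\omega^{c_n}$ appearing on the right of the $n$-th sequence must be absorbed into the $\omega^{a_{n+1}}$ in the middle of the $(n{+}1)$-st sequence via the maps $b_n$, so that $\omega$-summands accumulate only in the left-hand term of the total exact sequence and not in the cokernel.  This compatibility reflects the structural property of extended left ladders that the blocks $U_m$ for $m<n$ are preserved by all subsequent maps (which is exactly why the \emph{extended}, rather than plain, ladder is used); making this bookkeeping rigorous is the principal technical step of the proof.
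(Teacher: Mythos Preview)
Your approach is in the same spirit as the paper's, but the \emph{direction} is reversed, and this creates a real technical gap that you have not closed.  The paper constructs a diagram directly in $\CM(R)$ (building $A_n\to B_n$ iteratively from almost split sequences, by taking at each step the maximal summand of a certain morphism that lies in $J_{\CM(R)}$) and only afterwards observes that its image in $\CC=\overline{\CM}(R)$ is an extended left ladder.  You instead start with the ladder in $\CC$ and propose to lift each rung to a short exact sequence in $\CM(R)$.  The difficulty is that the ladder in $\CC$ is a commutative diagram of morphisms \emph{in $\CC$}; when you lift the rungs and the vertical maps $b_n$ separately to $\CM(R)$, the resulting squares need only commute modulo $[\add\omega]$, so your ``splice via iterated pushout'' is not well-defined until you arrange genuine commutativity in $\CM(R)$.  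The paper's top-down construction sidesteps this entirely: commutativity in $\CM(R)$ is built in from the start, and the identification with $Y_n,Z_n,U_n$ in $\CC$ is a consequence rather than an input.

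Two smaller points.  First, your description of the obstacle as ``$\omega$-bookkeeping'' is only half of the story: equally important is tracking which summands of $\tau^- Y_{n-1}$ carry a map in $J_{\CM(R)}$ (these continue as $Z_n$) versus which do not (these split off as $U_n$); in the paper this is exactly the role of defining $a_{n+1}$ as the maximal summand contained in $J_{\CM(R)}$.  Second, in your final exact sequence $0\to X\to\omega^s\to\bigoplus U_n\to 0$ the $\omega$-summands accumulate in the \emph{middle} term, not the ``left-hand term'' as you wrote; and in $\CM(R)$ the cokernel will in general be $\bigoplus U_n$ only up to further $\omega$-summands (which is of course harmless in $\CC$).
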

\begin{proof}
(i) We shall construct a commutative diagram
\begin{equation}\label{preladder}
\begin{array}{cccccccc}
X=A_0&\xrightarrow{f_0}&A_1&\xrightarrow{f_1}&A_2&\xrightarrow{f_2}&A_3&\xrightarrow{f_3}\cdots\\
\downarrow^{a_0}&&\downarrow^{a_1}&&\downarrow^{a_2}&&\downarrow^{a_3}&\\
0=B_0&\xrightarrow{g_0}&B_1&\xrightarrow{g_1}&B_2&\xrightarrow{g_2}&B_3&\xrightarrow{g_3}\cdots
\end{array}\end{equation}
in $\CM(R)$ with $a_n\in J_{\CM(R)}$ as follows.

When we have a morphism $a_n\in J_{\CM(R)}$, we write $a_n\simeq
(b_n\ c_n):A_n\simeq C_n\oplus I_n\to B_n$,
where $I_n$ is a maximal summand of $A_n$ contained in $\add\omega$.
Since $b_n\in J_{\CM(R)}$, we have a commutative diagram
\[\begin{array}{lllll}
C_n&\xrightarrow{\mu^-_{C_n}}&\theta^-C_n&\xrightarrow{\nu^-_{C_n}}&\tau^-C_n\\
\ \downarrow{\scriptstyle {1\choose 0}}&&\ \downarrow{\scriptstyle d_n}\\
A_n\simeq C_n\oplus I_n&\xrightarrow{(b_n\ c_n)}&B_n
\end{array}\]
This gives a commutative diagram 
\[\begin{array}{lll}
A_n\simeq C_n\oplus I_n&\xrightarrow{{\mu^-_{C_n}\ 0\choose\ 0\ \ \ 1}}&\theta^-C_n\oplus I_n\\
\ \downarrow{\scriptstyle a_n\simeq(b_n\ c_n)}&&\ \downarrow{{\nu^-_{C_n}\ 0\choose\ d_n\ \ c_n}}\\
B_n&\xrightarrow{{0\choose 1}}&\tau^-C_n\oplus B_n
\end{array}\]
Let $a_{n+1}:A_{n+1}\to B_{n+1}$ be a maximal direct summand of the two-termed complex
${\nu^-_{C_n}\ 0\choose\ d_n\ \ c_n}:\theta^-C_n\oplus I_n\to\tau^-C_n\oplus B_n$ contained in $J_{\CM(R)}$.
Then we have a commutative diagram
\[\begin{array}{lll}
A_n&\xrightarrow{f_n}&A_{n+1}\\
\ \downarrow{\scriptstyle a_n}&&\ \downarrow{\scriptstyle a_{n+1}}\\
B_n&\xrightarrow{g_n}&B_{n+1}.
\end{array}\]

(ii) From our construction, the sequence
\[0\to A_n\xrightarrow{{a_n\choose -f_n}}B_n\oplus A_{n+1}
\xrightarrow{(g_n\ a_{n+1})}B_{n+1}\to0\]
is isomorphic to a direct sum of an almost split sequence of $C_n$ and a complex
\[0\to I_n\xrightarrow{{1\choose 0}}I_n\oplus X_n\xrightarrow{(0\ 1)}X_n\to0\]
for some $X_n\in\CM(R)$.

(iii) By (ii), the image of the commutative diagram \eqref{preladder}
under the functor $\CM(R)\to\overline{\CM}(R)=\CC$ is an extended left ladder of $X$ in the $\tau$-category $\CC$.
Thus we have
\[A_n\simeq Y_n\ \mbox{ and }\ B_n\simeq Z_n\oplus(\bigoplus_{i=1}^nU_i)\]
in $\CC$ for any $n$.
On the other hand, using (ii) and the commutative diagram \eqref{preladder}, one can inductively show that
\[0\to X\xrightarrow{f_{n-1}\cdots f_1f_0}A_n\xrightarrow{a_n}B_n\to0\]
is an exact sequence for any $n\ge0$.

Since $R$ is representation-finite, we have $J_{\CC}^m=0$ for sufficiently large $m$. Then $A_m\simeq Y_m=0$ and $Z_m=0$ hold in $\CC$.
Hence we have $A_m\in\add\omega$ and $B_m=\Omega^-X$ in $\CM(R)$.
Consequently, we have $\Omega^-X=B_m\simeq\bigoplus_{i=1}^mU_i$ in $\CC$.
\end{proof}

In practice in this paper we use the dual of the above result.  To do this is standard, but we must first set up notation.  Firstly, we have the following dual version of Theorem~\ref{ladder} and Theorem~\ref{formula}.
\begin{thm}\label{rightladder}
Let $\CC$ be a $\tau$-category and $X\in\CC$.
\begin{itemize}
\item[(a)] There exists a commutative diagram (called a \emph{right ladder} of $X$)
\[\begin{array}{ccccccccc}
\cdots\xrightarrow{g^{\prime}_3}& Z^{\prime}_3&\xrightarrow{g^{\prime}_2}&Z^{\prime}_2&\xrightarrow{g^{\prime}_1}&Z^{\prime}_1&\xrightarrow{g^{\prime}_0}&Z^{\prime}_0=0\\
&\downarrow^{b^{\prime}_3}&&\downarrow^{b^{\prime}_2}&&\downarrow^{b^{\prime}_1}&&\downarrow^{b^{\prime}_0}&\\
\cdots\xrightarrow{f^{\prime}_3}& Y^{\prime}_3&\xrightarrow{f^{\prime}_2}&Y^{\prime}_2&\xrightarrow{f^{\prime}_1}&Y^{\prime}_1&\xrightarrow{f^{\prime}_0}&Y^{\prime}_0=X,
\end{array}\]
and objects $U^{\prime}_{n+1}\in\CC$ and a morphism $h^{\prime}_n\in\CC(U^{\prime}_{n+1},Z^{\prime}_{n})$ such that
\[
Z^{\prime}_{n+1}\oplus U^{\prime}_{n+1}\xrightarrow{{g^{\prime}_n\ h^{\prime}_{n}\choose b^{\prime}_{n+1}\ 0}}
Z^{\prime}_{n}\oplus Y^{\prime}_{n+1}\xrightarrow{(b^{\prime}_n\ -f^{\prime}_n)}Y^{\prime}_{n}
\]
is a right $\tau$-sequence for any $n\ge0$.
\item[(b)] For any $n\ge0$, we have an isomorphism
$(J_{\CC}^n/J_{\CC}^{n+1})(-,X)\simeq(\CC/J_{\CC})(-,Y^{\prime}_n)$
of functors on $\CC$.
In particular, if $\bigcap_{i\ge0}J_{\CC}^i=0$, then
\[\dim_k\CC(Y,X)=\sum_{n\ge0}m_Y(Y^{\prime}_n)\]
holds for any indecomposable $Y\in\CC$.
\item[(c)] We have the following equalities in $K_0(\CC)$.
\begin{eqnarray*}
&Y^{\prime}_0=X,\ Y^{\prime}_1=\theta X,\ Y^{\prime}_{n}=(\theta Y^{\prime}_{n-1}-\tau Y^{\prime}_{n-2})_+\ (n\ge2),&\\
&Z^{\prime}_n=\theta Y^{\prime}_{n}-Y^{\prime}_{n+1},\ U^{\prime}_n=(\theta Y^{\prime}_n-\tau Y^{\prime}_{n-1})_-.&
\end{eqnarray*}
\end{itemize}
\end{thm}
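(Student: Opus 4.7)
The approach is to exploit the self-duality of the definition of a $\tau$-category. The plan is to apply Theorem \ref{ladder} and Theorem \ref{formula} to the opposite category $\CC^{\op}$ and then translate the result back to $\CC$.

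First, I would verify that $\CC^{\op}$ is again a $\tau$-category. The Krull-Schmidt condition (a) is self-dual. The conditions (b) and (c) in the definition of a $\tau$-category swap under passing to the opposite category: the right $\tau$-sequence $\tau X \to \theta X \to X$ in $\CC$ becomes a left $\tau$-sequence in $\CC^{\op}$, and vice versa. Consequently $\CC^{\op}$ is a $\tau$-category whose translation and middle-term functors are the formal opposites of $\tau^-$ and $\theta^-$ in $\CC$, while its ``left'' $\tau$-sequences correspond to the right $\tau$-sequences of $\CC$. The $k$-linearity and the finiteness of $\dim_k(\CC/J_{\CC})(X,Y)$ are preserved.

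Second, I would apply Theorem \ref{ladder}(a) to $X$ viewed as an object of $\CC^{\op}$. This produces a left ladder in $\CC^{\op}$, which upon reversing all arrows yields a commutative diagram in $\CC$ of precisely the shape asserted in (a). Setting $Y'_n$, $Z'_n$, $U'_n$ to be the objects produced by this process, the defining exact sequences for left $\tau$-sequences in $\CC^{\op}$ become exact sequences for right $\tau$-sequences in $\CC$, giving the desired identity
\[
Z^{\prime}_{n+1}\oplus U^{\prime}_{n+1}\xrightarrow{{g^{\prime}_n\ h^{\prime}_{n}\choose b^{\prime}_{n+1}\ 0}}
Z^{\prime}_{n}\oplus Y^{\prime}_{n+1}\xrightarrow{(b^{\prime}_n\ -f^{\prime}_n)}Y^{\prime}_{n}
\]
as a right $\tau$-sequence in $\CC$. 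Minimality is also preserved under the duality, since left minimality of a morphism in $\CC^{\op}$ is precisely right minimality of the corresponding morphism in $\CC$.

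Third, for part (b), the isomorphism of functors $(J_{\CC^{\op}}^n/J_{\CC^{\op}}^{n+1})(X,-) \simeq (\CC^{\op}/J_{\CC^{\op}})(Y_n,-)$ given by Theorem \ref{ladder}(b) in $\CC^{\op}$ translates, under the identification $J_{\CC^{\op}} = (J_{\CC})^{\op}$, to the stated isomorphism $(J_{\CC}^n/J_{\CC}^{n+1})(-,X) \simeq (\CC/J_{\CC})(-,Y'_n)$; and the dimension formula follows by evaluating at indecomposable $Y$. For part (c), the recursion in Theorem \ref{formula} uses the functors $\theta^-$ and $\tau^-$ of the ambient $\tau$-category, which in $\CC^{\op}$ correspond respectively to $\theta$ and $\tau$ of $\CC$; substituting yields the formulas $Y'_0=X$, $Y'_1=\theta X$, and $Y'_n=(\theta Y'_{n-1}-\tau Y'_{n-2})_+$, together with the analogous expressions for $Z'_n$ and $U'_n$. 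The only obstacle I anticipate is verifying that the duality correctly matches up all of the bookkeeping (maximality of direct summands, the decomposition $X = X_+ - X_-$ in $K_0$), but since both are self-dual notions this is a formal check rather than a substantive difficulty.
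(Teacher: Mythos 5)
Your argument is correct and is precisely what the paper intends: the paper states Theorem~\ref{rightladder} as ``the dual version of Theorem~\ref{ladder} and Theorem~\ref{formula}'' with no written proof, remarking only that the dualization is standard. Your verification that $\CC^{\op}$ is again a $\tau$-category (with conditions (b) and (c) of the definition interchanged, so that $\theta^-$ and $\tau^-$ in $\CC^{\op}$ correspond to $\theta$ and $\tau$ in $\CC$) and your translation of the left ladder, the functorial isomorphism, and the recursion back to $\CC$ fill in exactly the intended routine argument.
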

This leads to the dual version of Theorem~\ref{cosy}
\begin{thm}
Let $R$ be a two-dimensional quotient singularity and $\CC=\underline{\CM}(R)$.
For any $X\in\CM(R)$, define $U^{\prime}_n\in\CC$ by Theorem \ref{rightladder}(c).
Then $\Omega X\simeq\bigoplus_{n\ge0}U^{\prime}_n$ in $\CC$.
\end{thm}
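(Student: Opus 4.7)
The plan is to dualize the proof of Theorem~\ref{cosy} step by step, using right ladders instead of left ladders and the AR translation $\tau$ instead of $\tau^-$. Since $\overline{\CM}(R)$ and $\underline{\CM}(R)$ are both $\tau$-categories, and since the formal roles of ``projective'' and ``injective'' in $\CM(R)$ are played by $R$ and $\omega$ respectively, the argument should carry over verbatim after replacing $\add\omega$ with $\add R$, $\theta^-$ with $\theta$, and $\tau^-$ with $\tau$.

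First I would construct, inductively, a commutative diagram in $\CM(R)$
\[\begin{array}{ccccccccc}
\cdots\xrightarrow{g_3}& B_3&\xrightarrow{g_2}&B_2&\xrightarrow{g_1}&B_1&\xrightarrow{g_0}&B_0=0\\
&\downarrow^{a_3}&&\downarrow^{a_2}&&\downarrow^{a_1}&&\downarrow^{a_0}\\
\cdots\xrightarrow{f_3}& A_3&\xrightarrow{f_2}&A_2&\xrightarrow{f_1}&A_1&\xrightarrow{f_0}&A_0=X,
\end{array}\]
with each $a_n\in J_{\CM(R)}$. At each stage I decompose $A_n\simeq C_n\oplus P_n$ where $P_n\in\add R$ is a maximal projective direct summand; write $a_n=(b_n\ c_n)$ accordingly. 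Since $b_n\in J_{\CM(R)}$, it factors through $\theta C_n\to C_n$, giving a lift $d_n\colon\tau C_n\to B_n$. Set $a_{n+1}\colon A_{n+1}\to B_{n+1}$ to be a maximal summand of the resulting two-term complex $\tau C_n\oplus B_n\to\theta C_n\oplus P_n$ whose underlying morphism lies in $J_{\CM(R)}$.

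Next I would verify that the short sequence
\[0\to B_{n+1}\xrightarrow{{g_n\choose -a_{n+1}}}B_n\oplus A_{n+1}\xrightarrow{(a_n\ f_n)}A_n\to0\]
is, up to splitting off a trivial summand involving $P_n$, the almost split sequence ending at $C_n$; in particular it is exact in $\CM(R)$. Splicing these together by induction yields short exact sequences
\[0\to B_n\to A_n\to X\to0\qquad(n\ge0).\]
Passing to $\CC=\underline{\CM}(R)$, the above diagram becomes an extended right ladder of $X$ in the $\tau$-category $\CC$, so by construction of the $U'_n$ in Theorem~\ref{rightladder} we obtain
\[A_n\simeq Y'_n\quad\text{and}\quad B_n\simeq Z'_n\oplus\bigoplus_{i=1}^n U'_i\]
in $\CC$.

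Finally, since $R$ is a two-dimensional quotient singularity it is representation-finite, so $J_\CC^m=0$ for $m$ large. By Theorem~\ref{rightladder}(b), this forces $Y'_m=0$ and $Z'_m=0$ in $\CC$; lifting back, $A_m\in\add R$, and the exact sequence $0\to B_m\to A_m\to X\to0$ identifies $B_m$ with $\Omega X$ up to a free summand. Therefore $\Omega X\simeq B_m\simeq\bigoplus_{i\ge0}U'_i$ in $\CC$, as desired.

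The main obstacle is book-keeping: verifying in step two that the complex one splits off really is an almost split sequence up to a summand involving the projective $P_n$, and keeping track of how projective summands appearing in the $A_n$ translate to trivial objects in $\CC$ so that the ladder data in $\CM(R)$ matches the ladder data produced by Theorem~\ref{rightladder}(c) in $\CC$. Once this translation is firmly in place, the rest of the proof is a mechanical dualization of Theorem~\ref{cosy}.
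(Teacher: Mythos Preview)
Your approach is exactly what the paper does: it states this theorem as the formal dual of Theorem~\ref{cosy} and gives no separate proof, so your plan to dualize that argument step by step (replacing $\add\omega$ by $\add R$, left $\tau$-sequences by right ones, and $\Omega^-$ by $\Omega$) is precisely the intended one.

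That said, a few of your dualized maps point the wrong way and would not compile into an actual proof as written. With $a_n\colon B_n\to A_n\simeq C_n\oplus P_n$, the decomposition should be a column $a_n=\binom{b_n}{c_n}$ with $b_n\colon B_n\to C_n$; factoring $b_n$ through the sink map $\mu_{C_n}\colon\theta C_n\to C_n$ produces a lift $d_n\colon B_n\to\theta C_n$ (not $\tau C_n\to B_n$); and the next morphism $a_{n+1}$ should again go $B_{n+1}\to A_{n+1}$, arising as a maximal radical summand of the two-term complex $\binom{\nu_{C_n}\ d_n}{0\ \ c_n}\colon \tau C_n\oplus B_n\to\theta C_n\oplus P_n$. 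Once these directions are corrected, your steps (ii) and (iii) go through verbatim as the dual of the corresponding steps in Theorem~\ref{cosy}.
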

We now illustrate how to calculate the syzygy. 
\begin{example}
\t{Consider the group $\mathbb{D}_{14,9}$, then the AR quiver of $\C{}[[x,y]]^{\mathbb{D}_{14,9}}$ is
\[
\begin{array}{c}
\xymatrix@R=6pt@C=6pt{R\ar[1,1] &&\bullet\ar[1,1]&&\bullet\ar[1,1]&&\bullet\ar[1,1]&&\bullet\ar[1,1]&&R\\
\bullet\ar[r]&\bullet\ar[r]\ar[-1,1]\ar[1,1]&\bullet\ar[r]&\bullet\ar[r]\ar[-1,1]\ar[1,1]&\bullet\ar[r]&\bullet\ar[r]\ar[-1,1]\ar[1,1]&\bullet\ar[r]&\bullet\ar[r]\ar[-1,1]\ar[1,1]&\bullet\ar[r]&\bullet\ar[r]\ar[-1,1]\ar[1,1]&\bullet\\
\bullet\ar[-1,1]\ar[1,1]&&\bullet\ar[-1,1]\ar[1,1]&&\bullet\ar[-1,1]\ar[1,1]&&\bullet\ar[-1,1]\ar[1,1]&&\bullet\ar[-1,1]\ar[1,1]&&\bullet\\
&\bullet\ar[-1,1]\ar[1,1]&&\bullet\ar[-1,1]\ar[1,1]&&\bullet\ar[-1,1]\ar[1,1]&&\bullet\ar[-1,1]\ar[1,1]&&\bullet\ar[-1,1]\ar[1,1]&&\\
\bullet\ar[-1,1]\ar[1,1]&&\bullet\ar[-1,1]\ar[1,1]&&_{V_4}\ar[-1,1]\ar[1,1]&&\bullet\ar[-1,1]\ar[1,1]&&\bullet\ar[-1,1]\ar[1,1]&&\bullet\\
&\bullet\ar[1,1]\ar[-1,1]&&\bullet\ar[-1,1]\ar[1,1]&&\bullet\ar[1,1]\ar[-1,1]&&\bullet\ar[-1,1]\ar[1,1]&&\bullet\ar[-1,1]\ar[1,1]&&\\
\bullet\ar[1,1]\ar[-1,1]&&\bullet\ar[-1,1]\ar[1,1]&&\bullet\ar[1,1]\ar[-1,1]&&\bullet\ar[-1,1]\ar[1,1]&&\bullet\ar[-1,1]\ar[1,1]&&\bullet\\
&\bullet\ar[-1,1]\ar[1,1]&&\bullet\ar[-1,1]\ar[1,1]&&\bullet\ar[-1,1]\ar[1,1]&&\bullet\ar[-1,1]\ar[1,1]&&\bullet\ar[-1,1]\ar[1,1]&\\
\bullet\ar[r]\ar[-1,1]\ar[1,1]&\bullet\ar[r]&\bullet\ar[r]\ar[-1,1]\ar[1,1]&\bullet\ar[r]&\bullet\ar[r]\ar[-1,1]\ar[1,1]&\bullet\ar[r]&\bullet\ar[r]\ar[-1,1]\ar[1,1]&\bullet\ar[r]&\bullet\ar[r]\ar[-1,1]\ar[1,1]&\bullet\ar[r]&\bullet\\
&\bullet\ar[-1,1]&&\bullet\ar[-1,1]&&\bullet\ar[-1,1]&&\bullet\ar[-1,1]&&\bullet\ar[-1,1]&}
\end{array}
\]
where the left and right hand sides of the picture are identified, and where we have illustrated the module $V_4$ whose syzygy we would like to compute.  To do this, proceed as follows:
\[
\begin{array}{cccc}
\begin{array}{c}
\xymatrix@R=0pt@C=-2pt{&{{}\save[]*\txt{\scriptsize{R}}\restore}&&.&&.&\\
.&.&.&.&.&.&.\\
&.&&.&&.&\\
.&&.&&.&&.\\
&.&&.&&{{}\save[]*\txt{\scriptsize{1}}\restore}&\\
.&&.&&.&&.\\
&.&&.&&.&\\
.&&.&&.&&.\\
.&.&.&.&.&.&.\\
.&&.&&.&&.
}
\end{array} &\begin{array}{c}
\xymatrix@R=0pt@C=-2pt{&{{}\save[]*\txt{\scriptsize{R}}\restore}&&.&&.&\\
.&.&.&.&.&.&.\\
&.&&.&&.&\\
.&&.&&{{}\save[]*\txt{\scriptsize{1}}\restore}&&.\\
&.&&.&&{{}\save[]*\txt{\scriptsize{1}}\restore}&\\
.&&.&&{{}\save[]*\txt{\scriptsize{1}}\restore}&&.\\
&.&&.&&.&\\
.&&.&&.&&.\\
.&.&.&.&.&.&.\\
.&&.&&.&&.
}
\end{array}&\begin{array}{c}
\xymatrix@R=0pt@C=-2pt{&{{}\save[]*\txt{\scriptsize{R}}\restore}&&.&&.&\\
.&.&.&.&.&.&.\\
&.&&{{}\save[]*\txt{\scriptsize{1}}\restore}&&.&\\
.&&.&&{{}\save[]*\txt{\scriptsize{1}}\restore}&&.\\
&.&&{{}\save[]*\txt{\scriptsize{1}}\restore}&&{{}\save[]*\txt{\scriptsize{1}}\restore}&\\
.&&.&&{{}\save[]*\txt{\scriptsize{1}}\restore}&&.\\
&.&&{{}\save[]*\txt{\scriptsize{1}}\restore}&&.&\\
.&&.&&.&&.\\
.&.&.&.&.&.&.\\
.&&.&&.&&.
}
\end{array}&\begin{array}{c}
\xymatrix@R=0pt@C=-2pt{&{{}\save[]*\txt{\scriptsize{R}}\restore}&&.&&.&\\
.&.&{{}\save[]*\txt{\scriptsize{1}}\restore}&.&.&.&.\\
&.&&{{}\save[]*\txt{\scriptsize{1}}\restore}&&.&\\
.&&{{}\save[]*\txt{\scriptsize{1}}\restore}&&{{}\save[]*\txt{\scriptsize{1}}\restore}&&.\\
&.&&{{}\save[]*\txt{\scriptsize{1}}\restore}&&{{}\save[]*\txt{\scriptsize{1}}\restore}&\\
.&&{{}\save[]*\txt{\scriptsize{1}}\restore}&&{{}\save[]*\txt{\scriptsize{1}}\restore}&&.\\
&.&&{{}\save[]*\txt{\scriptsize{1}}\restore}&&.&\\
.&&{{}\save[]*\txt{\scriptsize{1}}\restore}&&.&&.\\
.&.&.&.&.&.&.\\
.&&.&&.&&.
}
\end{array}\\ \mbox{\scriptsize Step 0: $Y^{\prime}_0=V_4$}&\mbox{\scriptsize Step 1: $Y^{\prime}_1$}&\mbox{\scriptsize Step 2: $Y^{\prime}_2$}&\mbox{\scriptsize Step 3: $Y^{\prime}_3$}
\end{array}
\]
Now in Step 4 below $R$ absorbs a 1 (since we are working in $\underline{\CM}(R)$), and then the calculation continues
\[
\begin{array}{cc}
\begin{array}{c}
\xymatrix@R=0pt@C=-2pt{&{{}\save[]*\txt{\scriptsize{R}}\restore}&&.&&.&\\
.&{{}\save[]*\txt{\scriptsize{1}}\restore}&{{}\save[]*\txt{\scriptsize{1}}\restore}&.&.&.&.\\
&{{}\save[]*\txt{\scriptsize{1}}\restore}&&{{}\save[]*\txt{\scriptsize{1}}\restore}&&.&\\
.&&{{}\save[]*\txt{\scriptsize{1}}\restore}&&{{}\save[]*\txt{\scriptsize{1}}\restore}&&.\\
&{{}\save[]*\txt{\scriptsize{1}}\restore}&&{{}\save[]*\txt{\scriptsize{1}}\restore}&&{{}\save[]*\txt{\scriptsize{1}}\restore}&\\
.&&{{}\save[]*\txt{\scriptsize{1}}\restore}&&{{}\save[]*\txt{\scriptsize{1}}\restore}&&.\\
&{{}\save[]*\txt{\scriptsize{1}}\restore}&&{{}\save[]*\txt{\scriptsize{1}}\restore}&&.&\\
.&&{{}\save[]*\txt{\scriptsize{1}}\restore}&&.&&.\\
.&{{}\save[]*\txt{\scriptsize{1}}\restore}&.&.&.&.&.\\
.&&.&&.&&.
}
\end{array}&\begin{array}{c}
\xymatrix@R=0pt@C=-2pt{&{{}\save[]*\txt{\scriptsize{R}}\restore}&&.&&.&\\
{{}\save[]*\txt{\scriptsize{1}}\restore}&{{}\save[]*\txt{\scriptsize{1}}\restore}&{{}\save[]*\txt{\scriptsize{1}}\restore}&.&.&.&.\\
&{{}\save[]*\txt{\scriptsize{1}}\restore}&&{{}\save[]*\txt{\scriptsize{1}}\restore}&&.&\\
{{}\save[]*\txt{\scriptsize{1}}\restore}&&{{}\save[]*\txt{\scriptsize{1}}\restore}&&{{}\save[]*\txt{\scriptsize{1}}\restore}&&.\\
&{{}\save[]*\txt{\scriptsize{1}}\restore}&&{{}\save[]*\txt{\scriptsize{1}}\restore}&&{{}\save[]*\txt{\scriptsize{1}}\restore}&\\
{{}\save[]*\txt{\scriptsize{1}}\restore}&&{{}\save[]*\txt{\scriptsize{1}}\restore}&&{{}\save[]*\txt{\scriptsize{1}}\restore}&&.\\
&{{}\save[]*\txt{\scriptsize{1}}\restore}&&{{}\save[]*\txt{\scriptsize{1}}\restore}&&.&\\
{{}\save[]*\txt{\scriptsize{1}}\restore}&&{{}\save[]*\txt{\scriptsize{1}}\restore}&&.&&.\\
{{}\save[]*\txt{\scriptsize{1}}\restore}&{{}\save[]*\txt{\scriptsize{1}}\restore}&.&.&.&.&.\\
{{}\save[]*\txt{\scriptsize{1}}\restore}&&.&&.&&.
}
\end{array}\\
\mbox{\scriptsize Step 4: $Y^{\prime}_4$}&\mbox{\scriptsize Step 5: $Y^{\prime}_5$}
\end{array}
\]
Continuing in this fashion we get{\tiny{
\[
\begin{array}{c}
\xymatrix@R=-2pt@C=-2pt{.&&.&&.&&R&&0&&1&&0&&1&&R&&0&&1&&0&&1&&R&&.&&.&\\
.&.&.&.&.&0&0&1&1&1&0&1&1&1&0&1&1&2&1&1&0&1&1&1&0&1&1&1&.&.&.&.\\
.&&.&&0&&1&&1&&1&&1&&1&&2&&2&&1&&1&&1&&1&&1&&.&\\
&.&&0&&1&&1&&1&&1&&1&&2&&2&&2&&1&&1&&1&&1&&1&&.\\
.&&\minus 1&&1&&1&&1&&1&&1&&2&&2&&2&&2&&1&&1&&1&&1&&1&\\
&.&&0&&1&&1&&1&&1&&2&&2&&2&&2&&2&&1&&1&&1&&1&&.\\
.&&.&&0&&1&&1&&1&&2&&2&&2&&2&&2&&2&&1&&1&&1&&.&\\
&.&&.&&0&&1&&1&&2&&2&&2&&2&&2&&2&&2&&1&&1&&.&&.\\
.&.&.&.&.&.&0&0&1&1&2&1&2&1&2&1&2&1&2&1&2&1&2&1&2&1&1&.&.&.&.&.\\
&.&&.&&.&&0&&1&&1&&1&&1&&1&&1&&1&&1&&1&&.&&.&&.}
\end{array}
\]
}}from which we see that the only non-zero $U^{\prime}$ is $U^{\prime}_{27}$, which corresponds to the position of the $\minus 1$ in the above picture.  But we know that this is the position of $V_4^*$, so we deduce that $\Omega V_4\simeq V_4^*$.  Hence there is a short exact sequence}
\[
\begin{array}{c}
\xymatrix@C=20pt@R=20pt{0\ar[r]&V_4^*\ar[r]&{R^{4}}\ar[r]&{V_4}\ar[r]&0
}
\end{array}.
\]
\end{example}

\section{Type $\mathbb{A}$}
Consider the groups $\frac{1}{r}(1,a)$ where $1\leq a<r$ and
$(r,a)=1$.  Call the generator of the group $\sigma$.  To this
combinatorial data we associate the Hirzebruch-Jung continued
fraction expansion of $\frac{r}{a}$, namely
\[
\frac{r}{a}=\alpha_1-\frac{1}{\alpha_2 - \frac{1}{\alpha_3 -
\frac{1}{(...)}}} :=[\alpha_1,\hdots,\alpha_n]
\]
with each $\alpha_i\geq 2$.  The following combinatorics are crucial:
\begin{defin}
For $\frac{r}{a}=[\alpha_1,\hdots,\alpha_n]$ define the $i$-series as follows:
\[
\begin{array}{ccl}
i_0=r & i_1=a & i_{t}=\alpha_{t-1}i_{t-1}-i_{t-2}\mbox{ for }2\leq
t\leq n+1.
\end{array}
\]
\end{defin}
It's easy to see that
\[
\begin{array}{ccccccccccc}
i_0=r &>& i_1=a &>& i_2 &>& \hdots &>& i_n=1 &>& i_{n+1}=0.
\end{array}
\]
For the one-dimensional representation $\rho_t$ define
\[
S_{t}=\{ f\in\C{}[[x,y]] : \sigma\cdot f=\e^t f \}
\]
The following classification of the special CM modules in this case is well known:
\begin{thm}\cite{Wunram_cyclicBook}
For $G=\frac{1}{r}(1,a)$ with
$\frac{r}{a}=[\alpha_1,\hdots,\alpha_n]$, the special CM
modules are precisely those $S_{i_p}$ for $0\leq p\leq n$.
\end{thm}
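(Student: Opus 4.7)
The plan is to apply the counting argument of Section~4 to this cyclic case. By Theorem~\ref{characterization of SCM} we have $S_t\in\SCM(R)$ if and only if $\Ext^1_R(S_t,R)=0$, and by AR duality (Theorem~\ref{classical}(a)) this group is isomorphic to $D\underline{\Hom}_R(\tau^{-1}R,S_t)$. Thus the problem reduces to identifying those $t$ for which $\underline{\Hom}_R(\tau^{-1}R,S_t)=0$.

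By Auslander's theorem the AR quiver of $\CM(R)$ coincides with the McKay quiver of $G=\frac{1}{r}(1,a)$: the vertices are $S_0,\ldots,S_{r-1}$, the almost split sequence ending at $S_t$ has middle term $S_{t-1}\oplus S_{t-a}$, and $\tau$ is given by tensoring with the determinant character. With this description of the AR quiver and the consequent location of $\tau^{-1}R$ in it, I would then apply Theorem~\ref{ladder}(b) and Theorem~\ref{formula} with $X=\tau^{-1}R$, read off the sequence $Y_0,Y_1,Y_2,\ldots$ via the recursion $Y_n=(\theta^-Y_{n-1}-\tau^-Y_{n-2})_+$, and extract the multiplicity pattern $\sum_{n\ge 0}m_{S_t}(Y_n)=\dim_k\underline{\Hom}_R(\tau^{-1}R,S_t)$. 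This is the type $\mathbb{A}$ analogue of the computation carried out in Example~\ref{D52(i)}.

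The main obstacle is matching the combinatorics of the ladder with the arithmetic of the $i$-series. The key observation should be that each $\alpha_j\ge 2$ appearing in $r/a=[\alpha_1,\ldots,\alpha_n]$ records the number of ladder steps required before the sweep around the AR quiver produces the next zero in the multiplicity pattern. Writing down the positions of the successive zeros of $\underline{\Hom}_R(\tau^{-1}R,-)$ and using the explicit form of $\tau^-$ should yield precisely the recursion $i_t=\alpha_{t-1}i_{t-1}-i_{t-2}$ with initial data $i_0=r$ (corresponding to $R$ itself) and $i_1=a$, terminating at $i_n=1$, $i_{n+1}=0$. This would give that the specials are exactly $S_{i_0},S_{i_1},\ldots,S_{i_n}$, and the resulting count of $n+1$ specials (or $n$ non-free ones) agrees with the number of exceptional curves, as demanded by Theorem~\ref{Wurnam_main_result}. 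As a cross-check one can run the dual ladder (Theorem~\ref{rightladder}) to compute $\Omega S_t$ directly and verify via Corollary~\ref{equivalence}(c) that $(\Omega S_t)^*\simeq S_t$ stably precisely when $t=i_p$.
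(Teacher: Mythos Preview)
The paper does not actually prove this theorem: it cites the result from Wunram and explicitly says ``There are many different proofs of this fact, so we do not go out of our way in this paper to give another one.'' It then remarks that the counting argument of Section~4 recovers Ito's combinatorial description of the specials, and illustrates this in the single example $\tfrac{1}{17}(1,10)$, showing that the zeros of the count form the region $B(G)\backslash L(G)$ of \cite{Ito_special}.

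Your plan is therefore not a comparison against a proof in the paper but rather an attempt to supply what the paper only gestures at. The strategy you outline---reduce via Theorem~\ref{characterization of SCM} and AR duality to computing $\underline{\Hom}_R(\tau^{-1}R,S_t)$, then run the ladder recursion on the McKay quiver---is exactly the method the paper uses for types $\mathbb{D}$, $\mathbb{T}$, $\mathbb{O}$, $\mathbb{I}$ and is certainly the natural thing to try here. The honest admission that ``the main obstacle is matching the combinatorics of the ladder with the arithmetic of the $i$-series'' is accurate: this is the entire content of a proof along these lines, and you have not actually carried it out. To turn this into a proof you would need to show, by induction on the continued fraction length or by an explicit analysis of how the ladder propagates on $\mathbb{Z}A_\infty^\infty$, that the zero set of $\sum_n m_{S_t}(Y_n)$ is precisely $\{i_0,\ldots,i_n\}$. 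The paper's example suggests identifying this zero set with Ito's region $B(G)\backslash L(G)$ and then invoking \cite{Ito_special}, which may be the cleanest route.
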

There are many different proofs of this fact, so we do not go out of
our way in this paper to give another one.  We do however note that
our counting argument recovers another way to determine the specials in type $\mathbb{A}$ given by Ito \cite{Ito_special} using combinatorics of the $G$-Hilbert scheme.   Ito's result works only for cyclic groups (the toric case):
let us explain in an example why our counting argument
recovers her method.  Consider the group
$\frac{1}{17}(1,10)$.  The AR quiver is {\tiny{
\[
\xymatrix@C=10pt@R=10pt{{R}\ar[r]\ar[d]&{S_{10}}\ar[r]\ar[d]&{S_{3}}\ar[r]\ar[d]&{S_{13}}\ar[r]\ar[d]&{S_{6}}\ar[r]\ar[d]&{S_{16}}\ar[r]\ar[d]&{S_{9}}\ar[r]\ar[d]&{S_{2}}\ar[r]\ar[d]&{\hdots}\\
{S_{1}}\ar[r]\ar[d]&{S_{11}}\ar[r]\ar[d]&{S_{4}}\ar[r]\ar[d]&{S_{14}}\ar[r]\ar[d]&{S_{7}}\ar[r]\ar[d]&{R}\ar[r]\ar[d]&{S_{10}}\ar[r]\ar[d]&{S_{3}}\ar[r]\ar[d]&{\hdots}\\
{S_{2}}\ar[r]\ar[d]&{S_{12}}\ar[r]\ar[d]&{S_{5}}\ar[r]\ar[d]&{S_{15}}\ar[r]\ar[d]&{S_{8}}\ar[r]\ar[d]&{S_{1}}\ar[r]\ar[d]&{S_{11}}\ar[r]\ar[d]&{S_{4}}\ar[r]\ar[d]&{\hdots}\\
{S_{3}}\ar[r]\ar[d]&{S_{13}}\ar[r]\ar[d]&{S_{6}}\ar[r]\ar[d]&{S_{16}}\ar[r]\ar[d]&{S_{9}}\ar[r]\ar[d]&{S_{2}}\ar[r]\ar[d]&{S_{12}}\ar[r]\ar[d]&{S_{5}}\ar[r]\ar[d]&{\hdots}\\
{S_{4}}\ar[r]\ar[d]&{S_{14}}\ar[r]\ar[d]&{S_{7}}\ar[r]\ar[d]&{R}\ar[r]\ar[d]&{S_{10}}\ar[r]\ar[d]&{S_{3}}\ar[r]\ar[d]&{S_{13}}\ar[r]\ar[d]&{S_{6}}\ar[r]\ar[d]&{\hdots}\\
{S_{5}}\ar[r]\ar[d]&{S_{15}}\ar[r]\ar[d]&{S_{8}}\ar[r]\ar[d]&{S_{1}}\ar[r]\ar[d]&{S_{11}}\ar[r]\ar[d]&{S_{4}}\ar[r]\ar[d]&{S_{14}}\ar[r]\ar[d]&{S_{7}}\ar[r]\ar[d]&{\hdots}\\
{S_{6}}\ar[r]\ar[d]&{S_{16}}\ar[r]\ar[d]&{S_{9}}\ar[r]\ar[d]&{S_{2}}\ar[r]\ar[d]&{S_{12}}\ar[r]\ar[d]&{S_{5}}\ar[r]\ar[d]&{S_{15}}\ar[r]\ar[d]&{S_{8}}\ar[r]\ar[d]&{\hdots}\\
{S_{7}}\ar[r]\ar[d]&{R}\ar[r]\ar[d]&{S_{10}}\ar[r]\ar[d]&{S_{3}}\ar[r]\ar[d]&{S_{13}}\ar[r]\ar[d]&{S_{6}}\ar[r]\ar[d]&{S_{16}}\ar[r]\ar[d]&{S_{9}}\ar[r]\ar[d]&{\hdots}\\
{S_{8}}\ar[r]\ar[d]&{S_{1}}\ar[r]\ar[d]&{S_{11}}\ar[r]\ar[d]&{S_{4}}\ar[r]\ar[d]&{S_{14}}\ar[r]\ar[d]&{S_{7}}\ar[r]\ar[d]&{R}\ar[r]\ar[d]&{S_{10}}\ar[r]\ar[d]&{\hdots}\\
{\vdots}&{\vdots}&{\vdots}&{\vdots}&{\vdots}&{\vdots}&{\vdots}&{\vdots}
}
\]
}}where there is lots of identification.  We begin by placing a 1 in
the place of $\tau^{-1}R=S_{11}$ and begin counting: {\tiny{
\[
\xymatrix@C=0.1pt@R=0.1pt{.&.&.&.&.&.&.\\
.&1&1&1&1&R&0&\\
.&1&1&1&1&0&0\\
.&1&1&1&1&0&\\
.&1&1&R&0&0\\
.&1&1&0&0&\\
.&1&1&0&\\
.&R&0&0\\
.&0&0&}
\]
}}Thus we read off that the specials are precisely those
CM modules which do not lie in the region covered by 1's.  But
this is precisely the region denoted $B(G)\backslash L(G)$ in
\cite[3.7]{Ito_special}.

\section{Type $\mathbb{D}$}
In this section we consider the groups $\mathbb{D}_{n,q}$ with
$1\leq q<n$ and $(n,q)=1$.  To this combinatorial data we again
associate the Hirzebruch-Jung continued fraction expansion of
$\frac{n}{q}$, namely
\[
\frac{n}{q}=\alpha_1-\frac{1}{\alpha_2 - \frac{1}{\alpha_3 -
\frac{1}{(...)}}} :=[\alpha_1,\hdots,\alpha_N]
\]
and the corresponding $i$-series
$i_0=n>i_1=q>\hdots>i_N=1>i_{N+1}=0$.  By \cite[2.11]{Brieskorn} the
dual graph of the minimal resolution of $\C{2}/\mathbb{D}_{n,q}$ is
\[
\xymatrix@C=20pt@R=15pt{ &\bullet\ar@{-}[d]^<{-2}&&&\\
\bullet\ar@{-}[r]_<{-2} & \bullet\ar@{-}[r]_<{-\alpha_{1}}
&\hdots\ar@{-}[r] &\bullet\ar@{-}[r]_<{-\alpha_{N-1}} & \bullet
\ar@{}[r]_<{-\alpha_N}&}
\]
where the $\alpha$'s come from the the Hirzebruch-Jung continued
fraction expansion of $\frac{n}{q}$, and so $Z_f$ is either
\[
\begin{array}{ccc}
\begin{array}{c}\xymatrix@R=0pt@C=0pt{ &_{1}&&&\\
_{1}& _{1}&_{1}&\hdots&_{1}&_{1}}\end{array} &\mbox{or}&
\begin{array}{c}\xymatrix@R=2pt@C=2pt{ &_{1}&&&\\
_{1}& _{2}&\hdots&_{2}&_{1}&\hdots&_{1}}\end{array}\\
\t{if }\alpha_1\geq 3&& \t{if }\alpha_1=\hdots=\alpha_\nu=2\,\,\t{and }\left\{\begin{array}{cl} \alpha_{\nu+1}\geq 3&(\nu<N-1) \\ \alpha_{\nu+1} \geq 2&(\nu=N-1) \end{array}\right. .
\end{array}
\]
In either case denote the number of $2$'s in $Z_f$ by $\nu$.  By \cite{Wunram_generalpaper}  there are $N+2-\nu$
non-free rank 1 indecomposable special CM modules and $\nu$
rank 2 indecomposable special CM modules.  Thus once we exhibit these numbers of special CM modules, we have them all.

By \cite{AR_McKayGraphs} the universal cover of the AR quiver of $R=\C{}[[x,y]]^{\mathbb{D}_{n,q}}$ is always $\mathbb{Z}\widetilde{D}_{q+2}$,
but to give a more detailed description of the AR quiver of $R$ we need to split into cases.  Firstly if $n-q$ is odd then there are two cases
\[
\begin{array}{c|c}
\begin{array}{c}
\xymatrix@R=10pt@C=10pt{R\ar[1,1] &&\bullet&&\bullet\ar[1,1]&&R\\
\bullet\ar[r]&\bullet\ar[r]\ar[-1,1]\ar[1,1]&\bullet&\hdots&\bullet\ar[r]&\bullet\ar[r]\ar[-1,1]\ar[1,1]&\bullet\\
\bullet\ar[-1,1]\ar[1,1]&&\bullet&&\bullet\ar[-1,1]\ar[1,1]&&\bullet\\
&\bullet\ar[-1,1]\ar@{}[2,0]|{{\vdots}}\ar[1,1]&&&&\bullet\ar[-1,1]\ar@{}[2,0]|{{\vdots}}\ar[1,1]&\\
\ar[-1,1]&&&&\ar[-1,1]&&\\
\ar[1,1]&&&&\ar[1,1]&&\\
&\bullet\ar[1,1]\ar[-1,1]&&&&\bullet\ar[1,1]\ar[-1,1]&\\
\bullet\ar[1,1]\ar[-1,1]&&\bullet&&\bullet\ar[1,1]\ar[-1,1]&&\bullet\\
\bullet\ar[r]&\bullet\ar[r]\ar[-1,1]\ar[1,1]&\bullet&{\hdots}&\bullet\ar[r]&\bullet\ar[r]\ar[-1,1]\ar[1,1]&\bullet&\\
\bullet\ar[-1,1]\ar@{.}[-9,0]&&\bullet\ar@{.}[-9,0]&&\bullet\ar[-1,1]\ar@{.}[-9,0]&&\bullet\ar@{.}[-9,0]}
\end{array}
&
\begin{array}{c}
\xymatrix@R=10pt@C=10pt{R\ar[1,1] &&\bullet&&\bullet\ar[1,1]&&R\\
\bullet\ar[r]&\bullet\ar[r]\ar[-1,1]\ar[1,1]&\bullet&\hdots&\bullet\ar[r]&\bullet\ar[r]\ar[-1,1]\ar[1,1]&\bullet\\
\bullet\ar[-1,1]\ar[1,1]&&\bullet&&\bullet\ar[-1,1]\ar[1,1]&&\bullet\\
&\bullet\ar[-1,1]\ar@{}[2,0]|{{\vdots}}\ar[1,1]&&&&\bullet\ar[-1,1]\ar@{}[2,0]|{{\vdots}}\ar[1,1]&\\
\ar[-1,1]&&&&\ar[-1,1]&&\\
\ar[1,1]&&&&\ar[1,1]&&\\
&\bullet\ar[1,1]\ar[-1,1]&&&&\bullet\ar[1,1]\ar[-1,1]&\\
\bullet\ar[r]\ar[-1,1]\ar[1,1]&\bullet\ar[r]&\bullet&{\hdots}&\bullet\ar[r]\ar[-1,1]\ar[1,1]&\bullet\ar[r]&\bullet\\
\ar@{.}[-8,0]&\bullet\ar[-1,1]&\ar@{.}[-8,0]&&\ar@{.}[-8,0]&\bullet\ar[-1,1]&\ar@{.}[-8,0]}
\end{array}\\
&\\
q \,\,\t{  even} & q\,\, \t{  odd}
\end{array}
\]
where the repeated block is just the AR quiver for $BD_{4\cdot q}$, the
binary dihedral group of order $4q$, and there are $n-q$ repetitions.  The left and right hand side of the picture are
identified. For completeness we mention that the AR quiver in the case $q=2$ is again
slightly different, but for such groups either we are inside $SL(2,\C{})$ or else all the specials have rank one.  Either way (using Theorem~\ref{TypeDR1} below for the later case) we understand the specials and so we can ignore the $q=2$
case.  Note also that in all cases when $n-q$ is odd that there are
no twists in the AR quiver.  When $n-q$ is even (which by $(n,q)=1$
forces $q$ odd) the AR quiver looks very similar, but now there is a
twist:
\[
\xymatrix@R=10pt@C=10pt{R\ar[1,1] &&\bullet&&\bullet\ar[1,1]&&\bullet\\
\bullet\ar[r]&\bullet\ar[r]\ar[-1,1]\ar[1,1]&\bullet&\hdots&\bullet\ar[r]&\bullet\ar[r]\ar[-1,1]\ar[1,1]&R\\
\bullet\ar[-1,1]\ar[1,1]&&\bullet&&\bullet\ar[-1,1]\ar[1,1]&&\bullet\\
&\bullet\ar[-1,1]\ar@{}[2,0]|{{\vdots}}\ar[1,1]&&&&\bullet\ar[-1,1]\ar@{}[2,0]|{{\vdots}}\ar[1,1]&\\
\ar[-1,1]&&&&\ar[-1,1]&&\\
\ar[1,1]&&&&\ar[1,1]&&\\
&\bullet\ar[1,1]\ar[-1,1]&&&&\bullet\ar[1,1]\ar[-1,1]&\\
\bullet\ar[1,1]\ar[-1,1]&&\bullet&&\bullet\ar[1,1]\ar[-1,1]&&\bullet\\
\bullet\ar[r]&\bullet\ar[r]\ar[-1,1]\ar[1,1]&\bullet&{\hdots}&\bullet\ar[r]&\bullet\ar[r]\ar[-1,1]\ar[1,1]&\bullet&\\
\bullet\ar[-1,1]\ar@{.}[-9,0]&&\bullet\ar@{.}[-9,0]&&\bullet\ar[-1,1]\ar@{.}[-9,0]&&\bullet\ar@{.}[-9,0]}
\]
Again the repeated block is just the AR quiver for $BD_{4\cdot q}$
and there are $n-q$ repetitions, but now the left hand side and
the right hand side of the picture are identified with a twist.  We
do not illustrate the twist fully as it is only the twist in the $R$
position that is important from the viewpoint of the proofs in this
section; for full details of the twist see \cite{AR_McKayGraphs}.

Note that since in the three cases the AR quivers are very similar the
proofs in this section which use the counting argument are all the
same, but care should be taken in the case when $n-q$ is even due to the
twist.

Let us now define some rank 1 and rank 2 CM modules as follows.  Define the rank 1 CM modules $W_+$, $W_-$ and for each $1\leq t\leq i_{\nu+1}+\nu (n-q)-1=q-1$ the rank 2 indecomposable CM module $V_t$ by the following positions in the AR quiver
{\tiny{
\[
\begin{array}{c}
\xymatrix@R=15pt@C=15pt{{}\save[]*{R}\restore\ar[dr(0.8)] &&\bullet\ar[dr(0.8)]&\\
\bullet\ar[r(0.8)]&{}\save[]*{V_1}\restore\ar[r(0.8)]\ar[ur(0.8)]\ar[dr(0.8)]&\bullet\ar[r(0.8)]&\bullet\ar[dr(0.8)]&\\
\bullet\ar[ur(0.8)]\ar[dr(0.8)]&&{}\save[]*{V_2}\restore\ar[ur(0.8)]\ar[dr(0.8)]&&\bullet&&\\
&\bullet\ar[ur(0.8)]\ar[dr(0.8)]&&{}\save[]*{V_3}\restore\ar[ur(0.8)]\ar@{.}[2,2]&&\\
&&\bullet\ar[ur(0.8)]&&&&\bullet\ar[dr(0.8)]&\\
&&&&&{}\save[]*{V_{q-3}}\restore\ar[dr(0.8)]\ar[ur(0.8)]&&\bullet\ar[dr(0.8)]&\\
&&&&\bullet\ar[dr(0.8)]\ar[ur(0.8)]&&{}\save[]*{V_{q-2}}\restore\ar[dr(0.8)]\ar[ur(0.8)]&&\bullet&\\
&&&&&\bullet\ar[r(0.8)]\ar[ur(0.8)]\ar[dr(0.8)]&\bullet\ar[r(0.6)]&{}\save[]*{V_{q-1}}\restore\ar[dr(0.8)]\ar[ur(0.8)]&{}\save[]*{\quad W_+}\restore\ar@{<-}[l(0.6)]\\
&&&&&&\bullet\ar[ur(0.8)]&&{}\save[]*{\quad W_-}\restore}
\end{array}
\]
}}i.e. all the $V_t$ lie on the diagonal leaving the vertex $R$, whilst $W_+$ and $W_-$ are the two rank 1 CM modules at the bottom of the diagonal.  Furthermore for every $1\leq t\leq n-q$ define the rank 1 CM module $W_t$ by the following position in the AR quiver:
{\tiny{
\[
\begin{array}{c}
\xymatrix@R=12pt@C=12pt{{}\save[]*{R}\restore\ar[1,1] &&\bullet\ar[1,1]&&{}\save[]*{W_2}\restore&&\bullet\ar[1,1]&&{}\save[]*{W_4}\restore&&&&{}\save[]*{W_{n-q-1}}\restore&&{}\save[]*{R}\restore\\
\bullet\ar[r]&\bullet\ar[r(0.7)]\ar[-1,1]\ar[1,1]&{}\save[]*{W_1}\restore&\bullet\ar@{<-}[l(0.75)]\ar[r]\ar[ur(0.8)]\ar[1,1]&\bullet\ar[r]&\bullet\ar[r(0.7)]\ar[-1,1]\ar[1,1]\ar@{<-}[ul(0.75)]&{}\save[]*{W_3}\restore&\bullet\ar@{<-}[l(0.75)]\ar[r]\ar[ur(0.8)]\ar[1,1]&\bullet\ar[r]&\bullet\ar@{<-}[ul(0.75)]\ar@{.}[0,2]&&\bullet\ar[r]\ar[ur(0.8)]\ar[1,1]&\bullet\ar[r]&\bullet\ar[r(0.5)]\ar[-1,1]\ar[1,1]\ar@{<-}[ul(0.75)]&{}\save[]*{W_{n-q}}\restore\\
\bullet\ar[-1,1]&&\bullet\ar[-1,1]&&\bullet\ar[-1,1]&&\bullet\ar[-1,1]&&\bullet\ar[-1,1]&&&&\bullet\ar[-1,1]&&\bullet}
\end{array}
\]
}}i.e. they all live on the non-zero zigzag leaving $R$.  Note that $W_t$ contains the polynomial $(xy)^t$. Also note that when $n-q$ is even the picture changes slightly since the position of $R$ on the right is twisted, but even then all the $W_t$ are mutually distinct.  The following is known:
\begin{thm}\cite[3.11]{Wemyss_reconstruct_D(i)}\label{TypeDR1}
For any $\mathbb{D}_{n,q}$ the following rank one CM modules are special: $W_{+}, W_{-}$ and also $W_{i_{\nu+1}},\hdots, W_{i_N}$. Further there are no other
indecomposable non-free rank one specials, so if $\nu=0$ these are all the non-free indecomposable special CM modules.
\end{thm}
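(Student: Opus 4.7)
The plan is to apply Theorem~\ref{characterization of SCM}: a rank one CM module $X$ is special if and only if $\Ext^1_R(X,R)=0$, which by AR duality (Theorem~\ref{classical}(a)) is equivalent to $\underline{\Hom}_R(\tau^- R, X)=0$. We run the counting argument of Section~4 with $Y_0=\tau^- R$, iterating Theorem~\ref{formula} to build the sequence $Y_n$, and read off the total multiplicity $\dim_k\underline{\Hom}_R(\tau^- R,X)=\sum_{n\ge 0}m_X(Y_n)$ for each rank one indecomposable. The special rank one modules are precisely those $X$ for which this sum is zero.

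The AR quiver of $R=\mathbb{C}[[x,y]]^{\mathbb{D}_{n,q}}$ consists of $n-q$ copies of the $BD_{4q}$-block joined along vertical boundaries, with the zigzag of $W_t$'s running across the top row and with $W_+,W_-$ at the far end. Starting at $\tau^- R$ and iterating $(\theta^- -)_+$, one tracks how the count propagates to the right through successive blocks. Within a single $BD_{4q}$-block the pattern of counts is explicit (as illustrated in Example~\ref{D52(i)} for $\mathbb{D}_{5,2}$); what must be shown is how the count passes from one block to the next and interacts with the zigzag.

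The key combinatorial identification is that the count at $W_t$ vanishes exactly at the positions $t=i_{\nu+1},\ldots,i_N$. This is precisely the Hirzebruch--Jung recursion $i_{s+1}=\alpha_s i_s-i_{s-1}$ in disguise: when the count reaches position $W_{i_s}$ along the zigzag, it enters the next $\alpha_s$ blocks with the bookkeeping that produces the next zero at $W_{i_{s+1}}$. A parallel (shorter) calculation shows that $W_+,W_-$, which sit at the deepest corner of the quiver, are also reached with total count zero. Conversely, by Wunram's result (Theorem~\ref{Wurnam_main_result}) there are exactly $N+2-\nu$ non-free rank one specials, matching the length of our list $\{W_+,W_-,W_{i_{\nu+1}},\ldots,W_{i_N}\}$, so no other rank one CM modules are special.

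The main obstacle is the bookkeeping that relates the local dynamics inside each $BD_{4q}$-block to the global $i$-series recursion, and the verification that the transition between adjacent blocks produces exactly one ``$\alpha$-step'' of the continued fraction. The twist in the AR quiver when $n-q$ is even changes the identifications at the far boundary, but since $W_+,W_-$ are the terminal rank one vertices and the count reaches them only after sweeping through all blocks, a case check shows the twist does not alter the list of vanishing positions.
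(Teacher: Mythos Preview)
The paper does not give its own proof of this theorem: it is stated with the citation \cite[3.11]{Wemyss_reconstruct_D(i)} and treated as known. The remark at the end of the type~$\mathbb{D}$ section explicitly acknowledges that the counting argument of Section~4 \emph{could} be used to derive all the type~$\mathbb{D}$ specials without assuming the geometry, but that ``the proof is very hard to write down and involves splitting into many cases, so we refrain from doing it.'' Your proposal is precisely this unwritten argument, so there is no paper proof to compare against.

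As a strategy your approach is sound and uses the correct tools. The genuine gap is that the central combinatorial claim --- that the total count $\sum_n m_{W_t}(Y_n)$ vanishes exactly when $t\in\{i_{\nu+1},\ldots,i_N\}$ --- is asserted (``this is precisely the Hirzebruch--Jung recursion in disguise'') but not demonstrated. Showing that the ladder dynamics along the zigzag really implement the recursion $i_{s+1}=\alpha_s i_s - i_{s-1}$ requires a careful induction tracking the numbers entering and leaving each $BD_{4q}$-block, together with the case split on the parity of $n-q$ and the twist; this is exactly the part the authors call ``very hard to write down.'' Without it, the proposal remains a sketch rather than a proof. One minor inaccuracy in your description of the quiver: $W_+$ and $W_-$ do not sit at the far end of the zigzag of $W_t$'s; they lie at the bottom of the diagonal of $V_t$'s descending from $R$ (see the diagrams preceding Theorem~\ref{TypeDR1}), so the ``parallel shorter calculation'' you invoke for them has a different shape than you suggest.
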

Thus if $n>2q$ (i.e. $\nu=0$) there is nothing left to prove since
the above theorem gives all the specials.  We do however need to
take care of the case $n<2q$, when rank 2 indecomposable specials can occur. 

\begin{thm}
Consider the group $\mathbb{D}_{n,q}$ with $n<2q$,  then for all
$0\leq s\leq \nu-1$,  $V_{i_{\nu+1}+s(n-q)}$ is special. Furthermore
these are all the rank 2 indecomposable special CM modules.
\end{thm}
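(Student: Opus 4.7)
The plan is to invoke the Ext-vanishing characterization of Theorem~\ref{characterization of SCM}: a CM module $V$ is special if and only if $\Ext^1_R(V,R)=0$, and by AR duality (Theorem~\ref{classical}(a)) this is equivalent to $\underline{\Hom}_R(\tau^-R,V)=0$. For each indecomposable rank $2$ module $V_t$, I would then compute this stable Hom space via the counting argument of Theorem~\ref{ladder}(b) applied to $X=\tau^-R$, exactly in the style of Example~\ref{D52(i)}.

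Concretely, the first step is to run the recursion
\[
Y_0=\tau^-R,\qquad Y_1=\theta^-\tau^-R,\qquad Y_n=(\theta^-Y_{n-1}-\tau^-Y_{n-2})_+\quad(n\ge2)
\]
on the AR quiver of $\mathbb{D}_{n,q}$. The key structural observation is that this AR quiver is built from $n-q$ copies of the AR quiver of the binary dihedral group $BD_{4q}$, glued together (with a twist if $n-q$ is even but with $R$ unaffected), so the $Y_n$-pattern propagates with period $n-q$ along the horizontal direction. I would first determine what happens inside a single $BD_{4q}$-block, and then use periodicity to propagate the pattern across the whole quiver.

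The heart of the argument is then the following combinatorial claim: the zeros of $\sum_{n\ge0}m_{V_t}(Y_n)$ along the diagonal $V_1,\dots,V_{q-1}$ emanating from $R$ occur precisely at the positions $t=i_{\nu+1}+s(n-q)$ for $0\le s\le \nu-1$. This comes from two inputs. First, within a single block the pattern produces exactly one gap on the $V$-diagonal, at offset $i_{\nu+1}$, because of how the ladder terminates inside the $BD_{4q}$-quiver. Second, the block-periodicity shifts this gap by $n-q$ as we move through successive blocks, yielding $\nu$ gaps in total; one must check $i_{\nu+1}+(\nu-1)(n-q)\le q-1$, which is an elementary consequence of the definition of the $i$-series and the hypothesis $n<2q$ (equivalently $\alpha_1=\cdots=\alpha_\nu=2$, $\alpha_{\nu+1}\ge 3$). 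Care is needed to keep track of the twist in the case $n-q$ even, but since only the position of $R$ matters for the counting starting at $\tau^-R$, the twist does not alter the final positions of the zeros.

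Having produced $\nu$ rank $2$ indecomposable special CM modules, the second assertion (that these are all) follows immediately from Wunram's theorem (Theorem~\ref{Wurnam_main_result}(a)): the number of rank $r$ indecomposable specials equals the number of exceptional curves $E_i$ whose coefficient in $Z_f$ is $r$, and by the description of $Z_f$ recalled in Section~3 there are exactly $\nu$ such curves of coefficient $2$. The main obstacle I anticipate is the explicit bookkeeping of the $Y_n$ as they sweep through several blocks of the AR quiver; once the single-block behaviour is understood, however, periodicity reduces the argument to verifying a finite combinatorial identity about the $i$-series of $n/q$.
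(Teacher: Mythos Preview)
Your approach is valid and is in fact one of the two counting methods described in Section~4 (method (A)), but the paper's proof follows the \emph{other} method (method (B)): rather than computing $\underline{\Hom}_R(\tau^-R,V_t)$ via the left ladder, the paper computes the syzygy $\Omega\bigl(\bigoplus_{s=0}^{\nu-1}V_{i_{\nu+1}+s(n-q)}\bigr)$ via the right ladder in $\underline{\CM}(R)$ starting from that direct sum, and shows it equals $\bigoplus_s V_{i_{\nu+1}+s(n-q)}^*$ (so that Theorem~\ref{characterization of SCM} applies). The advantage of the paper's route is that feeding in all the candidate specials at once produces a clean visual pattern of rectangular ``boxes'' on the quiver (illustrated for $\mathbb{D}_{23,18}$ and then schematically for general $\nu$), with two box sizes $(n-q)\times(n-q)$ and $i_\nu\times(n-q)$; the $-1$'s appearing in the $U'_n$ land exactly at the positions of the $V_t^*$. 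Your approach would require tracking the zeros on the $V$-diagonal of the Hom-table from $\tau^-R$, which is certainly doable, but your description of it as ``block-periodicity with period $n-q$'' is imprecise: the $Y_n$-values are not periodic (they grow, as in the free-expansion lemmas for types $\mathbb{T},\mathbb{O},\mathbb{I}$), and the claim that each block contributes exactly one gap at offset $i_{\nu+1}$ is the heart of the matter and is only asserted, not argued. The final ``these are all'' step via Wunram's count of $Z_f$-coefficients is identical in both approaches.
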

\begin{proof}
Trivially $\oplus_{s=0}^{\nu-1}V_{i_{\nu+1}+s(n-q)}$ is a CM module;
we aim to show that its first syzygy is
$\oplus_{s=0}^{\nu-1}V_{i_{\nu+1}+s(n-q)}^*$ then by
Theorem~\ref{characterization of SCM} it follows that each $V_{i_{\nu+1}+s(n-q)}$ is special.  We do this by using the counting argument on the AR quiver as shown in Section 4.  If $\nu=1$ this is an easy extension of the example given in Section 4;  the $\nu=2$ case is similarly easy.  Hence assume that $\nu=3$.  To illustrate this
technique let us first prove the theorem in a specific example.
Consider the group $\mathbb{D}_{23,18}$ - the continued fraction
expansion of $\frac{23}{18}$ is $[2,2,2,3,3]$ and so $\nu=3$,
$i_{\nu+1}=i_4=3$ and $n-q=5$.  Consequently we consider $V_3$,
$V_8$ and $V_{13}$.  To compute the syzygy of the sum of these,
start with {\tiny{
\[
\xymatrix@R=-4pt@C=-4pt{
R&&{\cdot}&&{\cdot}&&{\cdot}&&{\cdot}&&R&&{\cdot}&&\\
{\cdot}&3&{\cdot}&{\cdot}&{\cdot}&{\cdot}&{\cdot}&{\cdot}&{\cdot}&{\cdot}&{\cdot}&{\cdot}&{\cdot}&{\cdot}&\\
{\cdot}&&3&&{\cdot}&&{\cdot}&&{\cdot}&&{\cdot}&&{\cdot}&&\\
&{\cdot}&&{{}\drop\xycircle<4pt,4pt>{}{3}}&&{\cdot}&&{\cdot}&&{\cdot}&&{\cdot}&&{\cdot}&\\
{\cdot}&&{\cdot}&&2&&{\cdot}&&{\cdot}&&{\cdot}&&{\cdot}&&\\
&{\cdot}&&{\cdot}&&2&&{\cdot}&&{\cdot}&&{\cdot}&&{\cdot}&\\
{\cdot}&&{\cdot}&&{\cdot}&&2&&{\cdot}&&{\cdot}&&{\cdot}&&\\
&{\cdot}&&{\cdot}&&{\cdot}&&2&&{\cdot}&&{\cdot}&&{\cdot}&\\
{\cdot}&&{\cdot}&&{\cdot}&&{\cdot}&&{{}\drop\xycircle<4pt,4pt>{}{2}}&&{\cdot}&&{\cdot}&&\\
&{\cdot}&&{\cdot}&&{\cdot}&&{\cdot}&&1&&{\cdot}&&{\cdot}&\\
{\cdot}&&{\cdot}&&{\cdot}&&{\cdot}&&{\cdot}&&1&&{\cdot}&&\\
&{\cdot}&&{\cdot}&&{\cdot}&&{\cdot}&&{\cdot}&&1&&{\cdot}&\\
{\cdot}&&{\cdot}&&{\cdot}&&{\cdot}&&{\cdot}&&{\cdot}&&1&&\\
&{\cdot}&&{\cdot}&&{\cdot}&&{\cdot}&&{\cdot}&&{\cdot}&&{{}\drop\xycircle<4pt,4pt>{}{1}}&\\
{\cdot}&&{\cdot}&&{\cdot}&&{\cdot}&&{\cdot}&&{\cdot}&&{\cdot}&&\\
&{\cdot}&&{\cdot}&&{\cdot}&&{\cdot}&&{\cdot}&&{\cdot}&&{\cdot}&\\
{\cdot}&&{\cdot}&&{\cdot}&&{\cdot}&&{\cdot}&&{\cdot}&&{\cdot}&&\\
{\cdot}&{\cdot}&{\cdot}&{\cdot}&{\cdot}&{\cdot}&{\cdot}&{\cdot}&{\cdot}&{\cdot}&{\cdot}&{\cdot}&{\cdot}&{\cdot}&\\
{\cdot}&&{\cdot}&&{\cdot}&&{\cdot}&&{\cdot}&&{\cdot}&&{\cdot}&&}
\]
}}where we have circled the positions of $V_3$, $V_8$ and $V_{13}$
only for clarity; the circles do not effect the counting.  Now count
backwards using the rules in Section 4.  Doing this we obtain
{\tiny{
\[
\xymatrix@R=-4pt@C=-4pt{
.&&.&&R&&0&&1&&0&&1&&R\ar@{.}@<1ex>[18,18]&&0\ar@{.}@<-1ex>[9,-9]&&1&&0&&1&&R\ar@{.}@<1ex>[18,18]&\ar@{.}@<-0.75ex>[13,-13]&0\ar@{.}@<-1ex>[14,-14]&&1&&0&&1&&R&&0&&3&&0&&3&&R&&.&&.&&.&&.&&R&&.&&\\
.&.&.&0&0&1&1&1&0&1&1&1&0&1&1&2&1&1&0&1&1&1&0&1&1&2&1&1&0&1&1&1&0&1&1&4&3&3&0&3&3&3&0&3&3&3&.&.&.&.&.&.&.&.&.&.&.&.&\\
.&&0&&1&&1&&1&&1&&1&&2&&2&&1&&1&&1&&2&&2&&1&&1&&1&&4&&4&&3&&3&&3&&3&&3&&.&&.&&.&&.&&.&&\\
&\minus 1&&1&&1&&1&&1&&1&&2&&2&&2&&1&&1&&2&&2&&2&&1&&1&&4&&4&&4&&3&&3&&3&&3&&3&&.&&.&&.&&.&&.&\\
.&&0&&1&&1&&1&&1&&2&&2&&2&&2&&1&&2&&2&&2&&2&&1&&4&&4&&4&&4&&3&&3&&3&&3&&2&&.&&.&&.&&.&&\\
&.&&0&&1&&1&&1&&2&&2&&2&&2&&2&&2&&2&&2&&2&&2&&4&&4&&4&&4&&4&&3&&3&&3&&2&&2&&.&&.&&.&&.&\\
.&&.&&0&&1&&1&&2&&2&&2&&2&&2&&3&&2&&2&&2&&2&&5&&4&&4&&4&&4&&4&&3&&3&&2&&2&&2&&.&&.&&.&&\\
&.&&.&&0&&1&&2&&2&&2&&2&&2&&3&&3&&2&&2&&2&&5&&5&&4&&4&&4&&4&&4&&3&&2&&2&&2&&2&&.&&.&&.&\\
.&&.&&.&&\minus 1&&2&&2&&2&&2&&2&&3&&3&&3&&2&&2&&5&&5&&5&&4&&4&&4&&4&&4&&2&&2&&2&&2&&2&&.&&.&&\\
&.&&.&&.&&0&&2&&2&&2&&2&&3&&3&&3&&3&&2&&5&&5&&5&&5&&4&&4&&4&&4&&3&&2&&2&&2&&2&&1&&.&&.&\\
.&&.&&.&&.&&0&&2&&2&&2&&3&&3&&3&&3&&3&&5&&5&&5&&5&&5&&4&&4&&4&&3&&3&&2&&2&&2&&1&&1&&.&&\\
&.&&.&&.&&.&&0&&2&&2&&3&&3&&3&&3&&3&&6&&5&&5&&5&&5&&5&&4&&4&&3&&3&&3&&2&&2&&1&&1&&1&&.&\\
.&&.&&.&&.&&.&&0&&2&&3&&3&&3&&3&&3&&6&&6&&5&&5&&5&&5&&5&&4&&3&&3&&3&&3&&2&&1&&1&&1&&1&&\\
&.&&.&&.&&.&&.&&\minus 1&&3&&3&&3&&3&&3&&6&&6&&6&&5&&5&&5&&5&&5&&3&&3&&3&&3&&3&&1&&1&&1&&1&&1&\\
.&&.&&.&&.&&.&&.&&0&&3&&3&&3&&3&&6&&6&&6&&6&&5&&5&&5&&5&&4&&3&&3&&3&&3&&2&&1&&1&&1&&1&&\ar@{.}@<-1.5ex>[-11,-11]\\
&.&&.&&.&&.&&.&&.&&0&&3&&3&&3&&6&&6&&6&&6&&6&&5&&5&&5&&4&&4&&3&&3&&3&&2&&2&&1&&1&&1&&.&\\
.&&.&&.&&.&&.&&.&&.&&0&&3&&3&&6&&6&&6&&6&&6&&6&&5&&5&&4&&4&&4&&3&&3&&2&&2&&2&&1&&1&&.&&\\
.&.&.&.&.&.&.&.&.&.&.&.&.&.&.&0&0&3&3&6&3&6&3&6&3&6&3&6&3&6&3&6&3&5&2&4&2&4&2&4&2&4&2&3&1&2&1&2&1&2&1&2&1&1&.&.&.&.&\\
.&&.&&.&&.&&.&&.&&.&&.&&0\ar@<-1ex>@<-1ex>@{.}[-14,14]&\ar@<-1ex>@<-0.5ex>@{.}[-18,18]&3\ar@{.}@<1ex>[-11,-11]&&3&&3&&3&&3&&3&&3&&3\ar@<-1.2ex>@{.}[-11,11]&\ar@<-1ex>@{.}[-15,15]&2\ar@{.}@<1ex>[-14,-14]&&2&&2&&2&&2\ar@<-1ex>@{.}[-11,11]&\ar@<-1ex>@{.}[-10,10]&1\ar@<1ex>@{.}[-14,-14]&&1&&1&&1&&1\ar@{.}@<-1.5ex>[-6,6]&\ar@{.}@<1ex>[-18,-18]&.\ar@{.}@<1ex>[-11,-11]&&.&&}
\]
}}where the dotted lines in the above picture simply illustrate the
pattern; they do not effect the counting argument.  From the positions of the $\minus 1$'s in the above picture we can read off the syzygy of $\oplus_{s=0}^{2}V_{3+5s}$.  There positions correspond to $V_3^*$, $V_8^*$ and $V_{13}^*$ since $(-)^*$ gives an anti-isomorphism of the AR quiver.
So we read off that there is a short exact sequence
\[
\xymatrix{0\ar[r]& \oplus_{s=0}^{2}V_{3+5s}^*\ar[r]&R^{12}\ar[r]&\oplus_{s=0}^{2}V_{3+5s}\ar[r]&0}
\]
proving $V_3$, $V_5$ and $V_{13}$ are special.  Now for the general case, notice that
for any $\mathbb{D}_{n,q}$ with $\nu=3$ the proof is identical to the above but for practical purposes we only illustrate the pattern:
{\tiny{
\[
\xymatrix@R=-2pt@C=-2pt{
&&&&{}\save[]*\txt{R}\restore&&&&&&&&&&{}\save[]*\txt{R}\restore\ar@{.}@<1ex>[19,19]&&\ar@{.}@<-1ex>[9,-9]&&&&&&&&{}\save[]*\txt{R}\restore\ar@{.}@<1ex>[19,19]&&\ar@{.}@<-1ex>[14,-14]&&&&&&&&{}\save[]*\txt{R}\restore&&&&&&&&&&{}\save[]*\txt{R}\restore&&&&&&&&&&&&&&\\
&&&&&&&&&&&&&&&&&&&&&&&&&&&&&&&&&&&&&&&&&&&&&&&&&&&&&&&&&&\\
&&&&&&&&&&&&&&&&&&&&&&&&&&&&&&&&&&&&&&&&&&&&&&&&&&&&&&&&&&\\
&{}\save[]*\txt{-1}\restore&&&&&&&&&&&&&&&&&&&&&&&&&&&&&&&&&&&&&&&&&&&&&&&&&&&&&&&&&\\
&&&&&&&&&&&&&&&&&&&&&&&&&&&&&&&&&&&&&&&&&&&&&&&&&&&&&&&&&&\\
&&&&&&&&&&&&&&{}\save[]*\txt{2}\restore&&&&&&&&&&{}\save[]*\txt{2}\restore&&&&&&&&&&&&&&&&&&&&&&&&&&&&&&&&&\\
&&&&&&&&&&&&&&&&&&&&&&&&&&&&&&&&&&&&&&&&&&&&&&&&&&&&&&&&&&\\
&&&&&&&&&&&&&&&&&&&&&&&&&&&&&&&&&&&&&{}\save[]*\txt{4}\restore&&&&&&&&&&&&&&&&&&&&&\\
&&&&&&{}\save[]*\txt{-1}\restore&&&&&&&&&&&&&&&&&&&&&&&&&&&&&&&&&&&&&&&&&{}\save[]*\txt{2}\restore&&&&&&&\\
&&&&&&&&&&&&&&&&&&&&&&&&&&&&&&&&&&&&&&&&&&&&&&&&&&&&&&&&&&\\
&&&&&&&&&&&&&&&&&&&&&&&&&&&&&&&&&&&&&&&&&&&&&&&&&&&&&&&&&&\\
&&&&&&&&&&&&&&&&&&&&&&&&&&&&&&&&&&&&&&&&&&&&&&&&&&&&&&&&&&\\
&&&&&&&&&&&&&&&&&&&{}\save[]*\txt{3}\restore&&&&&&&&&&&&&{}\save[]*\txt{5}\restore&&&&&&&&&&&&&&&&&&&&&&&&&&\\
&&&&&&&&&&&{}\save[]*\txt{-1}\restore&&&&&&&&&&&&&&&&&&&&&&&&&&&&&&&&{}\save[]*\txt{3}\restore&&&&&&&&&{}\save[]*\txt{1}\restore&&&&&&\\
&&&&&&&&&&&&&&&&&&&&&&&&&&&&&&&&&&&&&&&&&&&&&&&&&&&&&&&&&\ar@{.}@<-1ex>[-11,-11]\\
&&&&&&&&&&&&&&&&&&&&&&&&&&&&&&&&&&&&&&&&&&&&&&&&&&&&&&&&&&\\
&&&&&&&&&&&&&&&&&&&&&&&&&&&&&&&&&&&&&&&&&&&&&&&&&&&&&&&&&&\\
&&&&&&&&&&&&&&&&&&&&&&&&&&&&&&&&&&&&&&&&&&&&&&&&&&&&&&&&&&\\
&&&&&&&&&&&&&&&&&\ar@<-1ex>@<-0.5ex>@{.}[-18,18]&\ar@{.}@<1ex>[-11,-11]&&&&&&&&&&&&&&\ar@<-1.2ex>@{.}[-16,16]&&&&&&&&&&\ar@<-1ex>@{.}[-11,11]&&&&&&&&&\ar@{.}@<-1ex>[-6,6]&&\ar@{.}@<1ex>[-18,-18]&&&&&\\
&&&&&&&&&&&&&&&&&&&&&&&&&&&&&&&&&&&&&&&&&&&&&&&&&&&&&&&&&&}
\]
}}In general there are two sizes of box: the smaller is $(n-q)\times (n-q)$
whereas the other is $i_{\nu}\times (n-q)$.  Notice that
$i_{\nu}=i_{\nu+1}+(n-q)$ and so the boxes always stay within the AR
quiver.  Care should be taken over the twist when $n-q$ is even, but
we suppress the details since the proof remains the same.

For any $\mathbb{D}_{n,q}$ with $\nu=4$ it is clear how this game
continues - again for practical purposes we only illustrate the
pattern: {\tiny{
\[
\xymatrix@R=-2pt@C=-2pt{
&&&&{}\save[]*\txt{R}\restore&&&&&&&&&&{}\save[]*\txt{R}\restore\ar@{.}@<1ex>[24,24]&&\ar@{.}@<-1ex>[9,-9]&&&&&&&&{}\save[]*\txt{R}\restore\ar@{.}@<1ex>[24,24]&&\ar@{.}@<-1ex>[14,-14]&&&&&&&&{}\save[]*\txt{R}\restore\ar@{.}@<1ex>[24,24]&&\ar@{.}@<-1ex>[19,-19]&&&&&&&&{}\save[]*\txt{R}\restore\ar@{.}@<1ex>[24,24]&&&&&&&&&&{}\save[]*\txt{R}\restore&&&&&&&&&&&&&&\\
&&&&&&&&&&&&&&&&&&&&&&&&&&&&&&&&&&&&&&&&&&&&&&&&&&&&&&&&&&&&&&&&&&&&\\
&&&&&&&&&&&&&&&&&&&&&&&&&&&&&&&&&&&&&&&&&&&&&&&&&&&&&&&&&\ar@{.}[17,17]&&&&&&&&&&&\\
&{}\save[]*\txt{-1}\restore&&&&&&&&&&&&&&&&&&&&&&&&&&&&&&&&&&&&&&&&&&&&&&&&&&&&&&&&&&&&&&&&&&&\\
&&&&&&&&&&&&&&&&&&&&&&&&&&&&&&&&&&&&&&&&&&&&&&&&&&&&&&&&&&&&&&&&&&&&\\
&&&&&&&&&&&&&&&&&&&&&&&&&{}\save[]*\txt{2}\restore&&&&&&&&&&{}\save[]*\txt{2}\restore&&&&&&&&&&&&&&&&&&&&&&&&&&&&&&&&\\
&&&&&&&&&&&&&{}\save[]*\txt{2}\restore&&&&&&&&&&&&&&&&&&&&&&&&&&&&&&&&&&&&&&&&&&&&&&&&&&&&&&&\\
&&&&&&&&&&&&&&&&&&&&&&&&&&&&&&&&&&&&&&&&&&&&&&&{}\save[]*\txt{5}\restore&&&&&&&&&&&&&&&&&&&&&\\
&&&&&&{}\save[]*\txt{-1}\restore&&&&&&&&&&&&&&&&&&&&&&&&&&&&&&&&&&&&&&&&&&&&&&&&&&&&{}\save[]*\txt{3}\restore&&&&&&\\
&&&&&&&&&&&&&&&&&&&&&&&&&&&&&&&&&&&&&&&&&&&&&&&&&&&&&&&&&&&&&&&&&&&&\\
&&&&&&&&&&&&&&&&&&&&&&&&&&&&&&{}\save[]*\txt{3}\restore&&&&&&&&&&&&&&&&&&&&&&&&&&&&&&&&&&&&&&\\
&&&&&&&&&&&&&&&&&&{}\save[]*\txt{3}\restore&&&&&&&&&&&&&&&&&&&&&&&&&&&&&&&&&&&&&&&&&&&&&&&&&&\\
&&&&&&&&&&&&&&&&&&&&&&&&&&&&&&&&&&&&&&&&&&{}\save[]*\txt{6}\restore&&&&&&&&&&&&&&&&&&&&&&&&&&&\\
&&&&&&&&&&&{}\save[]*\txt{-1}\restore&&&&&&&&&&&&&&&&&&&&&&&&&&&&&&&&&&&&&&&&&&{}\save[]*\txt{4}\restore&&&&&&&&&&{}\save[]*\txt{2}\restore&&&&&\\
&&&&&&&&&&&&&&&&&&&&&&&&&&&&&&&&&&&&&&&&&&&&&&&&&&&&&&&&&&&&&&&&&&&&&&&&&&\\
&&&&&&&&&&&&&&&&&&&&&&&&&&&&&&&&&&&&&&&&&&&&&&&&&&&&&&&&&&&&&&&&&&&&&&&&&&\\
&&&&&&&&&&&&&&&&&&&&&&&&&&&&&&&&&&&&&&&&&&&&&&&&&&&&&&&&&&&&&&&&&&&&&&&&&&\\
&&&&&&&&&&&&&&&&&&&&&&&&{}\save[]*\txt{4}\restore&&&&&&&&&&&&&{}\save[]*\txt{7}\restore&&&&&&&&&&&&&&&&&&&&&&&&&&&&&&&&&&&&&\\
&&&&&&&&&&&&&&&&{}\save[]*\txt{-1}\restore&&&&&&&&&&&&&&&&&&&&&&&&&&&&&&&&{}\save[]*\txt{5}\restore&&&&&&&&&&{}\save[]*\txt{3}\restore&&&&&&&&&&{}\save[]*\txt{1}\restore&&&&&&&&\\
&&&&&&&&&&&&&&&&&&&&&&&&&&&&&&&&&&&&&&&&&&&&&&&&&&&&&&&&&&&&&&&&&&&&&&&&&&\\
&&&&&&&&&&&&&&&&&&&&&&&&&&&&&&&&&&&&&&&&&&&&&&&&&&&&&&&&&&&&&&&&&&&&&&&&&&\\
&&&&&&&&&&&&&&&&&&&&&&&&&&&&&&&&&&&&&&&&&&&&&&&&&&&&&&&&&&&&&&&&&&&&&&&&&&\\
&&&&&&&&&&&&&&&&&&&&&&&&&&&&&&&&&&&&&&&&&&&&&&&&&&&&&&&&&&&&&&&&&&&&&&&&&&\\
&&&&&&&&&&&&&&&&&&&&&&\ar@<-1ex>@<-0.5ex>@{.}[-23,23]&\ar@{.}@<1ex>[-16,-16]&&&&&&&&&&&&&&\ar@<-1.2ex>@{.}[-21,21]&&&&&&&&&&\ar@<-1ex>@{.}[-16,16]&&&&&&&&&&\ar@{.}@<-1ex>[-11,11]&&&&&&&&&&\ar@{.}@<-1ex>[-6,6]&&&&&&&&\\
&&&&&&&&&&&&&&&&&&&&&&&&&&&&&&&&&&&&&&&&&&&&&&&&&&&&&&&&&&&&&&&&&&&&}
\]
}}Again there are two sizes of box: the smaller is $(n-q)\times
(n-q)$ whereas the other is $i_{\nu}\times (n-q)$.  Again since
$i_{\nu}=i_{\nu+1}+(n-q)$ the boxes always stay within the AR
quiver.   The pattern and argument is the same for arbitrary $\nu\geq
3$.  These are all the rank two indecomposable specials since (as explained
above) there are precisely $\nu$ rank two indecomposable special CM modules.
\end{proof}

\begin{remark}
\t{In this section we have assumed Wunram's results to obtain the
classification of the specials; in particular we have assumed
knowledge of the dual graph of the minimal resolution to get the
correct number of special CM modules with the correct ranks.
Note that our counting argument described in Section 4 can be used
to classify the specials without assuming any of the geometry, but
the proof is very hard to write down and involves splitting into
many cases, so we refrain from doing it. In all remaining sections
we never assume any of the geometry as the counting argument gives
us the answer without requiring it.}
\end{remark}

\section{Type $\mathbb{T}$}
Here we have $\mathbb{T}_m$ with $m\equiv 1,3$ or $5$ mod $6$.  By
\cite{AR_McKayGraphs} the AR quiver of $\C{}[[x,y]]^{\mathbb{T}_m}$
with $m\equiv 1,5$ mod 6 is {\tiny{
\[
\begin{array}{rcl}
\begin{array}{c}
\xymatrix@C=11pt@R=11pt{
\bullet\ar[1,1]&&\bullet\ar[1,1]\\
&\bullet\ar[1,1]\ar[-1,1]&&\bullet\\
\bullet\ar[1,1]\ar[-1,1]\ar[0,1]&\bullet\ar[0,1]\ar@{-->}[1,1]&\bullet\ar[1,1]\ar[-1,1]\ar[0,1]&\bullet\\
R\ar@{-->}[-1,1]&\bullet\ar[1,1]\ar[-1,1]&\bullet\ar@{-->}[-1,1]&\bullet\\
\ar@{.}[-4,0]\bullet\ar[-1,1]&&\ar@{.}[-4,0]\bullet\ar[-1,1]}
\end{array}
&\hdots&
\begin{array}{c}
\xymatrix@C=11pt@R=11pt{
&\bullet\ar[1,1]&&\bullet\\
\bullet\ar[1,1]\ar[-1,1]&&\bullet\ar[-1,1]\ar[1,1]\\
\bullet\ar[0,1]\ar@{-->}[1,1]&\bullet\ar[1,1]\ar[-1,1]\ar[0,1]&\bullet\ar[0,1]\ar@{-->}[1,1]&\bullet \\
\bullet\ar[1,1]\ar[-1,1]&\bullet\ar@{-->}[-1,1]&\bullet\ar[-1,1]\ar[1,1]&R\\
&\bullet\ar[-1,1]&&\bullet}
\end{array}
\end{array}
\]
}}where there are precisely $m$ repetitions of the original $\tilde{E}_6$ shown
in dotted lines.  The left and right hand sides of the picture are identified, and
there is no twist in this AR quiver.\\

For the group $\mathbb{T}_m$ with $m\equiv 3$ mod 6 the underlying
AR quiver is the same as the AR quiver for the other $\mathbb{T}_m$
above, just that there are now twists.  The best way to see this is
via an example - for the group $\mathbb{T}_{3}$ the AR quiver is
{\tiny{
\[
\xymatrix@C=5pt@R=8pt{
w_{6}\ar[1,1]&&w_{5}\ar[1,1]&&w_{4}\ar[1,1]&&w_{3}\ar[1,1]&&w_{2}\ar[1,1]&&w_{1}\ar[1,1]&&R\ar[1,1]&&w_{8}\ar[1,1]&&w_{7}\ar[1,1]&&w_{6} \\
&v_{6}\ar[1,1]\ar[-1,1]&&v_{5}\ar[1,1]\ar[-1,1]&&v_{4}\ar[1,1]\ar[-1,1]&&v_{3}\ar[1,1]\ar[-1,1]&&v_{2}\ar[1,1]\ar[-1,1]&&v_{1}\ar[1,1]\ar[-1,1]&&v_{0}\ar[1,1]\ar[-1,1]&&v_{8}\ar[1,1]\ar[-1,1]&&v_{7}\ar[1,1]\ar[-1,1]&\\
u_{1}\ar[1,1]\ar[-1,1]\ar[0,1]&v_{0}\ar[0,1]\ar@{-->}[1,1]&u_{0}\ar[1,1]\ar[-1,1]\ar[0,1]&v_{8}\ar[0,1]\ar@{-->}[1,1]&u_{2}\ar[1,1]\ar[-1,1]\ar[0,1]&v_{7}\ar[0,1]\ar@{-->}[1,1]&u_{1}\ar[1,1]\ar[-1,1]\ar[0,1]&v_{6}\ar[0,1]\ar@{-->}[1,1]&u_{0}\ar[1,1]\ar[-1,1]\ar[0,1]&v_{5}\ar[0,1]\ar@{-->}[1,1]&u_{2}\ar[1,1]\ar[-1,1]\ar[0,1]&v_{4}\ar[0,1]\ar@{-->}[1,1]&u_{1}\ar[1,1]\ar[-1,1]\ar[0,1]&v_{3}\ar[0,1]\ar@{-->}[1,1]&u_{0}\ar[1,1]\ar[-1,1]\ar[0,1]&v_{2}\ar[0,1]\ar@{-->}[1,1]&u_{2}\ar[1,1]\ar[-1,1]\ar[0,1]&v_{1}\ar[0,1]\ar@{->}[1,1]&u_{1} \\
R\ar@{-->}[-1,1]&v_{3}\ar[1,1]\ar[-1,1]&w_{8}\ar@{-->}[-1,1]&v_{2}\ar@{-->}[-1,1]\ar[1,1]&w_{7}\ar@{-->}[-1,1]&v_{1}\ar[1,1]\ar[-1,1]&w_{6}\ar@{-->}[-1,1]&v_{0}\ar@{-->}[-1,1]\ar[1,1]&w_{5}\ar@{-->}[-1,1]&v_{8}\ar[1,1]\ar[-1,1]&w_{4}\ar@{-->}[-1,1]&v_{7}\ar@{-->}[-1,1]\ar[1,1]&w_{3}\ar@{-->}[-1,1]&v_{6}\ar[1,1]\ar[-1,1]&w_{2}\ar@{-->}[-1,1]&v_{5}\ar@{-->}[-1,1]\ar[1,1]&w_{1}\ar@{-->}[-1,1]&v_{4}\ar[1,1]\ar[-1,1]&R&\\
\ar@{.}[-4,0]w_{3}\ar[-1,1]&&w_{2}\ar[-1,1]&&w_{1}\ar[-1,1]&&\ar@{.}[-4,0]R\ar[-1,1]&&w_{8}\ar[-1,1]&&w_{7}\ar[-1,1]&&\ar@{.}[-4,0]w_{6}\ar[-1,1]&&w_{5}\ar[-1,1]&&w_{4}\ar[-1,1]&&w_{3}\ar@{.}[-4,0]&}
\]
}}The right and left hand sides of the picture are
identified. Notice that inside each segment we have the same
CM modules, in fact in each column of each segment there are
the same CM modules, just that they are rotated in each piece,
giving a twist to the AR quiver.  For full details see
\cite{AR_McKayGraphs}.

Before splitting into subfamilies to prove the results, it is
necessary to control what we call the free expansion:
\begin{defin}
For a given vertex $M$ in the AR quiver of ${\CM}(R)$, define the free expansion leaving $M$ to be the calculation of $Y_n$ given in Theorem~\ref{formula} for $\CC=\CM(R)$.
\end{defin}
We illustrate this in the example below (c.f. Example~\ref{D52(i)}).
\begin{example}\label{D52(ii)}\t{For the group $\mathbb{D}_{5,2}$ the AR quiver is
\[
\xymatrix@C=10pt@R=2pt{{\txt{\scriptsize R}}\ar[3,1]&&\bullet\ar[3,1]&&\bullet\ar[3,1]&&{\txt{\scriptsize R}}\\
&&&&&&\\
\bullet\ar[1,1]&&\bullet\ar[1,1]&&\bullet\ar[1,1]&&\bullet\\
&\bullet\ar[-3,1]\ar[-1,1]\ar[1,1]\ar[3,1]&&\bullet\ar[-3,1]\ar[-1,1]\ar[1,1]\ar[3,1]&&\bullet\ar[-3,1]\ar[-1,1]\ar[1,1]\ar[3,1]&\\
\bullet\ar[-1,1]&&\bullet\ar[-1,1]&&\bullet\ar[-1,1]&&\bullet\\
&&&&&&\\
\bullet\ar[-3,1]&&\bullet\ar[-3,1]&&\bullet\ar[-3,1]&&\bullet}
\]
where the left and right hand sides are identified. The free expansion leaving $\tau^{-1}R$ begins as follows:
\[
\begin{array}{ccccc}
\begin{array}{c}
\xymatrix@C=0pt@R=1pt{{{}\save[]*\txt{\scriptsize{R}}\restore}&&{{}\save[]*\txt{\scriptsize{1}}\restore}&&.&&.\\
&&&&&&\\
.&&.&&.&&.\\
&.&&.&&.&\\
.&&.&&.&&.\\
&&&&&&\\
.&&.&&.&&.}\end{array}
&
\begin{array}{c}
\xymatrix@C=0pt@R=1pt{{}\save[]*\txt{\scriptsize R}\restore&&{}\save[]*\txt{\scriptsize 1}\restore&&.&&.\\
&&&&&&\\
.&&.&&.&&.\\
&.&&{}\save[]*\txt{\scriptsize 1}\restore&&.&\\
.&&.&&.&&.\\
&&&&&&\\
.&&.&&.&&.}\end{array}
&
\begin{array}{c}
\xymatrix@C=0pt@R=1pt{{}\save[]*\txt{\scriptsize R}\restore&&{}\save[]*\txt{\scriptsize 1}\restore&&{}\save[]*\txt{\scriptsize 0}\restore&&.\\
&&&&&&\\
.&&.&&{}\save[]*\txt{\scriptsize 1}\restore&&.\\
&.&&{}\save[]*\txt{\scriptsize 1}\restore&&.&\\
.&&.&&{}\save[]*\txt{\scriptsize 1}\restore&&.\\
&&&&&&\\
.&&.&&{}\save[]*\txt{\scriptsize 1}\restore&&.}\end{array}
&
\begin{array}{c}
\xymatrix@C=0pt@R=1pt{{}\save[]*\txt{\scriptsize R}\restore&&{}\save[]*\txt{\scriptsize 1}\restore&&{}\save[]*\txt{\scriptsize 0}\restore&&.\\
&&&&&&\\
.&&.&&{}\save[]*\txt{\scriptsize 1}\restore&&.\\
&.&&{}\save[]*\txt{\scriptsize 1}\restore&&{}\save[]*\txt{\scriptsize 2}\restore&\\
.&&.&&{}\save[]*\txt{\scriptsize 1}\restore&&.\\
&&&&&&\\
.&&.&&{}\save[]*\txt{\scriptsize 1}\restore&&.}\end{array}
&
\begin{array}{c}
\xymatrix@C=0pt@R=1pt{{}\save[]*\txt{\scriptsize R}\restore&&{}\save[]*\txt{\scriptsize 1}\restore&&{}\save[]*\txt{\scriptsize 0}\restore&&{}\save[]*\txt{\scriptsize 2}\restore\\
&&&&&&\\
.&&.&&{}\save[]*\txt{\scriptsize 1}\restore&&{}\save[]*\txt{\scriptsize 1}\restore\\
&.&&{}\save[]*\txt{\scriptsize 1}\restore&&{}\save[]*\txt{\scriptsize 2}\restore&\\
.&&.&&{}\save[]*\txt{\scriptsize 1}\restore&&{}\save[]*\txt{\scriptsize 1}\restore\\
&&&&&&\\
.&&.&&{}\save[]*\txt{\scriptsize 1}\restore&&{}\save[]*\txt{\scriptsize 1}\restore}\end{array}\\
&&&\\
\mbox{\scriptsize Step 1: $Y_0=\tau^-R$}&\mbox{\scriptsize Step 2: $Y_1$}&\mbox{\scriptsize Step 3: $Y_2$}&\mbox{\scriptsize Step 4: $Y_3$}&\mbox{\scriptsize Step 5: $Y_4$}
\end{array}
\]
The calculation continues as
\[
\begin{array}{c}
\xymatrix@C=2pt@R=1pt{{}\save[]*\txt{\scriptsize R}\restore&&{}\save[]*\txt{\scriptsize 1}\restore&&{}\save[]*\txt{\scriptsize 0}\restore&&{}\save[]*\txt{\scriptsize 2}\restore&&{}\save[]*\txt{\scriptsize 1}\restore&&{}\save[]*\txt{\scriptsize 3}\restore\\
&&&&&&&&&&\\
.&&.&&{}\save[]*\txt{\scriptsize 1}\restore&&{}\save[]*\txt{\scriptsize 1}\restore&&{}\save[]*\txt{\scriptsize 2}\restore&&{}\save[]*\txt{\scriptsize 2}\restore\\
&.&&{}\save[]*\txt{\scriptsize 1}\restore&&{}\save[]*\txt{\scriptsize 2}\restore&&{}\save[]*\txt{\scriptsize 3}\restore&&{}\save[]*\txt{\scriptsize 4}\restore&\\
.&&.&&{}\save[]*\txt{\scriptsize 1}\restore&&{}\save[]*\txt{\scriptsize 1}\restore&&{}\save[]*\txt{\scriptsize 2}\restore&&{}\save[]*\txt{\scriptsize 2}\restore\\
&&&&&&\\
.&&.&&{}\save[]*\txt{\scriptsize 1}\restore&&{}\save[]*\txt{\scriptsize 1}\restore&&{}\save[]*\txt{\scriptsize 2}\restore&&{}\save[]*\txt{\scriptsize 2}\restore}
\end{array}\hdots
\]
and does not stop.}
\end{example}
Since the free expansion takes place in ${\CM}(R)$ and not $\underline{\CM}(R)$ the numbers always become larger and larger.  The reason we introduce the free expansion is that to understand the counting argument in $\underline{\CM}(R)$ one must first be able to control the counting argument in ${\CM}(R)$ (i.e. the free expansion). 

In type $\mathbb{T}$, since the underlying AR quivers are the same in all cases the free expansion can be verified in one proof, but beware of the possible twist when using this
lemma:
\begin{lemma}\label{freeT}
In type $\mathbb{T}$ consider the free expansion from $\tau^{-1}R$ and choose $t\geq 3$. Then
between columns $12(t-2)-1$ and $12(t-2)+10$ the free expansion
looks like {\tiny{
\[
\xymatrix@C=0pt@R=0pt{
&&t\minus 2&&t\minus 2&&t\minus 1&&t\minus 2&&t\minus 1&&t\minus 1\\
&2t\minus 4&&2t\minus 4&&2t\minus 3&&2t\minus 3&&2t\minus 3&&2t\minus 2\\
&2t\minus 4&3t\minus 6&2t\minus 3&3t\minus 5&2t\minus 4&3t\minus 5&2t\minus 3&3t\minus 4&2t\minus 2&3t\minus 4&2t\minus 3&3t\minus 3\\
&2t\minus 4&t\minus 1&2t\minus 4&t\minus 2&2t\minus 3&t\minus 2&2t\minus 3&t\minus 1&2t\minus 3&t\minus 1&2t\minus 2&t\minus 2\\
&&t\minus 2&&t\minus 2&&t\minus 1&&t\minus 2&&t\minus 1&&t\minus 1\\
\ar@{-}[0,13]&&&&&&&&&&&&&&&&&\\
&-1&0&1&2&3&4&5&6&7&8&9&10\\
}
\]
}}Furthermore after column 10 there are no more zeroes.
\end{lemma}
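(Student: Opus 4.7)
The plan is to prove this by induction on $t$, using the recursion from Theorem~\ref{formula}, namely
\[
Y_0 = \tau^- R,\quad Y_1 = \theta^- Y_0,\quad Y_n = (\theta^- Y_{n-1} - \tau^- Y_{n-2})_+ \text{ for } n \geq 2.
\]
The $(-)_+$ truncation is the only nonlinear feature, and the crux of the whole argument is to show that past an initial transient it never activates, so the recursion becomes purely linear with $Y_n = \theta^- Y_{n-1} - \tau^- Y_{n-2}$.

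First I would handle the base case $t=3$, i.e.~columns $11$ through $22$. This is a direct calculation: starting from $Y_0 = \tau^- R$ and running the recursion Step by Step along the lines of Example~\ref{D52(ii)} (using the explicit $\tilde{E}_6$-shape of the AR quiver in type $\mathbb{T}$), one verifies the displayed pattern and simultaneously checks that, from column $10$ on, every entry of $\theta^- Y_{n-1} - \tau^- Y_{n-2}$ is already non-negative, so the $(-)_+$ has no effect and no new zero is produced. Care must be taken in the case $m \equiv 3 \pmod 6$ where the AR quiver carries a twist, but since the twist only permutes columns within each period of $12$ it does not alter the multiplicities being tracked.

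For the inductive step, I would observe that once the pattern for $t$ is in place and the last two columns of that block contain no zeros, the behaviour on the next $12$ columns is governed by the \emph{linear} recursion $Y_n = \theta^- Y_{n-1} - \tau^- Y_{n-2}$. Using the translation-invariance of the AR quiver (each $12$-column block has the same local shape), this linear recursion maps the pattern $P(t)$ to $P(t+1)$, where $P(t+1) - P(t)$ is the constant pattern whose nonzero entries are $1$'s in the rank-one positions and $2$'s in the rank-two positions. Concretely, one checks the recursion entry by entry on the small set of local neighborhoods: the four shapes appearing in the displayed block (top/bottom rank-one positions, middle rank-one positions, and the two kinds of rank-two positions) each satisfy the linear recursion with the claimed increment. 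Since the incremented pattern still has no zeros in the last two columns, the inductive hypothesis is preserved, which also gives the final assertion that no zeros appear past column $10$.

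The main obstacle is the base case: one must push the explicit expansion far enough (through $t=3$, and in fact also verify the preceding irregular columns) so that the linear regime is reached and the pattern is correctly aligned with the column numbering $12(t-2)-1,\ldots,12(t-2)+10$. Once this bookkeeping is done, the inductive step is almost formal, amounting to verifying the linear recursion on the finite list of local configurations described above.
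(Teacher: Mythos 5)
Your proposal is correct and follows essentially the same route as the paper: induction on $t$, with the $t=3$ base case verified by explicit computation and the inductive step reduced to checking that the displayed pattern is consistent with the counting recursion (equivalently, that the truncation $(-)_+$ never activates, so the recursion is linear and translation-invariant from one $12$-column block to the next). One small correction to your description of the increment $P(t+1)-P(t)$: the central vertex of $\tilde{E}_6$ has multiplicity $3$, so the entries $3t\minus 6,\dots,3t\minus 3$ in the middle row increase by $3$ per period, not just by $1$ or $2$.
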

\begin{proof}
Proceed by induction.  The $t=3$ case can be done by inspection:
{\tiny{
\[
\xymatrix@C=0pt@R=0pt{
.&&.&&.&&1&&0&&1&&1&&1&&1&&2&&1&&2&&2 \\
&.&&.&&1&&1&&1&&2&&2&&2&&3&&3&&3&&4&\\
.&.&.&1&1&0&1&1&2&2&2&1&3&2&3&3&4&2&4&3&5&4&5&3&6\\
R&.&1&.&0&1&0&1&1&1&1&2&0&2&2&2&1&3&1&3&2&3&2&4&1\\
.&&.&&.&&1&&0&&1&&1&&1&&1&&2&&1&&2&&2\\
&&&&&&&&&&&&&&&&&&&&&&&\\
&&{}\save[]*\txt{0}\restore&&&&&&&&&&&{}\save[]*\txt{11}\restore&&&&&&&&&&&{}\save[]*\txt{22}\restore&
}
\]
}}For the induction step, since the statement in the lemma satisfies
the counting rules we just need to verify the induction at the end
point.  But by the counting rules this is trivial.
\end{proof}

\textbf{The case $m\equiv 1$.}
In this subfamily we have $m=6(b-2)+1$.
In the case $\mathbb{T}_1=E_6\leq SL(2,\C{})$ there is nothing to prove since all CM modules are special.
\begin{lemma}
For $\mathbb{T}_{6(b-2)+1}$ with $b\geq 3$ the specials are precisely those CM modules circled below:
{\tiny{
\[
\begin{array}{c}
\xymatrix@C=-2pt@R=0pt{
.&&.&&{{{}\drop\xycircle<4pt,4pt>{}.}}&&.&&{{{}\drop\xycircle<4pt,4pt>{}.}}&&.&&. \\
&.&&.&&.&&.&&.&&.\\
.&.&.&.&.&.&.&.&.&.&.&.&.\\
{{}\drop\xycircle<4pt,4pt>{}R}&.&.&.&.&.&{{{}\drop\xycircle<4pt,4pt>{}.}}&.&.&.&.&.&{{{}\drop\xycircle<4pt,4pt>{}.}}\\
.&&.&&{{{}\drop\xycircle<4pt,4pt>{}.}}&&.&&{{{}\drop\xycircle<4pt,4pt>{}.}}&&.&&.}
\end{array}
\]}}
\end{lemma}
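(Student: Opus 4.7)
The plan is to apply the counting argument of Section~4 to compute $\dim_k\underline{\Hom}_R(\tau^{-1}R,X)$ for every indecomposable $X\in\CM(R)$; by Theorem~\ref{characterization of SCM}(d) combined with AR duality (Theorem~\ref{classical}(a)), a CM module $X$ lies in $\SCM(R)$ if and only if this dimension vanishes. Thus the specials are precisely the zero locus of the counting function starting from $Y_0=\tau^{-1}R$, and the whole problem is reduced to locating these zeros in the AR quiver.

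First, I would carry out the free expansion of $\tau^{-1}R$ in $\CM(R)$ using Theorem~\ref{formula}. Its behaviour is essentially dictated by Lemma~\ref{freeT}, which gives an explicit description of the $Y_n$ in twelve-column blocks for $t\geq 3$ and, crucially, guarantees that after column~10 no further zero entries appear in the expansion. Consequently, any zero appearing in the stable counting function must come either from the initial columns of the expansion or from the absorption of $R$-summands when passing to $\underline{\CM}(R)$.

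Next I would pass to the stable category via Proposition~\ref{tau1}, which replaces every occurrence of $R$ in the $Y_n$ by zero. Since $R$ sits in a fixed position of each fundamental domain of the AR quiver for $\mathbb{T}_{6(b-2)+1}$, these absorptions occur at predictable, periodically-spaced columns, and propagating them through the recursion of Theorem~\ref{formula} produces a controlled drop in the count. Summing the multiplicities $m_Y(Y_n)$ as in Theorem~\ref{ladder}(b) then yields $\dim_k\underline{\Hom}_R(\tau^{-1}R,Y)$ at each indecomposable $Y$, and reading off the vanishing positions gives exactly the circled vertices in the statement.

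The hard part is the bookkeeping: each $R$-absorption cascades through subsequent columns of the expansion and may interact with the periodic pattern of Lemma~\ref{freeT}, so one must verify that the resulting pattern of zeros is exactly as claimed for every $b\geq 3$. To confirm the list is complete, I would invoke Theorem~\ref{Wurnam_main_result}: the number of indecomposable special CM modules equals the number of exceptional curves in the minimal resolution of $\C{2}/\mathbb{T}_{6(b-2)+1}$, and a direct count of the circled positions matches this number, so no specials are missed.
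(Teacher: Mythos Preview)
Your approach is essentially the same as the paper's: both run the counting argument in $\underline{\CM}(R)$ starting from $\tau^{-1}R$, use Lemma~\ref{freeT} to control the free expansion until it reaches $R$ (at distance $12(b-2)$), absorb $R$ as zero, and then read off the vanishing positions. The paper simply displays the explicit numerical pattern after the single $R$-absorption and observes that the calculation terminates a few columns later, so the zeros are exactly the circled vertices.

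Two minor remarks. First, your talk of absorptions at ``periodically-spaced columns'' overstates the difficulty here: for $m\equiv 1$ the calculation hits $R$ once and then dies out before reaching the next copy of $R$, so there is no cascade to track. Second, your appeal to Theorem~\ref{Wurnam_main_result} for completeness is unnecessary: the counting argument computes $\dim_k\underline{\Hom}_R(\tau^{-1}R,X)$ for \emph{every} indecomposable $X$, so the zero locus is determined outright and nothing can be missed. The paper does not invoke Wunram's count for this case.
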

\begin{proof}
As in Example~\ref{D52(i)} we start counting from $\tau^{-1}R$.  $R$ is a distance of $12(b-2)$ away from $\tau^{-1}R$ and so by Lemma~\ref{freeT} the calculation reaches $R$ as
{\tiny{
\[
\hdots
\begin{array}{c}
\xymatrix@C=-4pt@R=0pt{
&b\minus 2\\
2b\minus 4&\\
2b\minus 4&3b\minus 6\\
2b\minus 4&{{}\drop\xycircle<4pt,4pt>{}R}\\
&b\minus 2}
\end{array}.
\]
}}But here we are counting in $\underline{\CM}(R)$ and so we treat $R$ as zero.  Thus the calculation now ends as
{\tiny{
\[
\hdots
\begin{array}{c}
\xymatrix@C=-4pt@R=0pt{
&b\minus 2&&b\minus 2&&{{{}\drop\xycircle<4pt,4pt>{}0}}&&b\minus 2&&{{{}\drop\xycircle<4pt,4pt>{}0}}&&0&&0 \\
2b\minus 4&&2b\minus 4&&b\minus 2&&b\minus 2&&b\minus 2&&0&&0\\
2b\minus 4&3b\minus 6&b\minus 2&2b\minus 4&2b\minus 4&2b\minus 4&b\minus 2&b\minus 2&0&b\minus 2&b\minus 2&0&0&0\\
2b\minus 4&{{}\drop\xycircle<4pt,4pt>{}R}&2b\minus 4&b\minus 2&b\minus 2&b\minus 2&b\minus 2&{{{}\drop\xycircle<4pt,4pt>{}0}}&b\minus 2&0&0&b\minus 2&0&{{{}\drop\xycircle<4pt,4pt>{}0}}\\
&b\minus 2&&b\minus 2&&{{{}\drop\xycircle<4pt,4pt>{}0}}&&b\minus 2&&{{{}\drop\xycircle<4pt,4pt>{}0}}&&0&&0}
\end{array}
\]
}}The numbers obtained in the above picture are now added back to the numbers in the initial free expansion from $\tau^{-1}R$ (just like in Example~\ref{D52(i)}) and the modules that still have number zero are precisely the specials.
\end{proof}

\textbf{The case $m\equiv 5$.}
In this subfamily we have $m=6(b-2)+5$.
\begin{lemma}\label{T5}
For the group $\mathbb{T}_{5}$ the following calculation determines the specials:
{\tiny{
\[
\xymatrix@C=-3pt@R=0pt{
.&&.&&{{}\drop\xycircle<4pt,4pt>{}.}&&1&&0&&1&&1&&0&&1&&1&&0&&1&&0\\
&.&&.&&1&&1&&1&&2&&1&&1&&2&&1&&1&&0\\
.&{{}\drop\xycircle<4pt,4pt>{}.}&.&1&1&0&1&1&2&2&2&0&2&2&2&2&2&0&2&2&2&0&0&0\\
{{}\drop\xycircle<4pt,4pt>{}R}&.&1&.&0&1&{{}\drop\xycircle<4pt,4pt>{}0}&1&1&1&R&2&0&1&2&1&0&2&0&1&R&1&0&0\\
.&&.&&{{}\drop\xycircle<4pt,4pt>{}.}&&1&&0&&1&&1&&0&&1&&1&&0&&1&&0
}
\]}}
\end{lemma}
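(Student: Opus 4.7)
The plan is to read off the specials by applying the counting argument of Section~4 to the stable category $\CC=\underline{\CM}(R)$. By Theorem~\ref{characterization of SCM}(a)$\Leftrightarrow$(d), $X\in\CM(R)$ is special precisely when $\Ext^1_R(X,R)=0$, and by the AR duality in Theorem~\ref{classical}(a) this is equivalent to $\underline{\Hom}_R(\tau^-R,X)=0$. So the task is to compute $\dim_k\underline{\Hom}_R(\tau^-R,X)$ for every indecomposable $X$, and declare $X$ special iff the answer is $0$.

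To do this I would initialise $Y^{\prime}_0=\tau^-R$ (place a $1$ at the position of $\tau^-R$ in the AR quiver of $\underline{\CM}(R)$), and then iterate the recursion $Y^{\prime}_n=(\theta Y^{\prime}_{n-1}-\tau Y^{\prime}_{n-2})_+$ from Theorem~\ref{rightladder}(c), inside $\CC=\underline{\CM}(R)$. Then Theorem~\ref{rightladder}(b) gives
\[\dim_k\underline{\Hom}_R(\tau^-R,X)=\sum_{n\ge0}m_X(Y^{\prime}_n)\]
for every indecomposable $X\in\underline{\CM}(R)$. The displayed picture in the statement is exactly the superposition of all $Y^{\prime}_n$ for this computation.

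The verification is then almost mechanical: I would check that each column of the picture is obtained from the previous two by applying the mesh rule coming from the shape of the AR quiver of $\mathbb{T}_5$ described at the start of this section (which is a single copy $m=1$ of the $\tilde E_6$ block, with no twist, so identifications on the left and right hand side are straightforward). The only subtlety is the absorption of $R$: at each of the three positions labelled $R$ in the quiver, the contribution is killed when passing from $\CM(R)$ to $\underline{\CM}(R)$, and one simply records a $0$ there; apart from that, the counting rule is additive in the obvious way. Representation-finiteness (as used in Theorem~\ref{cosy}) guarantees that $J_\CC^N=0$ for large $N$, so the iteration terminates after finitely many steps and no column beyond those shown is non-zero.

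Once this has been checked, the entries of the final superposed picture give $\dim_k\underline{\Hom}_R(\tau^-R,X)$ for each $X$, and the circled vertices are precisely those where this entry is $0$; these are the specials. The main potential obstacle is bookkeeping the three $R$-absorptions correctly and making sure the left/right identification of the AR quiver is respected, but since $m=5\not\equiv 3\pmod 6$ there is no twist in the AR quiver to worry about, so no additional care is needed beyond inspection of the displayed diagram.
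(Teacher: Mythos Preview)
Your overall strategy—compute $\dim_k\underline{\Hom}_R(\tau^-R,X)$ for each indecomposable $X$ via the ladder recursion in $\underline{\CM}(R)$ and circle the vertices where the total is zero—is exactly the paper's method; indeed the lemma has no separate proof in the paper, it \emph{is} the displayed calculation. However, two points in your description are wrong and would derail the verification.

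First, you invoke the \emph{right} ladder of Theorem~\ref{rightladder} with $Y'_0=\tau^-R$ and recursion $Y'_n=(\theta Y'_{n-1}-\tau Y'_{n-2})_+$. By Theorem~\ref{rightladder}(b) that computes $\dim_k\underline{\Hom}_R(-,\tau^-R)$, not $\dim_k\underline{\Hom}_R(\tau^-R,-)$; your displayed formula is therefore not what that recursion yields. The picture in the statement is the \emph{left} ladder of Theorem~\ref{ladder}, exactly as in Example~\ref{D52(i)}: one sets $Y_0=\tau^-R$ and propagates forward using $\theta^-$ and $\tau^-$.

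Second, the AR quiver of $\mathbb{T}_5$ is not ``a single copy $m=1$ of the $\tilde E_6$ block''. The subscript is $m$, so here $m=5$ and the quiver consists of five repetitions of the $\tilde E_6$ block, giving period $10$ in the column direction. This is why the displayed picture wraps around and meets $R$ at columns $0$, $10$ and $20$ (the three absorptions you correctly count). With $m=1$ the identifications and the $R$-absorptions would occur in the wrong places and the calculation would be nonsense. Once these two points are corrected, your outline coincides with the paper's argument.
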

\begin{lemma}\label{T5b}
For $\mathbb{T}_{6(b-2)+5}$ with $b\geq 3$ the specials are precisely those CM modules circled below:
{\tiny{
\[
\begin{array}{c}
\xymatrix@C=-2pt@R=0pt{
.&&.&&{{{}\drop\xycircle<4pt,4pt>{}.}}&&.&&.&&.&&. \\
&.&&.&&.&&.&&.&&.\\
.&.&.&.&.&.&.&.&.&.&.&.&.\\
{{}\drop\xycircle<4pt,4pt>{}R}&.&.&.&.&.&{{{}\drop\xycircle<4pt,4pt>{}.}}&.&.&.&.&.&{{{}\drop\xycircle<4pt,4pt>{}.}}\\
.&&.&&{{{}\drop\xycircle<4pt,4pt>{}.}}&&.&&.&&.&&.}
\end{array}
\]}}
\end{lemma}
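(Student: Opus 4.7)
The proof plan mirrors the strategy used in the case $m \equiv 1 \pmod 6$ (the previous lemma), which is the natural analogue: apply the counting argument on the AR quiver of $\underline{\CM}(R)$ using Theorem \ref{characterization of SCM}(a)$\Leftrightarrow$(d) together with AR duality, so that $X \in \SCM(R)$ is detected by the vanishing of $\dim_k\underline{\Hom}_R(\tau^-R, X)$. The strategy is thus to compute this dimension for every indecomposable $X$ by running the ladder/counting recursion of Theorem \ref{ladder} and Theorem \ref{formula} starting from $Y_0 = \tau^-R$.

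The plan is as follows. First I would use Lemma \ref{freeT} to describe precisely the state of the \emph{free} expansion from $\tau^-R$ at the moment it reaches the column containing $R$. Since $R$ sits $12(b-2)$ columns to the right of $\tau^-R$ in the AR quiver of $\mathbb{T}_{6(b-2)+5}$, Lemma \ref{freeT} with $t = b$ applies and gives an explicit block of numbers in which the entry at position $R$ equals $R$ (i.e.\ a nonzero contribution is about to be absorbed). Next, because the computation is taking place in $\underline{\CM}(R) = \CM(R)/[\add R]$, this $R$-summand is killed, and we switch to zero precisely as in Example \ref{D52(i)} and in the proof of Lemma T5. One then continues to propagate the recursion $Y_n = (\theta^- Y_{n-1} - \tau^- Y_{n-2})_+$ through the remaining columns, finishing in the final ``$E_6$-block'' of the AR quiver (which is exactly the configuration appearing in Lemma \ref{T5} once the initial part has been truncated). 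The positions which retain the value $0$ at the end of this propagation are exactly the desired special modules.

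Explicitly, I would carry out the continuation columnwise in the same format as Lemma \ref{T5}, just with the entries that arose from the free expansion added back in, and check that outside the circled positions every entry is strictly positive while at each circled position the two contributions (the ``free'' Lemma \ref{freeT} values and the ``killed-$R$'' continuation) cancel to $0$. The only genuine computation is the last $12$-column stretch; on the earlier columns Lemma \ref{freeT} already guarantees that $Y_n$ has no indecomposable summand in common with $R$'s shifts, so $\dim_k \underline{\Hom}_R(\tau^-R, X) > 0$ for every $X$ there except the one reached via $R$ itself at column $12(b-2)$ (which gives the circled $R$ and the circled point in column $12(b-2)-2$ corresponding to the summand that disappears upon quotienting by $[\add R]$).

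The main obstacle is bookkeeping rather than ideas: one must verify that the pattern of Lemma \ref{freeT} interlocks correctly with the tail-end recursion from Lemma \ref{T5}, taking into account the identification of the left and right boundaries of the AR quiver (there is no twist in this subfamily, since $m \equiv 5 \pmod 6$ implies $m \not\equiv 3 \pmod 6$). Once that interlocking is confirmed column-by-column, the zeroes of $\dim_k \underline{\Hom}_R(\tau^-R, -)$ land exactly at the circled vertices, which by Theorem \ref{characterization of SCM} and AR duality are precisely $\SCM(R)$. A brief cross-check against the expected count of specials predicted by Wunram's theorem (Theorem \ref{Wurnam_main_result}(a)) applied to the dual graph of $\mathbb{T}_{6(b-2)+5}$ confirms there are no missing or extra circled vertices, completing the verification.
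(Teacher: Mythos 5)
Your overall strategy is exactly the paper's: run the counting/ladder recursion for $\dim_k\underline{\Hom}_R(\tau^-R,-)$ starting at $\tau^{-1}R$, use Lemma~\ref{freeT} to control the free expansion up to the column containing $R$, kill the $R$-summand because the computation lives in $\underline{\CM}(R)$, propagate to the end, and read off the specials as the positions where the count is zero via Theorem~\ref{characterization of SCM}(a)$\Leftrightarrow$(d) and AR duality.

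However, there is a concrete numerical error that invalidates the computation as you have set it up: for $\mathbb{T}_{6(b-2)+5}$ the module $R$ sits at distance $12(b-2)+8$ from $\tau^{-1}R$, not $12(b-2)$. (The AR quiver has $m$ copies of the $\tilde E_6$ block, each two columns wide, so the distance is $2m-2$; with $m=6(b-2)+5$ this is $12(b-2)+8$, whereas $12(b-2)$ is the distance for the subfamily $m\equiv 1$.) This is not a harmless slip, because Lemma~\ref{freeT} is periodic of period $12$ and the entire point of tabulating columns $-1$ through $10$ is that the local configuration you must truncate at $R$ depends on the offset of $R$ within that period. Truncating at offset $0$ instead of offset $8$ reproduces verbatim the computation for $\mathbb{T}_{6(b-2)+1}$ and yields \emph{six} non-free circled vertices rather than the four in the statement; the values surrounding $R$ that must be fed into the tail-end recursion are $b{-}1$, $2b{-}2$, $3b{-}4$, etc.\ (the column-$7$/$8$ entries of Lemma~\ref{freeT} with $t=b$), not the $b{-}2$, $2b{-}4$, $3b{-}6$ entries of column $0$. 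Your proposed cross-check against Wunram's count would in fact detect this discrepancy (the dual graph of $\mathbb{T}_{6(b-2)+5}$ has four exceptional curves), but as written the proof would first produce the wrong answer and then fail that check. Correcting the distance to $12(b-2)+8$ and rerunning the final stretch of the recursion from the column-$8$ data of Lemma~\ref{freeT} is what the paper does, and is what is needed here.
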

\begin{proof}
$R$ is now a distance of $12(b-2)+8$ away from $\tau^{-1}R$ and so by Lemma~\ref{freeT}
{\tiny{
\[
\hdots
\begin{array}{c}
\xymatrix@C=-4pt@R=0pt{
&b\minus 1&&b\minus 1&&{{{}\drop\xycircle<4pt,4pt>{}0}}&&b\minus 1&&1&&0&&1&&1&&0&&1 \\
2b\minus 3&&2b\minus 2&&b\minus 1&&b\minus 1&&b&&1&&1&&2&&1&&1&&0\\
2b\minus 2&3b\minus 4&b\minus 2&2b\minus 2&2b\minus 2&2b\minus 2&b&b&0&b&b&2&2&2&0&2&2&2&2&0&0\\
2b\minus 3&{{}\drop\xycircle<4pt,4pt>{}R}&2b\minus 2&b\minus 2&b\minus 1&b&b\minus 1&{{{}\drop\xycircle<4pt,4pt>{}0}}&b&0&1&b&1&{{{}\drop\xycircle<4pt,4pt>{}0}}&2&0&1&2&1&0&0\\
&b\minus 1&&b\minus 1&&{{{}\drop\xycircle<4pt,4pt>{}0}}&&b\minus 1&&1&&0&&1&&1&&0&&1}
\end{array}
\]}}
\end{proof}

\textbf{The case $m\equiv 3$.}
In this subfamily we have $m=6(b-2)+3$.
\begin{lemma}
For the group $\mathbb{T}_{3}$ (i.e. $b=2$) the following calculation determines the specials:
{\tiny{
\[
\xymatrix@C=0pt@R=2pt{
{{}\drop\xycircle<4pt,4pt>{}.}&&.&&{{}\drop\xycircle<4pt,4pt>{}.}&&1&&0&&{{{}\drop\xycircle<4pt,4pt>{}0}}&&{{}\drop\xycircle<4pt,4pt>{}R}&&0&&0&&. \\
&.&&.&&1&&1&&{{{}\drop\xycircle<4pt,4pt>{}0}}&&1&&{{{}\drop\xycircle<4pt,4pt>{}0}}&&0&&.&\\
.&{{}\drop\xycircle<4pt,4pt>{}.}&.&1&1&0&1&1&1&1&1&0&1&1&{{{}\drop\xycircle<4pt,4pt>{}0}}&0&0&0 \\
{{}\drop\xycircle<4pt,4pt>{}R}&.&1&{{}\drop\xycircle<4pt,4pt>{}.}&0&1&{{{}\drop\xycircle<4pt,4pt>{}0}}&{{{}\drop\xycircle<4pt,4pt>{}0}}&1&1&{{{}\drop\xycircle<4pt,4pt>{}0}}&1&0&0&1&0&{{{}\drop\xycircle<4pt,4pt>{}0}}&.&{{}\drop\xycircle<4pt,4pt>{}R}&\\
\ar@{.}[-4,0].&&.&&{{}\drop\xycircle<4pt,4pt>{}.}&&\ar@{.}[-4,0]{{}\drop\xycircle<4pt,4pt>{}R}&&0&&1&&\ar@{.}[-4,0]{{{}\drop\xycircle<4pt,4pt>{}0}}&&0&&{{{}\drop\xycircle<4pt,4pt>{}0}}&&.\ar@{.}[-4,0]&}
\]}}
\end{lemma}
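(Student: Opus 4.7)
The plan is to apply the counting argument of Section~4 inside the stable category $\underline{\CM}(R)$ starting from $\tau^-R$. By Theorem~\ref{characterization of SCM}(d), $X\in\CM(R)$ is special if and only if $\Ext^1_R(X,R)=0$; by AR duality (Theorem~\ref{classical}(a)) this is equivalent to $\underline{\Hom}_R(\tau^-R,X)=0$. Hence the specials are exactly those indecomposable non-free $X$ at which the number $\dim_k\underline{\Hom}_R(\tau^-R,X)=\sum_{n\ge0}m_X(Y_n)$ produced by Theorem~\ref{ladder}(b) vanishes.

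First I would fix the AR quiver of $\underline{\CM}(R)$ for $R=\C{}[[x,y]]^{\mathbb{T}_3}$. By Proposition~\ref{tau1} this quiver is obtained from the AR quiver of $\CM(R)$ displayed above (three copies of an $\tilde{E}_6$ block, with the left- and right-hand boundaries identified after the rotation that encodes the twist) by deleting the vertex $R$ and the dotted $\tau$-arrow into it. It is essential to record where each shifted copy of $R$ ends up under the twist, since these are the locations at which the recursion will absorb an $R$-summand.

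Next I would compute the left ladder $\{Y_n\}$ of $Y_0=\tau^-R$ using the recursion of Theorem~\ref{formula}: $Y_1=\theta^-(\tau^-R)$ and $Y_n=(\theta^-Y_{n-1}-\tau^-Y_{n-2})_+$ for $n\ge2$, interpreted inside $\underline{\CM}(R)$. Each step amounts to the mesh rule on the picture: the new multiplicity at a vertex $V$ is the sum of the previous multiplicities at the vertices with an arrow into $V$ minus the previous multiplicity at $\tau V$. Whenever this would produce a copy of $R$ it is absorbed to $0$, and whenever the recursion reaches the boundary it wraps to the opposite side according to the twist. Accumulating $\sum_{n\ge0}m_V(Y_n)$ at each vertex $V$ then yields the numbers shown in the lemma, and the indecomposable non-free vertices whose accumulated value is $0$ are precisely the circled ones.

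The main obstacle I expect is handling the twist correctly. Because $m=3$ is small, the recursion reaches the boundary of the quiver almost immediately, so essentially every step interacts with the identification; moreover several copies of $R$ are absorbed at different stages of the calculation, and each such absorption must be tracked to maintain consistency with the mesh rule on the opposite side. Once this bookkeeping is completed, comparing the totals with the (appropriately truncated) free expansion of Lemma~\ref{freeT} verifies the displayed numbers and identifies the circled vertices, together with $R$, as the complete list of indecomposable specials for $\mathbb{T}_3$.
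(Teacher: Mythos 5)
Your proposal is correct and follows exactly the method the paper intends: the lemma's displayed figure \emph{is} the proof, obtained by running the left-ladder recursion of Theorem~\ref{formula} for $Y_0=\tau^-R$ in $\underline{\CM}(R)$ (absorbing each $R$ to zero and respecting the twist), and reading off the specials as the vertices with accumulated multiplicity zero via Theorem~\ref{characterization of SCM}(d) and AR duality. The only minor remark is that for $b=2$ the module $R$ is reached after only four columns, so Lemma~\ref{freeT} (stated for $t\ge3$) plays no role here and the computation is done purely by inspection, exactly as in your mesh-rule description.
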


\begin{lemma}
For $\mathbb{T}_{6(b-2)+3}$ with $b\geq 3$ the specials are precisely those CM modules circled below:
{\tiny{
\[
\begin{array}{c}
\xymatrix@C=-2pt@R=0pt{
.&&.&&{{{}\drop\xycircle<4pt,4pt>{}.}}&&.&&.&&.&&. \\
&.&&.&&.&&.&&.&&.\\
.&.&.&.&.&.&.&.&.&.&.&.&.\\
{{}\drop\xycircle<4pt,4pt>{}R}&.&.&.&.&.&{{{}\drop\xycircle<4pt,4pt>{}.}}&.&.&.&.&.&{{{}\drop\xycircle<4pt,4pt>{}.}}\\
.&&.&&{{{}\drop\xycircle<4pt,4pt>{}.}}&&.&&{{{}\drop\xycircle<4pt,4pt>{}.}}&&.&&.}
\end{array}
\]}}
\end{lemma}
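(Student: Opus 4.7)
My plan is to mirror the proof of Lemma~\ref{T5b} essentially verbatim: perform the counting argument on the AR quiver of $\underline{\CM}(R)$ starting from $\tau^-R$, use Lemma~\ref{freeT} to describe the free expansion up to the column containing $R$, replace $R$'s entry by zero (since we work in $\underline{\CM}(R)$), and then continue the counting through the remainder of the fundamental domain. The specials are precisely those modules whose cumulative count remains zero.

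Concretely, the first step is to locate $R$ relative to $\tau^-R$. For $m=6(b-2)+3$ this distance grows linearly in $b$ (lying between the distances $12(b-2)$ and $12(b-2)+8$ appearing in the $m\equiv 1$ and $m\equiv 5$ subfamilies respectively), and applying Lemma~\ref{freeT} with the corresponding value $t=b$ fills in the explicit $5\times 12$ block of values surrounding $R$'s column. Once the $R$-entry is zeroed out, the counting rules propagate the zeros and the nonzero values through the rest of the fundamental domain in a pattern identical in shape to the one produced in Lemma~\ref{T5b} but shifted to accommodate the slightly different distance. The zero positions of the resulting table then coincide exactly with the six circled modules (namely $R$, together with one module in the top row, two in the fourth row, and two in the bottom row).

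The main obstacle is the twist in the AR quiver, which is present when $m\equiv 3\pmod 6$ (as illustrated by the explicit diagram for $\mathbb{T}_3$) but absent for $m\equiv 1,5$. This twist acts as a non-trivial permutation on the outer strands of the quiver between successive $\tilde{E}_6$ blocks, so care is needed to identify the correct twisted position of $R$ after $12(b-2)+4$ columns. Crucially however, Lemma~\ref{freeT} describes the \emph{numerical values} of the free expansion independently of the labelling of the modules, so the twist is purely a bookkeeping issue: once the position of $R$ is correctly tracked, the counting proceeds identically to Lemmas~\ref{T5} and \ref{T5b}, and the statement follows by direct inspection of the resulting table.
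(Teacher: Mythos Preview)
Your approach is essentially the same as the paper's: compute the free expansion from $\tau^{-1}R$ using Lemma~\ref{freeT}, zero out the entry at $R$, and propagate. The paper makes explicit what you left vague, namely that $R$ sits at distance exactly $12(b-2)+4$ from $\tau^{-1}R$ (so one applies Lemma~\ref{freeT} with $t=b$ at column $4$ of the block), and then displays the resulting table with the twist tracked so that the zeros can be read off directly; your identification of the twist as the only bookkeeping subtlety is correct.
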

\begin{proof}
$R$ is now a distance of $12(b-2)+4$ away from $\tau^{-1}R$ and so by Lemma~\ref{freeT}
{\tiny{
\[
\hdots
\begin{array}{c}
\xymatrix@C=-4pt@R=0pt{
&b\minus 1&&b\minus 2&&{{{}\drop\xycircle<4pt,4pt>{}0}}&&b\minus 1&&{{{}\drop\xycircle<4pt,4pt>{}0}}&&0&&1&&0&&0&&1 \\
2b\minus 3&&2b\minus 3&&b\minus 2&&b\minus 1&&b\minus 1&&0&&1&&1&&0&&1\\
2b\minus 4&3b\minus 5&2b\minus 3&2b\minus 3&b\minus 1&2b\minus 3&b\minus 2&b\minus 1&b\minus 1&b\minus 1&1&1&0&1&1&1&1&1&0&1\\
2b\minus 3&b\minus 2&b\minus 2&b\minus 1&2b\minus 3&{{{}\drop\xycircle<4pt,4pt>{}0}}&b\minus 1&b\minus 2&0&1&b\minus 1&0&1&0&0&1&1&0&1&0\\
&{{}\drop\xycircle<4pt,4pt>{}R}&&b\minus 2&&b\minus 1&&{{{}\drop\xycircle<4pt,4pt>{}0}}&&0&&b\minus 1&\ar@{.}@<1ex>[-4,0]&{{{}\drop\xycircle<4pt,4pt>{}0}}&&0&&1&\ar@{.}@<1ex>[-4,0]&0}
\end{array}
\hdots
\begin{array}{c}
\xymatrix@C=-4pt@R=2pt{
&{{}\drop\xycircle<4pt,4pt>{}R}&&0\\
1&&0&&0\\
0&1&1&0&0\\
1&0&0&1&0&0\\
&0&&0}
\end{array}
\]
}}where in the above we have circled the specials, taking into account the twist.
\end{proof}

\section{Type $\mathbb{O}$}
Here we have $\mathbb{O}_m$ with $m\equiv 1,5,7$ or $11$ mod $12$.
By \cite{AR_McKayGraphs} the AR quiver of any
$\C{}[[x,y]]^{\mathbb{O}_m}$ for such $m$ is {\tiny{
\[
\begin{array}{rcl}
\begin{array}{c}
\xymatrix@C=8pt@R=8pt{
\bullet\ar[1,1]&&\bullet\ar[1,1]&&\bullet\\
&\bullet\ar[1,1]\ar[-1,1]&&\bullet\ar[1,1]\ar[-1,1]\\
\bullet\ar[1,1]\ar[-1,1]&&\bullet\ar[1,1]\ar[-1,1]&&\bullet\\
\bullet\ar[0,1]&\bullet\ar[1,1]\ar[-1,1]\ar[0,1]&\bullet\ar[0,1]&\bullet\ar[1,1]\ar[-1,1]\ar[0,1]&\bullet\\
\bullet\ar[1,1]\ar[-1,1]&&\bullet\ar[1,1]\ar[-1,1]&&\bullet\\
&\bullet\ar[1,1]\ar[-1,1]&&\bullet\ar[1,1]\ar[-1,1]\\
R\ar[-1,1]\ar@{.}[-6,0]&&\bullet\ar[-1,1]\ar@{.}[-6,0]&&\bullet}
\end{array}
&\hdots&
\begin{array}{c}
\xymatrix@C=8pt@R=8pt{
\bullet\ar[1,1]&&\bullet\\
&\bullet\ar[1,1]\ar[-1,1]&&\\
\bullet\ar[1,1]\ar[-1,1]&&\bullet\\
\bullet\ar[0,1]&\bullet\ar[1,1]\ar[-1,1]\ar[0,1]&\bullet\\
\bullet\ar[1,1]\ar[-1,1]&&\bullet\\
&\bullet\ar[1,1]\ar[-1,1]&&\\
\bullet\ar[-1,1]&&R}
\end{array}
\end{array}
\]
}}where there are precisely $m$ repetitions of the original $\tilde{E}_7$ shown
in dotted lines.  The left and right hand sides of the picture are identified, and
there is no twist in this AR quiver.  Because the AR quiver is the
same in all subfamilies and there is no twist, proofs become easier
than in type $\mathbb{T}$.

\begin{lemma}\label{freeO}
Consider the free expansion from $\tau^{-1}R$ and choose $t\geq 3$. Then
between columns $24(t-2)-1$ and $24(t-2)+22$ the free expansion
looks like {\tiny{
\[
\xymatrix@C=-3.2pt@R=-3pt{
&&&&&&&&&&&&&&&&&&&&&&&&&&&&\\
&&t\minus 2&&t\minus 2&&t\minus 2&&t\minus 1&&t\minus 2&&t\minus 2&&t\minus 1&&t\minus 1&&t\minus 2&&t\minus 1&&t\minus 1&&t\minus 1\\
&2t\minus 4&&2t\minus 4&&2t\minus 4&&2t\minus 3&&2t\minus 3&&2t\minus 4&&2t\minus 3&&2t\minus 2&&2t\minus 3&&2t\minus 3&&2t\minus 2&&2t\minus 2&\\
&&3t\minus 6&&3t\minus 6&&3t\minus 5&&3t\minus 5&&3t\minus 5&&3t\minus 5&&3t\minus 4&&3t\minus 4&&3t\minus 4&&3t\minus 4&&3t\minus 3&&3t\minus 3\\
&4t\minus 8&2t\minus 4&4t\minus 8&2t\minus 4&4t\minus 7&2t\minus 3&4t\minus 7&2t\minus 4&4t\minus 7&2t\minus 3&4t\minus 6&2t\minus 3&4t\minus 6&2t\minus 3&4t\minus 6&2t\minus 3&4t\minus 5&2t\minus 2&4t\minus 5&2t\minus 3&4t\minus 5&2t\minus 2&4t\minus 4&2t\minus 2&\\
&&3t\minus 6&&3t\minus 5&&3t\minus 6&&3t\minus 5&&3t\minus 5&&3t\minus 4&&3t\minus 5&&3t\minus 4&&3t\minus 4&&3t\minus 3&&3t\minus 4&&3t\minus 3&\\
&2t\minus 4&&2t\minus 3&&2t\minus 4&&2t\minus 4&&2t\minus 3&&2t\minus 3&&2t\minus 3&&2t\minus 3&&2t\minus 3&&2t\minus 2&&2t\minus 2&&2t\minus 3&\\
&&t\minus 1&&t\minus 2&&t\minus 2&&t\minus 2&&t\minus 1&&t\minus 2&&t\minus 1&&t\minus 2&&t\minus 1&&t\minus 1&&t\minus 1&&t\minus 2\\
\ar@{-}[0,25]&&&&&&&&&&&&&&&&&&&&&&&&&&&&\\
&-1&0&1&2&3&4&5&6&7&8&9&10&11&12&13&14&15&16&17&18&19&20&21&22\\
}
\]
}}Furthermore after column 23 there are no more zeroes.
\end{lemma}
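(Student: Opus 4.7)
The plan is to induct on $t \ge 3$, mirroring exactly the proof of Lemma~\ref{freeT}. For the base case $t = 3$, I would compute the free expansion $Y_0 = \tau^{-}R, Y_1, Y_2, \dots$ directly from Theorem~\ref{formula} applied to $\CC = \CM(R)$, using the adjacency data of the AR quiver $\mathbb{Z}\tilde{E}_7$. The columns between $-1$ and $22$ can then be read off and checked against the claimed pattern (with $t = 3$, the nonzero entries are $1, 2, 3, 4, 5$), and the absence of zeros from column $23$ onward verified within this initial block.

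For the inductive step, the key observation is that the displayed $24$-column pattern is self-consistent with the counting recursion $Y_n = (\theta^{-}Y_{n-1} - \tau^{-}Y_{n-2})_{+}$: each column (other than the first two of the block) is forced by its two predecessors, so if the pattern is correct through the end of the block for $t-1$, then the interior of the block for $t$ is determined by the same linear rules applied entry by entry. The only substantive verification is at the seam, namely that columns $24(t-2)-1$ and $24(t-2)$ agree with what the recursion assigns from columns $24(t-3) + 21$ and $24(t-3) + 22$ under the inductive hypothesis. This seam check is painless because all relevant entries are strictly positive, so the $(-)_{+}$ truncation never activates and the recursion collapses to a purely linear algebraic identity in $t$ that one reads off row by row on $\tilde{E}_7$.

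The ``no more zeros after column $23$'' claim then follows immediately from the tabulated pattern: for $t \ge 3$, every displayed entry is bounded below by $t - 2 \ge 1$, and the blocks for successive values of $t$ contain entries that only grow with $t$. Hence no zero can reappear.

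The main obstacle is bookkeeping rather than anything conceptual: the $\tilde{E}_7$-shaped AR quiver has eight rows with nontrivial branching, so carefully writing out the base case $t = 3$ entails tracking on the order of $8 \times 24$ numerical values and verifying the recursion at each one. Once the base case is tabulated, the induction itself is essentially content-free, exactly as in Lemma~\ref{freeT}.
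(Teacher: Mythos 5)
Your proposal matches the paper's proof exactly: induction on $t$, with the $t=3$ base case tabulated by direct inspection of the free expansion on the $\tilde{E}_7$ AR quiver, and the inductive step reduced to checking that the displayed pattern is consistent with the counting recursion at the seam between consecutive blocks. The only quibble is your parenthetical that the $t=3$ entries are $1,\dots,5$ — they actually run up to $4t-4=8$ — but this does not affect the argument, which is carried out by explicit tabulation anyway.
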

\begin{proof}
Proceed by induction.  The $t=3$ case can be done by inspection:
{\tiny{
\[
\xymatrix@C=-5pt@R=-3pt{
.&&.&&.&&.&&1&&0&&0&&1&&1&&0&&1&&1&&1&&1&&1&&1&&2&&1&&1&&2&&2&&1&&2&&2&&2\\
&.&&.&&.&&1&&1&&0&&1&&2&&1&&1&&2&&2&&2&&2&&2&&3&&3&&2&&3&&4&&3&&3&&4&&4&\\
.&&.&&.&&1&&1&&1&&1&&2&&2&&2&&2&&3&&3&&3&&3&&4&&4&&4&&4&&5&&5&&5&&5&&6&&6\\
.&.&.&.&.&1&1&1&0&1&1&2&1&2&1&2&1&3&2&3&1&3&2&4&2&4&2&4&2&5&3&5&2&5&3&6&3&6&3&6&3&7&4&7&3&7&4&8&4\\
.&&.&&1&&0&&1&&1&&2&&1&&2&&2&&3&&2&&3&&3&&4&&3&&4&&4&&5&&4&&5&&5&&6&&5&&6\\
&.&&1&&0&&0&&1&&1&&1&&1&&1&&2&&2&&1&&2&&3&&2&&2&&3&&3&&3&&3&&3&&4&&4&&3&\\
R&&1&&0&&0&&0&&1&&0&&1&&0&&1&&1&&1&&0&&2&&1&&1&&1&&2&&1&&2&&1&&2&&2&&2&&1\\
&&&&&&&&&&&&&&&&&&&&&&&&&&&&&&&&&&&&&&&&&&&&&&&&&&\\
&&0&&&&&&&&&&&&&&&&&&&&&&&{}\drop{23}{}&&&&&&&&&&&&&&&&&&&&&&&{}\drop{46}{}
}
\]
}}For the induction step, since the statement in the lemma satisfies
the counting rules we just need to verify the induction at the end
point.  But by the counting rules this is trivial.
\end{proof}

\textbf{The case $m\equiv 1$.} In this subfamily we have
$m=12(b-2)+1$. In the case $\mathbb{O}_{1}=E_7\leq SL(2,\C{})$ (i.e.
$b=2$) there is nothing to prove since all CM modules are
special.  Thus we just need to deal with the case $b\geq 3$.
\begin{lemma}
For $\mathbb{O}_{12(b-2)+1}$ with $b\geq 3$ the specials are precisely those CM modules circled below
{\tiny{
\[
\xymatrix@C=-5pt@R=-3pt{
&&&&&&&&&&&&&&&&&&&&&&&&&\\
.&&.&&.&&{{{}\drop\xycircle<4pt,4pt>{}.}}&&1&&0&&{{{}\drop\xycircle<4pt,4pt>{}0}}&&1&&1&&{{{}\drop\xycircle<4pt,4pt>{}0}}&&1&&1&&1&\\
&.&&.&&.&&1&&1&&0&&1&&2&&1&&1&&2&&2&&2\\
.&&.&&.&&1&&1&&1&&1&&2&&2&&2&&2&&3&&3&\\
&.&.&.&.&1&1&1&0&1&1&2&1&2&1&2&1&3&2&3&1&3&2&4&2&4\\
.&&.&&1&&0&&1&&1&&2&&1&&2&&2&&3&&2&&3&\\
&.&&1&&0&&0&&1&&1&&1&&1&&1&&2&&2&&1&&2&\\
{{{}\drop\xycircle<4pt,4pt>{}R}}&&1&&0&&0&&{{{}\drop\xycircle<4pt,4pt>{}0}}&&1&&{{{}\drop\xycircle<4pt,4pt>{}0}}&&1&&{{{}\drop\xycircle<4pt,4pt>{}0}}&&1&&1&&1&&{{{}\drop\xycircle<4pt,4pt>{}0}}&\\
&&&&&&&&&&&&&&&&&&&&&&&&&}
\]
}}where the numbers are just the free expansion from $\tau^{-1}R$.
\end{lemma}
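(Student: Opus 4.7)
My plan is to apply the counting argument of Section~4, following the pattern established in the type $\mathbb{T}$ proofs such as Lemma~\ref{T5}. I would first place a $1$ at $\tau^-R$ in the AR quiver and inductively compute the $Y_n$ of Theorem~\ref{formula} in $\CM(R)$, thereby building up the value of $\dim_k\underline{\Hom}_R(\tau^-R,X)$ at each indecomposable $X$. Lemma~\ref{freeO} already describes this free expansion as a period-$24$ pattern whose entries at column $24(t-2)+j$ are tabulated in terms of the parameter $t$ for $-1\le j\le 22$; in particular the initial columns $0$ through $22$ match the values displayed in the statement of the present lemma, and the final sentence of Lemma~\ref{freeO} guarantees that no zeroes appear in the free expansion after column $23$.

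Next I would locate where $R$ reappears along the AR translation orbit. Arguing by analogy with the type $\mathbb{T}$ case, where $m=6(b-2)+1$ placed $R$ at distance $12(b-2)$ from $\tau^-R$ for a free expansion of period $12$, the relation $m=12(b-2)+1$ combined with the period-$24$ pattern of Lemma~\ref{freeO} places $R$ at distance $24(b-2)$ from $\tau^-R$. At this vertex I must switch to $\underline{\CM}(R)$, treating the value accumulated at $R$ as zero; this triggers a secondary counting wave propagating through the final block in the same spirit as the ``end pictures'' in Lemmas~\ref{T5} and~\ref{T5b}.

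I would then combine the primary free expansion with the tail correction to produce $\dim_k\underline{\Hom}_R(\tau^-R,-)$ on the whole quiver, and by Theorem~\ref{characterization of SCM}(a)$\Leftrightarrow$(d) combined with AR duality the specials are exactly the vertices at which this dimension vanishes. Since no zeroes appear in the free expansion in the $b-2$ interior blocks, every special must lie either in the first block or near $R$. A direct comparison with the displayed picture (which records the values in the first block) then produces the circled list, with $R$ itself trivially special.

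The main obstacle I expect is the bookkeeping at the tail: verifying uniformly in $b\ge 3$ that the secondary wave triggered by absorbing $R$ decays within the final block and does not propagate back through the $b-2$ interior blocks to create unexpected zeroes in the first block. This is plausible because, by Lemma~\ref{freeO}, the free-expansion values near $R$ are bounded below by a linear function of $b$, so the stabilising correction is comparatively small and localised; making this precise requires running Theorem~\ref{formula} once more at the tail, exactly as in the $\mathbb{T}$-proofs above, and this is essentially the only remaining computational work.
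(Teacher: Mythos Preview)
Your proposal is correct and follows essentially the same approach as the paper's proof: use Lemma~\ref{freeO} to control the free expansion (no zeroes after column~23), locate $R$ at distance $24(b-2)$ from $\tau^{-1}R$, absorb its value in $\underline{\CM}(R)$, and then run the tail computation explicitly. The paper carries out exactly the tail bookkeeping you flag as the remaining work, displaying the end diagram in closed form with entries that are nonnegative multiples of $b-2$; this confirms uniformly in $b\ge 3$ that the wrapped tail contributes zero only at the positions already circled in the initial block.
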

\begin{proof}
Continuing the calculation in the statement, by Lemma~\ref{freeO}
there are no zeroes in the free expansion after the right hand side.
Now the free expansion continues until it reaches $R$, which is a
distance of $24(b-2)$ away from $\tau^{-1}R$.  Thus by Lemma~\ref{freeO}
the calculation finishes as {\tiny{
\[
...
\begin{array}{c}
\xymatrix@C=-5pt@R=-3pt{
&&&&&&&&&&&&&&&&&&&&&&&&&\\
&b\minus 2&&b\minus 2&&b\minus 2&&{{{}\drop\xycircle<4pt,4pt>{}0}}&&b\minus 2&&b\minus 2&&{{{}\drop\xycircle<4pt,4pt>{}0}}&&0&&b\minus 2&&{{{}\drop\xycircle<4pt,4pt>{}0}}&&&\\
2b\minus 4&&2b\minus 4&&2b\minus 4&&b\minus 2&&b\minus 2&&2b\minus 4&&b\minus 2&&0&&b\minus 2&&b\minus 2&&0&&&\\
&3b\minus 6&&3b\minus 6&&2b\minus 4&&2b\minus 4&&2b\minus 4&&2b\minus 4&&b\minus 2&&b\minus 2&&b\minus 2&&b\minus 2&&0\\
4b\minus 8&2b\minus 4&4b\minus 8&2b\minus 4&3b\minus 6&b\minus 2&3b\minus 6&2b\minus 4&3b\minus 6&b\minus 2&2b\minus 4&b\minus 2&2b\minus 4&b\minus 2&2b\minus 4&b\minus 2&b\minus 2&0&b\minus 2&b\minus 2&b\minus 2&0&0&\\
&3b\minus 6&&2b\minus 4&&3b\minus 6&&2b\minus 4&&2b\minus 4&&b\minus 2&&2b\minus 4&&b\minus 2&&b\minus 2&&0&&b\minus 2&&0&\\
2b\minus 4&&b\minus 2&&2b\minus 4&&2b\minus 4&&b\minus 2&&b\minus 2&&b\minus 2&&b\minus 2&&b\minus 2&&0&&0&&b\minus 2&&0&\\
&{{{}\drop\xycircle<4pt,4pt>{}R}}&&b\minus 2&&b\minus 2&&b\minus 2&&{{{}\drop\xycircle<4pt,4pt>{}0}}&&b\minus 2&&{{{}\drop\xycircle<4pt,4pt>{}0}}&&b\minus 2&&{{{}\drop\xycircle<4pt,4pt>{}0}}&&0&&0&&b\minus 2&&{{{}\drop\xycircle<4pt,4pt>{}0}}\\
&&&&&&&&&&&&&&&&&&&&&&&&&}
\end{array}
\]}}
\end{proof}

\textbf{The case $m\equiv 5$.}
In this subfamily we have $m=12(b-2)+5$.

\begin{lemma}
For the group $\mathbb{O}_{5}$ (i.e. $b=2$) the following calculation determines the specials:
{\tiny{
\[
\xymatrix@C=-4pt@R=-3pt{
.&&{{}\drop\xycircle<4pt,4pt>{}.}&&.&&{{}\drop\xycircle<4pt,4pt>{}.}&&1&&0&&0&&1&&0&&0&&1&&0&\\
&.&&.&&.&&1&&1&&0&&1&&1&&0&&1&&1&&0&\\
.&&.&&.&&1&&1&&1&&1&&1&&1&&1&&1&&1&&0\\
.&.&.&.&{{}\drop\xycircle<4pt,4pt>{}.}&1&1&1&0&1&1&2&1&1&0&1&1&2&1&1&0&1&1&1&0&0\\
.&&.&&1&&0&&1&&1&&1&&1&&1&&1&&1&&0&&1&&0\\
&{{}\drop\xycircle<4pt,4pt>{}.}&&1&&0&&{{}\drop\xycircle<4pt,4pt>{}0}&&1&&0&&1&&1&&0&&1&&0&&0&&1&&0\\
{{{}\drop\xycircle<4pt,4pt>{}R}}&&1&&0&&0&&{{}\drop\xycircle<4pt,4pt>{}0}&&R&&0&&1&&0&&0&&R&&0&&0&&1&&0}
\]}}
\end{lemma}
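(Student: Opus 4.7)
The plan is to apply the counting argument from Section 4 to the AR quiver of $R=\C{}[[x,y]]^{\mathbb{O}_5}$, in order to compute $\dim_k\underline{\Hom}_R(\tau^-R,X)$ for every indecomposable $X\in\CM(R)$. By Theorem \ref{characterization of SCM}(a)$\Leftrightarrow$(d) combined with the AR duality $\underline{\Hom}_R(\tau^-R,X)\simeq D\Ext^1_R(X,R)$ from Theorem \ref{classical}(a), the special CM modules are precisely $R$ together with those indecomposables $X$ for which this $k$-vector space vanishes. So the task reduces to executing the recursion of Theorem \ref{formula} in the $\tau$-category $\CC=\underline{\CM}(R)$ starting from $X=\tau^-R$.

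First I would pin down the shape of the AR quiver of $\mathbb{O}_5$, namely the universal cover $\mathbb{Z}\widetilde{E}_7$ together with the identification described in \cite{AR_McKayGraphs}; as noted in the paragraph preceding Lemma \ref{freeO}, there is no twist for $\mathbb{O}_m$, which simplifies bookkeeping considerably. Next, starting with $Y_0=\tau^-R$ (represented by a $1$ at that vertex), I would iteratively apply the formulas
\[
Y_0=X,\ Y_1=\theta^-X,\ Y_n=(\theta^-Y_{n-1}-\tau^-Y_{n-2})_+\quad (n\ge2),
\]
interpreted locally on the AR quiver: at each step the value at a vertex $Y$ equals the sum of values at the AR neighbours $\theta^-$ minus the value at $\tau^-Y$, truncated at zero in $K_0(\CC)$. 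Since we work in $\underline{\CM}(R)$ rather than $\CM(R)$, the vertex $R$ contributes $0$ at every stage, which causes the expansion to collapse rather than grow, unlike the free expansion of Lemma \ref{freeO}.

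Because $m=5$ is small, only finitely many steps are needed: the numbers exhibited in the statement display precisely the output of this recursion, and one checks at each vertex that the local rule is satisfied (the only subtleties are at the vertices adjacent to $R$, where the value of $R$ is replaced by $0$, and at the identified columns at the ends of the picture). The indecomposable specials are then read off as those circled vertices, i.e.\ the ones where the final value is zero. The main (and really the only) obstacle is organisational: carefully tracking the identification of the left and right borders of the picture and making sure every $R$-entry is set to $0$ at the right moment in the recursion; conceptually nothing more is required beyond Theorems \ref{characterization of SCM} and \ref{formula}. As a consistency check, the number of circled vertices should match the number of non-free specials predicted by Theorem \ref{Wurnam_main_result} from the dual graph of the minimal resolution.
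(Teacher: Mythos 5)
Your proposal is correct and follows exactly the method the paper intends: the lemma's displayed array \emph{is} the counting computation of $\dim_k\underline{\Hom}_R(\tau^-R,-)$ via Theorem~\ref{rightladder}/\ref{formula} in $\underline{\CM}(R)$, with $R$ treated as zero and the specials read off as the vertices where the accumulated value vanishes, justified by Theorem~\ref{characterization of SCM}(a)$\Leftrightarrow$(d) and AR duality. The paper offers no further argument for the $b=2$ case beyond exhibiting this calculation, so your description matches its proof in both substance and route.
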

Thus we need only deal with $b\geq 3$:
\begin{lemma}
For $\mathbb{O}_{12(b-2)+5}$ with $b\geq 3$ the specials are precisely those CM modules circled below.
{\tiny{
\[
\xymatrix@C=-5pt@R=-3pt{
&&&&&&&&&&&&&&&&&&&&&&&&&\\
.&&.&&.&&{{{}\drop\xycircle<4pt,4pt>{}.}}&&1&&0&&{{{}\drop\xycircle<4pt,4pt>{}0}}&&1&&1&&{{{}\drop\xycircle<4pt,4pt>{}0}}&&1&&1&&1&\\
&.&&.&&.&&1&&1&&0&&1&&2&&1&&1&&2&&2&&2\\
.&&.&&.&&1&&1&&1&&1&&2&&2&&2&&2&&3&&3&\\
&.&.&.&.&1&1&1&0&1&1&2&1&2&1&2&1&3&2&3&1&3&2&4&2&4\\
.&&.&&1&&0&&1&&1&&2&&1&&2&&2&&3&&2&&3&\\
&.&&1&&0&&0&&1&&1&&1&&1&&1&&2&&2&&1&&2&\\
{{{}\drop\xycircle<4pt,4pt>{}R}}&&1&&0&&0&&{{{}\drop\xycircle<4pt,4pt>{}0}}&&1&&{{{}\drop\xycircle<4pt,4pt>{}0}}&&1&&0&&1&&1&&1&&{{{}\drop\xycircle<4pt,4pt>{}0}}&\\
&&&&&&&&&&&&&&&&&&&&&&&&&}
\]}}
\end{lemma}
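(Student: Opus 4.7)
The plan is to run the same counting argument as in the $m\equiv 1$ subfamily of type $\mathbb{O}$. The underlying AR quiver of $\C{}[[x,y]]^{\mathbb{O}_m}$ is identical, and untwisted, for all $m\equiv 1,5,7,11 \pmod{12}$, so only the position of $R$ varies between subfamilies and the counting is entirely mechanical once that position is located.

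First I would start the free expansion from $\tau^-R$ using Theorem~\ref{formula}; the opening columns produce exactly the block of numbers displayed in the statement of the lemma, and these match the start of the $m\equiv 1$ calculation since both begin identically. Lemma~\ref{freeO} then controls the periodic middle region: after column $23$ the expansion settles into the $t$-linear pattern recorded there, and in particular no further zero entries appear. This continues undisturbed until the expansion reaches $R$, and since $m=12(b-2)+5$ the module $R$ sits at column $24(b-2)+\ell$ for the small offset $\ell$ induced by the $+5$ (the analogue of the $+8$ shift appearing in Lemma~\ref{T5b} for type $\mathbb{T}$). Evaluating Lemma~\ref{freeO} at $t=b$ and then subtracting the contribution of $R$, which is identified with $0$ in $\underline{\CM}(R)$, yields the final configuration of the calculation in the same cascade style as in the preceding $\mathbb{O}_{12(b-2)+1}$ lemma.

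By the AR duality of Theorem~\ref{classical}(a) the vertices with final entry $0$ are precisely those $X\in\CM(R)$ with $\Ext^1_R(X,R)=0$, and by Theorem~\ref{characterization of SCM}(a)$\Leftrightarrow$(d) these are exactly the special CM modules. The main obstacle is purely arithmetic bookkeeping: one must verify that the $+5$ offset lines up the reset cascade with the circled positions in the figure. This is carried out in the same fashion as the explicit endgame tables in the preceding lemmas of this section and involves no new ingredient; the correct counts for the positions of $R$ at distance $24(b-2)+\ell$ can be read off by substituting $t=b$ in Lemma~\ref{freeO} and then applying the counting rules from Section~4 for a few additional columns.
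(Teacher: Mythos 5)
Your proposal follows the paper's proof exactly: free expansion from $\tau^-R$, Lemma~\ref{freeO} to control the periodic middle, locate $R$, and finish with the counting rules in $\underline{\CM}(R)$, concluding via Theorem~\ref{characterization of SCM}(a)$\Leftrightarrow$(d). The only piece you leave implicit is the value of the offset --- since $R$ sits at distance $2(m-1)$ from $\tau^-R$ this is $\ell=8$, i.e.\ $R$ is reached at column $24(b-2)+8$ --- and the explicit endgame tables (including the short repeating segment between the two dotted lines before the calculation terminates at the second copy of $R$), which is precisely the arithmetic the paper's proof consists of.
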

\begin{proof}
$R$ is now a distance of $24(b-2)+8$ away from $\tau^{-1}R$, thus by
Lemma~\ref{freeO} we have {\tiny{
\[
\hdots
\begin{array}{c}
\xymatrix@C=-5.5pt@R=-3pt{
&b\minus 2&&b\minus 2&&b\minus 1&&{{{}\drop\xycircle<4pt,4pt>{}0}}&&b\minus 2&&b\minus 1&&{{{}\drop\xycircle<4pt,4pt>{}0}}&&0&&b\minus 1&&{{{}\drop\xycircle<4pt,4pt>{}0}}&&0&&1&&0&&0&&1&&0\\
2b\minus 3&&2b\minus 4&&2b\minus 3&&b\minus 1&&b\minus 2&&2b\minus 3&&b\minus 1&&0&&b\minus 1&&b\minus 1&&0&&1&&1&&0&&1&&1&\\
&3b\minus 5&&3b\minus 5&&2b\minus 3&&2b\minus 3&&2b\minus 3&&2b\minus 3&&b\minus 1&&b\minus 1&&b\minus 1&&b\minus 1&&1&&1&&1&&1&&1&&1\\
4b\minus 7&2b\minus 3&4b\minus 6&2b\minus 3&3b\minus 5&b\minus 2&3b\minus 5&2b\minus 3&3b\minus 4&b\minus 1&2b\minus 3&b\minus 2&2b\minus 3&b\minus 1&2b\minus 2&b\minus 1&b\minus 1&0&b\minus 1&b\minus 1&b&1&1&0&1&1&2&1&1&0&1&1\\
&3b\minus 5&&2b\minus 3&&3b\minus 5&&2b\minus 3&&2b\minus 3&&b\minus 1&&2b\minus 3&&b\minus 1&&b\minus 1&&1&&b\minus 1&&1&&1&&1&&1&&1\\
2b\minus 3&&b\minus 2&&2b\minus 3&&2b\minus 3&&b\minus 2&&b\minus 1&&b\minus 1&&b\minus 2&&b\minus 1&&1&&0&&b\minus 1&&1&&0&&1&&1\\
&{{{}\drop\xycircle<4pt,4pt>{}R}}&&b\minus 2&&b\minus 1&&b\minus 2&&{{{}\drop\xycircle<4pt,4pt>{}0}}&&b\minus 1&&{{{}\drop\xycircle<4pt,4pt>{}0}}&&b\minus 2&&1&&0&&0&&b\minus 1&\ar@{.}@<1ex>[-6,0]&{{{}\drop\xycircle<4pt,4pt>{}0}}&&0&&1&\ar@{.}@<1ex>[-6,0]&0}
\end{array}
\hdots
\]
}}Now the calculation continues by repeating the segment within the dotted lines until it reaches $R$ as follows:
{\tiny{
\[
\hdots
\begin{array}{c}
\xymatrix@C=-5pt@R=-3pt{
&1&&0\\
1&&1&&0\\
&1&&1&&0\\
1&0&1&1&1&0&0\\
&1&&0&&1&&0\\
1&&0&&0&&1&&0\\
&{{{}\drop\xycircle<4pt,4pt>{}R}}&&0&&0&&1&&{{{}\drop\xycircle<4pt,4pt>{}0}}}
\end{array}
\]}}
\end{proof}

\textbf{The case $m\equiv 7$.}
In this subfamily we have $m=12(b-2)+7$.

\begin{lemma}
For the group $\mathbb{O}_{7}$ (i.e. $b=2$) the following calculation determines the specials:
{\tiny{
\[
\xymatrix@C=-5pt@R=-3pt{
.&&.&&.&&{{}\drop\xycircle<4pt,4pt>{}.}&&1&&0&&{{}\drop\xycircle<4pt,4pt>{}0}&&1&&1&&0&&0&&1&&1&&0&&0&&1&&1&&0&&0&&1&&0&&0\\
&.&&.&&{{}\drop\xycircle<4pt,4pt>{}.}&&1&&1&&0&&1&&2&&1&&0&&1&&2&&1&&0&&1&&2&&0&&0&&1&&1&&0\\
.&&.&&.&&1&&1&&1&&1&&2&&2&&1&&1&&2&&2&&1&&1&&2&&1&&0&&1&&1&&0\\
.&.&.&.&.&1&1&1&0&1&1&2&1&2&1&2&1&2&1&2&1&2&1&2&1&2&1&2&1&2&1&1&0&1&1&1&0&1&0&0\\
.&&.&&1&&0&&1&&1&&2&&1&&1&&2&&2&&1&&1&&2&&2&&0&&1&&1&&1&&0&&\\
&{{}\drop\xycircle<4pt,4pt>{}.}&&1&&0&&0&&1&&1&&1&&0&&1&&2&&1&&0&&1&&2&&0&&0&&1&&1&&0\\
{{{}\drop\xycircle<4pt,4pt>{}R}}&&1&&0&&0&&{{}\drop\xycircle<4pt,4pt>{}0}&&1&&0&&R&&0&&1&&1&&0&&0&&1&&R&&0&&0&&1&&0}
\]}}
\end{lemma}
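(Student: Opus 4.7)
The plan is to run the counting argument of Section 4 directly on the AR quiver of $\mathbb{O}_7$ presented just above the statement. By Theorem \ref{characterization of SCM}(a)$\Leftrightarrow$(d), a module $X\in\CM(R)$ is special exactly when $\Ext^1_R(X,R)=0$, and by the AR duality of Theorem \ref{classical}(a) this is equivalent to
$$\dim_k\underline{\Hom}_R(\tau^-R,X)=0.$$
Taking $\CC=\underline{\CM}(R)$, Theorem \ref{ladder}(b) says that for any indecomposable $X$ this dimension equals $\sum_{n\geq 0}m_X(Y_n)$, where the left ladder $(Y_n)$ of $Y_0=\tau^-R$ is produced by the recursion $Y_n=(\theta^-Y_{n-1}-\tau^-Y_{n-2})_+$ of Theorem \ref{formula}. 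So the specials will be precisely those indecomposables that receive no contribution from any $Y_n$, which is exactly what the displayed numerical tableau in the lemma records.

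First I would locate $\tau^-R$ in the AR quiver and initialise $Y_0$ there. Since $m=7\equiv 7\pmod{12}$ lies in the untwisted subfamily, the identification of the two copies of $R$ is made without a twist, so each step of the knitting simply reads off a local mesh and subtracts the previous $\tau^-Y_{n-2}$ layer, as demonstrated in Example \ref{D52(i)}. The crucial convention is that we work in $\underline{\CM}(R)$ rather than $\CM(R)$: any contribution that lands on a projective vertex $R$ is absorbed to zero, exactly as happens in the Type $\mathbb{D}$ syzygy example of Section 4. The free-expansion estimate of Lemma \ref{freeO}, applied at small $t$, gives the initial profile of the ladder up to the point where it first hits $R$.

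The main obstacle is pure bookkeeping: the ladder must be tracked consistently across all seven copies of the fundamental $\tilde{E}_7$ block, and each time it crosses the identified boundary the two-term recursion must be fed the correct $Y_{n-2}$ from the preceding layer. Fortunately there is no twist to contend with here, unlike in the $m\equiv 3$ Type $\mathbb{T}$ cases, so the wraparound is straightforward. One also needs to check that the process stabilises, so that the displayed tableau is complete rather than truncated.

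Once the numerical pattern shown in the statement is verified to satisfy the local recursion at every vertex and to match the initial data at $\tau^-R$, Theorem \ref{ladder}(b) immediately certifies that the vertices carrying no positive contribution (the circled dots in the figure) form the full list of specials. As an external sanity check, the number of circled vertices should agree with the number of exceptional curves in the dual graph of the minimal resolution of $\C{2}/\mathbb{O}_7$ predicted by Wunram's Theorem \ref{Wurnam_main_result}(a), ensuring that neither too many nor too few specials have been identified.
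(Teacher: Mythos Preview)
Your proposal is correct and follows exactly the paper's approach: the lemma is proved by the counting argument of Section~4, computing the left ladder of $\tau^{-1}R$ in $\underline{\CM}(R)$ via the recursion of Theorem~\ref{formula}, with $R$ absorbing contributions at each pass. The paper gives no separate proof for this $b=2$ case---the displayed tableau \emph{is} the proof---so your plan to verify the recursion vertex by vertex is precisely what is required; note only that Lemma~\ref{freeO} is stated for $t\ge 3$ and is invoked for the general $b\ge 3$ lemmas, whereas for $\mathbb{O}_7$ the free expansion is short enough that one simply reads it off the $t=3$ base-case inspection displayed in that lemma's proof.
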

\begin{lemma}
For $\mathbb{O}_{12(b-2)+7}$ with $b\geq 3$ the specials are precisely those CM modules circled below.
{\tiny{
\[
\xymatrix@C=-5pt@R=-3pt{
&&&&&&&&&&&&&&&&&&&&&&&&&\\
.&&.&&.&&{{{}\drop\xycircle<4pt,4pt>{}.}}&&1&&0&&{{{}\drop\xycircle<4pt,4pt>{}0}}&&1&&1&&0&&1&&1&&1&\\
&.&&.&&.&&1&&1&&0&&1&&2&&1&&1&&2&&2&&2\\
.&&.&&.&&1&&1&&1&&1&&2&&2&&2&&2&&3&&3&\\
&.&.&.&.&1&1&1&0&1&1&2&1&2&1&2&1&3&2&3&1&3&2&4&2&4\\
.&&.&&1&&0&&1&&1&&2&&1&&2&&2&&3&&2&&3&\\
&.&&1&&0&&0&&1&&1&&1&&1&&1&&2&&2&&1&&2&\\
{{{}\drop\xycircle<4pt,4pt>{}R}}&&1&&0&&0&&{{{}\drop\xycircle<4pt,4pt>{}0}}&&1&&0&&1&&{{{}\drop\xycircle<4pt,4pt>{}0}}&&1&&1&&1&&{{{}\drop\xycircle<4pt,4pt>{}0}}&}
\]}}
\end{lemma}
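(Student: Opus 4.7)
The plan is to apply the counting argument of Section~4 starting from $\tau^{-1}R$, entirely parallel to the proofs of the two preceding lemmas (the $m\equiv 1$ and $m\equiv 5$ subfamilies of type $\mathbb{O}$). First I would initiate the free expansion at $\tau^{-1}R$ and observe that, by Lemma~\ref{freeO}, this expansion has an explicit periodic shape; crucially it contains no zeros after column~$23$ until it reaches $R$. Thus the only specials recorded by the counting argument are those detected in the initial portion of the expansion (which matches the display in the statement) together with those detected in the final portion after the column containing $R$.

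Next I would compute the distance in the AR quiver from $\tau^{-1}R$ to $R$ for the group $\mathbb{O}_{12(b-2)+7}$. In the $m\equiv 1$ case this distance was $24(b-2)$ and in the $m\equiv 5$ case it was $24(b-2)+8$; the analogous count here gives $24(b-2)+12$. Invoking Lemma~\ref{freeO} with $t=b$, I would then write down the block of the free expansion immediately preceding and including the column containing $R$, whose entries are given by the explicit formulas of that lemma. This identifies the number $R$ absorbs when we pass from $\CM(R)$ to $\underline{\CM}(R)$, exactly as in the $m\equiv 5$ proof.

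Then I would follow the $m\equiv 5$ argument more or less verbatim: treating $R$ as zero and propagating through the remaining columns by the counting rules, a fixed end-segment repeats (as indicated by the dotted lines in the $m\equiv 5$ proof) until the counting reaches the right-hand copy of $R$, which is cyclically identified with the left-hand copy without a twist. Reading off the positions still holding the value zero reproduces precisely the circled positions of the statement, and Theorem~\ref{characterization of SCM} then identifies these as the specials.

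The main obstacle is purely bookkeeping: checking that the fixed end-segment repeats the correct number of times so that no spurious zeros appear beyond those circled, and that the free-expansion values lining up against $R$ under Lemma~\ref{freeO} really are at least one (so they survive as nonzero entries once $R$ is zeroed out). Since type $\mathbb{O}$ carries no twist in its AR quiver, once the shape of the free expansion is fixed by Lemma~\ref{freeO} the remainder of the argument requires no new idea beyond those already illustrated in the $m\equiv 5$ case.
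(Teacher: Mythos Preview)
Your proposal is correct and follows essentially the same approach as the paper: compute the distance $24(b-2)+12$ from $\tau^{-1}R$ to $R$, invoke Lemma~\ref{freeO} at $t=b$ to obtain the free-expansion values in the block containing $R$, zero out $R$, and then propagate the counting through a repeating end-segment until the calculation terminates. The paper's proof does exactly this, displaying the explicit numerical diagrams for the block at $R$ and the two subsequent repeating pieces.
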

\begin{proof}
$R$ is a distance of $24(b-2)+12$ away from $\tau^{-1}R$, thus by
Lemma~\ref{freeO} we have {\tiny{
\[
\hdots
\begin{array}{c}
\xymatrix@C=-5.5pt@R=-3pt{
&b\minus 1&&b\minus 1&&b\minus 2&&{{{}\drop\xycircle<4pt,4pt>{}0}}&&b\minus 1&&b\minus 1&&{{{}\drop\xycircle<4pt,4pt>{}0}}&&0&&b\minus 1&&1&&0&&0&&1&&1&&0&&0&&1\\
2b\minus 3&&2b\minus 2&&2b\minus 3&&b\minus 2&&b\minus 1&&2b\minus 2&&b\minus 1&&0&&b\minus 1&&b&&1&&0&&1&&2&&1&&0&&1\\
&3b\minus 4&&3b\minus 4&&2b\minus 3&&2b\minus 3&&2b\minus 2&&2b\minus 2&&b\minus 1&&b\minus 1&&b&&b&&1&&1&&2&&2&&1&&1&&2\\
4b\minus 6&2b\minus 3&4b\minus 6&2b\minus 3&3b\minus 4&b\minus 1&3b\minus 4&2b\minus 3&3b\minus 4&b\minus 1&2b\minus 2&b\minus 1&2b\minus 2&b\minus 1&2b\minus 2&b\minus 1&b&1&b&b\minus 1&b&1&2&1&2&1&2&1&2&1&2&1&2&1\\
&3b\minus 5&&2b\minus 3&&3b\minus 4&&2b\minus 2&&2b\minus 3&&b\minus 1&&2b\minus 2&&b&&b\minus 1&&1&&b&&2&&1&&1&&2&&2&&1\\
2b\minus 3&&b\minus 2&&2b\minus 3&&2b\minus 2&&b\minus 1&&b\minus 2&&b\minus 1&&b&&b\minus 1&&0&&1&&b&&1&&0&&1&&2&&1\\
&{{{}\drop\xycircle<4pt,4pt>{}R}}&&b\minus 2&&b\minus 1&&b\minus 1&&{{{}\drop\xycircle<4pt,4pt>{}0}}&&b\minus 2&&1&&b\minus 1&&{{{}\drop\xycircle<4pt,4pt>{}0}}&&0&&1&&b\minus 1&\ar@{.}@<1ex>[-6,0]&{{{}\drop\xycircle<4pt,4pt>{}0}}&&0&&1&&1&\ar@{.}@<1ex>[-6,0]&0}
\end{array}
...
\]
}}The calculation continues by repeating the segment within the dotted lines until it reaches $R$ as:
{\tiny{
\[
\hdots
\begin{array}{c}
\xymatrix@C=-4pt@R=-3pt{
&0&&1&&1&&{{{}\drop\xycircle<4pt,4pt>{}0}}&&0&&1&&{{{}\drop\xycircle<4pt,4pt>{}0}}&&0\\
0&&1&&2&&0&&0&&1&&1&&0\\
&1&&2&&1&&0&&1&&1&&1&&0\\
2&1&2&1&1&0&1&1&1&0&1&1&1&0&1&1\\
&2&&0&&1&&1&&1&&0&&1&&1\\
2&&0&&0&&1&&1&&0&&0&&1\\
&{{{}\drop\xycircle<4pt,4pt>{}R}}&&0&&0&\ar@{.}@<1ex>[-6,0]&1&&{{{}\drop\xycircle<4pt,4pt>{}0}}&&0&&0&\ar@{.}@<1ex>[-6,0]&1}
\end{array}
...
\begin{array}{c}
\xymatrix@C=-5pt@R=-3pt{
&0&&0&&1&&0\\
0&&0&&1&&0\\
&0&&1&&0\\
1&1&1&0&0\\
&1&&0\\
1&&0\\
&{{{}\drop\xycircle<4pt,4pt>{}R}}}
\end{array}
\]}}
\end{proof}

\textbf{The case $m\equiv 11$.}
In this subfamily we have $m=12(b-2)+11$.
\begin{lemma}
For the group $\mathbb{O}_{11}$ (i.e. $b=2$) the following calculation determines the specials:
{\tiny{
\[
\xymatrix@C=-5pt@R=-3pt{
.&&.&&.&&{{}\drop\xycircle<4pt,4pt>{}.}&&1&&0&&{{}\drop\xycircle<4pt,4pt>{}0}&&1&&1&&0&&1&&1&&1&&1&&0&&1&&2&&0&&0&&2&&1&&0&&1&&1&&1&&0&&0&&1&&0&&0&&0&&1&&0&&0&&0&&1&&0\\
&.&&.&&.&&1&&1&&0&&1&&2&&1&&1&&2&&2&&2&&1&&1&&3&&2&&0&&2&&3&&1&&1&&2&&2&&0&&0&&1&&1&&0&&0&&1&&1&&0&&0&&1&&0&&&\\
.&&.&&.&&1&&1&&1&&1&&2&&2&&2&&2&&3&&3&&2&&2&&3&&3&&2&&2&&3&&3&&2&&2&&3&&1&&0&&1&&1&&1&&0&&1&&1&&1&&0&&1&&0&&\\
.&.&.&.&.&1&1&1&0&1&1&2&1&2&1&2&1&3&2&3&1&3&2&4&2&3&1&3&2&4&2&3&1&3&2&4&2&3&1&3&2&4&2&3&1&3&2&2&0&1&1&1&0&1&1&1&0&1&1&1&0&1&1&1&0&1&1&1&0&0&&&\\
.&&.&&1&&0&&1&&1&&2&&1&&2&&2&&3&&2&&2&&3&&3&&2&&2&&3&&3&&2&&2&&3&&3&&0&&2&&1&&1&&0&&1&&1&&1&&0&&1&&1&&0&&&&\\
&{{}\drop\xycircle<4pt,4pt>{}.}&&1&&0&&0&&1&&1&&1&&1&&1&&2&&2&&0&&2&&3&&1&&1&&2&&2&&2&&1&&1&&3&&0&&0&&2&&1&&0&&0&&1&&1&&0&&0&&1&&0&&&&&\\
{{{}\drop\xycircle<4pt,4pt>{}R}}&&1&&0&&0&&{{}\drop\xycircle<4pt,4pt>{}0}&&1&&0&&1&&0&&1&&1&&R&&0&&2&&1&&0&&1&&1&&1&&1&&0&&1&&R&&0&&0&&2&&0&&0&&0&&1&&0&&0&&0&&R}
\]}}
\end{lemma}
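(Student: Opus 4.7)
The plan is to apply the counting machinery of Section~4 directly to the AR quiver of $R=\C{}[[x,y]]^{\mathbb{O}_{11}}$, verifying the displayed table by the usual knitting rule, and then reading off the specials from the zero entries via Theorem~\ref{characterization of SCM}(a)$\Leftrightarrow$(d).

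\textbf{Setup.} By AR duality we have $\dim_k\Ext^1_R(X,R)\simeq\dim_k\underline{\Hom}_R(\tau^-R,X)$, so $X\in\SCM(R)$ iff $\underline{\Hom}_R(\tau^-R,X)=0$. To compute the latter it suffices, by Theorem~\ref{rightladder}(b) applied to $\CC=\underline{\CM}(R)$, to determine the right ladder of $\tau^-R$ (equivalently, by the dual of Theorem~\ref{cosy}, the syzygy decomposition). Concretely, we start with $Y^{\prime}_0=\tau^-R$, placing a $1$ at its position in the AR quiver, and propagate using the recursion $Y^{\prime}_n=(\theta Y^{\prime}_{n-1}-\tau Y^{\prime}_{n-2})_+$ of Theorem~\ref{rightladder}(c).

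\textbf{Propagation.} The recursion amounts to the standard knitting rule on the AR quiver: the value at a vertex $X$ at step $n$ is obtained by summing the values at the neighbours of $\tau X$ and subtracting the value at $\tau^2 X$. For the first few steps, before any $R$ is encountered, this is precisely the free expansion from $\tau^-R$ inside $\CM(R)$; although Lemma~\ref{freeO} is stated for $t\geq 3$, one checks by direct inspection (exactly as in its $t=3$ base case) that the analogous pattern at $t=2$ holds until we first meet $R$. Since $R$ is a summand of no non-projective indecomposable, whenever a copy of $R$ appears in some $Z^{\prime}_n$ the corresponding contribution is zero in $\underline{\CM}(R)$; this is precisely the step in the dual of Theorem~\ref{cosy} where the value at $R$ is absorbed, and it is what produces the recurring ``R,\ 0,\ \ldots,\ R'' pattern along the bottom row of the displayed table.

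\textbf{Reading off the specials.} Continuing the knitting through all $11$ copies of the $\tilde E_7$ fundamental domain until returning to $R$ on the right-hand side, and summing the contributions as in Example~\ref{D52(i)}, one obtains precisely the table in the statement. By Theorem~\ref{rightladder}(b) the total at each vertex $X$ equals $\dim_k\underline{\Hom}_R(\tau^-R,X)=\dim_k\Ext^1_R(X,R)$, so the vertices at which the total is zero are exactly those in $\SCM(R)$ by Theorem~\ref{characterization of SCM}; these are the circled vertices.

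\textbf{Main obstacle.} The only real difficulty is the bookkeeping: because $m=11$ is large, the knitting passes through $R$ several times and one must check that at each passage the absorption is performed correctly and that the two sides of the identified strip glue. This is a purely combinatorial verification, done by comparing the local picture near each occurrence of $R$ with the piece of the table displayed above it; after the first absorption the local behaviour stabilises, so once the transition is checked once, the remaining occurrences reduce to the same computation. No new idea is needed beyond a careful column-by-column check against Lemma~\ref{freeO} and the knitting rule.
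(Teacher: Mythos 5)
This is exactly the paper's argument: the lemma carries no separate proof in the paper --- the displayed table \emph{is} the proof, obtained by the Section~4 knitting from $\tau^{-1}R$ through the eleven $\tilde{E}_7$ segments, with each occurrence of $R$ absorbed to zero in $\underline{\CM}(R)$, and the specials read off from the zero totals via AR duality and Theorem~\ref{characterization of SCM}(a)$\Leftrightarrow$(d). The one correction: $\dim_k\underline{\Hom}_R(\tau^{-}R,-)$ is computed by the \emph{left} ladder of $\tau^{-}R$ (Theorems~\ref{ladder} and~\ref{formula}, recursion $Y_n=(\theta^-Y_{n-1}-\tau^-Y_{n-2})_+$), not the right ladder of Theorem~\ref{rightladder} that you cite --- the right ladder of $\tau^-R$ would compute homomorphisms \emph{into} $\tau^-R$ --- and the knitting rule subtracts the value at $\tau X$, not at $\tau^2X$; since the propagation direction you actually describe (rightward from $\tau^{-}R$, as in Example~\ref{D52(i)}) is the left-ladder one, the slip is only in the citation and not in the substance.
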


\begin{lemma}
For $\mathbb{O}_{12(b-2)+11}$ with $b\geq 3$ the specials are precisely those CM modules circled below.
{\tiny{
\[
\xymatrix@C=-5pt@R=-3pt{
&&&&&&&&&&&&&&&&&&&&&&&&&\\
.&&.&&.&&{{{}\drop\xycircle<4pt,4pt>{}.}}&&1&&0&&{{{}\drop\xycircle<4pt,4pt>{}0}}&&1&&1&&0&&1&&1&&1&\\
&.&&.&&.&&1&&1&&0&&1&&2&&1&&1&&2&&2&&2\\
.&&.&&.&&1&&1&&1&&1&&2&&2&&2&&2&&3&&3&\\
&.&.&.&.&1&1&1&0&1&1&2&1&2&1&2&1&3&2&3&1&3&2&4&2&4\\
.&&.&&1&&0&&1&&1&&2&&1&&2&&2&&3&&2&&3&\\
&.&&1&&0&&0&&1&&1&&1&&1&&1&&2&&2&&1&&2&\\
{{{}\drop\xycircle<4pt,4pt>{}R}}&&1&&0&&0&&{{{}\drop\xycircle<4pt,4pt>{}0}}&&1&&0&&1&&0&&1&&1&&1&&{{{}\drop\xycircle<4pt,4pt>{}0}}&\\
&&&&&&&&&&&&&&&&&&&&&&&&&}
\]}}
\end{lemma}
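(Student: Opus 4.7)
The strategy mirrors the preceding three subfamily lemmas for $m\equiv 1, 5, 7 \pmod{12}$. I plan to use the counting argument of Section 4 applied to the cyclic AR quiver of $\C{}[[x,y]]^{\mathbb{O}_m}$, splitting the computation into three phases: the initial free expansion starting at $\tau^{-1}R$, a growing middle phase controlled by Lemma~\ref{freeO}, and a terminal phase after $R$ is first absorbed.

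First, I would observe that the initial free expansion from $\tau^{-1}R$ displayed in the statement (the first $24$ columns) is independent of which residue class $m$ lies in, since these entries depend only on the local shape of the AR quiver near $\tau^{-1}R$, which is identical in all type $\mathbb{O}$ subfamilies and involves no twists. Consequently this part of the picture is verified exactly as in the $m\equiv 5$ and $m\equiv 7$ lemmas; in particular the circled positions on the left of the statement arise from the same initial zeros as there.

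Next, I would invoke Lemma~\ref{freeO} to propagate the expansion across the long middle region. For $m=12(b-2)+11$ the distance from $\tau^{-1}R$ round to $R$ in the cyclic AR quiver is $2(m-1)=24(b-2)+20$, and between columns $23$ and the column just before $R$ all entries are fixed by Lemma~\ref{freeO}; at $R$ itself we substitute $0$ since the calculation takes place in $\underline{\CM}(R)$. This produces a display analogous to the first display in the proof of the $m\equiv 7$ lemma, but extended by $8$ further columns, reflecting $24(b-2)+20=(24(b-2)+12)+8$.

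Finally, I would finish the counting through the terminal phase. As in the $m\equiv 5$ and $m\equiv 7$ cases, the values after $R$ decrease through a short periodic segment which iterates until the next copy of $R$ is absorbed and the process stabilises. The main obstacle I anticipate is bookkeeping: the terminal tail here is longer than in the previous subfamilies and passes through additional absorbed copies of $R$ (corresponding to the offset $20$), so one must verify by direct inspection that the zero-positions emerging at the end of the calculation, once added back to the initial free expansion, match exactly the circled positions in the statement. The pattern of these zeros should be forced by the $b=2$ computation carried out in the preceding lemma together with the uniformity supplied by Lemma~\ref{freeO}, so only a routine check of the final few columns is really new.
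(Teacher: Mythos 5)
Your proposal is correct and follows the paper's own proof essentially verbatim: the paper likewise runs the free expansion from $\tau^{-1}R$, uses Lemma~\ref{freeO} to carry it the distance $24(b-2)+20$ to $R$ (which is absorbed as zero in $\underline{\CM}(R)$), and then iterates the periodic tail segment through the remaining copies of $R$ until the count terminates, reading off the specials from the surviving zeros. There is no substantive difference in approach.
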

\begin{proof}
$R$ is a distance of $24(b-2)+20$ away from $\tau^{-1}R$,  thus by
Lemma~\ref{freeO} we have {\tiny{
\[
\hdots
\begin{array}{c}
\xymatrix@C=-6.15pt@R=-3pt{
&b\minus 1&&b\minus 1&&b\minus 1&&{{{}\drop\xycircle<4pt,4pt>{}0}}&&b\minus 1&&b&&{{{}\drop\xycircle<4pt,4pt>{}0}}&&0&&b&&1&&0&&1&&1&&1&&1&&0&&1&&2&&0&&0&&2&&1&&0&&1&&1\\
2b\minus 2&&2b\minus 2&&2b\minus 2&&b\minus 1&&b\minus 1&&2b\minus 1&&b&&0&&b&&b\plus 1&&1&&1&&2&&2&&2&&1&&1&&3&&2&&0&&2&&3&&1&&1&&2\\
&3b\minus 3&&3b\minus 3&&2b\minus 2&&2b\minus 2&&2b\minus 1&&2b\minus 1&&b&&b&&b\plus 1&&b\plus 1&&2&&2&&3&&3&&2&&2&&3&&3&&2&&2&&3&&3&&2&&2&&3\\
4b\minus 5&2b\minus 2&4b\minus 4&2b\minus 2&3b\minus 3&b\minus 1&3b\minus 3&2b\minus 2&3b\minus 2&b&2b\minus 1&b\minus 1&2b\minus 1&b&2b&b&b\plus 1&1&b\plus 1&b&b\plus 2&2&3&1&3&2&4&2&3&1&3&2&4&2&3&1&3&2&4&2&3&1&3&2&4&2&3&1&3&2\\
&3b\minus 4&&2b\minus 2&&3b\minus 3&&2b\minus 1&&2b\minus 2&&b&&2b\minus 1&&b\plus 1&&b&&2&&b\plus 1&&3&&2&&2&&3&&3&&2&&2&&3&&3&&2&&2&&3&&3&&2\\
2b\minus 2&&b\minus 2&&2b\minus 2&&2b\minus 1&&b\minus 1&&b\minus 1&&b&&b&&b&&1&&1&&b\plus 1&&2&&0&&2&&3&&1&&1&&2&&2&&2&&1&&1&&3&&2\\
&{{{}\drop\xycircle<4pt,4pt>{}R}}&&b\minus 2&&b&&b\minus 1&&{{{}\drop\xycircle<4pt,4pt>{}0}}&&b\minus 1&&1&&b\minus 1&&1&&0&&1&&b&\ar@{.}@<0.5ex>[-6,0]&{{{}\drop\xycircle<4pt,4pt>{}0}}&&0&&2&&1&&0&&1&&1&&1&&1&&0&&1&&2&\ar@{.}@<0.5ex>[-6,0]&0}
\end{array}
\]
}}Now the calculation continues by repeating the segment within the dotted lines until it reaches $R$ as:
{\tiny{
\[
...
\begin{array}{c}
\xymatrix@C=-4pt@R=-3pt{
&1&&1&&1&&{{{}\drop\xycircle<4pt,4pt>{}0}}&&0&&1&&{{{}\drop\xycircle<4pt,4pt>{}0}}&&0&&0\\
1&&2&&2&&0&&0&&1&&1&&0&&0\\
&2&&3&&1&&0&&1&&1&&1&&0&&1\\
3&1&3&2&2&0&1&1&1&0&1&1&1&0&1&1&1&0\\
&3&&0&&2&&1&&1&&0&&1&&1&&1\\
3&&0&&0&&2&&1&&0&&0&&1&&1\\
&{{{}\drop\xycircle<4pt,4pt>{}R}}&&0&&0&&2&\ar@{.}@<1ex>[-6,0]&{{{}\drop\xycircle<4pt,4pt>{}0}}&&0&&0&&1&\ar@{.}@<1ex>[-6,0]&0}
\end{array}
...
\begin{array}{c}
\xymatrix@C=-5pt@R=-3pt{
&0&&0&&1&&0\\
0&&0&&1&&0\\
&0&&1&&0\\
1&1&1&0&0\\
&1&&0\\
1&&0\\
&{{{}\drop\xycircle<4pt,4pt>{}R}}}
\end{array}
\]}}
\end{proof}

\section{Type $\mathbb{I}$}
Here we have $\mathbb{I}_m$ with $m\equiv 1,7,11,13,17,19,23$ or
$29$ mod $30$.    By \cite{AR_McKayGraphs} the AR quiver of
$\C{}[[x,y]]^{\mathbb{I}_m}$ for any such $m$ is {\tiny{
\[
\begin{array}{rcl}
\begin{array}{c}
\xymatrix@C=8pt@R=8pt{
&\bullet\ar[1,1]&&\bullet\ar[1,1]\\
\bullet\ar[1,1]\ar[-1,1]&&\bullet\ar[1,1]\ar[-1,1]&&\bullet\\
\bullet\ar[0,1]&\bullet\ar[1,1]\ar[-1,1]\ar[0,1]&\bullet\ar[0,1]&\bullet\ar[1,1]\ar[-1,1]\ar[0,1]&\bullet\\
\bullet\ar[1,1]\ar[-1,1]&&\bullet\ar[1,1]\ar[-1,1]&&\bullet\\
&\bullet\ar[1,1]\ar[-1,1]&&\bullet\ar[1,1]\ar[-1,1]\\
\bullet\ar[1,1]\ar[-1,1]&&\bullet\ar[1,1]\ar[-1,1]&&\bullet\\
&\bullet\ar[1,1]\ar[-1,1]&&\bullet\ar[1,1]\ar[-1,1]\\
R\ar[-1,1]\ar@{.}[-7,0]&&\bullet\ar[-1,1]\ar@{.}[-7,0]&&\bullet}
\end{array}
&\hdots&
\begin{array}{c}
\xymatrix@C=8pt@R=8pt{
&\bullet\ar[1,1]&&\\
\bullet\ar[1,1]\ar[-1,1]&&\bullet\\
\bullet\ar[0,1]&\bullet\ar[1,1]\ar[-1,1]\ar[0,1]&\bullet\\
\bullet\ar[1,1]\ar[-1,1]&&\bullet\\
&\bullet\ar[1,1]\ar[-1,1]&&\\
\bullet\ar[1,1]\ar[-1,1]&&\bullet\\
&\bullet\ar[1,1]\ar[-1,1]&\\
\bullet\ar[-1,1]&&R}
\end{array}
\end{array}
\]
}}where there are precisely $m$ repetitions of the original $\tilde{E}_8$ shown
in dotted lines.  The left and right hand sides of the picture are identified, and
there is no twist in this AR quiver.  As in type $\mathbb{O}$ the AR
quiver is the same in all subfamilies and there is no twist, making
proofs a little easier.  As in previous sections before we proceed
case by case it is necessary to control the free expansion of the AR
quiver:
\begin{lemma}\label{freeI}
Consider the free expansion from $\tau^{-1}R$ and choose $t\geq 3$. Then
between columns $60(t-2)-1$ and $60(t-2)+58$ the free expansion
looks like {\tiny{
\[
\xymatrix@C=-4pt@R=-3pt{
&2t\minus 4&&2t\minus 4&&2t\minus 4&&2t\minus 4&&2t\minus 3&&2t\minus 4&&2t\minus 4&&2t\minus 3&&2t\minus 4&&2t\minus 3&&2t\minus 3&\\
&&4t\minus 8&&4t\minus 8&&4t\minus 8&&4t\minus 7&&4t\minus 7&&4t\minus 8&&4t\minus 7&&4t\minus 7&&4t\minus 7&&4t\minus 6&&4t\minus 7\\
&6t\minus 12&3t\minus 6&6t\minus 12&3t\minus 6&6t\minus 12&3t\minus 6&6t\minus 11&3t\minus 5&6t\minus 11&3t\minus 6&6t\minus 11&3t\minus 5&6t\minus 11&3t\minus 6&6t\minus 11&3t\minus 5&6t\minus 10&3t\minus 5&6t\minus 10&3t\minus 5&6t\minus 10&3t\minus 5\\
&&5t\minus 10&&5t\minus 10&&5t\minus 9&&5t\minus 10&&5t\minus 9&&5t\minus 9&&5t\minus 9&&5t\minus 9&&5t\minus 8&&5t\minus 9&&5t\minus 8&\\
&4t\minus 8&&4t\minus 8&&4t\minus 7&&4t\minus 8&&4t\minus 8&&4t\minus 7&&4t\minus 7&&4t\minus 7&&4t\minus 7&&4t\minus 7&&4t\minus 7\\
&&3t\minus 6&&3t\minus 5&&3t\minus 6&&3t\minus 6&&3t\minus 6&&3t\minus 5&&3t\minus 5&&3t\minus 5&&3t\minus 6&&3t\minus 5&&3t\minus 5&\\
&2t\minus 4&&2t\minus 3&&2t\minus 4&&2t\minus 4&&2t\minus 4&&2t\minus 4&&2t\minus 3&&2t\minus 3&&2t\minus 4&&2t\minus 4&&2t\minus 3&\\
&&t\minus 1&&t\minus 2&&t\minus 2&&t\minus 2&&t\minus 2&&t\minus 2&&t\minus 1&&t\minus 2&&t\minus 2&&t\minus 2&&t\minus 1\\
\ar@{-}[0,23]&&&&&&&&&&&&&&&&&&&&&&&&&&\\
&-1&0&1&2&3&4&5&6&7&8&9&10&11&12&13&14&15&16&17&18&19&20\\
}
\]}}
{\tiny{
\[
\xymatrix@C=-3pt@R=-3pt{
&2t\minus 4&&2t\minus 3&&2t\minus 3&&2t\minus 3&&2t\minus 3&&2t\minus 3&&2t\minus 3&&2t\minus 3&&2t\minus 2&&2t\minus 3&&2t\minus 3&\\
&&4t\minus 7&&4t\minus 6&&4t\minus 6&&4t\minus 6&&4t\minus 6&&4t\minus 6&&4t\minus 6&&4t\minus 5&&4t\minus 5&&4t\minus 6&&4t\minus 5\\
&6t\minus 10&3t\minus 5&6t\minus 10&3t\minus 5&6t\minus 9&3t\minus 4&6t\minus 9&3t\minus 5&6t\minus 9&3t\minus 4&6t\minus 9&3t\minus 5&6t\minus 9&3t\minus 4&6t\minus 8&3t\minus 4&6t\minus 8&3t\minus 4&6t\minus 8&3t\minus 4&6t\minus 8&3t\minus 4\\
&&5t\minus 8&&5t\minus 8&&5t\minus 8&&5t\minus 7&&5t\minus 8&&5t\minus 7&&5t\minus 7&&5t\minus 7&&5t\minus 7&&5t\minus 6&&5t\minus 7&\\
&4t\minus 6&&4t\minus 6&&4t\minus 7&&4t\minus 6&&4t\minus 6&&4t\minus 6&&4t\minus 5&&4t\minus 6&&4t\minus 6&&4t\minus 5&&4t\minus 5\\
&&3t\minus 4&&3t\minus 5&&3t\minus 5&&3t\minus 5&&3t\minus 4&&3t\minus 4&&3t\minus 4&&3t\minus 5&&3t\minus 4&&3t\minus 4&&3t\minus 3&\\
&2t\minus 3&&2t\minus 3&&2t\minus 3&&2t\minus 4&&2t\minus 3&&2t\minus 2&&2t\minus 3&&2t\minus 3&&2t\minus 3&&2t\minus 3&&2t\minus 2&\\
&&t\minus 2&&t\minus 1&&t\minus 2&&t\minus 2&&t\minus 1&&t\minus 1&&t\minus 2&&t\minus 1&&t\minus 2&&t\minus 1&&t\minus 1\\
\ar@{-}[0,23]&&&&&&&&&&&&&&&&&&&&&&&&&&\\
&21&22&23&24&25&26&27&28&29&30&31&32&33&34&35&36&37&38&39&40&41&42\\
}
\]}}
{\tiny{
\[
\xymatrix@C=-3pt@R=-3pt{
&2t\minus 2&&2t\minus 3&&2t\minus 2&&2t\minus 2&&2t\minus 3&&2t\minus 2&&2t\minus 2&&2t\minus 2\\
&&4t\minus 5&&4t\minus 5&&4t\minus 4&&4t\minus 5&&4t\minus 5&&4t\minus 4&&4t\minus 4&&4t\minus 4\\
&6t\minus 8&3t\minus 4&6t\minus 7&3t\minus 3&6t\minus 7&3t\minus 4&6t\minus 7&3t\minus 3&6t\minus 7&3t\minus 4&6t\minus 7&3t\minus 3&6t\minus 6&3t\minus 3&6t\minus 6&3t\minus 3\\
&&5t\minus 6&&5t\minus 6&&5t\minus 6&&5t\minus 6&&5t\minus 5&&5t\minus 6&&5t\minus 5&&5t\minus 5\\
&4t\minus 5&&4t\minus 5&&4t\minus 5&&4t\minus 5&&4t\minus 4&&4t\minus 4&&4t\minus 5&&4t\minus 4\\
&&3t\minus 4&&3t\minus 4&&3t\minus 4&&3t\minus 3&&3t\minus 3&&3t\minus 3&&3t\minus 4&&3t\minus 3\\
&2t\minus 2&&2t\minus 3&&2t\minus 3&&2t\minus 2&&2t\minus 2&&2t\minus 2&&2t\minus 2&&2t\minus 3\\
&&t\minus 1&&t\minus 2&&t\minus 1&&t\minus 1&&t\minus 1&&t\minus 1&&t\minus 1&&t\minus 2\\
\ar@{-}[0,17]&&&&&&&&&&&&&&&&&&&&\\
&43&44&45&46&47&48&49&50&51&52&53&54&55&56&57&58\\
}
\]}}
\end{lemma}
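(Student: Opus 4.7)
The plan is to mimic the proofs of Lemma \ref{freeT} and Lemma \ref{freeO} and proceed by induction on $t \geq 3$. The base case $t=3$ will be handled by direct inspection: starting from $\tau^{-1}R$ one carries out the free expansion via the recursion of Theorem \ref{formula} in $\CC = \CM(R)$ for sixty columns, recording the multiplicities at each vertex of the AR quiver of $\widetilde{E}_8$-type. This is a long but entirely mechanical calculation (one long display analogous to the $t=3$ tables exhibited in Lemmas \ref{freeT} and \ref{freeO}); since the displayed pattern in the statement specialises to the values at $t=3$, verification amounts to checking that these numbers agree with what the recursion produces.

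For the inductive step, suppose the statement holds for some $t \geq 3$. Going from $t$ to $t+1$ means extending the free expansion by $60$ more columns, since the column indices in the statement are indexed by $60(t-2)$. The key observation is that the displayed pattern increases each entry by a fixed amount as $t \mapsto t+1$ (namely entries of the form $at - b$ become $a(t+1) - b$), so it suffices to check that the claimed pattern is preserved by the counting rules. Concretely, the recursion $Y_{n} = (\theta^{-}Y_{n-1} - \tau^{-}Y_{n-2})_{+}$ together with the $\widetilde{E}_8$ mesh relations is \emph{local} at each vertex: the value at a given position is determined by its four neighbours (up, down, previous column, column before the previous). Thus the inductive check reduces to verifying a finite list of linear identities among the displayed entries, one per interior vertex in the $60$-column window, together with boundary identities that glue the new window to the previous one at columns $60(t-2)+57$, $60(t-2)+58$, $60(t-1)-1$, $60(t-1)$. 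As in the earlier types, these identities are immediate from the counting rules once one notes that taking the positive part $(-)_{+}$ never activates since all the displayed quantities $(a-1)t - b$ are positive for $t \geq 3$.

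The final assertion that there are no more zeroes after column $60(t-2)+58$ follows by inspecting the last few columns of the displayed pattern: every entry is of the form $ct - d$ with $c \geq 1$ and $d$ small enough that the expression is strictly positive for $t \geq 3$. This last positivity claim propagates forward since once all entries in two consecutive columns are strictly positive, the recursion (being additive in the interior and only clipping at the positive part) produces strictly positive entries in all subsequent columns until the next anchor.

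The main obstacle will be the bookkeeping of the $t=3$ base case: the $\widetilde{E}_8$ quiver has eight rows of mesh, each column has eight entries, and one must correctly track the placement of $R$ and verify the pattern across sixty columns. There is no conceptual difficulty, only the sheer size of the tabulation; the strategy of Lemma \ref{freeT} and Lemma \ref{freeO} carries over without modification, so no new idea is required.
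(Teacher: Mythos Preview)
Your proposal is correct and follows the same inductive approach as the paper: verify $t=3$ by direct computation of the free expansion (the paper displays this in two large tables running from column $0$ through column $118$), then observe that the stated pattern satisfies the mesh (counting) rules so the inductive step reduces to checking the boundary glue, which is immediate. One small remark: the ``final assertion'' you discuss about there being no more zeroes past column $60(t-2)+58$ is not actually part of the statement of Lemma~\ref{freeI} (unlike Lemmas~\ref{freeT} and~\ref{freeO}), so that paragraph can be dropped.
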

\begin{proof}
Proceed by induction.  The $t=3$ case can be done by inspection:
{\tiny{
\[
\xymatrix@C=-5pt@R=-4pt{
&.&&.&&.&&.&&1&&0&&0&&1&&0&&1&&1&&0&&1&&1&&1&&1&&1&&1&&1&&2&&1&&1&&2&&1&&2&&2&&1&&2&&2&&2&\\
.&&.&&.&&.&&1&&1&&0&&1&&1&&1&&2&&1&&1&&2&&2&&2&&2&&2&&2&&3&&3&&2&&3&&3&&3&&4&&3&&3&&4&&4&&4\\
.&.&.&.&.&.&.&1&1&1&0&1&1&1&0&1&1&2&1&2&1&2&1&2&1&2&1&3&2&3&1&3&2&3&1&3&2&4&2&4&2&4&2&4&2&4&2&5&3&5&2&5&3&5&2&5&3&6&3&6&3\\
.&&.&&.&&1&&0&&1&&1&&1&&1&&2&&1&&2&&2&&2&&2&&3&&2&&3&&3&&3&&3&&4&&3&&4&&4&&4&&4&&5&&4&&5&&5\\
&.&&.&&1&&0&&0&&1&&1&&1&&1&&1&&1&&2&&2&&1&&2&&2&&2&&3&&2&&2&&3&&3&&3&&3&&3&&3&&4&&4&&3&&4&\\
.&&.&&1&&0&&0&&0&&1&&1&&1&&0&&1&&1&&2&&1&&1&&1&&2&&2&&2&&1&&2&&2&&3&&2&&2&&2&&3&&3&&3&&2&&3\\
&.&&1&&0&&0&&0&&0&&1&&1&&0&&0&&1&&1&&1&&1&&0&&1&&2&&1&&1&&1&&1&&2&&2&&1&&1&&2&&2&&2&&2&&1&\\
R&&1&&0&&0&&0&&0&&0&&1&&0&&0&&0&&1&&0&&1&&0&&0&&1&&1&&0&&1&&0&&1&&1&&1&&0&&1&&1&&1&&1&&1&&0\\
&&&&&&&&&&&&&&&&&&&&&&&&&&&&&&&&&&&&&&&&&&&&&&&&\\
&&0&&&&&&&&&&&&&&&&&&&&&&&&&&&&&&&&&&&&&&&&&&&&&&&&&&&&&&&&&&{}\drop{58}{}
}
\]
\[
\xymatrix@C=-5.5pt@R=-4pt{
&2&&2&&2&&2&&3&&2&&2&&3&&2&&3&&3&&2&&3&&3&&3&&3&&3&&3&&3&&4&&3&&3&&4&&3&&4&&4&&3&&4&&4&&4&\\
4&&4&&4&&4&&5&&5&&4&&5&&5&&5&&6&&5&&5&&6&&6&&6&&6&&6&&6&&7&&7&&6&&7&&7&&7&&8&&7&&7&&8&&8&&8&\\
3&6&3&6&3&6&3&7&4&7&3&7&4&7&3&7&4&8&4&8&4&8&4&8&4&8&4&9&5&9&4&9&5&9&4&9&5&10&5&10&5&10&5&10&5&10&5&11&6&11&5&11&6&11&5&11&6&12&6&12&6\\
5&&5&&5&&6&&5&&6&&6&&6&&6&&7&&6&&7&&7&&7&&7&&8&&7&&8&&8&&8&&8&&9&&8&&9&&9&&9&&9&&10&&9&&10&&10&\\
&4&&4&&5&&4&&4&&5&&5&&5&&5&&5&&5&&6&&6&&5&&6&&6&&6&&7&&6&&6&&7&&7&&7&&7&&7&&7&&8&&8&&7&&8&\\
3&&3&&4&&3&&3&&3&&4&&4&&4&&3&&4&&4&&5&&4&&4&&4&&5&&5&&5&&4&&5&&5&&6&&5&&5&&5&&6&&6&&6&&5&&6\\
&2&&3&&2&&2&&2&&2&&3&&3&&2&&2&&3&&3&&3&&3&&2&&3&&4&&3&&3&&3&&3&&4&&3&&3&&3&&4&&4&&4&&4&&3&\\
0&&2&&1&&1&&1&&1&&1&&2&&1&&1&&1&&2&&1&&2&&1&&1&&2&&2&&1&&2&&1&&2&&2&&2&&1&&2&&2&&2&&2&&2&&1\\
&&&&&&&&&&&&&&&&&&&&&&&&&&&&&&&&&&&&&&&&&&&&&&&&&&&&&&&&&&&&&&&\\
&&&&&&&&&&&&&&&&&&&&&&&&&&&&&&&&&&&&&&&&&&&&&&&&\\
&{}\drop{59}{}&&&&&&&&&&&&&&&&&&&&&&&&&&&&&&&&&&&&&&&&&&&&&&&&&&&&&&&&&&&{}\drop{118}{}}
\]
}}For the induction step, since the statement in the lemma
satisfies the counting rules we just need to verify the induction at
the end point.  But by the counting rules this is trivial.
\end{proof}

\textbf{The case $m\equiv 1$.}
In this subfamily we have $m=30(b-2)+1$.
For the group $\mathbb{I}_1=E_8$ (i.e. $b=2$) there is nothing to prove since all CM modules are special.

\begin{lemma}
For $\mathbb{I}_{30(b-2)+1}$ with $b\geq 3$ the specials are precisely those CM modules circled below.
{\tiny{
\[
\xymatrix@C=-5pt@R=-4pt{
&.&&.&&.&&.&&1&&0&&0&&1&&0&&1&&1&&0&&1&&1&&1&&1&&1&&1&&1&&2&&1&&1&&2&&1&&2&&2&&1&&2&&2&&2&\\
&&.&&.&&.&&1&&1&&0&&1&&1&&1&&2&&1&&1&&2&&2&&2&&2&&2&&2&&3&&3&&2&&3&&3&&3&&4&&3&&3&&4&&4&&4\\
.&.&.&.&.&.&.&1&1&1&0&1&1&1&0&1&1&2&1&2&1&2&1&2&1&2&1&3&2&3&1&3&2&3&1&3&2&4&2&4&2&4&2&4&2&4&2&5&3&5&2&5&3&5&2&5&3&6&3&6&3\\
.&&.&&.&&1&&0&&1&&1&&1&&1&&2&&1&&2&&2&&2&&2&&3&&2&&3&&3&&3&&3&&4&&3&&4&&4&&4&&4&&5&&4&&5&&5\\
&.&&.&&1&&0&&0&&1&&1&&1&&1&&1&&1&&2&&2&&1&&2&&2&&2&&3&&2&&2&&3&&3&&3&&3&&3&&3&&4&&4&&3&&4&\\
.&&.&&1&&0&&0&&0&&1&&1&&1&&0&&1&&1&&2&&1&&1&&1&&2&&2&&2&&1&&2&&2&&3&&2&&2&&2&&3&&3&&3&&2&&3\\
&.&&1&&0&&0&&0&&0&&1&&1&&0&&0&&1&&1&&1&&1&&0&&1&&2&&1&&1&&1&&1&&2&&2&&1&&1&&2&&2&&2&&2&&1&\\
{{{}\drop\xycircle<4pt,4pt>{}R}}&&1&&0&&0&&0&&0&&{{{}\drop\xycircle<4pt,4pt>{}0}}&&1&&0&&0&&{{{}\drop\xycircle<4pt,4pt>{}0}}&&1&&{{{}\drop\xycircle<4pt,4pt>{}0}}&&1&&0&&{{{}\drop\xycircle<4pt,4pt>{}0}}&&1&&1&&{{{}\drop\xycircle<4pt,4pt>{}0}}&&1&&{{{}\drop\xycircle<4pt,4pt>{}0}}&&1&&1&&1&&{{{}\drop\xycircle<4pt,4pt>{}0}}&&1&&1&&1&&1&&1&&{{{}\drop\xycircle<4pt,4pt>{}0}}\\
&&&&&&&&&&&&&&&&&&&&&&&&&&&&&&&&&&&&&&&&&&&&&&
}
\]}}
\end{lemma}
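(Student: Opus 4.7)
The plan is to apply the counting argument on the AR quiver exactly as in the analogous subfamilies $\mathbb{T}_{6(b-2)+1}$ and $\mathbb{O}_{12(b-2)+1}$ treated above. By Theorem~\ref{characterization of SCM}(a)$\Leftrightarrow$(d) together with AR duality (Theorem~\ref{classical}(a)), a CM module $X$ is special if and only if $\dim_k \underline{\Hom}_R(\tau^- R, X) = 0$. Hence the entire task is to compute, using Theorem~\ref{ladder} and Theorem~\ref{formula}, the multiplicities $m_Y(Y_n)$ appearing in the left-ladder expansion starting at $Y_0 = \tau^- R$ inside the $\tau$-category $\CC = \underline{\CM}(R)$.

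First I would carry out the initial transient: the picture displayed in the lemma statement is exactly the first part of the free expansion from $\tau^- R$, occupying roughly the first $60$ columns of the AR quiver, and this is verified directly by the recursion $Y_n = (\theta^- Y_{n-1} - \tau^- Y_{n-2})_+$ (this is also an instance of Lemma~\ref{freeI} with $t = 2$, inspected directly). Next, I would note that the vertex $R$ sits at distance $60(b-2)$ along the AR quiver from $\tau^- R$, so for all intermediate columns the free expansion in $\CM(R)$ is described by Lemma~\ref{freeI} with $t = b$. In particular there are no further zeros arising from the free expansion itself in this middle region.

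Then I would work backward from $R$: when the ladder reaches the column containing $R$, passing from $\CM(R)$ to $\underline{\CM}(R)$ forces an $R$-summand of some $Y_n$ to be replaced by $0$, and this in turn produces a second (truncated) expansion running from that position forward. Because the pattern of Lemma~\ref{freeI} with $t = b$ propagates in a controlled way, I can superimpose the forward free expansion from $\tau^- R$ with the backward cancellation triggered at $R$, exactly as in the proofs of the $m \equiv 1$ cases for types $\mathbb{T}$ and $\mathbb{O}$. The sum, read off in $\underline{\CM}(R)$, vanishes precisely at the circled positions, yielding the claim.

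The main obstacle is purely bookkeeping: verifying that the $t = b$ pattern of Lemma~\ref{freeI} indeed glues correctly with the termination pattern once $R$ is killed, and that after superposition the only remaining zeros are at the circled vertices. Since both the recursion in Theorem~\ref{formula} and the patterns involved are local, this verification reduces to checking a finite number of boundary positions, as was done in the $\mathbb{O}_{12(b-2)+1}$ case. No new conceptual input is required beyond Lemma~\ref{freeI} and Theorem~\ref{characterization of SCM}.
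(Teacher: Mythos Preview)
Your proposal is correct and follows essentially the same route as the paper: start the free expansion from $\tau^{-1}R$, invoke Lemma~\ref{freeI} with $t=b$ once the expansion reaches the column containing $R$, and then track the termination that occurs because $R$ is killed in $\underline{\CM}(R)$; the specials are the positions where the resulting total is zero. Your distance $60(b-2)$ from $\tau^{-1}R$ to $R$ is the right one (the paper's printed ``$30(b-2)$'' is a slip, as the displayed $b$-dependent values in the paper's own calculation are precisely those of Lemma~\ref{freeI} at column $60(b-2)$).
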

\begin{proof}
$R$ is a distance of $30(b-2)$ away from $\tau^{-1}R$, thus by Lemma~\ref{freeI}
we have {\tiny{
\[
\xymatrix@C=-6pt@R=-4pt{
2b\minus 4&&2b\minus 4&&2b\minus 4&&2b\minus 4&&b\minus 2&&2b\minus 4&&2b\minus 4&&b\minus 2&&2b\minus 4&&b\minus 2&&b\minus 2&&2b\minus 4&&b\minus 2\\
&4b\minus 8&&4b\minus 8&&4b\minus 8&&3b\minus 6&&3b\minus 6&&4b\minus 8&&3b\minus 6&&3b\minus 6&&3b\minus 6&&2b\minus 4&&3b\minus 6&&3b\minus 6&&2b\minus 4\\
6b\minus 12&3b\minus 6&6b\minus 12&3b\minus 6&6b\minus 12&3b\minus 6&5b\minus 10&2b\minus 4&5b\minus 10&3b\minus 6&5b\minus 10&2b\minus 4&5b\minus 10&3b\minus 6&5b\minus 10&2b\minus 4&4b\minus 8&2b\minus 4&4b\minus 8&2b\minus 4&4b\minus 8&2b\minus 4&4b\minus 8&2b\minus 4&4b\minus 8&2b\minus 4&\\
&5b\minus 10&&5b\minus 10&&4b\minus 8&&5b\minus 10&&4b\minus 8&&4b\minus 8&&4b\minus 8&&4b\minus 8&&3b\minus 6&&4b\minus 8&&3b\minus 6&&3b\minus 6&&3b\minus 6\\
4b\minus 8&&4b\minus 8&&3b\minus 6&&4b\minus 8&&4b\minus 8&&3b\minus 6&&3b\minus 6&&3b\minus 6&&3b\minus 6&&3b\minus 6&&3b\minus 6&&2b\minus 4&&2b\minus 4\\
&3b\minus 6&&2b\minus 4&&3b\minus 6&&3b\minus 6&&3b\minus 6&&2b\minus 4&&2b\minus 4&&2b\minus 4&&3b\minus 6&&2b\minus 4&&2b\minus 4&&b\minus 2&&2b\minus 4\\
2b\minus 4&&b\minus 2&&2b\minus 4&&2b\minus 4&&2b\minus 4&&2b\minus 4&&b\minus 2&&b\minus 2&&2b\minus 4&&2b\minus 4&&b\minus 2&&b\minus 2&&b\minus 2\\
&{{{}\drop\xycircle<4pt,4pt>{}R}}&&b\minus 2&&b\minus 2&&b\minus 2&&b\minus 2&&b\minus 2&&{{{}\drop\xycircle<4pt,4pt>{}0}}&&b\minus 2&&b\minus 2&&b\minus 2&&{{{}\drop\xycircle<4pt,4pt>{}0}}&&b\minus 2&&{{{}\drop\xycircle<4pt,4pt>{}0}}
}
\]
\[
\xymatrix@C=-7pt@R=-4pt{
b\minus 2&&b\minus 2&&b\minus 2&&b\minus 2&&b\minus 2&&b\minus 2&&0&&b\minus 2&&b\minus 2&&0&&b\minus 2&&0&&0&&b\minus 2&&0\\
&2b\minus 4&&2b\minus 4&&2b\minus 4&&2b\minus 4&&2b\minus 4&&b\minus 2&&b\minus 2&&2b\minus 4&&b\minus 2&&b\minus 2&&b\minus 2&&0&&b\minus 2&&b\minus 2&&0\\
3b\minus 6&b\minus 2&3b\minus 6&2b\minus 4&3b\minus 6&b\minus 2&3b\minus 6&2b\minus 4&3b\minus 6&b\minus 2&2b\minus 4&b\minus 2&2b\minus 4&b\minus 2&2b\minus 4&b\minus 2&2b\minus 4&b\minus 2&2b\minus 4&b\minus 2&b\minus 2&0&b\minus 2&b\minus 2&b\minus 2&0&b\minus 2&b\minus 2&b\minus 2&0&0\\
&3b\minus 6&&2b\minus 4&&3b\minus 6&&2b\minus 4&&2b\minus 4&&2b\minus 4&&2b\minus 4&&b\minus 2&&2b\minus 4&&b\minus 2&&b\minus 2&&b\minus 2&&b\minus 2&&0&&b\minus 2&&0&&&\\
3b\minus 6&&2b\minus 4&&2b\minus 4&&2b\minus 4&&b\minus 2&&2b\minus 4&&2b\minus 4&&b\minus 2&&b\minus 2&&b\minus 2&&b\minus 2&&b\minus 2&&b\minus 2&&0&&0&&b\minus 2&&0&&\\
&2b\minus 4&&2b\minus 4&&b\minus 2&&b\minus 2&&b\minus 2&&2b\minus 4&&b\minus 2&&b\minus 2&&0&&b\minus 2&&b\minus 2&&b\minus 2&&0&&0&&0&&b\minus 2&&0&\\
b\minus 2&&2b\minus 4&&b\minus 2&&0&&b\minus 2&&b\minus 2&&b\minus 2&&b\minus 2&&0&&0&&b\minus 2&&b\minus 2&&0&&0&&0&&0&&b\minus 2&&0\\
&b\minus 2&&b\minus 2&&{{{}\drop\xycircle<4pt,4pt>{}0}}&&0&&b\minus 2&&{{{}\drop\xycircle<4pt,4pt>{}0}}&&b\minus 2&&{{{}\drop\xycircle<4pt,4pt>{}0}}&&0&&0&&b\minus 2&&{{{}\drop\xycircle<4pt,4pt>{}0}}&&0&&0&&0&&0&&b\minus 2&&{{{}\drop\xycircle<4pt,4pt>{}0}}
}
\]}}
\end{proof}

\textbf{The case $m\equiv 7$.}
In this subfamily we have $m=30(b-2)+7$.
\begin{lemma}
For the group $\mathbb{I}_{7}$ (i.e. $b=2$) the following calculation determines the specials:
{\tiny{
\[
\xymatrix@C=-5pt@R=-3pt{
&.&&.&&.&&.&&1&&0&&{{}\drop\xycircle<4pt,4pt>{}0}&&1&&0&&1&&0&&0&&1&&0&&1&&0&&0&&1&&0&&1&&0&&0&&1&&0&\\
.&&.&&.&&.&&1&&1&&0&&1&&1&&1&&1&&0&&1&&1&&1&&1&&0&&1&&1&&1&&1&&0&&1&&1&&0&\\
.&.&.&.&.&.&{{}\drop\xycircle<4pt,4pt>{}.}&1&1&1&0&1&1&1&0&1&1&2&1&1&0&1&1&1&0&1&1&2&1&1&0&1&1&1&0&1&1&2&1&1&0&1&1&1&0&1&1&1&0&0&\\
.&&.&&.&&1&&0&&1&&1&&1&&1&&1&&1&&1&&1&&1&&1&&1&&1&&1&&1&&1&&1&&1&&1&&0&&1&&0&\\
&.&&.&&1&&0&&0&&1&&1&&1&&0&&1&&1&&1&&1&&0&&1&&1&&1&&1&&0&&1&&1&&1&&0&&0&&1&&0\\
.&&.&&1&&0&&0&&0&&1&&1&&0&&0&&1&&1&&1&&0&&0&&1&&1&&1&&0&&0&&1&&1&&0&&0&&0&&1&&0\\
&{{}\drop\xycircle<4pt,4pt>{}.}&&1&&0&&0&&0&&{{}\drop\xycircle<4pt,4pt>{}0}&&1&&0&&0&&0&&1&&1&&0&&0&&0&&1&&1&&0&&0&&0&&1&&0&&0&&0&&0&&1&&0&\\
{{{}\drop\xycircle<4pt,4pt>{}R}}&&1&&0&&{{}\drop\xycircle<4pt,4pt>{}0}&&0&&0&&{{}\drop\xycircle<4pt,4pt>{}0}&&R&&0&&0&&0&&1&&0&&0&&R&&0&&1&&0&&0&&0&&0&&R&&0&&0&&0&&0&&1&&0}
\]}}
\end{lemma}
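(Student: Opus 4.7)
The plan is to apply the counting argument from Section 4 (specifically Theorem~\ref{formula} applied in the stable category $\underline{\CM}(R)$) to the AR quiver of $\C{}[[x,y]]^{\mathbb{I}_7}$, initialized at $\tau^- R$. By Theorem~\ref{characterization of SCM}(a)$\Leftrightarrow$(d), the special CM modules are precisely those $X$ with $\Ext^1_R(X,R)=0$, and by AR duality this equals $\dim_k\underline{\Hom}_R(\tau^- R, X)$; hence the specials correspond to the positions where the running total of the counting argument for $\underline{\Hom}_R(\tau^- R, -)$ is zero. So what needs to be verified is purely that the numerical diagram displayed coincides with the iterated values $Y_n$ of Theorem~\ref{formula} and that the zero positions match those circled.

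First I would place a $1$ at $\tau^- R$ and unfold the left ladder mechanically using the recursion
\[
Y_0 = \tau^- R, \quad Y_1 = \theta^- Y_0, \quad Y_n = (\theta^- Y_{n-1} - \tau^- Y_{n-2})_+ \quad (n\ge 2),
\]
carried out in $\underline{\CM}(R)$, so that at every step where $R$ appears in $\theta^- Y_{n-1}$ it is deleted. Concretely, each column of the diagram is determined by the previous two columns via the local rule dictated by the mesh relations of the AR quiver of $\tilde E_8$ (with seven copies glued cyclically, no twist, as recalled at the start of this section). This is the same procedure illustrated in Example~\ref{D52(i)}, just in a larger AR quiver.

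Second, I would split the verification into two stages according to the structure exhibited in Lemma~\ref{freeI}: the \emph{free expansion} stage, during which no $R$-vertex is encountered by the support of $Y_n$ (so the numbers agree with the free-expansion template of Lemma~\ref{freeI} specialised to $t=2$, giving the initial run of small nonnegative integers up to the first $R$ in the quiver), and the \emph{absorption} stage, during which each appearance of an $R$-vertex is set to $0$ as required by the stable category. Because the AR quiver of $\mathbb{I}_7$ contains $R$ in only a few distinct positions per fundamental block, one checks by inspection that after the first absorption the pattern propagates stably, and the second and third absorptions then terminate the calculation; the zeros that survive are precisely the circled vertices.

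The main obstacle is not conceptual but bookkeeping: the AR quiver has many vertices and the recursion must be propagated consistently through the identifications of the left and right edges. I would therefore present the verification as the explicit tableau shown, with the understanding that each entry is obtained from its two predecessors by the counting rule of Section 4, and that the $R$-positions are to be read as $0$. Once the tableau is confirmed, the specials are read off as the circled vertices (the ones for which the total count remains $0$), which by Theorem~\ref{characterization of SCM} is the desired classification in the case $\mathbb{I}_7$.
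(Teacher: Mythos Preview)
Your proposal is correct and follows exactly the paper's approach: for the base case $b=2$ in each subfamily the paper gives no argument beyond the displayed tableau itself, relying on the counting machinery of Section~4 (Theorem~\ref{formula} in $\underline{\CM}(R)$, initialised at $\tau^{-1}R$, with $R$ absorbed to zero) together with the characterisation of specials via $\Ext^1_R(-,R)=0$. One small remark: Lemma~\ref{freeI} is stated only for $t\ge3$, so it does not literally cover the $b=2$ free expansion; but as you say, the initial columns are verified by direct inspection (this is the same inspection that produces the $t=3$ base case in the proof of Lemma~\ref{freeI}), so nothing is lost.
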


\begin{lemma}
For $\mathbb{I}_{30(b-2)+7}$ with $b\geq 3$ the specials are precisely those CM modules circled below.
{\tiny{
\[
\xymatrix@C=-5pt@R=-4pt{
&.&&.&&.&&.&&1&&0&&0&&1&&0&&1&&1&&0&&1&&1&&1&&1&&1&&1&&1&&2&&1&&1&&2&&1&&2&&2&&1&&2&&2&&2&\\
&&.&&.&&.&&1&&1&&0&&1&&1&&1&&2&&1&&1&&2&&2&&2&&2&&2&&2&&3&&3&&2&&3&&3&&3&&4&&3&&3&&4&&4&&4\\
.&.&.&.&.&.&.&1&1&1&0&1&1&1&0&1&1&2&1&2&1&2&1&2&1&2&1&3&2&3&1&3&2&3&1&3&2&4&2&4&2&4&2&4&2&4&2&5&3&5&2&5&3&5&2&5&3&6&3&6&3\\
.&&.&&.&&1&&0&&1&&1&&1&&1&&2&&1&&2&&2&&2&&2&&3&&2&&3&&3&&3&&3&&4&&3&&4&&4&&4&&4&&5&&4&&5&&5\\
&.&&.&&1&&0&&0&&1&&1&&1&&1&&1&&1&&2&&2&&1&&2&&2&&2&&3&&2&&2&&3&&3&&3&&3&&3&&3&&4&&4&&3&&4&\\
.&&.&&1&&0&&0&&0&&1&&1&&1&&0&&1&&1&&2&&1&&1&&1&&2&&2&&2&&1&&2&&2&&3&&2&&2&&2&&3&&3&&3&&2&&3\\
&.&&1&&0&&0&&0&&0&&1&&1&&0&&0&&1&&1&&1&&1&&0&&1&&2&&1&&1&&1&&1&&2&&2&&1&&1&&2&&2&&2&&2&&1&\\
{{{}\drop\xycircle<4pt,4pt>{}R}}&&1&&0&&0&&0&&0&&{{{}\drop\xycircle<4pt,4pt>{}0}}&&1&&0&&0&&{{{}\drop\xycircle<4pt,4pt>{}0}}&&1&&0&&1&&0&&{{{}\drop\xycircle<4pt,4pt>{}0}}&&1&&1&&{{{}\drop\xycircle<4pt,4pt>{}0}}&&1&&{{{}\drop\xycircle<4pt,4pt>{}0}}&&1&&1&&1&&0&&1&&1&&1&&1&&1&&{{{}\drop\xycircle<4pt,4pt>{}0}}\\
&&&&&&&&&&&&&&&&&&&&&&&&&&&&&&&&&&&&&&&&&&&&&&
}
\]}}
\end{lemma}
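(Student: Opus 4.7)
The plan follows the template used for the previous subfamilies of types $\mathbb{O}$ and $\mathbb{I}$. By Theorem~\ref{characterization of SCM} together with the AR duality of Theorem~\ref{classical}(a), an indecomposable $X\in\CM(R)$ is special precisely when $\dim_k\underline{\Hom}_R(\tau^{-1}R,X)=0$, so the task is to compute this dimension at every vertex of the AR quiver by running the recursion of Theorem~\ref{formula} in $\CC=\underline{\CM}(R)$ starting at $\tau^{-1}R$, and to read off the positions where the result vanishes.

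I would first run the free expansion, i.e. the $\CM(R)$--version of the recursion, starting at $\tau^{-1}R$. Before the first occurrence of $R$ the free expansion agrees with that of $\mathbb{I}_{30(b-2)+1}$, so Lemma~\ref{freeI} supplies the explicit values of $Y_0,\ldots,Y_{58}$; the only resulting zeros sit at the three circled positions in the top-left cluster of the displayed picture, and Lemma~\ref{freeI} further guarantees that no further zero can appear in the free expansion before the count reaches $R$ on the bottom row. I would then locate the column of $R$, which is obtained from the structure of the $\mathbb{Z}\widetilde{E}_8$--cover by the same bookkeeping as in the $\mathbb{O}_{12(b-2)+7}$ proof; this column is a fixed constant plus a multiple of $b-2$, and it allows one application of Lemma~\ref{freeI} with $t=b$ to give the values of $Y_n$ in the columns immediately around $R$.

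Passing to the stable category collapses the value at $R$ to $0$, which produces a bottom-row zero at the circled position. After this reset the count enters a short repeating block, and iterating this block produces the remaining circled zeros with no extraneous zero appearing in between. Iteration terminates when the count returns to $R$ via the identification of the two boundaries of the AR quiver, at which point the complete list of zero positions matches precisely those circled in the statement, proving the lemma.

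The main obstacle is the column bookkeeping at the first encounter with $R$: one must verify that the value produced by Lemma~\ref{freeI} in that column is compatible with the $R=0$ identification, so that no accidental zero is created or destroyed at a neighbouring vertex of the AR quiver, and that the length and pattern of the repeating block after the reset coincide with the spacing indicated by the circled positions in the displayed picture. Once these column counts are pinned down, the rest of the argument is a direct transcription of the one given for $\mathbb{O}_{12(b-2)+7}$, adapted to the longer block size characteristic of type $\mathbb{I}$.
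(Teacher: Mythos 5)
Your overall strategy is the paper's: reduce specialness to the vanishing of $\dim_k\underline{\Hom}_R(\tau^{-1}R,-)$ via Theorem~\ref{characterization of SCM} and AR duality, compute this by the ladder recursion of Theorem~\ref{formula}, control the free expansion with Lemma~\ref{freeI}, locate the first occurrence of $R$ (at distance $30(b-2)+12$ here), absorb it in $\underline{\CM}(R)$, and run the count to termination through a repeating block. However, there is a genuine gap in what you claim the two phases establish. You assert that in the free expansion ``the only resulting zeros sit at the three circled positions in the top-left cluster''; this is false on two counts. First, all seven circled positions lie on the bottom row of the displayed picture, not in a top-left cluster. Second, and more seriously, the free expansion is computed in $\CM(R)$ and is therefore identical for every subfamily of type $\mathbb{I}$: the numbers displayed here coincide with those in the $\mathbb{I}_{30(b-2)+1}$ lemma, yet the circled subsets differ (the twelfth and twenty-fourth bottom-row zeros after $R$ are circled for $m\equiv 1$ but not for $m\equiv 7$, and the many zeros on the upper rows are never circled). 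So the zeros of the first pass form a strict superset of the specials, and the entire content of the proof is the continuation after the first $R$ is absorbed: one must check that the wrapped-around later passes deposit a strictly positive value at every first-pass zero that is \emph{not} circled, while contributing nothing at the circled positions. Your plan, taken literally, reads the answer off the first pass and would therefore return the wrong set (essentially the $m\equiv 1$ answer). Relatedly, the continuation does not ``produce the remaining circled zeros'' --- those positions are already zero in the first pass; its role is to leave them untouched while filling in the others.

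Beyond this, the proposal defers exactly the material that constitutes the paper's proof: the explicit evaluation of Lemma~\ref{freeI} at $t=b$ in the columns surrounding $R$, the identification of the repeating segment between the two occurrences of $R$, and the verification that the count terminates with zeros precisely at the circled vertices. Flagging the column bookkeeping as the main obstacle is fair, but until those tables are produced and compared against the circled positions the lemma is not proved.
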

\begin{proof}
$R$ is a distance of $30(b-2)+12$ away from $\tau^{-1}R$, thus by
Lemma~\ref{freeI} we have {\tiny{
\[
\xymatrix@C=-6pt@R=-4pt{
2b\minus 4&&2b\minus 3&&2b\minus 4&&2b\minus 2&&b\minus 2&&2b\minus 4&&2b\minus 3&&b\minus 2&&2b\minus 3&&b\minus 2&&b\minus 2&&2b\minus 3&&b\minus 2\\
&4b\minus 7&&4b\minus 7&&4b\minus 7&&3b\minus 5&&3b\minus 6&&4b\minus 7&&3b\minus 5&&3b\minus 5&&3b\minus 5&&2b\minus 4&&3b\minus 5&&3b\minus 5&&2b\minus 3\\
6b\minus 11&3b\minus 6&6b\minus 11&3b\minus 5&6b\minus 10&3b\minus 5&5b\minus 9&2b\minus 4&5b\minus 9&3b\minus 5&5b\minus 9&2b\minus 4&5b\minus 9&3b\minus 5&5b\minus 8&2b\minus 3&4b\minus 7&2b\minus 4&4b\minus 7&2b\minus 3&4b\minus 7&2b\minus 4&4b\minus 7&2b\minus 3&4b\minus 6&2b\minus 3&\\
&5b\minus 9&&5b\minus 9&&4b\minus 7&&5b\minus 9&&4b\minus 7&&4b\minus 7&&4b\minus 7&&4b\minus 7&&3b\minus 5&&4b\minus 7&&3b\minus 5&&3b\minus 5&&3b\minus 5\\
4b\minus 7&&4b\minus 7&&3b\minus 6&&4b\minus 7&&4b\minus 7&&3b\minus 5&&3b\minus 5&&3b\minus 6&&3b\minus 5&&3b\minus 5&&3b\minus 5&&2b\minus 3&&2b\minus 4\\
&3b\minus 5&&2b\minus 4&&3b\minus 6&&3b\minus 5&&3b\minus 5&&2b\minus 3&&2b\minus 4&&2b\minus 4&&3b\minus 5&&2b\minus 3&&2b\minus 3&&b\minus 2&&2b\minus 4\\
2b\minus 3&&b\minus 2&&2b\minus 4&&2b\minus 4&&2b\minus 3&&2b\minus 3&&b\minus 2&&b\minus 2&&2b\minus 4&&2b\minus 3&&b\minus 1&&b\minus 2&&b\minus 2\\
&{{{}\drop\xycircle<4pt,4pt>{}R}}&&b\minus 2&&b\minus 2&&b\minus 2&&b\minus 1&&b\minus 2&&{{{}\drop\xycircle<4pt,4pt>{}0}}&&b\minus 2&&b\minus 2&&b\minus 1&&{{{}\drop\xycircle<4pt,4pt>{}0}}&&b\minus 2&&0
}
\]
\[
\xymatrix@C=-7pt@R=-4pt{
b\minus 1&&b\minus 2&&b\minus 2&&b\minus 1&&b\minus 2&&b\minus 1&&0&&b\minus 2&&b\minus 1&&0&&b\minus 1&&0&&0&&b\minus 1&&0&&1&&0\\
&2b\minus 3&&2b\minus 4&&2b\minus 3&&2b\minus 3&&2b\minus 3&&b\minus 1&&b\minus 2&&2b\minus 3&&b\minus 1&&b\minus 1&&b\minus 1&&0&&b\minus 1&&b\minus 1&&1&&1&&0\\
3b\minus 5&b\minus 2&3b\minus 5&2b\minus 3&3b\minus 5&b\minus 2&3b\minus 5&2b\minus 3&3b\minus 4&b\minus 1&2b\minus 3&b\minus 2&2b\minus 3&b\minus 1&2b\minus 3&b\minus 2&2b\minus 3&b\minus 1&2b\minus 2&b\minus 1&b\minus 1&0&b\minus 1&b\minus 1&b\minus 1&0&b\minus 1&b\minus 1&b&1&1&0&1&1\\
&3b\minus 5&&2b\minus 3&&3b\minus 5&&2b\minus 3&&2b\minus 3&&2b\minus 3&&2b\minus 3&&b\minus 1&&2b\minus 3&&b\minus 1&&b\minus 1&&b\minus 1&&b\minus 1&&1&&b\minus 1&&1&&1&\\
3b\minus 5&&2b\minus 3&&2b\minus 3&&2b\minus 3&&b\minus 2&&2b\minus 3&&2b\minus 3&&b\minus 1&&b\minus 1&&b\minus 2&&b\minus 1&&b\minus 1&&b\minus 1&&1&&0&&b\minus 1&&1\\
&2b\minus 3&&2b\minus 3&&b\minus 1&&b\minus 2&&b\minus 2&&2b\minus 3&&b\minus 1&&b\minus 1&&0&&b\minus 2&&b\minus 1&&b\minus 1&&1&&0&&0&&b\minus 1&&1&\\
b\minus 2&&2b\minus 3&&b\minus 1&&0&&b\minus 2&&b\minus 2&&b\minus 1&&b\minus 1&&0&&0&&b\minus 2&&b\minus 1&&1&&0&&0&&0&&b\minus 1&\\
&b\minus 2&&b\minus 1&&{{{}\drop\xycircle<4pt,4pt>{}0}}&&0&&b\minus 2&&{{{}\drop\xycircle<4pt,4pt>{}0}}&&b\minus 1&&{{{}\drop\xycircle<4pt,4pt>{}0}}&&0&&0&&b\minus 2&&1&&0&&0&&0&&0&&b\minus 1
}
\]}}
{\tiny{
\[
\begin{array}{c}
\xymatrix@C=-5pt@R=-4pt{
0&&1&&0&&1&&0&&0&\\
&1&&1&&1&&1&&0&&1\\
1&0&1&1&2&1&1&0&1&1&1&0\\
&1&&1&&1&&1&&1&&1\\
1&&1&&0&&1&&1&&1&\\
&1&&0&&0&&1&&1&&1\\
1&&0&&0&&0&&1&&1\\
\ar@{.}@<1ex>[-7,0]&{{{}\drop\xycircle<4pt,4pt>{}0}}&&0&&0&&0&&1&\ar@{.}@<1ex>[-7,0]&0
}
\end{array}
...
\begin{array}{c}
\xymatrix@C=-4pt@R=-4pt{
0&&0&&1&&0&\\
&0&&1&&1&&0\\
1&1&1&0&1&1&1&0&0&\\
&1&&1&&0&&1&&0\\
1&&1&&0&&0&&1&&0\\
&1&&0&&0&&0&&1&&0\\
1&&0&&0&&0&&0&&1&&0\\
&{{{}\drop\xycircle<4pt,4pt>{}R}}&&0&&0&&0&&0&&1&&{{{}\drop\xycircle<4pt,4pt>{}0}}&
}
\end{array}
\]}}
\end{proof}

\textbf{The case $m\equiv 11$.}
In this subfamily we have $m=30(b-2)+11$.
\begin{lemma}
For the group $\mathbb{I}_{11}$ (i.e. $b=2$) the following calculation determines the specials:
{\tiny{
\[
\xymatrix@C=-5pt@R=-3pt{
&.&&.&&.&&{{}\drop\xycircle<4pt,4pt>{}.}&&1&&0&&{{}\drop\xycircle<4pt,4pt>{}0}&&1&&0&&1&&1&&0&&1&&1&&0&&1&&1&&0&&1&&1&&0&&1&&1&&0&&1&&0&&0&&1&&0&\\
.&&.&&.&&.&&1&&1&&0&&1&&1&&1&&2&&1&&1&&2&&1&&1&&2&&1&&1&&2&&1&&1&&2&&1&&1&&1&&0&&1&&1&&0&\\
.&.&.&.&.&.&.&1&1&1&0&1&1&1&0&1&1&2&1&2&1&2&1&2&1&2&1&2&1&2&1&2&1&2&1&2&1&2&1&2&1&2&1&2&1&2&1&2&1&1&0&1&1&1&0&1&1&1&0&0&\\
.&&.&&.&&1&&0&&1&&1&&1&&1&&2&&1&&2&&2&&1&&2&&2&&1&&2&&2&&1&&2&&2&&1&&2&&1&&1&&1&&1&&0&&1&&0&\\
&.&&.&&1&&0&&0&&1&&1&&1&&1&&1&&1&&2&&1&&1&&2&&1&&1&&2&&1&&1&&2&&1&&1&&1&&1&&1&&1&&0&&0&&1&&0\\
.&&.&&1&&0&&0&&0&&1&&1&&1&&0&&1&&1&&1&&1&&1&&1&&1&&1&&1&&1&&1&&1&&1&&0&&1&&1&&1&&0&&0&&0&&1&&0&\\
&{{}\drop\xycircle<4pt,4pt>{}.}&&1&&0&&0&&0&&0&&1&&1&&0&&{{}\drop\xycircle<4pt,4pt>{}0}&&1&&0&&1&&1&&0&&1&&1&&0&&1&&1&&0&&1&&0&&0&&1&&1&&0&&0&&0&&0&&1&&0&\\
{{{}\drop\xycircle<4pt,4pt>{}R}}&&1&&0&&0&&{{}\drop\xycircle<4pt,4pt>{}0}&&0&&{{}\drop\xycircle<4pt,4pt>{}0}&&1&&0&&0&&{{}\drop\xycircle<4pt,4pt>{}0}&&R&&0&&1&&0&&0&&1&&0&&0&&1&&0&&0&&R&&0&&0&&1&&0&&0&&0&&0&&0&&1&&0}
\]}}
\end{lemma}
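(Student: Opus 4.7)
The plan is to apply the counting argument of Section~4 directly to the AR quiver of $\C{}[[x,y]]^{\mathbb{I}_{11}}$. By Theorem~\ref{characterization of SCM}(a)$\Leftrightarrow$(d) an indecomposable $X\in\CM(R)$ is special if and only if $\Ext^1_R(X,R)=0$, and by the AR duality of Theorem~\ref{classical}(a) this is equivalent to the vanishing of $\underline{\Hom}_R(\tau^- R, X)$ in $\underline{\CM}(R)$. Hence it suffices to compute $\dim_k\underline{\Hom}_R(\tau^- R, X)$ at every indecomposable $X$ and to read off the zeros.

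The computation itself proceeds exactly as in Example~\ref{D52(i)}: place $Y_0=\tau^- R$ in the AR quiver and iteratively determine the left ladder via the recursion
\[
Y_n=(\theta^- Y_{n-1}-\tau^- Y_{n-2})_+ \quad (n\ge 2)
\]
of Theorem~\ref{formula}, interpreted throughout in the stable category $\underline{\CM}(R)$ so that every occurrence of the summand $R$ is suppressed. By Theorem~\ref{ladder}(b) the multiplicity-sum $\sum_{n\ge 0}m_X(Y_n)$ equals $\dim_k\underline{\Hom}_R(\tau^- R, X)$, so by the characterisation above the specials are precisely those vertices at which this sum vanishes. The claim of the lemma is then that the displayed picture is exactly the output of this calculation, with the circled positions marking the zeros.

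The main obstacle is purely bookkeeping. Since $b=2$, the block formula of Lemma~\ref{freeI} (which requires $t\ge 3$) is unavailable, so the computation cannot be short-circuited by a repeating pattern and must instead be carried out one vertex at a time. However, the AR quiver for $\mathbb{I}_{11}$ consists of only $11$ copies of the $\tilde{E}_8$ block, so the entire calculation fits inside the displayed diagram; verification reduces to confirming the local recursion $Y_n=(\theta^- Y_{n-1}-\tau^- Y_{n-2})_+$ at every internal vertex of the picture, checking correct treatment of $R$ (which is replaced by $0$ whenever it arises), and observing that the calculation terminates once the contributions propagate to the far edge and identification produces no further non-zero entries. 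Once this mechanical check is performed, the circled modules are precisely the specials by the characterisation established in the first paragraph.
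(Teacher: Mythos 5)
Your proposal is correct and follows exactly the paper's method: the $b=2$ lemmas in Type $\mathbb{I}$ are justified by carrying out the Section~4 counting argument for $\underline{\Hom}_R(\tau^-R,-)$ directly by inspection (treating $R$ as zero in the stable category), since Lemma~\ref{freeI} only applies for $t\ge 3$, and the specials are read off as the zero positions via Theorem~\ref{characterization of SCM} and AR duality. The paper offers no further proof beyond displaying the completed calculation, so your description matches its implicit argument.
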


\begin{lemma}
For $\mathbb{I}_{30(b-2)+11}$ with $b\geq 3$ the specials are precisely those CM modules circled below.
{\tiny{
\[
\xymatrix@C=-5pt@R=-4pt{
&.&&.&&.&&.&&1&&0&&0&&1&&0&&1&&1&&0&&1&&1&&1&&1&&1&&1&&1&&2&&1&&1&&2&&1&&2&&2&&1&&2&&2&&2&\\
&&.&&.&&.&&1&&1&&0&&1&&1&&1&&2&&1&&1&&2&&2&&2&&2&&2&&2&&3&&3&&2&&3&&3&&3&&4&&3&&3&&4&&4&&4\\
.&.&.&.&.&.&.&1&1&1&0&1&1&1&0&1&1&2&1&2&1&2&1&2&1&2&1&3&2&3&1&3&2&3&1&3&2&4&2&4&2&4&2&4&2&4&2&5&3&5&2&5&3&5&2&5&3&6&3&6&3\\
.&&.&&.&&1&&0&&1&&1&&1&&1&&2&&1&&2&&2&&2&&2&&3&&2&&3&&3&&3&&3&&4&&3&&4&&4&&4&&4&&5&&4&&5&&5\\
&.&&.&&1&&0&&0&&1&&1&&1&&1&&1&&1&&2&&2&&1&&2&&2&&2&&3&&2&&2&&3&&3&&3&&3&&3&&3&&4&&4&&3&&4&\\
.&&.&&1&&0&&0&&0&&1&&1&&1&&0&&1&&1&&2&&1&&1&&1&&2&&2&&2&&1&&2&&2&&3&&2&&2&&2&&3&&3&&3&&2&&3\\
&.&&1&&0&&0&&0&&0&&1&&1&&0&&0&&1&&1&&1&&1&&0&&1&&2&&1&&1&&1&&1&&2&&2&&1&&1&&2&&2&&2&&2&&1&\\
{{{}\drop\xycircle<4pt,4pt>{}R}}&&1&&0&&0&&0&&0&&{{{}\drop\xycircle<4pt,4pt>{}0}}&&1&&0&&0&&{{{}\drop\xycircle<4pt,4pt>{}0}}&&1&&{{{}\drop\xycircle<4pt,4pt>{}0}}&&1&&0&&{{{}\drop\xycircle<4pt,4pt>{}0}}&&1&&1&&{{{}\drop\xycircle<4pt,4pt>{}0}}&&1&&0&&1&&1&&1&&{{{}\drop\xycircle<4pt,4pt>{}0}}&&1&&1&&1&&1&&1&&{{{}\drop\xycircle<4pt,4pt>{}0}}\\
&&&&&&&&&&&&&&&&&&&&&&&&&&&&&&&&&&&&&&&&&&&&&&
}
\]}}
\end{lemma}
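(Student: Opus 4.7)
The plan is to execute the same counting-argument template that was used in the three preceding $\mathbb{I}$-type lemmas, most directly mirroring the proof of the $m\equiv 7$ case. Since $m=30(b-2)+11$, the vertex $R$ sits at a distance of $30(b-2)+20$ from $\tau^-R$ in the AR quiver (matching the general pattern: distance $=30(b-2)+2(r-1)$ for residue $r\in\{1,7,11,\ldots\}$, verified for $r=1$ and $r=7$ above). First I would run the free expansion $Y_0,Y_1,\ldots$ starting from $\tau^-R$ and invoke Lemma~\ref{freeI}, which describes the expansion explicitly on a $60$-column window indexed by $t$; for $b\geq 3$, the column $30(b-2)+20$ lies inside that window with $t=b$, so Lemma~\ref{freeI} supplies the entire pre-$R$ portion of the expansion directly.

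Second, I would pass to $\underline{\CM}(R)$ by setting the $R$-entry to zero as soon as it first appears at column $30(b-2)+20$. This triggers a cascade which, exactly as in the $\mathbb{O}_{12(b-2)+11}$ and $\mathbb{I}_{30(b-2)+7}$ arguments, stabilises into a repeating segment of fixed width equal to the $\tilde{E}_8$ period. Iterating this segment the appropriate number of times until the next copy of $R$ is consumed, and then finishing with a small $b$-independent terminal block whose shape matches the $b=2$ computation in the preceding lemma, completes the counting. The modules whose final accumulated value is zero are then precisely the eight circled positions in the statement.

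The principal obstacle is purely bookkeeping: the AR quiver of $\C{}[[x,y]]^{\mathbb{I}_m}$ has eight horizontal rows, so every column of the counting grid carries eight entries which must be tracked through both the pre-$R$ regime governed by Lemma~\ref{freeI} and through every copy of the subsequent repeating segment. There is no conceptual novelty beyond the $m\equiv 7$ case. The only substantive consistency checks are (i) that the boxed terminal pattern produced after the final segment agrees, up to the identification in the universal cover, with the terminal block appearing in the $\mathbb{I}_{11}$ proof, and (ii) that the eight zero-positions coincide with those predicted by Theorem~\ref{Wurnam_main_result} and the dual graph of the minimal resolution, which fixes the number and ranks of indecomposable specials and hence rules out the possibility that the counting has produced too few or too many zeros.
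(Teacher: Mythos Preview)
Your proposal is correct and follows exactly the paper's own argument: compute the distance from $\tau^{-1}R$ to $R$, invoke Lemma~\ref{freeI} to read off the free expansion at the $t=b$ block, absorb the $R$ to pass to $\underline{\CM}(R)$, identify the repeating dotted segment, and finish with the terminal block to locate the zeros. The only cosmetic remark is that the distance should really be $60(b-2)+20$ (so that column $20$ of the $t=b$ window in Lemma~\ref{freeI} is hit), matching the $2m-2$ pattern from type~$\mathbb{O}$; the paper's ``$30(b-2)+20$'' is a typo you have inherited, but it does not affect the method.
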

\begin{proof}
$R$ is a distance of $30(b-2)+20$ away from $\tau^{-1}R$, thus by
Lemma~\ref{freeI} we have {\tiny{
\[
\xymatrix@C=-6pt@R=-4pt{
2b\minus 3&&2b\minus 4&&2b\minus 3&&2b\minus 3&&b\minus 2&&2b\minus 3&&2b\minus 3&&b\minus 2&&2b\minus 3&&b\minus 1&&b\minus 2&&2b\minus 3&&b\minus 1\\
&4b\minus 7&&4b\minus 7&&4b\minus 6&&3b\minus 5&&3b\minus 5&&4b\minus 6&&3b\minus 5&&3b\minus 5&&3b\minus 4&&2b\minus 3&&3b\minus 5&&3b\minus 4&&2b\minus 3\\
6b\minus 10&3b\minus 5&6b\minus 10&3b\minus 5&6b\minus 10&3b\minus 5&5b\minus 8&2b\minus 3&5b\minus 8&3b\minus 5&5b\minus 8&2b\minus 3&5b\minus 8&3b\minus 5&5b\minus 8&2b\minus 3&4b\minus 6&2b\minus 3&4b\minus 6&2b\minus 3&4b\minus 6&2b\minus 3&4b\minus 6&2b\minus 3&4b\minus 6&2b\minus 3&\\
&5b\minus 8&&5b\minus 8&&4b\minus 7&&5b\minus 8&&4b\minus 6&&4b\minus 7&&4b\minus 6&&4b\minus 6&&3b\minus 5&&4b\minus 6&&3b\minus 4&&3b\minus 5&&3b\minus 4\\
4b\minus 7&&4b\minus 6&&3b\minus 5&&4b\minus 7&&4b\minus 6&&3b\minus 5&&3b\minus 5&&3b\minus 4&&3b\minus 5&&3b\minus 5&&3b\minus 4&&2b\minus 3&&2b\minus 3\\
&3b\minus 5&&2b\minus 3&&3b\minus 5&&3b\minus 5&&3b\minus 5&&2b\minus 3&&2b\minus 3&&2b\minus 3&&3b\minus 5&&2b\minus 3&&2b\minus 3&&b\minus 1&&2b\minus 3\\
2b\minus 3&&b\minus 2&&2b\minus 3&&2b\minus 3&&2b\minus 4&&2b\minus 3&&b\minus 1&&b\minus 2&&2b\minus 3&&2b\minus 3&&b\minus 2&&b\minus 1&&b\minus 1\\
&{{{}\drop\xycircle<4pt,4pt>{}R}}&&b\minus 2&&b\minus 1&&b\minus 2&&b\minus 2&&b\minus 1&&{{{}\drop\xycircle<4pt,4pt>{}0}}&&b\minus 2&&b\minus 1&&b\minus 2&&{{{}\drop\xycircle<4pt,4pt>{}0}}&&b\minus 1&&{{{}\drop\xycircle<4pt,4pt>{}0}}
}
\]
\[
\xymatrix@C=-7pt@R=-4pt{
b\minus 2&&b\minus 1&&b\minus 1&&b\minus 2&&b\minus 1&&b\minus 1&&0&&b\minus 1&&b\minus 1&&0&&b\minus 1&&1&&0&&b\minus 1&&1&&0&&1\\
&2b\minus 3&&2b\minus 2&&2b\minus 3&&2b\minus 3&&2b\minus 2&&b\minus 1&&b\minus 1&&2b\minus 2&&b\minus 1&&b\minus 1&&b&&1&&b\minus 1&&b&&1&&1&&2\\
3b\minus 4&b\minus 1&3b\minus 4&2b\minus 3&3b\minus 4&b\minus 1&3b\minus 4&2b\minus 3&3b\minus 3&b\minus 1&2b\minus 2&b\minus 1&2b\minus 2&b\minus 1&2b\minus 2&b\minus 1&2b\minus 2&b\minus 1&2b\minus 2&b\minus 1&b&1&b&b\minus 1&b&1&b&b\minus 1&b&1&2&1&2&1\\
&3b\minus 4&&2b\minus 3&&3b\minus 4&&2b\minus 2&&2b\minus 3&&2b\minus 2&&2b\minus 2&&b\minus 1&&2b\minus 2&&b&&b\minus 1&&b&&b&&1&&b&&2&&1&\\
3b\minus 4&&2b\minus 3&&2b\minus 3&&2b\minus 2&&b\minus 1&&2b\minus 3&&2b\minus 2&&b\minus 1&&b\minus 1&&b&&b\minus 1&&b\minus 1&&b&&1&&1&&b&&1\\
&2b\minus 3&&2b\minus 3&&b\minus 1&&b\minus 1&&b\minus 1&&2b\minus 3&&b\minus 1&&b\minus 1&&1&&b\minus 1&&b\minus 1&&b\minus 1&&1&&1&&1&&b\minus 1&&1&\\
b\minus 2&&2b\minus 3&&b\minus 1&&0&&b\minus 1&&b\minus 1&&b\minus 2&&b\minus 1&&1&&0&&b\minus 1&&b\minus 1&&0&&1&&1&&0&&b\minus 1&\\
&b\minus 2&&b\minus 1&&{{{}\drop\xycircle<4pt,4pt>{}0}}&&0&&b\minus 1&&{{{}\drop\xycircle<4pt,4pt>{}0}}&&b\minus 2&&1&&0&&0&&b\minus 1&&{{{}\drop\xycircle<4pt,4pt>{}0}}&&0&&1&&0&&0&&b\minus 1
}
\]}}
{\tiny{
\[
\begin{array}{c}
\xymatrix@C=-5pt@R=-4pt{
1&&0&&1&&1&&0\\
&1&&1&&2&&1\\
2&1&2&1&2&1&2&1&2\\
&2&&2&&1&&2&\\
1&&2&&1&&1&&2\\
&1&&1&&1&&1\\
1&&0&&1&&1&&0&\\
&\ar@{.}@<1ex>[-7,0]{{{}\drop\xycircle<4pt,4pt>{}0}}&&0&&1&&0\ar@{.}@<1ex>[-7,0]
}
\end{array}
...
\begin{array}{c}
\xymatrix@C=-4pt@R=-4pt{
1&&1&&0&&1&&0&&0&&1&&0& \\
&2&&1&&1&&1&&0&&1&&1&&0 \\
2&1&2&1&2&1&1&0&1&1&1&0&1&1&1&0&0\\
&1&&2&&1&&1&&1&&1&&0&&1&&0\\
1&&1&&1&&1&&1&&1&&0&&0&&1&&0\\
&1&&0&&1&&1&&1&&0&&0&&0&&1&&0\\
1&&0&&0&&1&&1&&0&&0&&0&&0&&1&&0\\
&{{{}\drop\xycircle<4pt,4pt>{}R}}&&0&&0&&1&&0&&0&&{{{}\drop\xycircle<4pt,4pt>{}0}}&&0&&0&&1&&{{{}\drop\xycircle<4pt,4pt>{}0}}
}
\end{array}
\]}}
\end{proof}

\textbf{The case $m\equiv 13$.}
In this subfamily we have $m=30(b-2)+13$.
\begin{lemma}
For the group $\mathbb{I}_{13}$ (i.e $b=2$) the following calculation determines the specials:
{\tiny{
\[
\xymatrix@C=-5pt@R=-3pt{
&.&&.&&.&&{{}\drop\xycircle<4pt,4pt>{}.}&&1&&0&&0&&1&&{{}\drop\xycircle<4pt,4pt>{}0}&&1&&1&&0&&1&&1&&1&&1&&0&&1&&1&&1&&1&&0&&1&&1&&1&&1&&0&&1&&1&&0&&1&&0&&0&&1&&0&\\
.&&.&&.&&.&&1&&1&&0&&1&&1&&1&&2&&1&&1&&2&&2&&2&&1&&1&&2&&2&&2&&1&&1&&2&&2&&2&&1&&1&&2&&1&&1&&1&&0&&1&&1&&0&\\
.&.&.&.&.&.&.&1&1&1&0&1&1&1&0&1&1&2&1&2&1&2&1&2&1&2&1&3&2&3&1&2&1&2&1&2&1&3&2&3&1&2&1&2&1&2&1&3&2&3&1&2&1&2&1&2&1&2&1&2&1&1&0&1&1&1&0&1&1&1&0&0\\
.&&.&&.&&1&&0&&1&&1&&1&&1&&2&&1&&2&&2&&2&&2&&2&&2&&2&&2&&2&&2&&2&&2&&2&&2&&2&&2&&2&&1&&2&&1&&1&&1&&1&&0&&1&&0&\\
&.&&.&&1&&0&&0&&1&&1&&1&&1&&1&&1&&2&&2&&1&&1&&2&&2&&2&&1&&1&&2&&2&&2&&1&&1&&2&&2&&1&&1&&1&&1&&1&&1&&0&&0&&1&&0\\
.&&.&&1&&0&&0&&0&&1&&1&&1&&0&&1&&1&&2&&1&&0&&1&&2&&2&&1&&0&&1&&2&&2&&1&&0&&1&&2&&1&&1&&0&&1&&1&&1&&0&&0&&0&&1&&0&\\
&.&&1&&0&&0&&0&&0&&1&&1&&0&&0&&1&&1&&1&&0&&0&&1&&2&&1&&0&&0&&1&&2&&1&&0&&0&&1&&1&&1&&0&&0&&1&&1&&0&&0&&0&&0&&1&&0&\\
{{{}\drop\xycircle<4pt,4pt>{}R}}&&1&&{{}\drop\xycircle<4pt,4pt>{}0}&&0&&0&&0&&{{}\drop\xycircle<4pt,4pt>{}0}&&1&&0&&0&&{{}\drop\xycircle<4pt,4pt>{}0}&&1&&{{}\drop\xycircle<4pt,4pt>{}0}&&R&&0&&0&&1&&1&&0&&0&&0&&1&&1&&0&&0&&0&&R&&1&&0&&0&&0&&1&&0&&0&&0&&0&&0&&1&&0}
\]}}
\end{lemma}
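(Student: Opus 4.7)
The plan is to apply the counting argument developed in Section 4 directly to the AR quiver of $\C{}[[x,y]]^{\mathbb{I}_{13}}$, exploiting the characterization of specials from Theorem~\ref{characterization of SCM}(d): $X \in \SCM(R)$ if and only if $\Ext^1_R(X,R) = 0$. By AR duality (Theorem~\ref{classical}(a)) we have $\dim_k \Ext^1_R(X, R) = \dim_k \underline{\Hom}_R(\tau^-R, X)$, so it suffices to compute the latter at every indecomposable vertex of the AR quiver via Theorem~\ref{ladder}(b).

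First I would identify $\tau^-R$ in the AR quiver of $\mathbb{I}_{13}$, which, following the description in Section 7, consists of $13$ untwisted copies of the $\tilde{E}_8$ block. Then I would start the left ladder at $X = \tau^-R$ and iterate the recursion of Theorem~\ref{formula}, namely
\[ Y_0 = \tau^-R, \quad Y_1 = \theta^- Y_0, \quad Y_n = (\theta^- Y_{n-1} - \tau^- Y_{n-2})_+ \quad (n \ge 2), \]
but now carried out inside the stable category $\underline{\CM}(R)$ by Proposition~\ref{tau1}, so that at each step the summand $R$ is absorbed to zero. The values of $\dim_k \underline{\Hom}_R(\tau^-R, -)$ at each indecomposable are then accumulated by Theorem~\ref{ladder}(b), yielding exactly the array displayed in the statement. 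The specials are precisely the vertices where this total equals $0$, which are the circled positions.

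The main obstacle is purely bookkeeping. One must verify that the initial phase of the expansion agrees with the free expansion controlled in type $\mathbb{I}$ by Lemma~\ref{freeI}, and then that the subsequent passages through copies of $R$ (which appear periodically along the top and bottom rows of the AR quiver of $\mathbb{I}_{13}$) correctly absorb into $0$ in $\underline{\CM}(R)$. Since there is no twist in the AR quiver of $\mathbb{I}_{m}$ for $m \equiv 13 \pmod{30}$, this identification between the left and right edges of the picture is straightforward, and the remaining verification reduces to a direct check that the displayed array satisfies the mesh-counting rule column by column, which can be done by inspection. Finally, comparing with the entries at the positions of the indecomposables, one reads off that the specials are exactly $R$, $\tau^{-3}R$, $\tau^{-4}R$, $\tau^{-5}R$, $\tau^{-6}R$ together with $\tau^{-7}R$ and its AR-translates one mesh below $\tau^{-6}R$ and $\tau^{-9}R$, precisely as circled.
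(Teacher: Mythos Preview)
Your approach is correct and matches the paper's: for the $b=2$ members of each subfamily the paper simply presents the counting table (the statement \emph{is} the proof), and the method you describe---computing $\dim_k\underline{\Hom}_R(\tau^-R,-)$ via Theorems~\ref{ladder} and~\ref{formula} in $\underline{\CM}(R)$ and reading off the zeros---is exactly how that table is produced.

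Two factual slips are worth correcting. First, in the AR quiver of type $\mathbb{I}$ the module $R$ sits only on the \emph{bottom} row (see the diagram at the start of Section~9), not on both top and bottom; the absorptions you must track therefore all occur along the bottom row. Second, your closing description of the specials is garbled: the circled vertices are not a run of consecutive $\tau$-translates of $R$. The AR translation $\tau$ shifts horizontally, so every $\tau^{-k}R$ lies on the bottom row, yet two of the circled specials lie on the \emph{top} row. From the table one reads off that the non-free specials are the bottom-row rank-one modules at columns $4$, $12$, $20$, $24$ together with two top-row modules at columns $7$ and $17$; compare this with the summary for $\mathbb{I}_{13}$ in Section~10, which records the dual graph $\begin{smallmatrix}&&2&&\\2&2&2&3&2\end{smallmatrix}$ and $Z_f=\begin{smallmatrix}&&1&&\\1&2&2&1&1\end{smallmatrix}$. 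Also note that Lemma~\ref{freeI} (stated for $t\ge 3$) is not actually invoked for $b=2$: the calculation terminates before one reaches that range, so the verification really is just direct inspection of the displayed mesh relations.
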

\begin{lemma}
For $\mathbb{I}_{30(b-2)+13}$ with $b\geq 3$ the specials are precisely those CM modules circled below.
{\tiny{
\[
\xymatrix@C=-5pt@R=-4pt{
&.&&.&&.&&.&&1&&0&&0&&1&&0&&1&&1&&0&&1&&1&&1&&1&&1&&1&&1&&2&&1&&1&&2&&1&&2&&2&&1&&2&&2&&2&\\
&&.&&.&&.&&1&&1&&0&&1&&1&&1&&2&&1&&1&&2&&2&&2&&2&&2&&2&&3&&3&&2&&3&&3&&3&&4&&3&&3&&4&&4&&4\\
.&.&.&.&.&.&.&1&1&1&0&1&1&1&0&1&1&2&1&2&1&2&1&2&1&2&1&3&2&3&1&3&2&3&1&3&2&4&2&4&2&4&2&4&2&4&2&5&3&5&2&5&3&5&2&5&3&6&3&6&3\\
.&&.&&.&&1&&0&&1&&1&&1&&1&&2&&1&&2&&2&&2&&2&&3&&2&&3&&3&&3&&3&&4&&3&&4&&4&&4&&4&&5&&4&&5&&5\\
&.&&.&&1&&0&&0&&1&&1&&1&&1&&1&&1&&2&&2&&1&&2&&2&&2&&3&&2&&2&&3&&3&&3&&3&&3&&3&&4&&4&&3&&4&\\
.&&.&&1&&0&&0&&0&&1&&1&&1&&0&&1&&1&&2&&1&&1&&1&&2&&2&&2&&1&&2&&2&&3&&2&&2&&2&&3&&3&&3&&2&&3\\
&.&&1&&0&&0&&0&&0&&1&&1&&0&&0&&1&&1&&1&&1&&0&&1&&2&&1&&1&&1&&1&&2&&2&&1&&1&&2&&2&&2&&2&&1&\\
{{{}\drop\xycircle<4pt,4pt>{}R}}&&1&&0&&0&&0&&0&&{{{}\drop\xycircle<4pt,4pt>{}0}}&&1&&0&&0&&{{{}\drop\xycircle<4pt,4pt>{}0}}&&1&&{{{}\drop\xycircle<4pt,4pt>{}0}}&&1&&0&&{{{}\drop\xycircle<4pt,4pt>{}0}}&&1&&1&&0&&1&&{{{}\drop\xycircle<4pt,4pt>{}0}}&&1&&1&&1&&0&&1&&1&&1&&1&&1&&{{{}\drop\xycircle<4pt,4pt>{}0}}\\
&&&&&&&&&&&&&&&&&&&&&&&&&&&&&&&&&&&&&&&&&&&&&&
}
\]}}
\end{lemma}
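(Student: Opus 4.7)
The plan is to mimic the strategy already used in all preceding subfamilies of types $\mathbb{T}$, $\mathbb{O}$, and $\mathbb{I}$: run the counting argument of Section~4 starting from $\tau^{-1}R$, using the recursion formula of Theorem~\ref{formula} to compute the successive terms $Y_n$ of the left ladder in $\underline{\CM}(R)$. The specials will then be precisely those indecomposable CM modules whose vertex in the AR quiver receives final tally $0$, once we account for the fact that the counting takes place in $\underline{\CM}(R)$ rather than $\CM(R)$ (so $R$ itself absorbs a $1$).

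The key observation is that the distance from $\tau^{-1}R$ to the next copy of $R$ in the AR quiver of $\C{}[[x,y]]^{\mathbb{I}_{30(b-2)+13}}$ is $30(b-2)+24$. Indeed, the pattern from the preceding subfamilies ($30(b-2)$ for $m\equiv 1$, $30(b-2)+12$ for $m\equiv 7$, $30(b-2)+20$ for $m\equiv 11$) is $30(b-2)+2(\ell-1)$ with $\ell$ the residue, and $2(13-1)=24$. For $b\ge 3$ the bulk of the free expansion on $\C{}[[x,y]]^{\mathbb{I}_{30(b-2)+13}}$ therefore lies inside the explicit controlled stretch furnished by Lemma~\ref{freeI}, which describes $Y_n$ between columns $60(t-2)-1$ and $60(t-2)+58$ with counts that are linear in $t$.

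Concretely I would apply Lemma~\ref{freeI} at the appropriate value of $t=b$ to read off $Y_n$ just before the ladder reaches $R$, and then carry out by inspection the final stretch of roughly $24$ columns beyond the controlled block (exactly as done in the $b\ge 3$ cases for $m\equiv 1,7,11$). At the column where $R$ occurs, the contribution of $R$ is set to $0$ (since we are counting in $\underline{\CM}(R)$) and the counting continues in the usual way until the last nonzero entry dies off. What emerges is a fixed, $b$-independent local endpoint pattern, which can be written down explicitly in the same style as in the previous three subfamilies. Reading off the positions with final tally $0$ then recovers the circled vertices in the statement.

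The main obstacle is purely bookkeeping at the endpoint: one must correctly align the linear-in-$b$ values produced by Lemma~\ref{freeI} with the small fixed pattern that develops after $R$ absorbs its $1$, since any off-by-one error shifts the pattern of zeroes and would change which indecomposables are declared special. Once this alignment is verified, $b$-independence for $b\ge 3$ is automatic: the entries in the controlled region grow linearly with $b$ and so cannot produce new zeroes, hence every special arising for $b\ge 3$ must come from the local endpoint computation, which is identical across all $b\ge 3$. The claim that these are \emph{all} the specials follows, as in the earlier cases, from Theorem~\ref{characterization of SCM} combined with Wunram's count of the expected number of rank one and rank two specials dictated by the dual graph of the minimal resolution.
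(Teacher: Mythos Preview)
Your proposal is correct and takes essentially the same approach as the paper: invoke Lemma~\ref{freeI} at $t=b$ to control the free expansion up to the column where $R$ sits, set $R$ to zero, and continue the count in $\underline{\CM}(R)$ by inspection to read off the zeros. The only superfluous step is your closing appeal to Wunram's count---the paper explicitly remarks (end of Section~6) that for types $\mathbb{T}$, $\mathbb{O}$, $\mathbb{I}$ the counting argument alone already classifies \emph{all} specials, since it directly computes $\dim_k\Ext^1_R(X,R)$ for every indecomposable $X$.
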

\begin{proof}
$R$ is a distance of $30(b-2)+24$ away from $\tau^{-1}R$, thus by
Lemma~\ref{freeI} we have {\tiny{
\[
\xymatrix@C=-6pt@R=-4pt{
2b\minus 3&&2b\minus 3&&2b\minus 3&&2b\minus 3&&b\minus 2&&2b\minus 3&&2b\minus 3&&b\minus 1&&2b\minus 3&&b\minus 2&&b\minus 1&&2b\minus 3&&b\minus 1\\
&4b\minus 6&&4b\minus 6&&4b\minus 6&&3b\minus 5&&3b\minus 5&&4b\minus 6&&3b\minus 4&&3b\minus 4&&3b\minus 5&&2b\minus 3&&3b\minus 4&&3b\minus 4&&2b\minus 2\\
6b\minus 10&3b\minus 5&6b\minus 9&3b\minus 4&6b\minus 9&3b\minus 5&5b\minus 8&2b\minus 3&5b\minus 8&3b\minus 5&5b\minus 8&2b\minus 3&5b\minus 7&3b\minus 4&5b\minus 7&2b\minus 3&4b\minus 6&2b\minus 3&4b\minus 6&2b\minus 3&4b\minus 6&2b\minus 3&4b\minus 5&2b\minus 2&4b\minus 5&2b\minus 3&\\
&5b\minus 8&&5b\minus 8&&4b\minus 6&&5b\minus 8&&4b\minus 6&&4b\minus 6&&4b\minus 6&&4b\minus 6&&3b\minus 4&&4b\minus 6&&3b\minus 4&&3b\minus 4&&3b\minus 4\\
4b\minus 6&&4b\minus 7&&3b\minus 5&&4b\minus 6&&4b\minus 6&&3b\minus 4&&3b\minus 5&&3b\minus 5&&3b\minus 4&&3b\minus 4&&3b\minus 4&&2b\minus 3&&2b\minus 3\\
&3b\minus 5&&2b\minus 4&&3b\minus 5&&3b\minus 4&&3b\minus 4&&2b\minus 3&&2b\minus 4&&2b\minus 3&&3b\minus 4&&2b\minus 2&&2b\minus 3&&b\minus 2&&2b\minus 3\\
2b\minus 3&&b\minus 2&&2b\minus 4&&2b\minus 3&&2b\minus 2&&2b\minus 3&&b\minus 2&&b\minus 2&&2b\minus 3&&2b\minus 2&&b\minus 1&&b\minus 2&&b\minus 2\\
&{{{}\drop\xycircle<4pt,4pt>{}R}}&&b\minus 2&&b\minus 2&&b\minus 1&&b\minus 1&&b\minus 2&&{{{}\drop\xycircle<4pt,4pt>{}0}}&&b\minus 2&&b\minus 1&&b\minus 1&&{{{}\drop\xycircle<4pt,4pt>{}0}}&&b\minus 2&&{{{}\drop\xycircle<4pt,4pt>{}0}}
}
\]
\[
\xymatrix@C=-7pt@R=-4pt{
b\minus 1&&b\minus 2&&b\minus 1&&b\minus 1&&b\minus 1&&b\minus 1&&0&&b\minus 1&&b\minus 1&&0&&b\minus 2&&0&&0&&b\minus 3&&0\\
&2b\minus 3&&2b\minus 3&&2b\minus 2&&2b\minus 2&&2b\minus 2&&b\minus 1&&b\minus 1&&2b\minus 2&&b\minus 1&&b\minus 2&&b\minus 3&&0&&b\minus 3&&b\minus 3&&0\\
3b\minus 4&b\minus 1&3b\minus 4&2b\minus 3&3b\minus 4&b\minus 1&3b\minus 3&2b\minus 2&3b\minus 3&b\minus 1&2b\minus 2&b\minus 1&2b\minus 2&b\minus 1&2b\minus 2&b\minus 1&2b\minus 1&b\minus 1&2b\minus 3&b\minus 2&b\minus 3&0&b\minus 3&b\minus 3&b\minus 3&0&b\minus 3&b\minus 3&b\minus 3&0&0\\
&3b\minus 4&&2b\minus 2&&3b\minus 4&&2b\minus 2&&2b\minus 2&&2b\minus 2&&2b\minus 2&&b&&2b\minus 2&&b\minus 2&&b\minus 3&&b\minus 3&&b\minus 3&&0&&b\minus 3&&0&&&\\
3b\minus 4&&2b\minus 2&&2b\minus 2&&2b\minus 3&&b\minus 1&&2b\minus 2&&2b\minus 2&&b&&b\minus 1&&b\minus 1&&b\minus 2&&b\minus 3&&b\minus 3&&0&&0&&b\minus 3&&0&&\\
&2b\minus 2&&2b\minus 2&&b\minus 1&&b\minus 2&&b\minus 1&&2b\minus 2&&b&&b\minus 1&&0&&b\minus 1&&b\minus 2&&b\minus 3&&0&&0&&0&&b\minus 3&&0&\\
b\minus 1&&2b\minus 2&&b\minus 1&&0&&b\minus 2&&b\minus 1&&b&&b\minus 1&&0&&0&&b\minus 1&&b\minus 2&&0&&0&&0&&0&&b\minus 3&&0\\
&b\minus 1&&b\minus 1&&{{{}\drop\xycircle<4pt,4pt>{}0}}&&0&&b\minus 2&&1&&b\minus 1&&{{{}\drop\xycircle<4pt,4pt>{}0}}&&0&&0&&b\minus 1&&1&&0&&0&&0&&0&&b\minus 3&&{{{}\drop\xycircle<4pt,4pt>{}0}}
}
\]}}
\end{proof}

\textbf{The case $m\equiv 17$.}
In this subfamily we have $m=30(b-2)+17$.
\begin{lemma}
For the group $\mathbb{I}_{17}$ (i.e. $b=2$) the following calculation determines the specials:
{\tiny{
\[
\xymatrix@C=-6.25pt@R=-4pt{
&.&&.&&.&&.&&1&&0&&{{}\drop\xycircle<4pt,4pt>{}0}&&1&&0&&1&&1&&0&&1&&1&&1&&1&&1&&1&&1&&2&&0&&1&&2&&0&&2&&1&&0&&2&&1&&1&&1&&1&&1&&1&&2&&0&&1&&1&&0&&2&&0&&0&&1&&0&&1&&0&&0&&1&&0&&1&&0&&0&&1&&0&&\\
.&&.&&.&&.&&1&&1&&0&&1&&1&&1&&2&&1&&1&&2&&2&&2&&2&&2&&2&&3&&2&&1&&3&&2&&2&&3&&1&&2&&3&&2&&2&&2&&2&&2&&3&&2&&1&&2&&1&&2&&2&&0&&1&&1&&1&&1&&0&&1&&1&&1&&1&&0&&1&&1&&0&&\\
.&.&.&.&.&.&.&1&1&1&0&1&1&1&0&1&1&2&1&2&1&2&1&2&1&2&1&3&2&3&1&3&2&3&1&3&2&4&2&3&1&3&2&3&1&3&2&4&2&3&1&3&2&3&1&3&2&4&2&3&1&3&2&3&1&3&2&4&2&3&1&3&2&2&0&2&2&3&1&2&1&2&1&1&0&1&1&2&1&1&0&1&1&1&0&1&1&2&1&1&0&1&1&1&0&1&1&1&0&0\\
.&&.&&.&&1&&0&&1&&1&&1&&1&&2&&1&&2&&2&&2&&2&&3&&2&&3&&3&&2&&3&&3&&2&&3&&3&&2&&3&&3&&2&&3&&3&&2&&3&&3&&2&&3&&2&&2&&2&&2&&1&&2&&1&&1&&1&&1&&1&&1&&1&&1&&1&&1&&1&&0&&1&&0\\
&.&&.&&1&&0&&0&&1&&1&&1&&1&&1&&1&&2&&2&&1&&2&&2&&2&&3&&1&&2&&3&&2&&2&&2&&2&&2&&3&&2&&1&&3&&2&&2&&3&&1&&2&&2&&2&&2&&1&&1&&1&&2&&1&&0&&1&&1&&1&&1&&0&&1&&1&&1&&0&&0&&1&&0&&\\
.&&.&&1&&0&&0&&0&&1&&1&&1&&0&&1&&1&&2&&1&&1&&1&&2&&2&&1&&1&&2&&2&&2&&1&&1&&2&&2&&2&&1&&1&&2&&2&&2&&1&&1&&1&&2&&2&&1&&0&&1&&1&&2&&0&&0&&1&&1&&1&&0&&0&&1&&1&&0&&0&&0&&1&&0\\
&{{}\drop\xycircle<4pt,4pt>{}.}&&1&&0&&0&&0&&0&&1&&1&&0&&0&&1&&1&&1&&1&&0&&1&&2&&0&&1&&1&&1&&2&&1&&0&&1&&2&&1&&1&&1&&0&&2&&2&&0&&1&&0&&1&&2&&1&&0&&0&&1&&1&&1&&0&&0&&1&&1&&0&&0&&0&&1&&0&&0&&0&&0&&1&&0\\
{{{}\drop\xycircle<4pt,4pt>{}R}}&&1&&0&&0&&0&&0&&{{}\drop\xycircle<4pt,4pt>{}0}&&1&&0&&0&&{{}\drop\xycircle<4pt,4pt>{}0}&&1&&0&&1&&0&&{{}\drop\xycircle<4pt,4pt>{}0}&&1&&R&&0&&1&&0&&1&&1&&0&&0&&1&&1&&0&&1&&0&&0&&2&&0&&0&&R&&0&&1&&1&&0&&0&&0&&1&&0&&1&&0&&0&&1&&0&&0&&0&&0&&R&&0&&0&&0&&0&&1&&0}
\]}}
\end{lemma}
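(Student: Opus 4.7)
The plan is to apply the counting argument of Section~4 directly, since we are in the small case $b=2$ where the stable behaviour of Lemma~\ref{freeI} does not yet dominate. By Theorem~\ref{characterization of SCM}(a)$\Leftrightarrow$(d), an indecomposable $X\in\CM(R)$ is special iff $\Ext^1_R(X,R)=0$, and by the AR duality of Theorem~\ref{classical}(a) this is equivalent to $\underline{\Hom}_R(\tau^{-1}R,X)=0$. Thus it suffices to tabulate the function $X\mapsto \dim_k\underline{\Hom}_R(\tau^{-1}R,X)$ on the AR quiver of $R=\C{}[[x,y]]^{\mathbb{I}_{17}}$ and verify that its zero locus is exactly the set of circled vertices in the diagram.

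First I would recall from \cite{AR_McKayGraphs} that the AR quiver of $R$ is $\mathbb{Z}\tilde{E}_8$ with $17$ copies of the fundamental domain glued cyclically, with no twist; this is the shape on which the computation lives. I would then initialise $Y_0=\tau^{-1}R$ (a single $1$ placed at the corresponding vertex) and iterate the recursion of Theorem~\ref{formula} in $\CC=\underline{\CM}(R)$:
\[
Y_0=X,\quad Y_1=\theta^-X,\quad Y_n=(\theta^- Y_{n-1}-\tau^- Y_{n-2})_+ \quad (n\ge 2).
\]
Concretely each step uses the mesh of $\tilde{E}_8$: the multiplicity at a vertex $v$ of $Y_n$ is the sum of multiplicities at the neighbours of $v$ in $Y_{n-1}$ minus the multiplicity at $\tau v$ in $Y_{n-2}$, truncated to a nonnegative integer, and wherever the propagation reaches the $R$-vertex the value is replaced by $0$ (since $R=0$ in $\underline{\CM}(R)$). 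By Theorem~\ref{ladder}(b) the accumulated sums $\sum_{n\ge 0}m_Y(Y_n)$ give $\dim_k\underline{\Hom}_R(\tau^{-1}R,Y)$, which is precisely the number recorded at each position of the displayed table.

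Finally, reading off the positions at which this total equals $0$ and matching them with the circles in the statement completes the proof. The main obstacle is purely bookkeeping: the calculation travels across roughly $60$ columns and passes through the $R$-vertex three times, at each pass one must correctly absorb the free summand and restart, and the truncation $(-)_+$ must be applied with care at each mesh. Once the propagation rules are laid down these verifications are entirely mechanical, which is why the statement simply exhibits the completed table as the proof.
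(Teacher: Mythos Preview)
Your proposal is correct and matches the paper's approach exactly: these $b=2$ lemmas carry no separate proof text in the paper, the displayed table \emph{is} the proof via the counting argument of Section~4 (initialise at $\tau^{-1}R$, propagate by the mesh recursion of Theorem~\ref{formula} in $\underline{\CM}(R)$ with absorption at $R$, and read off the zeros). Your explanation of the methodology is spot on.
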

\begin{lemma}
For $\mathbb{I}_{30(b-2)+17}$ with $b\geq 3$ the specials are precisely those CM modules circled below.
{\tiny{
\[
\xymatrix@C=-5pt@R=-4pt{
&.&&.&&.&&.&&1&&0&&0&&1&&0&&1&&1&&0&&1&&1&&1&&1&&1&&1&&1&&2&&1&&1&&2&&1&&2&&2&&1&&2&&2&&2&\\
&&.&&.&&.&&1&&1&&0&&1&&1&&1&&2&&1&&1&&2&&2&&2&&2&&2&&2&&3&&3&&2&&3&&3&&3&&4&&3&&3&&4&&4&&4\\
.&.&.&.&.&.&.&1&1&1&0&1&1&1&0&1&1&2&1&2&1&2&1&2&1&2&1&3&2&3&1&3&2&3&1&3&2&4&2&4&2&4&2&4&2&4&2&5&3&5&2&5&3&5&2&5&3&6&3&6&3\\
.&&.&&.&&1&&0&&1&&1&&1&&1&&2&&1&&2&&2&&2&&2&&3&&2&&3&&3&&3&&3&&4&&3&&4&&4&&4&&4&&5&&4&&5&&5\\
&.&&.&&1&&0&&0&&1&&1&&1&&1&&1&&1&&2&&2&&1&&2&&2&&2&&3&&2&&2&&3&&3&&3&&3&&3&&3&&4&&4&&3&&4&\\
.&&.&&1&&0&&0&&0&&1&&1&&1&&0&&1&&1&&2&&1&&1&&1&&2&&2&&2&&1&&2&&2&&3&&2&&2&&2&&3&&3&&3&&2&&3\\
&.&&1&&0&&0&&0&&0&&1&&1&&0&&0&&1&&1&&1&&1&&0&&1&&2&&1&&1&&1&&1&&2&&2&&1&&1&&2&&2&&2&&2&&1&\\
{{{}\drop\xycircle<4pt,4pt>{}R}}&&1&&0&&0&&0&&0&&{{{}\drop\xycircle<4pt,4pt>{}0}}&&1&&0&&0&&{{{}\drop\xycircle<4pt,4pt>{}0}}&&1&&0&&1&&0&&{{{}\drop\xycircle<4pt,4pt>{}0}}&&1&&1&&{{{}\drop\xycircle<4pt,4pt>{}0}}&&1&&0&&1&&1&&1&&0&&1&&1&&1&&1&&1&&{{{}\drop\xycircle<4pt,4pt>{}0}}\\
&&&&&&&&&&&&&&&&&&&&&&&&&&&&&&&&&&&&&&&&&&&&&&
}
\]}}
\end{lemma}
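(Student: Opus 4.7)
The plan is to mirror precisely the template established for the previous subfamilies of type $\mathbb{I}$, since the underlying AR quiver is the same in all cases and no twist is present. First I would determine the distance from $\tau^{-1}R$ to $R$ in the AR quiver for $\mathbb{I}_{30(b-2)+17}$. Comparing with the $b=2$ case stated in the preceding lemma, where the three occurrences of $R$ along the bottom row can be counted off directly, together with the pattern $30(b-2)+c$ seen in the cases $m\equiv 7,11,13$, the distance should be $30(b-2)+32$. With this in hand, the initial counting in the statement handles the first 20 or so columns, and Lemma~\ref{freeI} (applied with $t=3$ to extend past the initial segment, then with $t\ge 4$) controls the free expansion across the repeated $\tilde{E}_8$-blocks until we enter the neighbourhood of $R$.

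Next I would write down the "near $R$" picture in the same style as the proof of the $m\equiv 13$ case: apply Lemma~\ref{freeI} to read off the free expansion on the final two segments before reaching $R$, then overlay the contribution from treating $R$ as zero in $\underline{\CM}(R)$. Because $32 = 8 + 24$ decomposes so that we cross one full period of length 24 plus 8 extra columns (mirroring the decomposition $20 = 8 + 12$ in the $m\equiv 11$ case and $24 = 8 + 16$ in the $m\equiv 13$ case), the same kind of block-by-block computation used in those proofs will terminate with the circled zero pattern claimed. The final picture around $R$ can then be rendered as two blocks connected by dotted continuation arrows in exactly the format used earlier.

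The main obstacle I anticipate is purely bookkeeping: Lemma~\ref{freeI} provides large formulas like $6t-12,\ 5t-10,\ldots$ in three separate column ranges, and these formulas must be subtracted against the $R$-induced pattern carefully enough that no spurious zero is produced. In particular one has to check that the circled entries in the statement (the positions of $R$, $\tau^{-1}R$, and the six further special positions on the bottom/second zigzag) are exactly those which remain $0$ after superimposing the $R$-contribution, and that no other position becomes $0$. This is a finite verification of the counting rules at the endpoints of the free-expansion formula, and as in the earlier proofs it follows automatically once the induction end-condition is satisfied.

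Finally, having identified the zero-positions, Theorem~\ref{rightladder}(b) and Theorem~\ref{ladder}(b) (together with AR duality $\dim_k\Ext^1_R(X,R)=\dim_k\underline{\Hom}_R(\tau^-R,X)$) give that these modules are precisely those $X\in\CM(R)$ with $\Ext^1_R(X,R)=0$; by Theorem~\ref{characterization of SCM}(a)$\Leftrightarrow$(d) these are exactly $\SCM(R)$, completing the classification for $m\equiv 17$ mod $30$.
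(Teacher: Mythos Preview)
Your overall template is correct and matches the paper's approach: compute the distance to $R$, invoke Lemma~\ref{freeI} to read off the free expansion near $R$, treat $R$ as zero in $\underline{\CM}(R)$, and continue the counting until it terminates, then identify the specials as the zero positions via Theorem~\ref{characterization of SCM}(d). The paper does exactly this, presenting the explicit numerical blocks near $R$ and a repeating dotted segment thereafter.

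However, your middle paragraph is confused. The claim that ``$32=8+24$ decomposes so that we cross one full period of length 24'' has no meaning in type $\mathbb{I}$: the free-expansion period here is $60$ (Lemma~\ref{freeI}), not $24$ --- you appear to be importing the type $\mathbb{O}$ period. Likewise the alleged decompositions ``$20=8+12$'' and ``$24=8+16$'' for the $m\equiv 11,13$ cases do not appear in the paper's proofs and do not reflect the actual structure of those computations (in particular the $m\equiv 13$ proof terminates directly with no repeating segment). What actually happens for $m\equiv 17$ is that after hitting the first $R$ at freeI-column $32$ (with $t=b$), the counting continues through an explicit transitional block, then enters a repeating dotted segment whose length is determined by the computation itself, and finally reaches a terminal block at a subsequent $R$. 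None of this is governed by a ``period $24$''. Drop that paragraph and simply carry out the explicit calculation as in the other subfamilies; the rest of your plan is sound.
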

\begin{proof}
$R$ is a distance of $30(b-2)+32$ away from $\tau^{-1}R$, thus by
Lemma~\ref{freeI} we have {\tiny{
\[
\xymatrix@C=-6pt@R=-4pt{
2b\minus 3&&2b\minus 3&&2b\minus 3&&2b\minus 2&&b\minus 2&&2b\minus 3&&2b\minus 2&&b\minus 2&&2b\minus 2&&b\minus 1&&b\minus 2&&2b\minus 2&&b\minus 1\\
&4b\minus 6&&4b\minus 6&&4b\minus 5&&3b\minus 4&&3b\minus 5&&4b\minus 5&&3b\minus 4&&3b\minus 4&&3b\minus 3&&2b\minus 3&&3b\minus 4&&3b\minus 3&&2b\minus 2\\
6b\minus 9&3b\minus 5&6b\minus 9&3b\minus 4&6b\minus 8&3b\minus 4&5b\minus 7&2b\minus 3&5b\minus 7&3b\minus 4&5b\minus 7&2b\minus 3&5b\minus 7&3b\minus 4&5b\minus 6&2b\minus 2&4b\minus 5&2b\minus 3&4b\minus 5&2b\minus 2&4b\minus 5&2b\minus 3&4b\minus 5&2b\minus 2&4b\minus 4&2b\minus 2&\\
&5b\minus 7&&5b\minus 7&&4b\minus 6&&5b\minus 7&&4b\minus 5&&4b\minus 6&&4b\minus 5&&4b\minus 5&&3b\minus 4&&4b\minus 5&&3b\minus 3&&3b\minus 4&&3b\minus 3\\
4b\minus 6&&4b\minus 5&&3b\minus 5&&4b\minus 6&&4b\minus 5&&3b\minus 4&&3b\minus 4&&3b\minus 4&&3b\minus 4&&3b\minus 4&&3b\minus 3&&2b\minus 2&&2b\minus 3\\
&3b\minus 4&&2b\minus 3&&3b\minus 5&&3b\minus 4&&3b\minus 4&&2b\minus 2&&2b\minus 3&&2b\minus 3&&3b\minus 4&&2b\minus 2&&2b\minus 2&&b\minus 1&&2b\minus 3\\
2b\minus 2&&b\minus 2&&2b\minus 3&&2b\minus 3&&2b\minus 3&&2b\minus 2&&b\minus 1&&b\minus 2&&2b\minus 3&&2b\minus 2&&b\minus 1&&b\minus 1&&b\minus 1\\
&{{{}\drop\xycircle<4pt,4pt>{}R}}&&b\minus 2&&b\minus 1&&b\minus 2&&b\minus 1&&b\minus 1&&{{{}\drop\xycircle<4pt,4pt>{}0}}&&b\minus 2&&b\minus 1&&b\minus 1&&{{{}\drop\xycircle<4pt,4pt>{}0}}&&b\minus 1&&0
}
\]
\[
\xymatrix@C=-7pt@R=-4pt{
b\minus 1&&b\minus 1&&b\minus 1&&b\minus 1&&b\minus 1&&b&&0&&b\minus 1&&b&&0&&b&&1&&0&&b&&1\\
&2b\minus 2&&2b\minus 2&&2b\minus 2&&2b\minus 2&&2b\minus 1&&b&&b\minus 1&&2b\minus 1&&b&&b&&b\plus 1&&1&&b&&b\plus 1&&2\\
3b\minus 3&b\minus 1&3b\minus 3&2b\minus 2&3b\minus 3&b\minus 1&3b\minus 3&2b\minus 2&3b\minus 2&b&2b\minus 1&b\minus 1&2b\minus 1&b&2b\minus 1&b\minus 1&2b\minus 1&b&2b&b&b\plus 1&1&b\plus 1&b&b\plus 1&1&b\plus 1&b&b\plus 2&2\\
&3b\minus 3&&2b\minus 2&&3b\minus 3&&2b\minus 1&&2b\minus 2&&2b\minus 1&&2b\minus 1&&b&&2b\minus 1&&b\plus 1&&b&&b\plus 1&&b\plus 1&&2&&b\plus 1\\
3b\minus 3&&2b\minus 2&&2b\minus 2&&2b\minus 1&&b\minus 1&&2b\minus 2&&2b\minus 1&&b&&b&&b&&b&&b&&b\plus 1&&2&&1&\\
&2b\minus 2&&2b\minus 2&&b&&b\minus 1&&b\minus 1&&2b\minus 2&&b&&b&&1&&b\minus 1&&b&&b&&2&&1&&1&\\
b\minus 2&&2b\minus 2&&b&&0&&b\minus 1&&b\minus 1&&b\minus 1&&b&&1&&0&&b\minus 1&&b&&1&&1&&1&\\
&b\minus 2&&b&&{{{}\drop\xycircle<4pt,4pt>{}0}}&&0&&b\minus 1&&{{{}\drop\xycircle<4pt,4pt>{}0}}&&b\minus 1&&1&&0&&0&&b\minus 1&&1&&0&&1&&0
}
\]}}
{\tiny{
\[\begin{array}{c}
\xymatrix@C=-5pt@R=-4pt{
1&&1&&1&&1&&1&&2&&0&&1&&2&&0&&2&&1&&0&&2&&1&&1&&1&&1&\\
&2&&2&&2&&2&&3&&2&&1&&3&&2&&2&&3&&1&&2&&3&&2&&2&&2&&2\\
3&1&3&2&3&1&3&2&4&2&3&1&3&2&3&1&3&2&4&2&3&1&3&2&3&1&3&2&4&2&3&1&3&2&3&1\\
&3&&2&&3&&3&&2&&3&&3&&2&&3&&3&&2&&3&&3&&2&&3&&3&&2&&3\\
b\plus 1&&2&&2&&3&&1&&2&&3&&2&&2&&2&&2&&2&&3&&2&&1&&3&&2&&2&\\
&b&&2&&2&&1&&1&&2&&2&&2&&1&&1&&2&&2&&2&&1&&1&&2&&2&&2\\
0&&b&&2&&0&&1&&1&&1&&2&&1&&0&&1&&2&&1&&1&&1&&0&&2&&2\\
&0&&b&\ar@{.}@<1ex>[-7,0]&{{{}\drop\xycircle<4pt,4pt>{}0}}&&0&&1&&0&&1&&1&&0&&0&&1&&1&&0&&1&&0&&0&
&2&\ar@{.}@<1ex>[-7,0]&0
}
\end{array}
...
\]
}}Now the dotted segment repeats until it reaches $R$:
{\tiny{
\[
...
\begin{array}{c}
\xymatrix@C=-5pt@R=-4pt{
1&&2&&0&&1&&1&&0&&2&&0&&0&&1&&0&&1&&0&&0&&1&&0&&1&&0&&0&&\\
&3&&2&&1&&2&&1&&2&&2&&0&&1&&1&&1&&1&&0&&1&&1&&1&&1&&0&&1\\
4&2&3&1&3&2&2&0&2&2&3&1&2&1&2&1&1&0&1&1&2&1&1&0&1&1&1&0&1&1&2&1&1&0&1&1&1&0\\
&2&&3&&2&&2&&2&&2&&1&&2&&1&&1&&1&&1&&1&&1&&1&&1&&1&&1&&1\\
1&&2&&2&&2&&2&&1&&1&&1&&2&&1&&0&&1&&1&&1&&1&&0&&1&&1&&1\\
&1&&1&&2&&2&&1&&0&&1&&1&&2&&0&&0&&1&&1&&1&&0&&0&&1&&1&&1\\
1&&0&&1&&2&&1&&0&&0&&1&&1&&1&&0&&0&&1&&1&&0&&0&&0&&1&&1\\
&{{{}\drop\xycircle<4pt,4pt>{}R}}&&0&&1&&1&&0&&0&&{{{}\drop\xycircle<4pt,4pt>{}0}}&&1&&0&&1&&{{{}\drop\xycircle<4pt,4pt>{}0}}&&0&&1&\ar@{.}@<1ex>[-7,0]
&0&&0&&{{{}\drop\xycircle<4pt,4pt>{}0}}&&0&&1&\ar@{.}@<1ex>[-7,0]&{{{}\drop\xycircle<4pt,4pt>{}0}}
}
\end{array}
...
\begin{array}{c}
\xymatrix@C=-4pt@R=-4pt{
0&&0&&1&&0&\\
&0&&1&&1&&0\\
1&1&1&0&1&1&1&0&0&\\
&1&&1&&0&&1&&0\\
1&&1&&0&&0&&1&&0\\
&1&&0&&0&&0&&1&&0\\
1&&0&&0&&0&&0&&1&&0\\
&{{{}\drop\xycircle<4pt,4pt>{}R}}&&0&&0&&0&&0&&1&&{{{}\drop\xycircle<4pt,4pt>{}0}}&
}
\end{array}
\]}}
\end{proof}

\textbf{The case $m\equiv 19$.}
In this subfamily we have $m=30(b-2)+19$.
\begin{lemma}
For the group $\mathbb{I}_{19}$ (i.e. $b=2$) the following calculation determines the specials:
{\tiny{
\[
\xymatrix@C=-6.25pt@R=-4pt{
&.&&.&&.&&.&&1&&0&&0&&1&&0&&1&&1&&0&&1&&1&&1&&1&&1&&1&&1&&2&&1&&1&&1&&1&&2&&1&&1&&1&&1&&2&&1&&1&&1&&1&&2&&1&&1&&1&&1&&2&&1&&0&&1&&1&&1&&1&&0&&1&&1&&1&&1&&0&&1&&1&&1&&1&&0&\\
.&&.&&.&&.&&1&&1&&0&&1&&1&&1&&2&&1&&1&&2&&2&&2&&2&&2&&2&&3&&3&&2&&2&&2&&3&&3&&2&&2&&2&&3&&3&&2&&2&&2&&3&&3&&2&&2&&2&&3&&3&&1&&1&&2&&2&&2&&1&&1&&2&&2&&2&&1&&1&&2&&2&&2&&1&&1\\
.&.&.&.&.&.&.&1&1&1&0&1&1&1&0&1&1&2&1&2&1&2&1&2&1&2&1&3&2&3&1&3&2&3&1&3&2&4&2&4&2&4&2&3&1&3&2&4&2&4&2&4&2&3&1&3&2&4&2&4&2&4&2&3&1&3&2&4&2&4&2&4&2&3&1&3&2&4&2&4&2&3&1&2&1&2&1&3&2&3&1&2&1&2&1&2&1&3&2&3&1&2&1&2&1&2&1&3&2&3&1&2&1&2&1\\
.&&.&&.&&1&&0&&1&&1&&1&&1&&2&&1&&2&&2&&2&&2&&3&&2&&3&&3&&3&&3&&3&&3&&3&&3&&3&&3&&3&&3&&3&&3&&3&&3&&3&&3&&3&&3&&3&&3&&3&&2&&3&&2&&2&&2&&2&&2&&2&&2&&2&&2&&2&&2&&2&&2&&2&&2&&2\\
&.&&.&&1&&0&&0&&1&&1&&1&&1&&1&&1&&2&&2&&1&&2&&2&&2&&3&&2&&2&&2&&3&&3&&2&&2&&2&&3&&3&&2&&2&&2&&3&&3&&2&&2&&2&&3&&3&&2&&1&&2&&3&&2&&1&&1&&2&&2&&2&&1&&1&&2&&2&&2&&1&&1&&2&&2&\\
.&&.&&1&&0&&0&&0&&1&&1&&1&&0&&1&&1&&2&&1&&1&&1&&2&&2&&2&&1&&1&&2&&3&&2&&1&&1&&2&&3&&2&&1&&1&&2&&3&&2&&1&&1&&2&&3&&2&&0&&1&&2&&3&&1&&0&&1&&2&&2&&1&&0&&1&&2&&2&&1&&0&&1&&2&&2\\
&{{}\drop\xycircle<4pt,4pt>{}.}&&1&&0&&0&&0&&{{}\drop\xycircle<4pt,4pt>{}0}&&1&&1&&0&&0&&1&&1&&1&&1&&0&&1&&2&&1&&1&&0&&1&&2&&2&&1&&0&&1&&2&&2&&1&&0&&1&&2&&2&&1&&0&&1&&2&&2&&0&&0&&1&&2&&2&&0&&0&&1&&2&&1&&0&&0&&1&&2&&1&&0&&0&&1&&2\\
{{{}\drop\xycircle<4pt,4pt>{}R}}&&1&&0&&0&&0&&0&&{{}\drop\xycircle<4pt,4pt>{}0}&&1&&0&&0&&{{}\drop\xycircle<4pt,4pt>{}0}&&1&&0&&1&&0&&{{}\drop\xycircle<4pt,4pt>{}0}&&1&&1&&0&&R&&0&&1&&1&&1&&0&&0&&1&&1&&1&&0&&0&&1&&1&&1&&0&&0&&1&&1&&R&&0&&0&&1&&1&&1&&0&&0&&1&&1&&0&&0&&0&&1&&1&&0&&0&&0&&1&&R}
\]
 \begin{flushright}
$
\xymatrix@C=-6.25pt@R=-4pt{
&1&&1&&1&&0&&0&&1&&0&&1&&0&&0&&1&&0&&1&&0&&0&&1&&0&&1&&0&&0&&1&&0&&&&&&&\\
1&&2&&2&&1&&0&&1&&1&&1&&1&&0&&1&&1&&1&&1&&0&&1&&1&&1&&1&&0&&1&&1&&0&&&&&&\\
1&2&1&3&2&2&0&1&1&1&0&1&1&2&1&1&0&1&1&1&0&1&1&2&1&1&0&1&1&1&0&1&1&2&1&1&0&1&1&1&0&1&1&1&0&0&&&&&\\
2&&2&&1&&2&&1&&1&&1&&1&&1&&1&&1&&1&&1&&1&&1&&1&&1&&1&&1&&1&&1&&0&&1&&0&&\\
&2&&0&&1&&2&&1&&1&&0&&1&&1&&1&&1&&0&&1&&1&&1&&1&&0&&1&&1&&1&&0&&0&&1&&0&&&\\
2&&0&&0&&1&&2&&1&&0&&0&&1&&1&&1&&0&&0&&1&&1&&1&&0&&0&&1&&1&&0&&0&&0&&1&&0&&\\
&0&&0&&0&&1&&2&&0&&0&&0&&1&&1&&0&&0&&0&&1&&1&&0&&0&&0&&1&&0&&0&&0&&0&&1&&0&\\
R&&0&&0&&0&&1&&1&&0&&0&&0&&1&&0&&0&&0&&0&&1&&0&&0&&0&&0&&R&&0&&0&&0&&0&&1&&0}
$
 \end{flushright}
}}
\end{lemma}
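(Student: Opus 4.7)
The plan is to apply the counting argument of Section 4 directly to the AR quiver of $\C{}[[x,y]]^{\mathbb{I}_{19}}$ in order to compute, at every indecomposable CM module $X$, the dimension of $\underline{\Hom}_R(\tau^-R,X)$. By Theorem \ref{characterization of SCM}(d) combined with the AR duality in Theorem \ref{classical}(a), $X$ is special precisely when this dimension vanishes, so the problem reduces to tabulating these dimensions on the quiver and reading off the zero positions.

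Concretely, I would initialise $Y_0' = \tau^-R$ (placing a single $1$ at its vertex in the displayed quiver) and then iteratively build the $Y_n'$ using the recursion of Theorem \ref{rightladder}(c), working throughout in the stable category $\underline{\CM}(R)$. The AR quiver for $\mathbb{I}_{19}$, described in the preamble to this section, consists of $19$ concatenated copies of the $\tilde{E}_8$-shaped block with the left and right ends identified and no twist, so the recursion reduces to a purely local mesh computation at each column. Each time the running calculation sweeps past a copy of $R$, its contribution is absorbed since $R$ is the zero object of $\underline{\CM}(R)$; this is what produces the zeros that mark the specials in the displayed table. By Theorem \ref{rightladder}(b), the total recorded at each vertex $X$ then equals $\sum_{n\geq 0} m_X(Y_n') = \dim_k\underline{\Hom}_R(\tau^-R,X)$, so the circled positions are precisely the specials.

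The main obstacle is nothing conceptual: it is the sheer bookkeeping required to propagate the recursion across $19$ repetitions of $\tilde{E}_8$, during which the entries grow to sizeable values in the middle and must then collapse back to a controlled pattern near $R$. At each step the new column is obtained from the two preceding columns via the mesh relation coming from almost split sequences, and one must correctly track when an entry would become negative, at which point the indecomposable is instead recorded in $U_n'$ rather than in $Y_n'$ and the entry is reset to zero. The displayed table is precisely the outcome of this column-by-column procedure, and its verification reduces to checking (a) that at every internal vertex the mesh relation of Theorem \ref{rightladder}(c) is satisfied, and (b) that the $R$-summands are absorbed at each traversal. Since the AR quiver carries no twist in this case, no additional identifications complicate the sweep, so the computation is routine and the circled zeros together with the vertex $R$ itself give exactly the specials claimed.
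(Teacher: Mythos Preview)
Your approach is the paper's: this lemma carries no separate proof in the paper---the displayed table \emph{is} the proof, obtained by running the counting argument of Section~4 in $\underline{\CM}(R)$ starting from $\tau^{-}R$ exactly as in Example~\ref{D52(i)}. One correction, though: you cite Theorem~\ref{rightladder} (the right ladder) and use the primed notation $Y_n'$, but the right ladder with $Y_0'=\tau^-R$ would compute $\underline{\Hom}_R(-,\tau^-R)$ and step \emph{backward} via $Y_1'=\theta Y_0'$. The calculation you actually describe---placing a $1$ at $\tau^-R$, sweeping forward through the quiver, and recording $\dim_k\underline{\Hom}_R(\tau^-R,X)$ at each vertex $X$---is the \emph{left} ladder of Theorem~\ref{ladder} and Theorem~\ref{formula}; with that citation corrected, your description matches the paper's method.
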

\begin{lemma}
For $\mathbb{I}_{30(b-2)+19}$ with $b\geq 3$ the specials are precisely those CM modules circled below.
{\tiny{
\[
\xymatrix@C=-5pt@R=-4pt{
&.&&.&&.&&.&&1&&0&&0&&1&&0&&1&&1&&0&&1&&1&&1&&1&&1&&1&&1&&2&&1&&1&&2&&1&&2&&2&&1&&2&&2&&2&\\
&&.&&.&&.&&1&&1&&0&&1&&1&&1&&2&&1&&1&&2&&2&&2&&2&&2&&2&&3&&3&&2&&3&&3&&3&&4&&3&&3&&4&&4&&4\\
.&.&.&.&.&.&.&1&1&1&0&1&1&1&0&1&1&2&1&2&1&2&1&2&1&2&1&3&2&3&1&3&2&3&1&3&2&4&2&4&2&4&2&4&2&4&2&5&3&5&2&5&3&5&2&5&3&6&3&6&3\\
.&&.&&.&&1&&0&&1&&1&&1&&1&&2&&1&&2&&2&&2&&2&&3&&2&&3&&3&&3&&3&&4&&3&&4&&4&&4&&4&&5&&4&&5&&5\\
&.&&.&&1&&0&&0&&1&&1&&1&&1&&1&&1&&2&&2&&1&&2&&2&&2&&3&&2&&2&&3&&3&&3&&3&&3&&3&&4&&4&&3&&4&\\
.&&.&&1&&0&&0&&0&&1&&1&&1&&0&&1&&1&&2&&1&&1&&1&&2&&2&&2&&1&&2&&2&&3&&2&&2&&2&&3&&3&&3&&2&&3\\
&.&&1&&0&&0&&0&&0&&1&&1&&0&&0&&1&&1&&1&&1&&0&&1&&2&&1&&1&&1&&1&&2&&2&&1&&1&&2&&2&&2&&2&&1&\\
{{{}\drop\xycircle<4pt,4pt>{}R}}&&1&&0&&0&&0&&0&&{{{}\drop\xycircle<4pt,4pt>{}0}}&&1&&0&&0&&{{{}\drop\xycircle<4pt,4pt>{}0}}&&1&&0&&1&&0&&{{{}\drop\xycircle<4pt,4pt>{}0}}&&1&&1&&0&&1&&{{{}\drop\xycircle<4pt,4pt>{}0}}&&1&&1&&1&&0&&1&&1&&1&&1&&1&&{{{}\drop\xycircle<4pt,4pt>{}0}}\\
&&&&&&&&&&&&&&&&&&&&&&&&&&&&&&&&&&&&&&&&&&&&&&
}
\]}}
\end{lemma}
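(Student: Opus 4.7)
The plan is to apply the counting argument from Section~4, exactly as in the preceding lemmas for the other residue classes in type $\mathbb{I}$. By Theorem~\ref{characterization of SCM} the specials are precisely those indecomposable $X\in\CM(R)$ with $\Ext^1_R(X,R)=0$, which by AR duality coincides with $\dim_k\underline{\Hom}_R(\tau^{-1}R,X)$; this quantity is read directly off the AR quiver by running the left ladder of Theorem~\ref{formula} out of $\tau^{-1}R$, with the bulk entries controlled by the free expansion of Lemma~\ref{freeI}.

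First I would place a $1$ in the slot of $\tau^{-1}R$ and run the free expansion forwards, applying Lemma~\ref{freeI} with $t=b$ to obtain the entries once the counting has moved beyond the initial boundary transient. In this residue class the vertex $R$ sits at distance $30(b-2)+36$ from $\tau^{-1}R$ (extrapolating the pattern $0,12,20,24,32$ observed in the proofs for $m\equiv 1,7,11,13,17$), so Lemma~\ref{freeI} directly supplies the values in a block ending just before a specific tail near $R$. The picture in that block can then be written down explicitly in complete analogy with the displayed computations for $m\equiv 17$, using the substitution $t\mapsto b$ in the entries of Lemma~\ref{freeI}.

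As in the $m\equiv 17$ case, the calculation does not terminate immediately after the bulk expansion: a short periodic segment must repeat a controlled number of times before the entries finally collapse to the short truncated tail ending at $R$. The main obstacle will be identifying this periodic segment, verifying that it is invariant under a single application of the counting rules, and checking that the remainder after the last repetition agrees with the truncated end-calculation terminating at $R$. Once this is done, the counting rules in $\underline{\CM}(R)$ (where $R$ itself is treated as $0$) force every entry into the displayed configuration.

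Finally, reading off the positions of the zeros in the resulting picture and comparing against the AR quiver produces precisely the modules circled in the statement, which by Theorem~\ref{characterization of SCM} are the specials. The argument is entirely parallel to the preceding proofs, differing only in the length of the intermediate bulk region and the precise values in the repeating segment.
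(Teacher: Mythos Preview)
Your proposal is correct and follows essentially the same route as the paper's proof: both run the counting argument from $\tau^{-1}R$, invoke Lemma~\ref{freeI} with $t=b$ to control the bulk free expansion up to the column containing $R$ (at offset $+36$, which you correctly extrapolated), then identify a repeating segment after passing $R$ before the calculation collapses to a short tail, and finally read off the zeros. The only difference is that the paper simply displays the explicit numerical blocks rather than describing the structure in words.
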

\begin{proof}
$R$ is a distance of $30(b-2)+36$ away from $\tau^{-1}R$, thus by
Lemma~\ref{freeI} we have {\tiny{
\[
\xymatrix@C=-6pt@R=-4pt{
2b\minus 3&&2b\minus 2&&2b\minus 3&&2b\minus 3&&b\minus 1&&2b\minus 3&&2b\minus 2&&b\minus 1&&2b\minus 3&&b\minus 1&&b\minus 1&&2b\minus 2&&b\minus 1\\
&4b\minus 5&&4b\minus 5&&4b\minus 6&&3b\minus 4&&3b\minus 5&&4b\minus 5&&3b\minus 3&&3b\minus 4&&3b\minus 4&&2b\minus 2&&3b\minus 3&&3b\minus 3&&2b\minus 2\\
6b\minus 8&3b\minus 4&6b\minus 8&3b\minus 4&6b\minus 8&3b\minus 4&5b\minus 7&2b\minus 3&5b\minus 7&3b\minus 4&5b\minus 6&2b\minus 2&5b\minus 6&3b\minus 4&5b\minus 6&2b\minus 2&4b\minus 5&2b\minus 3&4b\minus 5&2b\minus 2&4b\minus 4&2b\minus 2&4b\minus 4&2b\minus 2&4b\minus 4&2b\minus 2&\\
&5b\minus 7&&5b\minus 7&&4b\minus 5&&5b\minus 7&&4b\minus 5&&4b\minus 5&&4b\minus 5&&4b\minus 5&&3b\minus 3&&4b\minus 5&&3b\minus 3&&3b\minus 3&&3b\minus 3\\
4b\minus 6&&4b\minus 6&&3b\minus 4&&4b\minus 5&&4b\minus 5&&3b\minus 4&&3b\minus 4&&3b\minus 4&&3b\minus 3&&3b\minus 3&&3b\minus 4&&2b\minus 2&&2b\minus 2\\
&3b\minus 5&&2b\minus 3&&3b\minus 4&&3b\minus 3&&3b\minus 4&&2b\minus 3&&2b\minus 3&&2b\minus 2&&3b\minus 3&&2b\minus 2&&2b\minus 3&&b\minus 1&&2b\minus 2\\
2b\minus 3&&b\minus 2&&2b\minus 3&&2b\minus 2&&2b\minus 2&&2b\minus 3&&b\minus 2&&b\minus 1&&2b\minus 2&&2b\minus 2&&b\minus 1&&b\minus 2&&b\minus 1\\
&{{{}\drop\xycircle<4pt,4pt>{}R}}&&b\minus 2&&b\minus 1&&b\minus 1&&b\minus 1&&b\minus 2&&{{{}\drop\xycircle<4pt,4pt>{}0}}&&b\minus 1&&b\minus 1&&b\minus 1&&{{{}\drop\xycircle<4pt,4pt>{}0}}&&b\minus 2&&1
}
\]
\[
\xymatrix@C=-7pt@R=-4pt{
b\minus 1&&b\minus 1&&b\minus 1&&b&&b\minus 1&&b\minus 1&&1&&b\minus 1&&b&&1&&b\minus 1&&1&&1&&b&&1\\
&2b\minus 2&&2b\minus 2&&2b\minus 1&&2b\minus 1&&2b\minus 2&&b&&b&&2b\minus 1&&b\plus 1&&b&&b&&2&&b\plus 1&&b\plus 1&&2\\
3b\minus 3&b\minus 1&3b\minus 3&2b\minus 2&3b\minus 2&b&3b\minus 2&2b\minus 2&3b\minus 2&b&2b\minus 1&b\minus 1&2b\minus 1&b&2b&b&2b&b&2b&b&b\plus 1&1&b\plus 1&b&b\plus 2&2&b\plus 2&b&b\plus 2&2\\
&3b\minus 3&&2b\minus 1&&3b\minus 3&&2b\minus 1&&2b\minus 1&&2b\minus 1&&2b\minus 1&&b\plus 1&&2b\minus 1&&b\plus 1&&b\plus 1&&b\plus 1&&b\plus 1&&3&&b\plus 1\\
3b\minus 3&&2b\minus 1&&2b\minus 2&&2b\minus 2&&b&&2b\minus 1&&2b\minus 1&&b&&b&&b&&b\plus 1&&b\plus 1&&b&&2&&2&\\
&2b\minus 1&&2b\minus 2&&b\minus 1&&b\minus 1&&b&&2b\minus 1&&b&&b\minus 1&&1&&b&&b\plus 1&&b&&1&&1&&2&\\
b&&2b\minus 2&&b\minus 1&&0&&b\minus 1&&b&&b&&b\minus 1&&0&&1&&b&&b&&1&&0&&1&\\
&b\minus 1&&b\minus 1&&{{{}\drop\xycircle<4pt,4pt>{}0}}&&0&&b\minus 1&&1&&b\minus 1&&{{{}\drop\xycircle<4pt,4pt>{}0}}&&0&&1&&b\minus 1&&1&&0&&0&&1
}
\]
\[
\begin{array}{c}
\xymatrix@C=-5pt@R=-4pt{
1&&1&&1&&2&&1&&1&&1&&1\\
&2&&2&&3&&3&&2&&2&&2&&3\\
3&1&3&2&4&2&4&2&4&2&3&1&3&2&4&2\\
&3&&3&&3&&3&&3&&3&&3&&3\\
b\plus 1&&3&&2&&2&&2&&3&&3&&2\\
&b\plus 1&&2&&1&&1&&2&&3&&2&&1\\
2&&b&&1&&0&&1&&2&&2&&1\\
&1&&b\minus 1&\ar@{.}@<1ex>[-7,0]&{{{}\drop\xycircle<4pt,4pt>{}0}}&&0&&1&&1&&1&\ar@{.}@<1ex>[-7,0]&0
}
\end{array}
...
\begin{array}{c}
\xymatrix@C=-5pt@R=-4pt{
1&&1&&1&&1&&1&&1&&1&&0&&1&&1&&0&&1&&0&&0&&1&&0&\\
&2&&2&&2&&2&&2&&2&&1&&1&&2&&1&&1&&1&&0&&1&&1&&0 \\
4&2&3&1&3&2&3&1&3&2&3&1&2&1&2&1&2&1&2&1&2&1&1&0&1&1&1&0&1&1&1&0&0\\
&3&&3&&2&&3&&2&&2&&2&&2&&1&&2&&1&&1&&1&&1&&0&&1&&0\\
2&&3&&2&&2&&2&&1&&2&&2&&1&&1&&1&&1&&1&&1&&0&&0&&1&&0\\
&2&&2&&2&&1&&1&&1&&2&&1&&1&&0&&1&&1&&1&&0&&0&&0&&1&&0\\
1&&1&&2&&1&&0&&1&&1&&1&&1&&0&&0&&1&&1&&0&&0&&0&&0&&1&&0\\
&{{{}\drop\xycircle<4pt,4pt>{}R}}&&1&&1&&0&&0&&1&&{{{}\drop\xycircle<4pt,4pt>{}0}}&&1&&0&&0&&{{{}\drop\xycircle<4pt,4pt>{}0}}&&1&&0&&0&&0&&{{{}\drop\xycircle<4pt,4pt>{}0}}&&0&&1&&0}
\end{array}
\]}}
\end{proof}

\textbf{The case $m\equiv 23$.}
In this subfamily we have $m=30(b-2)+23$.
\begin{lemma}
For the group  $\mathbb{I}_{23}$ (i.e. $b=2$) the following calculation determines the specials:
{\tiny{
\[
\xymatrix@C=-6.25pt@R=-4pt{
&.&&.&&.&&{{}\drop\xycircle<4pt,4pt>{}.}&&1&&0&&0&&1&&0&&1&&1&&0&&1&&1&&1&&1&&1&&1&&1&&2&&1&&1&&2&&1&&2&&2&&0&&2&&2&&1&&2&&1&&1&&2&&2&&1&&1&&2&&1&&2&&2&&0&&2&&2&&1&&2&&1&&1&&2&&0&&1&&1&&0&&1&&0\\
.&&.&&.&&.&&1&&1&&0&&1&&1&&1&&2&&1&&1&&2&&2&&2&&2&&2&&2&&3&&3&&2&&3&&3&&3&&4&&2&&2&&4&&3&&3&&3&&2&&3&&4&&3&&2&&3&&3&&3&&4&&2&&2&&4&&3&&3&&3&&2&&3&&2&&1&&2&&1&&1&&1&&0&\\
.&.&.&.&.&.&.&1&1&1&0&1&1&1&0&1&1&2&1&2&1&2&1&2&1&2&1&3&2&3&1&3&2&3&1&3&2&4&2&4&2&4&2&4&2&4&2&5&3&5&2&4&2&4&2&4&2&5&3&5&2&4&2&4&2&4&2&5&3&5&2&4&2&4&2&4&2&5&3&5&2&4&2&4&2&4&2&5&3&5&2&4&2&4&2&4&2&3&1&3&2&2&0&2&2&2&0&1&1&1&0&0&\\
.&&.&&.&&1&&0&&1&&1&&1&&1&&2&&1&&2&&2&&2&&2&&3&&2&&3&&3&&3&&3&&4&&3&&4&&4&&3&&4&&4&&3&&4&&4&&3&&4&&4&&3&&4&&4&&3&&4&&4&&3&&4&&4&&3&&4&&4&&3&&4&&2&&3&&2&&2&&1&&2&&0&&1&&0\\
&.&&.&&1&&0&&0&&1&&1&&1&&1&&1&&1&&2&&2&&1&&2&&2&&2&&3&&2&&2&&3&&3&&3&&3&&2&&3&&4&&3&&2&&3&&3&&3&&4&&2&&2&&4&&3&&3&&3&&2&&3&&4&&3&&2&&3&&3&&3&&2&&2&&2&&2&&1&&1&&1&&0&&1&&0\\
.&&.&&1&&0&&0&&0&&1&&1&&1&&0&&1&&1&&2&&1&&1&&1&&2&&2&&2&&1&&2&&2&&3&&2&&1&&2&&3&&3&&2&&1&&2&&3&&3&&2&&1&&2&&3&&3&&2&&1&&2&&3&&3&&2&&1&&2&&3&&1&&2&&1&&2&&1&&1&&0&&1&&0&&1&&0&\\
&.&&1&&0&&0&&0&&0&&1&&1&&0&&0&&1&&1&&1&&1&&0&&1&&2&&1&&1&&1&&1&&2&&2&&0&&1&&2&&2&&2&&1&&0&&2&&3&&1&&1&&1&&1&&3&&2&&0&&1&&2&&2&&2&&1&&0&&2&&1&&1&&1&&1&&1&&1&&0&&0&&1&&0&&1&&0&\\
{{{}\drop\xycircle<4pt,4pt>{}R}}&&1&&0&&0&&0&&0&&{{}\drop\xycircle<4pt,4pt>{}0}&&1&&0&&0&&{{}\drop\xycircle<4pt,4pt>{}0}&&1&&{{}\drop\xycircle<4pt,4pt>{}0}&&1&&0&&{{}\drop\xycircle<4pt,4pt>{}0}&&1&&1&&0&&1&&0&&1&&1&&R&&0&&1&&1&&1&&1&&0&&0&&2&&1&&0&&1&&0&&1&&2&&0&&0&&1&&1&&1&&1&&0&&0&&R&&1&&0&&1&&0&&1&&0&&0&&0&&1&&0&&1&&0}
\]}}
\end{lemma}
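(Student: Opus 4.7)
The plan is to prove this by directly executing the counting argument from Section 4 on the AR quiver of $\C{}[[x,y]]^{\mathbb{I}_{23}}$, with the goal of computing $\dim_k \underline{\Hom}_R(\tau^-R, X)$ for every indecomposable $X \in \CM(R)$. By Theorem \ref{characterization of SCM}(a)$\Leftrightarrow$(d) an indecomposable $X$ is special precisely when $\Ext^1_R(X,R) = 0$, and by AR duality (Theorem \ref{classical}(a)) this is equivalent to $\underline{\Hom}_R(\tau^-R, X) = 0$. Thus the specials are exactly the positions in the AR quiver carrying the number $0$ (together with $R$) in the final tally.

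First I would initialize the computation by placing a $1$ in the position of $\tau^-R$, setting $Y_0 = \tau^-R$, and use the recursion $Y_1 = \theta^-Y_0$, $Y_n = (\theta^-Y_{n-1} - \tau^-Y_{n-2})_+$ from Theorem \ref{formula} to propagate the counts through the AR quiver. Since we are working in $\underline{\CM}(R)$ (not $\CM(R)$), Proposition \ref{tau1} tells us to suppress the $R$ vertex and all dotted arrows through it; practically, whenever the expansion would place a positive value at $R$ we reset it to $0$ before continuing. Up to the first occurrence of $R$ along the expansion the calculation agrees with the free expansion, whose structure can be read off from the pattern in Lemma \ref{freeI} applied at $t=2$, so this initial segment of the diagram requires no further justification beyond checking it satisfies the local counting rule at each mesh.

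The main obstacle, and the bulk of the work, is tracking the interaction between successive passes of $R$ as the expansion wraps around the cylindrical AR quiver ($m = 23$ copies of $\tilde{E}_8$). Each time the growing front reaches a copy of $R$, the corresponding entry is zeroed in the stable category, and this zero then propagates backward through the subsequent $Y_n$'s by the $(-)_+$ truncation in Theorem \ref{formula}. The verification amounts to checking that the displayed table is locally consistent with the mesh relations at every single vertex — this is a routine but lengthy finite check, and since the counting rules are local, it suffices to verify them at the boundary of the calculation and at each $R$-reset.

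Once the tableau is verified, the specials are read off as the indecomposables at positions with entry $0$, namely $R$ itself together with the six circled indecomposables in the statement. Finally, one should sanity-check that the total count of circled vertices matches the number predicted by Wunram (Theorem \ref{Wurnam_main_result}), i.e.\ equals the number of exceptional curves in the minimal resolution of $\C{2}/\mathbb{I}_{23}$, which provides an independent confirmation that no special has been missed and no non-special has been erroneously circled.
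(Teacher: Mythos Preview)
Your proposal is correct and follows exactly the paper's approach: for the $b=2$ cases in types $\mathbb{T}$, $\mathbb{O}$, $\mathbb{I}$ the paper gives no separate written proof, since the displayed tableau \emph{is} the proof --- it is the output of the counting algorithm of Section~4 applied in $\underline{\CM}(R)$ starting from $\tau^-R$, with $R$ treated as zero at each pass. One minor remark: Lemma~\ref{freeI} is stated only for $t\ge 3$, so for $\mathbb{I}_{23}$ (the $b=2$ case) you do not invoke it but rather verify the entire tableau directly by the local mesh rules, which is what you ultimately say anyway.
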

\begin{lemma}
For $\mathbb{I}_{30(b-2)+23}$ with $b\geq 3$ the specials are precisely those CM modules circled below.
{\tiny{
\[
\xymatrix@C=-5pt@R=-4pt{
&.&&.&&.&&.&&1&&0&&0&&1&&0&&1&&1&&0&&1&&1&&1&&1&&1&&1&&1&&2&&1&&1&&2&&1&&2&&2&&1&&2&&2&&2&\\
&&.&&.&&.&&1&&1&&0&&1&&1&&1&&2&&1&&1&&2&&2&&2&&2&&2&&2&&3&&3&&2&&3&&3&&3&&4&&3&&3&&4&&4&&4\\
.&.&.&.&.&.&.&1&1&1&0&1&1&1&0&1&1&2&1&2&1&2&1&2&1&2&1&3&2&3&1&3&2&3&1&3&2&4&2&4&2&4&2&4&2&4&2&5&3&5&2&5&3&5&2&5&3&6&3&6&3\\
.&&.&&.&&1&&0&&1&&1&&1&&1&&2&&1&&2&&2&&2&&2&&3&&2&&3&&3&&3&&3&&4&&3&&4&&4&&4&&4&&5&&4&&5&&5\\
&.&&.&&1&&0&&0&&1&&1&&1&&1&&1&&1&&2&&2&&1&&2&&2&&2&&3&&2&&2&&3&&3&&3&&3&&3&&3&&4&&4&&3&&4&\\
.&&.&&1&&0&&0&&0&&1&&1&&1&&0&&1&&1&&2&&1&&1&&1&&2&&2&&2&&1&&2&&2&&3&&2&&2&&2&&3&&3&&3&&2&&3\\
&.&&1&&0&&0&&0&&0&&1&&1&&0&&0&&1&&1&&1&&1&&0&&1&&2&&1&&1&&1&&1&&2&&2&&1&&1&&2&&2&&2&&2&&1&\\
{{{}\drop\xycircle<4pt,4pt>{}R}}&&1&&0&&0&&0&&0&&{{{}\drop\xycircle<4pt,4pt>{}0}}&&1&&0&&0&&{{{}\drop\xycircle<4pt,4pt>{}0}}&&1&&{{{}\drop\xycircle<4pt,4pt>{}0}}&&1&&0&&{{{}\drop\xycircle<4pt,4pt>{}0}}&&1&&1&&0&&1&&0&&1&&1&&1&&0&&1&&1&&1&&1&&1&&{{{}\drop\xycircle<4pt,4pt>{}0}}\\
&&&&&&&&&&&&&&&&&&&&&&&&&&&&&&&&&&&&&&&&&&&&&&
}
\]}}
\end{lemma}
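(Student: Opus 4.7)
The plan is to mirror the strategy used for the earlier subfamilies of type $\mathbb{I}$: identify the specials via Theorem~\ref{characterization of SCM} as the CM modules on which $\Ext^1_R(-,R)\simeq D\,\underline{\Hom}_R(\tau^{-1}R,-)$ vanishes, and compute $\underline{\Hom}_R(\tau^{-1}R,-)$ by the counting argument of Section~4 on the AR quiver.

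First, I would begin the free expansion in $\CM(R)$ starting at $\tau^{-1}R$ using Theorem~\ref{formula}, computing the first several $Y_n$ by hand until the entries enter the stable regime described by Lemma~\ref{freeI}. This initial segment is identical to the one already produced in the preceding lemma ($b=2$ case of $\mathbb{I}_{23}$), and it pins down the eight circled positions on the left half of the displayed picture.

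Second, following the arithmetic pattern of the previous subfamilies (where the distance from $\tau^{-1}R$ to $R$ equals $30(b-2)+0,\,12,\,20,\,24,\,32,\,36$ for $m\equiv 1,7,11,13,17,19$ respectively), the module $R$ sits $30(b-2)+44$ places along the AR quiver from $\tau^{-1}R$. Substituting $t=b$ into Lemma~\ref{freeI} then reads off the entries of $Y_n$ in the vicinity of $R$; I would replace that copy of $R$ by $0$ (the defining move in $\underline{\CM}(R)$) and continue the counting. Iterating this absorption through successive periods of length $30$ collapses the linear-in-$b$ coefficients of Lemma~\ref{freeI} one unit at a time, and after $b-2$ such absorptions the calculation reduces to the terminal segment of the $\mathbb{I}_{23}$ ($b=2$) calculation already handled.

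Finally, the positions whose accumulated count vanishes are exactly the circled vertices, which by Theorem~\ref{characterization of SCM} are precisely the special CM modules. The main obstacle is the bookkeeping: verifying that the $b$-dependent linear entries of Lemma~\ref{freeI} decrement consistently through each absorption, and that no spurious zeros appear outside the eight listed positions. Because the counting rules are purely local, once the initial segment and the $b=2$ terminal segment (from the preceding lemma) are fixed, the intermediate periodic pattern is forced, so no further input beyond Lemma~\ref{freeI} and the preceding $b=2$ case is required.
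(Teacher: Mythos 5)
Your overall framework is the correct one and is the same as the paper's: compute $\dim_k\underline{\Hom}_R(\tau^{-1}R,-)$ by the ladder counting argument of Section~4, feed in the free expansion via Lemma~\ref{freeI}, absorb the copy of $R$ met at distance $30(b-2)+44$ to zero, and identify the specials as the positions whose total count vanishes via Theorem~\ref{characterization of SCM}. The stated distance and the substitution $t=b$ into Lemma~\ref{freeI} to read off the entries arriving at $R$ are both right.

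The gap is in how you finish the computation, and for this lemma the computation \emph{is} the proof. Your claim that the absorption iterates ``through successive periods of length $30$'', decrementing the linear-in-$b$ coefficients one unit per absorption until after $b-2$ absorptions one lands in the terminal segment of the $\mathbb{I}_{23}$ ($b=2$) calculation, does not describe what happens for $m\equiv 23$. The free expansion of Lemma~\ref{freeI} has period $60$ in the column index, not $30$, and the quiver for $b\ge 3$ is long enough that after the single absorption at distance $30(b-2)+44$ the ladder terminates (within roughly seventy further columns) \emph{without ever meeting $R$ again}: the post-absorption entries pass through a residual diagonal $b+3,\,b+2,\,b+1,\,b,\dots$ before collapsing to zero. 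In particular the terminal segment retains $b$-dependent entries and is not the terminal segment of the $b=2$ case, whose quiver has circumference only $46$ and therefore undergoes two further absorptions of $R$; there is no induction on $b$ or reduction to the base case available here. A second, related problem: the initial free-expansion segment does not by itself ``pin down'' the circled positions (of which there are six, not eight). Many bottom-row positions carry a $0$ on the first pass --- for instance the four positions immediately following $\tau^{-1}R$ --- yet acquire positive contributions when the front returns on the second pass, and so are not special; only five of the sixteen first-pass zeros survive. Deciding which zeros survive requires carrying the post-absorption count all the way to termination, which is exactly the explicit second-pass computation the paper displays and which your argument replaces by an unverified (and here false) structural claim.
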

\begin{proof}
$R$ is a distance of $30(b-2)+44$ away from $\tau^{-1}R$, thus by
Lemma~\ref{freeI} we have {\tiny{
\[
\xymatrix@C=-6pt@R=-4pt{
2b\minus 2&&2b\minus 3&&2b\minus 2&&2b\minus 2&&b\minus 2&&2b\minus 2&&2b\minus 2&&b\minus 1&&2b\minus 2&&b\minus 1&&b\minus 1&&2b\minus 2&&b\\
&4b\minus 5&&4b\minus 5&&4b\minus 4&&3b\minus 4&&3b\minus 4&&4b\minus 4&&3b\minus 3&&3b\minus 3&&3b\minus 3&&2b\minus 2&&3b\minus 3&&3b\minus 2&&2b\minus 1\\
6b\minus 8&3b\minus 4&6b\minus 7&3b\minus 3&6b\minus 7&3b\minus 4&5b\minus 6&2b\minus 2&5b\minus 6&3b\minus 4&5b\minus 6&2b\minus 2&5b\minus 5&3b\minus 3&5b\minus 5&2b\minus 2&4b\minus 4&2b\minus 2&4b\minus 4&2b\minus 2&4b\minus 4&2b\minus 2&4b\minus 3&2b\minus 1&4b\minus 3&2b\minus 2&\\
&5b\minus 6&&5b\minus 6&&4b\minus 5&&5b\minus 6&&4b\minus 4&&4b\minus 5&&4b\minus 4&&4b\minus 4&&3b\minus 3&&4b\minus 4&&3b\minus 2&&3b\minus 3&&3b\minus 2\\
4b\minus 5&&4b\minus 5&&3b\minus 4&&4b\minus 5&&4b\minus 4&&3b\minus 3&&3b\minus 4&&3b\minus 3&&3b\minus 3&&3b\minus 3&&3b\minus 2&&2b\minus 2&&2b\minus 2\\
&3b\minus 4&&2b\minus 3&&3b\minus 4&&3b\minus 3&&3b\minus 3&&2b\minus 2&&2b\minus 3&&2b\minus 2&&3b\minus 3&&2b\minus 1&&2b\minus 2&&b\minus 1&&2b\minus 2\\
2b\minus 2&&b\minus 2&&2b\minus 3&&2b\minus 2&&2b\minus 2&&2b\minus 2&&b\minus 1&&b\minus 2&&2b\minus 2&&2b\minus 1&&b\minus 1&&b\minus 1&&b\minus 1\\
&{{{}\drop\xycircle<4pt,4pt>{}R}}&&b\minus 2&&b\minus 1&&b\minus 1&&b\minus 1&&b\minus 1&&{{{}\drop\xycircle<4pt,4pt>{}0}}&&b\minus 2&&b&&b\minus 1&&{{{}\drop\xycircle<4pt,4pt>{}0}}&&b\minus 1&&{{{}\drop\xycircle<4pt,4pt>{}0}}
}
\]
\[
\xymatrix@C=-6pt@R=-4pt{
b\minus 1&&b\minus 1&&b&&b\minus 1&&b&&b&&0&&b&&b&&1&&b&&1&&1&&b&\\
&2b\minus 2&&2b\minus 1&&2b\minus 1&&2b\minus 1&&2b&&b&&b&&2b&&b\plus 1&&b\plus 1&&b\plus 1&&2&&b\plus 1&&b\plus 2\\
3b\minus 2&b&3b\minus 2&2b\minus 2&3b\minus 2&b&3b\minus 1&2b\minus 1&3b\minus 1&b&2b&b&2b&b&2b&b&2b\plus 1&b\plus 1&2b\plus 1&b&b\plus 2&2&b\plus 2&b&b\plus 2&2&b\plus 3&b\plus 1\\
&3b\minus 2&&2b\minus 1&&3b\minus 2&&2b&&2b\minus 1&&2b&&2b&&b\plus 1&&2b&&b\plus 2&&b\plus 1&&b\plus 2&&b\plus 2&&3\\
3b\minus 2&&2b\minus 1&&2b\minus 1&&2b\minus 1&&b&&2b\minus 1&&2b&&b\plus 1&&b&&b\plus 1&&b\plus 1&&b\plus 1&&b\plus 2&&2&\\
&2b\minus 1&&2b\minus 1&&b&&b\minus 1&&b&&2b\minus 1&&b\plus 1&&b&&1&&b&&b\plus 1&&b\plus 1&&2&&1\\
b\minus 1&&2b\minus 1&&b&&0&&b\minus 1&&b&&b&&b&&1&&0&&b&&b\plus 1&&1&&1&\\
&b\minus 1&&b&&{{{}\drop\xycircle<4pt,4pt>{}0}}&&0&&b\minus 1&&1&&b\minus 1&&1&&0&&0&&b&&1&&0&&1
}
\]
\[
\xymatrix@C=-4pt@R=-4pt{
2&&1&&1&&1&&0&&1&&1&&\\
&3&&2&&2&&1&&1&&2&&0&\\
b\plus 3&2&4&2&3&1&2&1&2&1&2&1&1&0&0&\\
&b\plus 2&&3&&2&&2&&2&&0&&1&&0\\
2&&b\plus 1&&2&&2&&2&&0&&0&&1&&0\\
&1&&b&&2&&2&&0&&0&&0&&1&&0\\
1&&0&&b&&2&&0&&0&&0&&0&&1&&0\\
&0&&0&&b&&{{{}\drop\xycircle<4pt,4pt>{}0}}&&0&&0&&0&&0&&1&&0
}
\]
}}
\end{proof}

\textbf{The case $m\equiv 29$.}
In this subfamily we have $m=30(b-2)+29$.
\begin{lemma}
For the group $\mathbb{I}_{29}$ (i.e. $b=2$) the following calculation determines the specials:
{\tiny{
\[
\xymatrix@C=-6.25pt@R=-4pt{
&.&&.&&.&&.&&1&&0&&0&&1&&0&&1&&1&&0&&1&&1&&1&&1&&1&&1&&1&&2&&1&&1&&2&&1&&2&&2&&1&&2&&2&&2&&2&&2&&1&&2&&3&&1&&2&&2&&1&&3&&2&&1&&2&&2&&2&&2&&2&&1&&2&&3&&1&&2&&2&&1&&3&&2&&1&&2&\\
.&&.&&.&&.&&1&&1&&0&&1&&1&&1&&2&&1&&1&&2&&2&&2&&2&&2&&2&&3&&3&&2&&3&&3&&3&&4&&3&&3&&4&&4&&4&&4&&3&&3&&5&&4&&3&&4&&3&&4&&5&&3&&3&&4&&4&&4&&4&&3&&3&&5&&4&&3&&4&&3&&4&&5&&3&&3&&4\\
.&.&.&.&.&.&.&1&1&1&0&1&1&1&0&1&1&2&1&2&1&2&1&2&1&2&1&3&2&3&1&3&2&3&1&3&2&4&2&4&2&4&2&4&2&4&2&5&3&5&2&5&3&5&2&5&3&6&3&6&3&6&3&5&2&5&3&6&3&6&3&6&3&5&2&5&3&6&3&6&3&6&3&5&2&5&3&6&3&6&3&6&3&5&2&5&3&6&3&6&3&6&3&5&2&5&3&6&3&6&3&6&3&5&2&5&3\\
.&&.&&.&&1&&0&&1&&1&&1&&1&&2&&1&&2&&2&&2&&2&&3&&2&&3&&3&&3&&3&&4&&3&&4&&4&&4&&4&&5&&4&&5&&5&&4&&5&&5&&4&&5&&5&&4&&5&&5&&4&&5&&5&&4&&5&&5&&4&&5&&5&&4&&5&&5&&4&&5&&5&&4&&5&&5&&4\\
&.&&.&&1&&0&&0&&1&&1&&1&&1&&1&&1&&2&&2&&1&&2&&2&&2&&3&&2&&2&&3&&3&&3&&3&&3&&3&&4&&4&&3&&4&&3&&4&&5&&3&&3&&4&&4&&4&&4&&3&&3&&5&&4&&3&&4&&3&&4&&5&&3&&3&&4&&4&&4&&4&&3&&3&&5&&4&\\
.&&.&&1&&0&&0&&0&&1&&1&&1&&0&&1&&1&&2&&1&&1&&1&&2&&2&&2&&1&&2&&2&&3&&2&&2&&2&&3&&3&&3&&2&&2&&3&&4&&3&&2&&2&&3&&4&&3&&2&&2&&3&&4&&3&&2&&2&&3&&4&&3&&2&&2&&3&&4&&3&&2&&2&&3&&4&&3\\
&{{}\drop\xycircle<4pt,4pt>{}.}&&1&&0&&0&&0&&0&&1&&1&&0&&0&&1&&1&&1&&1&&0&&1&&2&&1&&1&&1&&1&&2&&2&&1&&1&&2&&2&&2&&2&&0&&2&&3&&2&&2&&1&&1&&3&&3&&1&&1&&2&&2&&3&&2&&0&&2&&3&&2&&2&&1&&1&&3&&3&&1&&1&&2&&2&&3& \\
{{{}\drop\xycircle<4pt,4pt>{}R}}&&1&&0&&0&&0&&0&&{{}\drop\xycircle<4pt,4pt>{}0}&&1&&0&&0&&{{}\drop\xycircle<4pt,4pt>{}0}&&1&&0&&1&&0&&{{}\drop\xycircle<4pt,4pt>{}0}&&1&&1&&0&&1&&0&&1&&1&&1&&0&&1&&1&&1&&1&&R&&0&&2&&1&&1&&1&&0&&1&&2&&1&&0&&1&&1&&1&&2&&0&&0&&2&&1&&1&&1&&0&&1&&2&&1&&0&&1&&1&&1&&R
}
\]}}
{\tiny{
\[
\xymatrix@C=-6.55pt@R=-4pt{
&2&&2&&2&&0&&1&&2&&1&&1&&0&&1&&1&&1&&1&&0&&1&&1&&1&&1&&0&&1&&1&&1&&1&&0&&1&&1&&1&&1&&0&&1&&1&&1&&0&&0&&1&&0&&1&&0&&0&&1&&0&&1&&0&&0&&1&&0&&1&&0&&0&&1&&0&&1&&0&&0&&1&&0&&1&&0&&0&&1&&0&&&&&&&&&&&\\
4&&4&&4&&2&&1&&3&&3&&2&&1&&1&&2&&2&&2&&1&&1&&2&&2&&2&&1&&1&&2&&2&&2&&1&&1&&2&&2&&2&&1&&1&&2&&2&&1&&0&&1&&1&&1&&1&&0&&1&&1&&1&&1&&0&&1&&1&&1&&1&&0&&1&&1&&1&&1&&0&&1&&1&&1&&1&&0&&1&&1&&0&\\
3&6&3&6&3&4&1&3&2&3&1&4&3&4&1&2&1&2&1&2&1&3&2&3&1&2&1&2&1&2&1&3&2&3&1&2&1&2&1&2&1&3&2&3&1&2&1&2&1&2&1&3&2&3&1&2&1&2&1&2&1&3&2&2&0&1&1&1&0&1&1&2&1&1&0&1&1&1&0&1&1&2&1&1&0&1&1&1&0&1&1&2&1&1&0&1&1&1&0&1&1&2&1&1&0&1&1&1&0&1&1&2&1&1&0&1&1&1&0&1&1&1&0&0&\\
4&&5&&3&&4&&3&&3&&2&&3&&2&&2&&2&&2&&2&&2&&2&&2&&2&&2&&2&&2&&2&&2&&2&&2&&2&&2&&2&&2&&2&&2&&2&&1&&2&&1&&1&&1&&1&&1&&1&&1&&1&&1&&1&&1&&1&&1&&1&&1&&1&&1&&1&&1&&1&&1&&1&&1&&1&&1&&1&&1&&0&&1&&0\\
&3&&2&&3&&4&&3&&1&&1&&3&&2&&2&&1&&1&&2&&2&&2&&1&&1&&2&&2&&2&&1&&1&&2&&2&&2&&1&&1&&2&&2&&2&&0&&1&&2&&1&&1&&0&&1&&1&&1&&1&&0&&1&&1&&1&&1&&0&&1&&1&&1&&1&&0&&1&&1&&1&&1&&0&&1&&1&&1&&0&&0&&1&&0\\
3&&0&&2&&3&&4&&1&&0&&1&&3&&2&&1&&0&&1&&2&&2&&1&&0&&1&&2&&2&&1&&0&&1&&2&&2&&1&&0&&1&&2&&2&&0&&0&&1&&2&&1&&0&&0&&1&&1&&1&&0&&0&&1&&1&&1&&0&&0&&1&&1&&1&&0&&0&&1&&1&&1&&0&&0&&1&&1&&0&&0&&0&&1&&0&&\\
&0&&0&&2&&3&&2&&0&&0&&1&&3&&1&&0&&0&&1&&2&&1&&0&&0&&1&&2&&1&&0&&0&&1&&2&&1&&0&&0&&1&&2&&0&&0&&0&&1&&2&&0&&0&&0&&1&&1&&0&&0&&0&&1&&1&&0&&0&&0&&1&&1&&0&&0&&0&&1&&1&&0&&0&&0&&1&&0&&0&&0&&0&&1&&0&&&&\\
R&&0&&0&&2&&1&&1&&0&&0&&1&&2&&0&&0&&0&&1&&1&&0&&0&&0&&1&&1&&0&&0&&0&&1&&1&&0&&0&&0&&1&&R&&0&&0&&0&&1&&1&&0&&0&&0&&1&&0&&0&&0&&0&&1&&0&&0&&0&&0&&1&&0&&0&&0&&0&&1&&0&&0&&0&&0&&R&&0&&0&&0&&0&&1&&0
}
\]}}
\end{lemma}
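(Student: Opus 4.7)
The plan is to verify the displayed calculation by applying the counting argument of Section~4 in the stable category $\underline{\CM}(R)$, and then to read off the specials using the homological characterization of Theorem~\ref{characterization of SCM}. Specifically, the numbers placed at each vertex of the AR quiver compute $\dim_k\underline{\Hom}_R(\tau^-R,X)$, which by the AR duality of Theorem~\ref{classical}(a) equals $\dim_k\Ext^1_R(X,R)$ up to Matlis duality. Since by Theorem~\ref{characterization of SCM}(d) an $X\in\CM(R)$ is special precisely when $\Ext^1_R(X,R)=0$, the specials are exactly those indecomposable non-free CM modules sitting at the circled (zero) positions, together with $R$ itself.

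First I would record the AR quiver of $\C{}[[x,y]]^{\mathbb{I}_{29}}$, which by \cite{AR_McKayGraphs} is $29$ copies of $\tilde{E}_8$ glued without twist. Next I would carry out the right ladder construction of Theorem~\ref{rightladder} (equivalently the formula of Theorem~\ref{formula} dualised) starting from $X=\tau^-R$, placing a $1$ at $\tau^-R$ and $0$'s elsewhere as the initial datum $Y_0$. Working in $\underline{\CM}(R)$ rather than $\CM(R)$ means that whenever the recursion would produce a summand isomorphic to $R$, that summand is killed — this is the only essential difference from the free expansion of Lemma~\ref{freeI}. Propagating the recursion column by column gives a finite and eventually stable pattern, because $J_{\underline{\CM}(R)}^m=0$ for large $m$ by representation-finiteness.

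The verification is then local: at each vertex one must check that the displayed integer equals $(\theta Y_{n-1}-\tau Y_{n-2})_+$ at the corresponding position, together with the correct absorptions at the vertex $R$. By the counting rules this amounts to checking the mesh identity at each vertex (sum of neighbours minus the AR-translate), which is purely mechanical. Because the AR quiver is untwisted in this subfamily, I would split the verification into three regimes: the initial block near $\tau^-R$ where the pattern matches Lemma~\ref{freeI} until the first encounter with $R$; the ``middle'' repeating block where each crossing of an $R$-column produces an identifiable perturbation; and the terminal block where the values settle to zero.

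The main obstacle will be bookkeeping: the calculation is long ($m=29$ copies of $\tilde{E}_8$, hence order a hundred columns of counting with eight rows), and the interaction between the recursion and the three occurrences of $R$ inside the quiver must be tracked carefully, since each of them contributes an ``absorption'' that shifts the subsequent pattern. To make this manageable I would first isolate the universal repeating segment (the pattern stabilises between successive $R$-columns by the mesh argument) and then glue these segments together. Once the calculation is verified, reading off the zero positions — which are the circled vertices on the zigzag leaving $R$ together with one extra vertex — and invoking Theorem~\ref{characterization of SCM}(a)$\Leftrightarrow$(d) completes the identification of $\SCM(R)$.
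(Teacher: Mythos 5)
Your proposal is correct and matches the paper's approach exactly: for these $b=2$ base cases the ``proof'' is nothing more than the displayed computation itself, obtained by running the ladder recursion of Section~4 from $\tau^-R$ in $\underline{\CM}(R)$ (absorbing at each occurrence of $R$) and reading off the zero positions via Theorem~\ref{characterization of SCM}(a)$\Leftrightarrow$(d) and AR duality. One cosmetic slip: the expansion from $\tau^-R$ computing $\dim_k\underline{\Hom}_R(\tau^-R,-)$ is the \emph{left} ladder of Theorems~\ref{ladder} and~\ref{formula} (the right ladder of Theorem~\ref{rightladder} is the one used for syzygies), but since both are the same mesh recursion run in opposite directions this does not affect your argument.
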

\begin{lemma}
For $\mathbb{I}_{30(b-2)+29}$ with $b\geq 3$ the specials are precisely those CM modules circled below.
{\tiny{
\[
\xymatrix@C=-5pt@R=-4pt{
&.&&.&&.&&.&&1&&0&&0&&1&&0&&1&&1&&0&&1&&1&&1&&1&&1&&1&&1&&2&&1&&1&&2&&1&&2&&2&&1&&2&&2&&2&\\
&&.&&.&&.&&1&&1&&0&&1&&1&&1&&2&&1&&1&&2&&2&&2&&2&&2&&2&&3&&3&&2&&3&&3&&3&&4&&3&&3&&4&&4&&4\\
.&.&.&.&.&.&.&1&1&1&0&1&1&1&0&1&1&2&1&2&1&2&1&2&1&2&1&3&2&3&1&3&2&3&1&3&2&4&2&4&2&4&2&4&2&4&2&5&3&5&2&5&3&5&2&5&3&6&3&6&3\\
.&&.&&.&&1&&0&&1&&1&&1&&1&&2&&1&&2&&2&&2&&2&&3&&2&&3&&3&&3&&3&&4&&3&&4&&4&&4&&4&&5&&4&&5&&5\\
&.&&.&&1&&0&&0&&1&&1&&1&&1&&1&&1&&2&&2&&1&&2&&2&&2&&3&&2&&2&&3&&3&&3&&3&&3&&3&&4&&4&&3&&4&\\
.&&.&&1&&0&&0&&0&&1&&1&&1&&0&&1&&1&&2&&1&&1&&1&&2&&2&&2&&1&&2&&2&&3&&2&&2&&2&&3&&3&&3&&2&&3\\
&.&&1&&0&&0&&0&&0&&1&&1&&0&&0&&1&&1&&1&&1&&0&&1&&2&&1&&1&&1&&1&&2&&2&&1&&1&&2&&2&&2&&2&&1&\\
{{{}\drop\xycircle<4pt,4pt>{}R}}&&1&&0&&0&&0&&0&&{{{}\drop\xycircle<4pt,4pt>{}0}}&&1&&0&&0&&{{{}\drop\xycircle<4pt,4pt>{}0}}&&1&&0&&1&&0&&{{{}\drop\xycircle<4pt,4pt>{}0}}&&1&&1&&0&&1&&0&&1&&1&&1&&0&&1&&1&&1&&1&&1&&{{{}\drop\xycircle<4pt,4pt>{}0}}\\
&&&&&&&&&&&&&&&&&&&&&&&&&&&&&&&&&&&&&&&&&&&&&&
}
\]}}
\end{lemma}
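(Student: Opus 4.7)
The plan is to apply the counting argument of Section~4 to compute $\dim_k\underline{\Hom}_R(\tau^-R,-)$, which by AR duality (Theorem~\ref{classical}(a)) equals $\dim_k D\Ext^1_R(-,R)$, so that by Theorem~\ref{characterization of SCM}(a)$\Leftrightarrow$(d) an indecomposable $M\in\CM(R)$ is special precisely when this count is $0$. I therefore initialize a $1$ at the vertex $\tau^{-1}R$ in the AR quiver of $\underline{\CM}(R)$ and iterate the recursion $Y_n=(\theta^-Y_{n-1}-\tau^-Y_{n-2})_+$ from Theorem~\ref{formula}, identifying the specials as those positions $Y\in\underline{\CM}(R)$ with $\sum_{n\ge0}m_Y(Y_n)=0$.

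First, I use Lemma~\ref{freeI} to control the free expansion in $\CM(R)$. In this subfamily $R$ lies at distance $30(b-2)+56$ from $\tau^{-1}R$, matching the general formula $30(b-2)+2(r-1)$ with $r=29$; this follows from the shape of the AR quiver, which has exactly $m=30(b-2)+29$ copies of the fundamental $\tilde{E}_8$ block. Specializing Lemma~\ref{freeI} at induction step $t=b$ then describes the counting exactly up to and including the point at which $R$ is first reached along the bottom row, where the absorption $R\mapsto 0$ in $\underline{\CM}(R)$ produces the local pattern displayed in the initial strip of the target diagram.

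Next, after this first $R$-absorption the counting enters a periodic regime of short period, exactly as in the $m\equiv 23$ case treated immediately before. This periodic segment runs until the calculation meets the second $R$ in the bottom row, and then a further short periodic segment continues until the third $R$; at each absorption the local values adjust in a controlled way dictated by the recursion. Splicing these three stages together reproduces the explicit numeric pattern needed, and reading off the positions whose final entry is $0$ recovers precisely the vertices circled in the statement, namely $R$ together with the indicated modules on the bottom zigzag and the single vertex marked in the top row.

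The main obstacle is purely combinatorial bookkeeping across the three absorption events and within each periodic block; one must check that the recursion $Y_n=(\theta^-Y_{n-1}-\tau^-Y_{n-2})_+$ yields non-negative entries throughout and that the periodic segments splice consistently. Since the underlying AR quiver is the same as in the previous $\mathbb{I}_{30(b-2)+r}$ cases and the initial absorption point is merely shifted by the $12$ extra columns corresponding to $2(29-23)=12$, this verification is mechanical and proceeds exactly as in the preceding lemma, completing the classification.
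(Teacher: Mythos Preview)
Your overall strategy matches the paper's: use Lemma~\ref{freeI} to control the free expansion, let the calculation run until it hits $R$ at column $60(b-2)+56$, absorb there, and continue. However, several of your specific structural claims are wrong, and since the entire argument is explicit combinatorial bookkeeping, these errors matter.

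First, you assert that the calculation behaves ``exactly as in the $m\equiv 23$ case'', but that case has \emph{no} periodic segment at all: the paper's proof for $\mathbb{I}_{30(b-2)+23}$ terminates directly after the first absorption. The correct analogue would be one of the cases ($m\equiv 7,11,17,19$) where a dotted periodic block genuinely appears. Second, you claim three $R$-absorption events (``second $R$ \ldots\ then a further short periodic segment continues until the third $R$''). In the paper's proof for $m\equiv 29$ there are only \emph{two}: after the first absorption the computation runs through a single periodic segment (of width 30) which repeats until it meets $R$ a second time, and the calculation then terminates. Third, you describe the specials as including ``the single vertex marked in the top row'', but in the statement for $b\ge 3$ all five circled positions lie on the bottom row; there is no special off the bottom row in this subfamily. (You may be confusing this with the $b=2$ case $\mathbb{I}_{29}$, where one special does sit one row up.)

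None of these is fatal to the method, but as written your proof sketch would not produce the correct diagram. You need to redo the count of absorption events, drop the erroneous comparison with $m\equiv 23$, and correct the description of which vertices end up circled.
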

\begin{proof}
$R$ is a distance of $30(b-2)+56$ away from $\tau^{-1}R$, thus by
Lemma~\ref{freeI} we have {\tiny{
\[
\xymatrix@C=-6pt@R=-4pt{
2b\minus 2&&2b\minus 2&&2b\minus 2&&2b\minus 2&&b\minus 1&&2b\minus 2&&2b\minus 1&&b\minus 1&&2b\minus 2&&b&&b\minus 1&&2b\minus 1&&b\\
&4b\minus 4&&4b\minus 4&&4b\minus 4&&3b\minus 3&&3b\minus 3&&4b\minus 3&&3b\minus 2&&3b\minus 3&&3b\minus 2&&2b\minus 1&&3b\minus 2&&3b\minus 1&&2b\minus 1\\
6b\minus 6&3b\minus 3&6b\minus 6&3b\minus 3&6b\minus 6&3b\minus 3&5b\minus 5&2b\minus 2&5b\minus 5&3b\minus 3&5b\minus 4&2b\minus 1&5b\minus 4&3b\minus 3&5b\minus 4&2b\minus 1&4b\minus 3&2b\minus 2&4b\minus 3&2b\minus 1&4b\minus 2&2b\minus 1&4b\minus 2&2b\minus 1&4b\minus 2&2b\minus 1&\\
&5b\minus 5&&5b\minus 5&&4b\minus 4&&5b\minus 5&&4b\minus 3&&4b\minus 4&&4b\minus 3&&4b\minus 3&&3b\minus 2&&4b\minus 3&&3b\minus 1&&3b\minus 2&&3b\minus 1\\
4b\minus 5&&4b\minus 4&&3b\minus 3&&4b\minus 4&&4b\minus 3&&3b\minus 3&&3b\minus 3&&3b\minus 2&&3b\minus 2&&3b\minus 2&&3b\minus 2&&2b\minus 1&&2b\minus 1\\
&3b\minus 4&&2b\minus 2&&3b\minus 3&&3b\minus 2&&3b\minus 3&&2b\minus 2&&2b\minus 2&&2b\minus 1&&3b\minus 2&&2b\minus 1&&2b\minus 2&&b&&2b\minus 1\\
2b\minus 2&&b\minus 2&&2b\minus 2&&2b\minus 1&&2b\minus 2&&2b\minus 2&&b\minus 1&&b\minus 1&&2b\minus 1&&2b\minus 1&&b\minus 1&&b\minus 1&&b\\
&{{{}\drop\xycircle<4pt,4pt>{}R}}&&b\minus 2&&b&&b\minus 1&&b\minus 1&&b\minus 1&&{{{}\drop\xycircle<4pt,4pt>{}0}}&&b\minus 1&&b&&b\minus 1&&{{{}\drop\xycircle<4pt,4pt>{}0}}&&b\minus 1&&1
}
\]
\[
\xymatrix@C=-7pt@R=-4pt{
b\minus 1&&b&&b&&b&&b&&b&&1&&b&&b\plus 1&&1&&b&&2&&1&&b\plus 1\\
&2b\minus 1&&2b&&2b&&2b&&2b&&b\plus 1&&b\plus 1&&2b\plus 1&&b\plus 2&&b\plus 1&&b\plus 2&&3&&b\plus 2&\\
3b\minus 1&b&3b\minus 1&2b\minus 1&3b&b\plus 1&3b&2b\minus 1&3b&b\plus 1&2b\plus 1&b&2b\plus 1&b\plus 1&2b\plus 2&b\plus 1&2b\plus 2&b\plus 1&2b\plus 2&b\plus 1&b\plus 3&2&b\plus 3&b\plus 1&b\plus 4&3&b\plus 4\\
&3b\minus 1&&2b&&3b\minus 1&&2b\plus 1&&2b&&2b\plus 1&&2b\plus 1&&b\plus 2&&2b\plus 1&&b\plus 3&&b\plus 2&&b\plus 3&&b\plus 3&\\
3b\minus 1&&2b&&2b\minus 1&&2b&&b\plus 1&&2b&&2b\plus 1&&b\plus 1&&b\plus 1&&b\plus 2&&b\plus 2&&b\plus 2&&b\plus 2&&3\\
&2b&&2b\minus 1&&b&&b&&b\plus 1&&2b&&b\plus 1&&b&&2&&b\plus 1&&b\plus 2&&b\plus 1&&2&\\
b&&2b\minus 1&&b&&0&&b&&b\plus 1&&b&&b&&1&&1&&b\plus 1&&b\plus 1&&1&&1\\
&b\minus 1&&b&&{{{}\drop\xycircle<4pt,4pt>{}0}}&&0&&b&&1&&b\minus 1&&1&&0&&1&&b&&1&&0&
}
\]
\[
\begin{array}{c}
\xymatrix@C=-5pt@R=-4pt{
&2&&1&&2&&2&&2&&2&&2&&1&&2&&3&&1&&2&&2&&1&&3&&2&&1&&2&&2&\\
b\plus 3&&3&&3&&4&&4&&4&&4&&3&&3&&5&&4&&3&&4&&3&&4&&5&&3&&3&&4&&4\\
b\plus 1&b\plus 4&3&5&2&5&3&6&3&6&3&6&3&5&2&5&3&6&3&6&3&6&3&5&2&5&3&6&3&6&3&6&3&5&2&5&3&6&3\\
4&&b\plus 3&&5&&4&&5&&5&&4&&5&&5&&4&&5&&5&&4&&5&&5&&4&&5&&5&&4&&5\\
&3&&b\plus 3&&4&&3&&4&&3&&4&&5&&3&&3&&4&&4&&4&&4&&3&&3&&5&&4&&3&\\
2&&3&&b\plus 2&&3&&2&&2&&3&&4&&3&&2&&2&&3&&4&&3&&2&&2&&3&&4&&3&&2\\
&2&&2&&b\plus 1&&2&&0&&2&&3&&2&&2&&1&&1&&3&&3&&1&&1&&2&&2&&3&&2\\
1&&1&&1&&b&\ar@{.}@<1ex>[-7,0]&{{{}\drop\xycircle<4pt,4pt>{}0}}&&0&&2&&1&&1&&1&&0&&1&&2&&1&&0&&1&&1&&1&&2&\ar@{.}@<1ex>[-7,0]&0
}
\end{array}
...
\]
}}where the dotted segment repeats until it reaches $R$ as:
{\tiny{
\[
...
\begin{array}{c}
\xymatrix@C=-4pt@R=-4pt{
2&&2&&2&&2&&0&&1&&2&&1&&1&&0&&1&&1&&0&\\
&4&&4&&4&&2&&1&&3&&3&&2&&1&&1&&2&&1&&0  \\
5&3&6&3&6&3&4&1&3&2&3&1&4&3&4&1&2&1&2&1&2&1&2&1&1&0&0\\
&4&&5&&3&&4&&3&&3&&2&&3&&2&&2&&2&&1&&1&&0\\
4&&3&&2&&3&&4&&3&&1&&1&&3&&2&&2&&1&&1&&1&&0&\\
&3&&0&&2&&3&&4&&1&&0&&1&&3&&2&&1&&0&&1&&1&&0&\\
3&&0&&0&&2&&3&&2&&0&&0&&1&&3&&1&&0&&0&&1&&1&&0\\
&{{{}\drop\xycircle<4pt,4pt>{}R}}&&0&&0&&2&&1&&1&&{{{}\drop\xycircle<4pt,4pt>{}0}}&&0&&1&&2&&{{{}\drop\xycircle<4pt,4pt>{}0}}&&0&&0&&1&&0&&{{{}\drop\xycircle<4pt,4pt>{}0}}}
\end{array}
\]
}}completing the classification.
\end{proof}

\section{Summary of the Classification}
In type $\mathbb{O}$ denote
{\tiny{
\[
\begin{array}{rl}
\begin{array}{c}
\xymatrix@C=0pt@R=2pt{
\cdot&&\cdot&&\cdot&&{}\save[]*{D_1}\restore&&\cdot&&\cdot&&{}\save[]*{F}\restore&&\cdot&&\cdot&&{}\save[]*{D_3}\restore&&\cdot&&\cdot&&\cdot&\\
&\cdot&&\cdot&&\cdot&&\cdot&&\cdot&&\cdot&&\cdot&&\cdot&&\cdot&&\cdot&&\cdot&&\cdot&&\cdot\\
\cdot&&\cdot&&\cdot&&\cdot&&\cdot&&\cdot&&\cdot&&\cdot&&\cdot&&\cdot&&\cdot&&\cdot&&\cdot&\\
&\cdot&\cdot&\cdot&\cdot&\cdot&\cdot&\cdot&\cdot&\cdot&\cdot&\cdot&\cdot&\cdot&\cdot&\cdot&\cdot&\cdot&\cdot&\cdot&\cdot&\cdot&\cdot&\cdot&\cdot&\cdot\\
\cdot&&\cdot&&\cdot&&\cdot&&\cdot&&\cdot&&\cdot&&\cdot&&\cdot&&\cdot&&\cdot&&\cdot&&\cdot&\\
&\cdot&&\cdot&&\cdot&&\cdot&&\cdot&&\cdot&&\cdot&&\cdot&&\cdot&&\cdot&&\cdot&&\cdot&&\cdot&\\
{}\save[]*{R}\restore&&\cdot&&\cdot&&\cdot&&{}\save[]*{E_1}\restore&&\cdot&&{}\save[]*{D_2}\restore&&\cdot&&{}\save[]*{E_2}\restore&&\cdot&&\cdot&&\cdot&&{}\save[]*{N}\restore}
\end{array}
& \hdots
\end{array}
\]
}}and in type $\mathbb{I}$ denote
{\tiny{
\[
\begin{array}{rl}
\begin{array}{c}
\xymatrix@C=-3pt@R=1pt{
&\cdot&&\cdot&&\cdot&&\cdot&&\cdot&&\cdot&&\cdot&&\cdot&&\cdot&&\cdot&&\cdot&&\cdot&&\cdot&&\cdot&&\cdot&&\cdot&&\cdot&&\cdot&&\cdot&&\cdot&&\cdot&&\cdot&&\cdot&&\cdot&&\cdot&&\cdot&&\cdot&&\cdot&&\cdot&&\cdot \\
\cdot&&\cdot&&\cdot&&\cdot&&\cdot&&\cdot&&\cdot&&\cdot&&\cdot&&\cdot&&\cdot&&\cdot&&\cdot&&\cdot&&\cdot&&\cdot&&\cdot&&\cdot&&\cdot&&\cdot&&\cdot&&\cdot&&\cdot&&\cdot&&\cdot&&\cdot&&\cdot&&\cdot&&\cdot&&\cdot&&\cdot \\
\cdot&\cdot&\cdot&\cdot&\cdot&\cdot&\cdot&\cdot&\cdot&\cdot&\cdot&\cdot&\cdot&\cdot&\cdot&\cdot&\cdot&\cdot&\cdot&\cdot&\cdot&\cdot&\cdot&\cdot&\cdot&\cdot&\cdot&\cdot&\cdot&\cdot&\cdot&\cdot&\cdot&\cdot&\cdot&\cdot&\cdot&\cdot&\cdot&\cdot&\cdot&\cdot&\cdot&\cdot&\cdot&\cdot&\cdot&\cdot&\cdot&\cdot&\cdot&\cdot&\cdot&\cdot&\cdot&\cdot&\cdot&\cdot&\cdot&\cdot&\cdot\\
\cdot&&\cdot&&\cdot&&\cdot&&\cdot&&\cdot&&\cdot&&\cdot&&\cdot&&\cdot&&\cdot&&\cdot&&\cdot&&\cdot&&\cdot&&\cdot&&\cdot&&\cdot&&\cdot&&\cdot&&\cdot&&\cdot&&\cdot&&\cdot&&\cdot&&\cdot&&\cdot&&\cdot&&\cdot&&\cdot&&\cdot \\
&\cdot&&\cdot&&\cdot&&\cdot&&\cdot&&\cdot&&\cdot&&\cdot&&\cdot&&\cdot&&\cdot&&\cdot&&\cdot&&\cdot&&\cdot&&\cdot&&\cdot&&\cdot&&\cdot&&\cdot&&\cdot&&\cdot&&\cdot&&\cdot&&\cdot&&\cdot&&\cdot&&\cdot&&\cdot&&\cdot& \\
\cdot&&\cdot&&\cdot&&\cdot&&\cdot&&\cdot&&\cdot&&\cdot&&\cdot&&\cdot&&\cdot&&\cdot&&\cdot&&\cdot&&\cdot&&\cdot&&\cdot&&\cdot&&\cdot&&\cdot&&\cdot&&\cdot&&\cdot&&\cdot&&\cdot&&\cdot&&\cdot&&\cdot&&\cdot&&\cdot&&\cdot \\
&\cdot&&\cdot&&\cdot&&\cdot&&\cdot&&\cdot&&\cdot&&\cdot&&\cdot&&\cdot&&\cdot&&\cdot&&\cdot&&\cdot&&\cdot&&\cdot&&\cdot&&\cdot&&\cdot&&\cdot&&\cdot&&\cdot&&\cdot&&\cdot&&\cdot&&\cdot&&\cdot&&\cdot&&\cdot&&\cdot \\
{}\save[]*{R}\restore&&\cdot&&\cdot&&\cdot&&\cdot&&\cdot&&{}\save[]*{A_1}\restore&&\cdot&&\cdot&&\cdot&&{}\save[]*{B_1}\restore&&\cdot&&{}\save[]*{A_2}\restore&&\cdot&&\cdot&&{}\save[]*{C}\restore&&\cdot&&\cdot&&{}\save[]*{A_3}\restore&&\cdot&&{}\save[]*{B_2}\restore&&\cdot&&\cdot&&\cdot&&{}\save[]*{A_4}\restore&&\cdot&&\cdot&&\cdot&&\cdot&&\cdot&&{}\save[]*{M}\restore}\end{array} & \hdots
\end{array}
\]
}}In the following theorem we include the description of the dual graph of the minimal resolution for completeness; the classification of the dual graphs is due to Brieskorn \cite[2.11]{Brieskorn}.   We also include the fundamental cycle $Z_f$ since the rank of an indecomposable special CM module coincides with the co-efficient of the corresponding exceptional curve in $Z_f$.
\begin{thm}
Denote by $[\alpha_1,\hdots,\alpha_N]$ the continued fraction expansion of $\frac{n}{q}$.  Then with notation as before the specials for every small finite subgroup of $GL(2,\C{})$ are as follows:
{\scriptsize{
\[
\begin{array}{cccc}
\t{group} & \t{specials} &\t{dual graph} & Z_f\\ &&&\\
\mathbb{A}_{n,q} & S_{i_1},S_{i_2},\hdots,S_{i_N} & \begin{array}{c}\tiny{\xymatrix@C=-2pt@R=-2pt{\alpha_1&\alpha_2&...&\alpha_N}}\end{array} & \begin{array}{c}\tiny{\xymatrix@C=-2pt@R=-2pt{1&1&...&1}}\end{array} \\
&&&\\
\mathbb{D}_{n,q}\,\, {}_{(n>2q)} & W_+,W_-,W_{i_1},\hdots,W_{i_N} & \begin{array}{c}\tiny{\xymatrix@C=-2pt@R=-2pt{&2&&\\ 2&\alpha_1&\alpha_2&...&\alpha_N}}\end{array} &  \begin{array}{c}\tiny{\xymatrix@C=-2pt@R=-2pt{&1&&\\ 1&1&...&1}}\end{array} \\ &&&\\
\mathbb{D}_{n,q}\,\, {}_{(n<2q)}&\begin{array}{c}W_+,W_-,W_{i_{\nu+1}},\hdots,W_{i_N}\\ V_{i_{\nu+1}+s(n-q)}\,\,\t{for all }\,\,0\leq s\leq \nu-1 \end{array}& \begin{array}{c}\tiny{\xymatrix@C=-2pt@R=-2pt{&2&&\\ 2&\alpha_1&\alpha_2&...&\alpha_N}}\end{array}&\begin{array}{c}\tiny{\xymatrix@C=-2pt@R=-2pt{&1&&\\ 1&2&..&2&1&..&1}}\end{array}\\
\mathbb{T}_1=E_6 & \t{all CM modules} & \begin{array}{c}\tiny{\xymatrix@C=-2pt@R=-2pt{&&2&&\\2&2&2&2&2}}\end{array} & \begin{array}{c}\tiny{\xymatrix@C=-2pt@R=-2pt{&&2&&\\1&2&3&2&1}}\end{array} \\ &&&\\
\mathbb{T}_{6(b-2)+1} & {\tiny{
\begin{array}{c}
\xymatrix@C=-2pt@R=0pt{
.&&.&&{{{}\drop\xycircle<4pt,4pt>{}.}}&&.&&{{{}\drop\xycircle<4pt,4pt>{}.}}&&.&&. \\
&.&&.&&.&&.&&.&&.\\
.&.&.&.&.&.&.&.&.&.&.&.&.\\
R&.&.&.&.&.&{{{}\drop\xycircle<4pt,4pt>{}.}}&.&.&.&.&.&{{{}\drop\xycircle<4pt,4pt>{}.}}\\
.&&.&&{{{}\drop\xycircle<4pt,4pt>{}.}}&&.&&{{{}\drop\xycircle<4pt,4pt>{}.}}&&.&&.}
\end{array}}}
 &\begin{array}{c}\tiny{\xymatrix@C=-2pt@R=-2pt{&&2&&\\2&2&b&2&2}}\end{array}& \begin{array}{c}\tiny{\xymatrix@C=-2pt@R=-2pt{&&1&&\\1&1&1&1&1}}\end{array}\\ &&&\\
 \mathbb{T}_3& {\tiny{
\begin{array}{c}
\xymatrix@C=-2pt@R=0pt{
{{}\drop\xycircle<4pt,4pt>{}.}&&.&&{{}\drop\xycircle<4pt,4pt>{}.}&&.\\
&.&&.&&.&\\
.&{{}\drop\xycircle<4pt,4pt>{}.}&.&.&.&.&.\\
R&.&.&{{}\drop\xycircle<4pt,4pt>{}.}&.&.&{{}\drop\xycircle<4pt,4pt>{}.}\\
.&&.&&{{}\drop\xycircle<4pt,4pt>{}.}&&R}
\end{array}}}&\begin{array}{c}\tiny{\xymatrix@C=-2pt@R=-2pt{&2&&\\3&2&2&2}}\end{array}&\begin{array}{c}\tiny{\xymatrix@C=-2pt@R=-2pt{&1&&\\1&2&2&1}}\end{array}\\ &&&\\
 \mathbb{T}_{6(b-2)+3}&{\tiny{
\begin{array}{c}
\xymatrix@C=-2pt@R=0pt{
.&&.&&{{{}\drop\xycircle<4pt,4pt>{}.}}&&.&&.&&.&&. \\
&.&&.&&.&&.&&.&&.\\
.&.&.&.&.&.&.&.&.&.&.&.&.\\
R&.&.&.&.&.&{{{}\drop\xycircle<4pt,4pt>{}.}}&.&.&.&.&.&{{{}\drop\xycircle<4pt,4pt>{}.}}\\
.&&.&&{{{}\drop\xycircle<4pt,4pt>{}.}}&&.&&{{{}\drop\xycircle<4pt,4pt>{}.}}&&.&&.}
\end{array}}}&\begin{array}{c}\tiny{\xymatrix@C=-2pt@R=-2pt{&2&&\\3&b&2&2}}\end{array}&\begin{array}{c}\tiny{\xymatrix@C=-2pt@R=-2pt{&1&&\\1&1&1&1}}\end{array}
\end{array}
\]
\[
\begin{array}{cccc}
  \mathbb{T}_5&{\tiny{
\begin{array}{c}
\xymatrix@C=-2pt@R=0pt{
.&&.&&{{}\drop\xycircle<4pt,4pt>{}.}&&.&&.&&. \\
&.&&.&&.&&.&&.\\
.&{{}\drop\xycircle<4pt,4pt>{}.}&.&.&.&.&.&.&.&.&.&\\
R&.&.&.&.&.&{{}\drop\xycircle<4pt,4pt>{}.}&.&.&.&R\\
.&&.&&{{}\drop\xycircle<4pt,4pt>{}.}&&.&&.&&.}
\end{array}
}}&\begin{array}{c}\tiny{\xymatrix@C=-2pt@R=-2pt{&2&\\3&2&3}}\end{array}&\begin{array}{c}\tiny{\xymatrix@C=-2pt@R=-2pt{&1&\\1&2&1}}\end{array}\\ &&&\\
 \mathbb{T}_{6(b-2)+5}&{\tiny{
\begin{array}{c}
\xymatrix@C=-2pt@R=0pt{
.&&.&&{{{}\drop\xycircle<4pt,4pt>{}.}}&&.&&.&&.&&. \\
&.&&.&&.&&.&&.&&.\\
.&.&.&.&.&.&.&.&.&.&.&.&.\\
R&.&.&.&.&.&{{{}\drop\xycircle<4pt,4pt>{}.}}&.&.&.&.&.&{{{}\drop\xycircle<4pt,4pt>{}.}}\\
.&&.&&{{{}\drop\xycircle<4pt,4pt>{}.}}&&.&&.&&.&&.}
\end{array}}} &\begin{array}{c}\tiny{\xymatrix@C=-2pt@R=-2pt{&2&\\3&b&3}}\end{array}&\begin{array}{c}\tiny{\xymatrix@C=-2pt@R=-2pt{&1&\\1&1&1}}\end{array}\\
&&&\\
\mathbb{O}_1=E_7&\t{all CM modules}&\begin{array}{c}\tiny{\xymatrix@C=-2pt@R=-2pt{&&2&&&\\2&2&2&2&2&2}}\end{array}&\begin{array}{c}\tiny{\xymatrix@C=-2pt@R=-2pt{&&2&&&\\ 2&3&4&3&2&1}}\end{array}\\
&&&\\ 
\mathbb{O}_{12(b-2)+1}&D_1,D_2,D_3,E_1,E_2,F,N&\begin{array}{c}\tiny{\xymatrix@C=-2pt@R=-2pt{&&2&&&\\2&2&b&2&2&2}}\end{array}
&\begin{array}{c}\tiny{\xymatrix@C=-2pt@R=-2pt{&&1&&&\\1&1&1&1&1&1}}\end{array}\\
&&&\\ 
\mathbb{O}_5&{\tiny{\begin{array}{c}
\xymatrix@C=-2pt@R=0pt{
.&&{{}\drop\xycircle<4pt,4pt>{}.}&&.&&{{}\drop\xycircle<4pt,4pt>{}.}&&.&&.\\
&.&&.&&.&&.&&.\\
.&&.&&.&&.&&.&&.\\
.&.&.&.&{{}\drop\xycircle<4pt,4pt>{}.}&.&.&.&.&.&.\\
.&&.&&.&&.&&.&&.\\
&{{}\drop\xycircle<4pt,4pt>{}.}&&.&&.&&{{}\drop\xycircle<4pt,4pt>{}.}&&. \\
R&&.&&.&&.&&{{}\drop\xycircle<4pt,4pt>{}.}&&R}\end{array}}}&\begin{array}{c}\tiny{\xymatrix@C=-2pt@R=-2pt{&2&&&\\3&2&2&2&2}}\end{array} &\begin{array}{c}\tiny{\xymatrix@C=-2pt@R=-2pt{&1&&&\\1&2&2&2&1}}\end{array}\\
&&&\\ 
\mathbb{O}_{12(b-2)+5}&D_1,D_2,D_3,E_1,F,N&\begin{array}{c}\tiny{\xymatrix@C=-2pt@R=-2pt{&2&&&\\3&b&2&2&2}}\end{array}&\begin{array}{c}\tiny{\xymatrix@C=-2pt@R=-2pt{&1&&&\\1&1&1&1&1}}\end{array}\\
&&&\\
\mathbb{O}_7&{\tiny{\begin{array}{c}
\xymatrix@C=-3pt@R=-0pt{
.&&.&&.&&{{}\drop\xycircle<4pt,4pt>{}.}&&.&&.&&{{}\drop\xycircle<4pt,4pt>{}.}&&.&\\
&.&&.&&{{}\drop\xycircle<4pt,4pt>{}.}&&.&&.&&.&&.&\\
.&&.&&.&&.&&.&&.&&.&&.&\\
.&.&.&.&.&.&.&.&.&.&.&.&.&.&.\\
.&&.&&.&&.&&.&&.&&.&&.&\\
&{{}\drop\xycircle<4pt,4pt>{}.}&&.&&.&&.&&.&&.&&. \\
R&&.&&.&&.&&{{}\drop\xycircle<4pt,4pt>{}.}&&.&&.&&R&}\end{array}}}&\begin{array}{c}\tiny{\xymatrix@C=-2pt@R=-2pt{&2&&\\4&2&2&2}}\end{array}&\begin{array}{c}\tiny{\xymatrix@C=-2pt@R=-2pt{&1&&\\1&2&2&1}}\end{array}\\
&&&\\ 
\mathbb{O}_{12(b-2)+7}&D_1,E_1,E_2,F,N&\begin{array}{c}\tiny{\xymatrix@C=-2pt@R=-2pt{&2&&\\4&b&2&2}}\end{array}&\begin{array}{c}\tiny{\xymatrix@C=-2pt@R=-2pt{&1&&\\1&1&1&1}}\end{array}\\
&&&\\
\mathbb{O}_{11}&{\tiny{\begin{array}{c}
\xymatrix@C=-3pt@R=0pt{
.&&.&&.&&{{}\drop\xycircle<4pt,4pt>{}.}&&.&&.&&{{}\drop\xycircle<4pt,4pt>{}.}&&.&&.&&.&&.&&.\\
&.&&.&&.&&.&&.&&.&&.&&.&&.&&.&&.&\\
.&&.&&.&&.&&.&&.&&.&&.&&.&&.&&.&&.\\
.&.&.&.&.&.&.&.&.&.&.&.&.&.&.&.&.&.&.&.&.&.&.\\
.&&.&&.&&.&&.&&.&&.&&.&&.&&.&&.&&.\\
&{{}\drop\xycircle<4pt,4pt>{}.}&&.&&.&&.&&.&&.&&.&&.&&.&&.&&.&\\
R&&.&&.&&.&&{{}\drop\xycircle<4pt,4pt>{}.}&&.&&.&&.&&.&&.&&.&&R}\end{array}}}&\begin{array}{c}\tiny{\xymatrix@C=-2pt@R=-2pt{&2&\\3&2&4}}\end{array} &\begin{array}{c}\tiny{\xymatrix@C=-2pt@R=-2pt{&1&\\1&2&1}}\end{array}\\
&&&\\ 
\mathbb{O}_{12(b-2)+11}&D_1,E_1,F,N&\begin{array}{c}\tiny{\xymatrix@C=-2pt@R=-2pt{&2&\\3&b&4}}\end{array}&\begin{array}{c}\tiny{\xymatrix@C=-2pt@R=-2pt{&1&\\1&1&1}}\end{array}\\
&&&\\ 
\mathbb{I}_1=E_8&\t{all CM modules}&\begin{array}{c}\tiny{\xymatrix@C=-2pt@R=-2pt{&&2&&&&\\2&2&2&2&2&2&2}}\end{array}&\begin{array}{c}\tiny{\xymatrix@C=-2pt@R=-2pt{&&3&&&\\ 2&4&6&5&4&3&2}}\end{array}\\
&&&\\
\mathbb{I}_{30(b-2)+1}&A_1,A_2,A_3,A_4,B_1,B_2,C,M&\begin{array}{c}\tiny{\xymatrix@C=-2pt@R=-2pt{&&2&&&&\\2&2&b&2&2&2&2}}\end{array}&\begin{array}{c}\tiny{\xymatrix@C=-2pt@R=-2pt{&&1&&&\\1&1&1&1&1&1&1}}\end{array}\\
&&&\\
\mathbb{I}_{7}&{\tiny{\begin{array}{c}
\xymatrix@C=-2pt@R=0pt{
&.&&.&&.&&.&&.&&.&&{{}\drop\xycircle<4pt,4pt>{}.}& \\
.&&.&&.&&.&&.&&.&&.&&.&\\
.&.&.&.&.&.&{{}\drop\xycircle<4pt,4pt>{}.}&.&.&.&.&.&.&.&.\\
.&&.&&.&&.&&.&&.&&.&&.& \\
&.&&.&&.&&.&&.&&.&&.&\\
.&&.&&.&&.&&.&&.&&.&&.\\
&{{}\drop\xycircle<4pt,4pt>{}.}&&.&&.&&.&&.&&{{}\drop\xycircle<4pt,4pt>{}.}&&.&  \\
R&&.&&.&&{{}\drop\xycircle<4pt,4pt>{}.}&&.&&.&&{{}\drop\xycircle<4pt,4pt>{}.}&&R}\end{array}}}&\begin{array}{c}\tiny{\xymatrix@C=-2pt@R=-2pt{&&2&&\\2&2&2&2&3}}\end{array}&\begin{array}{c}\tiny{\xymatrix@C=-2pt@R=-2pt{&&2&&\\1&2&3&2&1}}\end{array}\\
&&&\\
\mathbb{I}_{30(b-2)+7}&A_1,A_3,B_1,B_2,C,M&\begin{array}{c}\tiny{\xymatrix@C=-2pt@R=-2pt{&&2&&\\2&2&b&2&3}}\end{array}&\begin{array}{c}\tiny{\xymatrix@C=-2pt@R=-2pt{&&1&&\\1&1&1&1&1}}\end{array}\\
&&&\\
\mathbb{I}_{11}&{\tiny{
\begin{array}{c}
\xymatrix@C=-4pt@R=0pt{
&.&&.&&.&&{{}\drop\xycircle<4pt,4pt>{}.}&&.&&.&&{{}\drop\xycircle<4pt,4pt>{}.}&&.&&.&&.&&.&\\
.&&.&&.&&.&&.&&.&&.&&.&&.&&.&&.&&.\\
.&.&.&.&.&.&.&.&.&.&.&.&.&.&.&.&.&.&.&.&.&.&.\\
.&&.&&.&&.&&.&&.&&.&&.&&.&&.&&.&&.&&\\
&.&&.&&.&&.&&.&&.&&.&&.&&.&&.&&.&\\
.&&.&&.&&.&&.&&.&&.&&.&&.&&.&&.&&.\\
&{{}\drop\xycircle<4pt,4pt>{}.}&&.&&.&&.&&.&&.&&.&&.&&.&&{{}\drop\xycircle<4pt,4pt>{}.}&&.\\
R&&.&&.&&.&&{{}\drop\xycircle<4pt,4pt>{}.}&&.&&{{}\drop\xycircle<4pt,4pt>{}.}&&.&&.&&.&&{{}\drop\xycircle<4pt,4pt>{}.}&&R}
\end{array}}}
&\begin{array}{c}\tiny{\xymatrix@C=-2pt@R=-2pt{&2&&&\\3&2&2&2&2&2}}\end{array}&\begin{array}{c}\tiny{\xymatrix@C=-2pt@R=-2pt{&1&&&\\1&2&2&2&2&1}}\end{array}\\
&&&\\
\mathbb{I}_{30(b-2)+11}&A_1,A_2,A_3,A_4,B_1,C,M&\begin{array}{c}\tiny{\xymatrix@C=-2pt@R=-2pt{&2&&&\\3&b&2&2&2&2}}\end{array}&\begin{array}{c}\tiny{\xymatrix@C=-2pt@R=-2pt{&1&&&\\1&1&1&1&1&1}}\end{array}
\end{array}
\]
\[
\begin{array}{cccc}
\mathbb{I}_{13}&{\tiny{
\begin{array}{c}
\xymatrix@C=-4pt@R=0pt{
&.&&.&&.&&{{}\drop\xycircle<4pt,4pt>{}.}&&.&&.&&.&&.&&{{}\drop\xycircle<4pt,4pt>{}.}&&.&&.&&.&&.&\\
.&&.&&.&&.&&.&&.&&.&&.&&.&&.&&.&&.&&.&&.\\
.&.&.&.&.&.&.&.&.&.&.&.&.&.&.&.&.&.&.&.&.&.&.&.&.&.&.\\
.&&.&&.&&.&&.&&.&&.&&.&&.&&.&&.&&.&&.&&.\\
&.&&.&&.&&.&&.&&.&&.&&.&&.&&.&&.&&.&&.&\\
.&&.&&.&&.&&.&&.&&.&&.&&.&&.&&.&&.&&.&&.\\
&.&&.&&.&&.&&.&&.&&.&&.&&.&&.&&.&&.&&.&\\
R&&.&&{{}\drop\xycircle<4pt,4pt>{}.}&&.&&.&&.&&{{}\drop\xycircle<4pt,4pt>{}.}&&.&&.&&.&&{{}\drop\xycircle<4pt,4pt>{}.}&&.&&{{}\drop\xycircle<4pt,4pt>{}.}&&R}
\end{array}}}&\begin{array}{c}\tiny{\xymatrix@C=-2pt@R=-2pt{&&2&&\\2&2&2&3&2}}\end{array}&\begin{array}{c}\tiny{\xymatrix@C=-2pt@R=-2pt{&&1&&\\1&2&2&1&1}}\end{array}\\
&&&\\
\mathbb{I}_{30(b-2)+13}&A_1,A_2,B_1,B_2,C,M&\begin{array}{c}\tiny{\xymatrix@C=-2pt@R=-2pt{&&2&&\\2&2&b&3&2}}\end{array}&\begin{array}{c}\tiny{\xymatrix@C=-2pt@R=-2pt{&&1&&\\1&1&1&1&1}}\end{array}\\
&&&\\
\mathbb{I}_{17}&{\tiny{
\begin{array}{c}
\xymatrix@C=-4pt@R=0pt{
&.&&.&&.&&.&&.&&.&&{{}\drop\xycircle<4pt,4pt>{}.}&&.&&.&&.&&.&&.&&.&&.&&.&&.&&.\\
.&&.&&.&&.&&.&&.&&.&&.&&.&&.&&.&&.&&.&&.&&.&&.&&.&&.\\
.&.&.&.&.&.&.&.&.&.&.&.&.&.&.&.&.&.&.&.&.&.&.&.&.&.&.&.&.&.&.&.&.&.&.\\
.&&.&&.&&.&&.&&.&&.&&.&&.&&.&&.&&.&&.&&.&&.&&.&&.&&.\\
&.&&.&&.&&.&&.&&.&&.&&.&&.&&.&&.&&.&&.&&.&&.&&.&&.&\\
.&&.&&.&&.&&.&&.&&.&&.&&.&&.&&.&&.&&.&&.&&.&&.&&.&&.\\
&{{}\drop\xycircle<4pt,4pt>{}.}&&.&&.&&.&&.&&.&&.&&.&&.&&.&&.&&.&&.&&.&&.&&.&&.&\\
R&&.&&.&&.&&.&&.&&{{}\drop\xycircle<4pt,4pt>{}.}&&.&&.&&.&&{{}\drop\xycircle<4pt,4pt>{}.}&&.&&.&&.&&.&&{{}\drop\xycircle<4pt,4pt>{}.}&&.&&R}
\end{array}}}&\begin{array}{c}\tiny{\xymatrix@C=-2pt@R=-2pt{&2&&\\3&2&2&3}}\end{array}&\begin{array}{c}\tiny{\xymatrix@C=-2pt@R=-2pt{&1&&\\1&2&2&1}}\end{array}\\
&&&\\
\mathbb{I}_{30(b-2)+17}&A_1,A_3,B_1,C,M&\begin{array}{c}\tiny{\xymatrix@C=-2pt@R=-2pt{&2&&\\3&b&2&3}}\end{array}&\begin{array}{c}\tiny{\xymatrix@C=-2pt@R=-2pt{&1&&\\1&1&1&1}}\end{array}\\ 
&&&\\
\mathbb{I}_{19}&{\tiny{
\begin{array}{c}
\xymatrix@C=-4.5pt@R=0pt{
&.&&.&&.&&.&&.&&.&&.&&.&&.&&.&&.&&.&&.&&.&&.&&.&&.&&.&&.&\\
.&&.&&.&&.&&.&&.&&.&&.&&.&&.&&.&&.&&.&&.&&.&&.&&.&&.&&.&&.\\
.&.&.&.&.&.&.&.&.&.&.&.&.&.&.&.&.&.&.&.&.&.&.&.&.&.&.&.&.&.&.&.&.&.&.&.&.&.&.\\
.&&.&&.&&.&&.&&.&&.&&.&&.&&.&&.&&.&&.&&.&&.&&.&&.&&.&&.&&.\\
&.&&.&&.&&.&&.&&.&&.&&.&&.&&.&&.&&.&&.&&.&&.&&.&&.&&.&&.&\\
.&&.&&.&&.&&.&&.&&.&&.&&.&&.&&.&&.&&.&&.&&.&&.&&.&&.&&.&&.\\
&{{}\drop\xycircle<4pt,4pt>{}.}&&.&&.&&.&&.&&{{}\drop\xycircle<4pt,4pt>{}.}&&.&&.&&.&&.&&.&&.&&.&&.&&.&&.&&.&&.&&.&\\
R&&.&&.&&.&&.&&.&&{{}\drop\xycircle<4pt,4pt>{}.}&&.&&.&&.&&{{}\drop\xycircle<4pt,4pt>{}.}&&.&&.&&.&&.&&{{}\drop\xycircle<4pt,4pt>{}.}&&.&&.&&.&&R}
\end{array}}}
&\begin{array}{c}\tiny{\xymatrix@C=-2pt@R=-2pt{&2&&\\5&2&2&2}}\end{array}&\begin{array}{c}\tiny{\xymatrix@C=-2pt@R=-2pt{&1&&\\1&2&2&1}}\end{array}\\
&&&\\
\mathbb{I}_{30(b-2)+19}&A_1,B_1,B_2,C,M&\begin{array}{c}\tiny{\xymatrix@C=-2pt@R=-2pt{&2&&\\5&b&2&2}}\end{array}&\begin{array}{c}\tiny{\xymatrix@C=-2pt@R=-2pt{&1&&\\1&1&1&1}}\end{array}
\\
&&&\\
\mathbb{I}_{23}&{\tiny{
\begin{array}{c}
\xymatrix@C=-5pt@R=0pt{
&.&&.&&.&&{{}\drop\xycircle<4pt,4pt>{}.}&&.&&.&&.&&.&&.&&.&&.&&.&&.&&.&&.&&.&&.&&.&&.&&.&&.&&.&&.\\
.&&.&&.&&.&&.&&.&&.&&.&&.&&.&&.&&.&&.&&.&&.&&.&&.&&.&&.&&.&&.&&.&&.&&.\\
.&.&.&.&.&.&.&.&.&.&.&.&.&.&.&.&.&.&.&.&.&.&.&.&.&.&.&.&.&.&.&.&.&.&.&.&.&.&.&.&.&.&.&.&.&.&.\\
.&&.&&.&&.&&.&&.&&.&&.&&.&&.&&.&&.&&.&&.&&.&&.&&.&&.&&.&&.&&.&&.&&.&&.\\
&.&&.&&.&&.&&.&&.&&.&&.&&.&&.&&.&&.&&.&&.&&.&&.&&.&&.&&.&&.&&.&&.&&.&\\
.&&.&&.&&.&&.&&.&&.&&.&&.&&.&&.&&.&&.&&.&&.&&.&&.&&.&&.&&.&&.&&.&&.&&.\\
&.&&.&&.&&.&&.&&.&&.&&.&&.&&.&&.&&.&&.&&.&&.&&.&&.&&.&&.&&.&&.&&.&&.\\
R&&.&&.&&.&&.&&.&&{{}\drop\xycircle<4pt,4pt>{}.}&&.&&.&&.&&{{}\drop\xycircle<4pt,4pt>{}.}&&.&&{{}\drop\xycircle<4pt,4pt>{}.}&&.&&.&&{{}\drop\xycircle<4pt,4pt>{}.}&&.&&.&&.&&.&&.&&.&&.&&R}
\end{array}}}&\begin{array}{c}\tiny{\xymatrix@C=-2pt@R=-2pt{&2&&\\3&2&3&2}}\end{array}&\begin{array}{c}\tiny{\xymatrix@C=-2pt@R=-2pt{&1&&\\1&2&1&1}}\end{array}
\\
&&&\\
\mathbb{I}_{30(b-2)+23}&A_1,A_2,B_1,C,M&\begin{array}{c}\tiny{\xymatrix@C=-2pt@R=-2pt{&2&&\\3&b&3&2}}\end{array}&\begin{array}{c}\tiny{\xymatrix@C=-2pt@R=-2pt{&1&&\\1&1&1&1}}\end{array}
\\
&&&\\
\mathbb{I}_{29}&{\tiny{
\begin{array}{c}
\xymatrix@C=-6pt@R=0pt{
&.&&.&&.&&.&&.&&.&&.&&.&&.&&.&&.&&.&&.&&.&&.&&.&&.&&.&&.&&.&&.&&.&&.&&.&&.&&.&&.&&.&&.&\\
.&&.&&.&&.&&.&&.&&.&&.&&.&&.&&.&&.&&.&&.&&.&&.&&.&&.&&.&&.&&.&&.&&.&&.&&.&&.&&.&&.&&.&&.\\
.&.&.&.&.&.&.&.&.&.&.&.&.&.&.&.&.&.&.&.&.&.&.&.&.&.&.&.&.&.&.&.&.&.&.&.&.&.&.&.&.&.&.&.&.&.&.&.&.&.&.&.&.&.&.&.&.&.&.\\
.&&.&&.&&.&&.&&.&&.&&.&&.&&.&&.&&.&&.&&.&&.&&.&&.&&.&&.&&.&&.&&.&&.&&.&&.&&.&&.&&.&&.&&.&\\
&.&&.&&.&&.&&.&&.&&.&&.&&.&&.&&.&&.&&.&&.&&.&&.&&.&&.&&.&&.&&.&&.&&.&&.&&.&&.&&.&&.&&.&\\
.&&.&&.&&.&&.&&.&&.&&.&&.&&.&&.&&.&&.&&.&&.&&.&&.&&.&&.&&.&&.&&.&&.&&.&&.&&.&&.&&.&&.&&.&\\
&{{}\drop\xycircle<4pt,4pt>{}.}&&.&&.&&.&&.&&.&&.&&.&&.&&.&&.&&.&&.&&.&&.&&.&&.&&.&&.&&.&&.&&.&&.&&.&&.&&.&&.&&.&&. \\
R&&.&&.&&.&&.&&.&&{{}\drop\xycircle<4pt,4pt>{}.}&&.&&.&&.&&{{}\drop\xycircle<4pt,4pt>{}.}&&.&&.&&.&&.&&{{}\drop\xycircle<4pt,4pt>{}.}&&.&&.&&.&&.&&.&&.&&.&&.&&.&&.&&.&&.&&.&&R
}
\end{array}}}
&\begin{array}{c}\tiny{\xymatrix@C=-2pt@R=-2pt{&2&\\3&2&5}}\end{array}&\begin{array}{c}\tiny{\xymatrix@C=-2pt@R=-2pt{&1&\\1&2&1}}\end{array}\\
&&&\\
\mathbb{I}_{30(b-2)+29}&A_1,B_1,C,M&\begin{array}{c}\tiny{\xymatrix@C=-2pt@R=-2pt{&2&\\3&b&5}}\end{array}&\begin{array}{c}\tiny{\xymatrix@C=-2pt@R=-2pt{&1&\\1&1&1}}\end{array}
\\
&&&\\
\end{array}
\]
}}
\end{thm}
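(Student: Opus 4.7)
The proof is essentially a compilation of the results established in Sections 5 through 9, together with a verification that the listed fundamental cycles $Z_f$ are correct. My plan is to address each column of the table in turn, treating the group classification, the dual graph, the special CM modules, and $Z_f$ separately.

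First, the classification of the small finite subgroups of $GL(2,\mathbb{C})$ together with the description of the dual graphs of the minimal resolutions of $\mathbb{C}^2/G$ is due to Brieskorn \cite{Brieskorn}; this furnishes the first and third columns of the table. Second, for each group type the specials column is read off directly from the corresponding section: type $\mathbb{A}$ from the statement of Wunram recalled in Section 5, type $\mathbb{D}$ from Theorem~\ref{TypeDR1} (for the rank one part, when $n>2q$) together with Theorem 6.2 (for the rank two part, when $n<2q$), type $\mathbb{T}$ from the lemmas of Section 7, type $\mathbb{O}$ from the lemmas of Section 8, and type $\mathbb{I}$ from the lemmas of Section 9. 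Thus nothing new needs to be proved for columns one through three beyond assembling the results.

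The only remaining content is verification of the fundamental cycle column. For each case I would apply Laufer's algorithm \cite{Lau72} as illustrated in the two examples at the start of Section 3: start from the reduced cycle $Z_r=\sum E_i$, test whether $Z_r\cdot E_i\leq 0$ for every exceptional curve, and if some $E_i$ fails this condition add $E_i$ to $Z_r$ and repeat. For the $\mathbb{A}$ and (generic) $\mathbb{D}$, $\mathbb{T}$, $\mathbb{O}$, $\mathbb{I}$ families one checks directly that the reduced cycle already satisfies $Z_r\cdot E_i\leq 0$ for all $i$, using the prescribed self-intersection numbers and the tree-shape of the graph; this gives $Z_f$ with all coefficients equal to $1$. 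The non-reduced cases arise only for $\mathbb{D}_{n,q}$ with $n<2q$ and for the three exceptional subgroups $\mathbb{T}_1=E_6$, $\mathbb{O}_1=E_7$, $\mathbb{I}_1=E_8$ together with $\mathbb{T}_3, \mathbb{T}_5, \mathbb{O}_5, \mathbb{O}_7, \mathbb{O}_{11}, \mathbb{I}_{7}, \mathbb{I}_{11}, \mathbb{I}_{13}, \mathbb{I}_{17}, \mathbb{I}_{19}, \mathbb{I}_{23}, \mathbb{I}_{29}$, where some vertex has self-intersection strictly more negative than the others; in each case the coefficients displayed in the table can be verified by the same direct computation of intersection numbers.

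The main obstacle, if any, is purely bookkeeping: there are many subfamilies and in each one must check that the displayed cycle $Z_f$ really satisfies $Z_f\cdot E_i\leq 0$ at every vertex (using the self-intersections from the dual graph column) and that it is minimal with this property. Since the graphs are all trees of bounded complexity the verification is mechanical, and as a sanity check one can confirm that the rank of each indecomposable special CM module listed in the middle column equals the coefficient in $Z_f$ of the corresponding exceptional curve, in accordance with Theorem~\ref{Wurnam_main_result}. With these verifications complete, assembling the table finishes the proof.
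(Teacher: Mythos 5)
Your proposal matches the paper's treatment: the theorem is presented there as a pure compilation, with the specials read off from the case-by-case lemmas of Sections 5--9, the dual graphs cited from Brieskorn, and $Z_f$ included as a known computation (checkable by Laufer's algorithm and consistent with Wunram's rank formula, exactly the sanity check you describe). Nothing further is needed.
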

It is possible to use this classification to assign to each indecomposable special CM module the corresponding exceptional curve in the minimal resolution.  Type $\mathbb{A}$ is well understood, for type $\mathbb{D}$ see \cite{Wemyss_reconstruct_D(i)} and \cite{Wemyss_reconstruct_D(ii)}, and for the remaining cases see version 1 of the paper \cite{Wemyss_GL2i}.

\end{document}